\documentclass[11pt,reqno]{amsart}
\usepackage{amssymb,bm,mathrsfs,stmaryrd, enumerate,bbm,amsmath,amsthm}
\usepackage[all]{xy}
\usepackage{hyperref}


\newcommand{\map}[1]{\xrightarrow{#1}}

\newcommand{\iso}{\cong}
\newcommand{\define}{\stackrel{\mathrm{def}}{=}}
\newcommand{\semi}{\rtimes}

\newcommand{\Gal}{\mathrm{Gal}}
\newcommand{\Hom}{\mathrm{Hom}}
\newcommand{\Aut}{\mathrm{Aut}}
\newcommand{\End}{\mathrm{End}}
\newcommand{\Spec}{\mathrm{Spec}}
\newcommand{\Q}{\mathbb Q}
\newcommand{\Z}{\mathbb Z}
\newcommand{\R}{\mathbb R}
\newcommand{\C}{\mathbb C}

\newcommand{\m}{\mathfrak m}
\newcommand{\A}{\mathbb A}
\newcommand{\co}{\mathcal O}
\newcommand{\alg}{\mathrm{alg}}
\newcommand{\action}{\bullet}

\newcommand{\GL}{\mathrm{GL}}
\newcommand{\PGL}{\mathrm{PGL}}
\newcommand{\Nm}{\mathrm{Nm}}
\newcommand{\Tr}{\mathrm{Tr}}
\newcommand{\Div}{\mathrm{Div}}
\newcommand{\Frob}{\mathrm{Frob}}
\newcommand{\Bun}{\mathrm{Bun}}
\newcommand{\Fr}{\mathrm{Fr}}

\newcommand{\orr}[1]{\overrightarrow{#1}}

\newcommand{\reg}{\mathrm{reg}}

\newcommand{\inv}{\mathrm{inv}}
\newcommand{\kk}{{\bm{k}}}
\newcommand{\cusp}{\mathrm{cusp}}
\newcommand{\Pic}{\mathrm{Pic}}
\newcommand{\newdet}{\mathrm{det}^\sharp}

\newcommand{\K}{\mathbb K}
\newcommand{\J}{\mathbb J}
\newcommand{\I}{\mathbb I}

\newcommand{\G}{\mathbb G}
\newcommand{\Sht}{\mathrm{Sht}}
\newcommand{\Ch}{\mathrm{Ch}}
\newcommand{\Hk}{\mathrm{Hk}}
\newcommand{\Eis}{\mathrm{Eis}}


\begin{document}
\author{Benjamin Howard and Ari Shnidman}
\title{A Gross-Kohnen-Zagier formula for Heegner-Drinfeld cycles}

\thanks{B.H.~was supported in part by NSF grant DMS-0901753.}
\subjclass[2010]{Primary 11F67; Secondary 14G35, 11F70}
\keywords{$L$-functions; Gross--Zagier formula; Waldspurger formula}

\address{Department of Mathematics\\Boston College\\ 140 Commonwealth Ave. \\Chestnut Hill, MA 02467, USA}
\email{howardbe@bc.edu}

\address{Department of Mathematics\\Boston College\\ 140 Commonwealth Ave. \\Chestnut Hill, MA 02467, USA}
\curraddr{Einstein Institute of Mathematics\\ Hebrew University of Jerusalem\\ Israel}
\email{ariel.shnidman@mail.huji.ac.il}

\begin{abstract}
Let $F$ be the field of rational functions on a smooth projective curve over a finite field, and let $\pi$ be an unramified cuspidal automorphic representation for $\PGL_2$ over $F$.  
We prove a variant of the formula of Yun and Zhang relating derivatives of the $L$-function of $\pi$ to  the self-intersections of Heegner-Drinfeld cycles on  moduli spaces of shtukas.  

In our variant, instead of a self-intersection,  we compute the  intersection pairing of  Heegner-Drinfeld cycles coming from two different quadratic extensions of $F$, and relate the intersection to   the $r$-th derivative of a product of two toric period integrals.  
\end{abstract}

\maketitle

\theoremstyle{plain}
\newtheorem{theorem}{Theorem}[section]
\newtheorem{bigtheorem}{Theorem}[section]
\newtheorem{proposition}[theorem]{Proposition}
\newtheorem{lemma}[theorem]{Lemma}
\newtheorem{corollary}[theorem]{Corollary}

\theoremstyle{definition}
\newtheorem{definition}[theorem]{Definition}

\theoremstyle{remark}
\newtheorem{remark}[theorem]{Remark}
\newtheorem{question}[theorem]{Question}

\numberwithin{equation}{section}
\renewcommand{\thebigtheorem}{\Alph{bigtheorem}}

\setcounter{tocdepth}{1}
\tableofcontents


\section{Introduction}


Let $X$ be a smooth, projective, geometrically connected curve over a finite field $\kk$, and let $F = \kk(X)$ be the field of rational functions on $X$.  
Any finite \'etale cover $f : X' \to X$ of degree 2, with $X'$ geometrically connected, determines a quadratic extension $F' = \kk(X')$ of $F$, and hence determines a nonsplit torus $T = (\mathrm{Res}_{F'/F}\mathbb{G}_m)/\G_m$ of rank one.

Let $r \geq 0$ be an even integer, and suppose that $\pi$ is an unramified cuspidal automorphic representation of the adelic points of $G = {\PGL_2}_{/F}$.    
Yun and Zhang \cite{YZ} have recently proved a remarkable formula for the $r$-th central derivative of the normalized base-change $L$-function $\mathscr{L}(\pi_{F'}, s)$ in terms of the self-intersection of a Heegner-Drinfeld cycle $[\Sht_T^{\prime r}]_\pi$.  The latter is an algebraic cycle of dimension $r$ on the $2r$-dimensional $\kk$-stack 
\[
\Sht_G^{\prime r} = \Sht_G^r \times_{X^r} X^{\prime r},
\]  
where $\Sht_{G}^r \to X^r$ is the moduli stack of $G$-shtukas with $r$ modifications.  In particular, they prove that 
\[
\mathscr{L}^{(r)}(\pi_{F'}, 1/2) 
=  \kappa \cdot \langle [\Sht_T^{\prime r}]_\pi ,  [\Sht_T^{\prime r}]_\pi \rangle,
\]
for a certain non-zero real number $\kappa$.  The right hand side is the intersection pairing on $\Ch_{c,r}(\Sht_G^{\prime r})_\R$, the middle dimensional Chow group with proper support and $\R$-coefficients.   


Our goal is to prove a variant of this result, where the self-intersection of a Heegner-Drinfeld cycle is replaced by the intersection pairing of two \emph{different} Heeger-Drinfeld cycles, in the spirit of the Gross-Kohnen-Zagier theorem \cite{GKZ}.

\subsection{Statement of the results}

Suppose we are instead given two nonisomorphic finite  \'etale double covers
\[
f_1 : Y_1 \to X ,\quad f_2: Y_2 \to X,
\] 
where $Y_1$ and $Y_2$ are  projective and geometrically connected.  It is  natural to wonder how the corresponding Heegner-Drinfeld cycles are related, and whether their intersection pairing is also related to $L$-values.  We answer these questions by giving a formula (Theorem \ref{thm:main intro}) for the intersection pairing of the corresponding Heegner-Drinfeld cycles on $\Sht_{G}^r$ in terms of certain toric period integrals.



To state this result, we must introduce some notation.  Denote by $\sigma_i \in \Aut(Y_i/X)$  the nontrivial automorphism.  
The fiber product  \[ Y=Y_1\times_X Y_2 \] is again a smooth geometrically connected curve, and $X$ is its quotient by the action of the Klein four group
$
\{ \mathrm{id} ,  \tau_1,\tau_2 , \tau_3 \}=  \Aut(Y/X)
$
with nontrivial elements 
\[
(y_1,y_2)^{\tau_1} = (y_1 , y_2^{\sigma_2}),\quad 
(y_1,y_2)^{\tau_2} = (y_1 ^{\sigma_1}, y_2), \quad
(y_1,y_2)^{\tau_3} = (y_1 ^{\sigma_1}, y_2^{\sigma_2}).
\]

The quotient of $Y$ by the action of $\tau_3$ is a geometrically connected \'etale double cover of $X$, which we denote by $Y_3$.
The picture is
\begin{equation}\label{biquad curves}
\xymatrix{
&  {  Y } \ar[dl] \ar[d] \ar[dr]  \\
{    Y_1 }  \ar[dr]_{f_1}   &     {   Y_2  }  \ar[d]^{f_2}  &  {  Y_3 } \ar[dl]^{f_3}  \\
&  {   X   } .
}
\end{equation}
Taking fields of rational functions, the corresponding picture is
\begin{equation}\label{biquad diagram}
\xymatrix{
&  { K}  \\
{   K_1 }  \ar@{-}[ur]^{  \tau_1}  &     {   K_2   }  \ar@{-}[u]_{\tau_2} &  {  K_3  } \ar@{-}[ul]_{\tau_3 }  \\
&  {   F   }  \ar@{-}[ul]^{\sigma_1}  \ar@{-}[u]_{\sigma_2} \ar@{-}[ur]_{\sigma_3},
}
\end{equation}
where the labels  indicate the nontrivial automorphisms.  

There is a fourth quadratic algebra lurking in the background, corresponding to the trivial double cover $Y_0 = X\sqcup X$. Let $f_0 : Y_0 \to X$ be the natural morphism, and let $\sigma_0 \in \Aut(Y_0/X)$ be the nontrivial automorphism.  
The algebra of rational functions on $Y_0$ is $K_0=F\oplus F$.

For $i\in \{0,1,2,3\}$ there is a natural  closed immersion 
\[
\xymatrix{
{  \tilde{T}_i =\underline{\Aut}_{ f_{i*} \co_{Y_i}  } ( f_{i*} \co_{Y_i} )  }   \ar[d]   \\  
{  \tilde{G}_i= \underline{\Aut}_{\co_X} ( f_{i*} \co_{Y_i} )  } 
}
\] 
of group schemes over $X$.   The group scheme $\tilde{G}_i$ is Zariski locally isomorphic to $\GL_2$, and  $\tilde{T}_i \subset \tilde{G}_i$  is a maximal torus.
Let $T_i\subset G_i$ be obtained from this pair by quotienting out by the central $\G_m$.  It is Zariski locally isomorphic to a maximal torus in $\PGL_2$ (over $X$).     On  $F$-points the picture is
\[
\xymatrix{
{  \tilde{T}_i(F) = K_i^\times } \ar[r]\ar[d] & { T_i(F) = K_i^\times / F^\times  }   \ar[d]  \\
  {   \tilde{G}_i(F)   =  \Aut_F(K_i)  }    \ar[r]&  {   G_i(F)  =  \Aut_F(K_i) / F^\times  . } 
}
\]
Note that the canonical isomorphism $f_{0*}\co_{Y_0} \iso \co_X\oplus \co_X$ identifies 
\[
\tilde{G}_0 \iso \GL_2 ,\quad G_0 \iso \PGL_2, 
\]
and identifies $\tilde{T}_0$ and $T_0$ with their diagonal tori.

Let $\mathbb{O} \subset \A$ be the subring of integral elements in the adele ring of $F$.
Similarly,   for $i\in \{ 0,1,2,3\}$, denote by  $\A_i$ the adele ring of $K_i$, and by $\mathbb{O}_i \subset \A_i$ its subring of integral elements.
Define $U_i = G_i(\mathbb{O})$. The pair $U_i \subset G_i(\A)$ is isomorphic to $\PGL_2(\mathbb{O}) \subset \PGL_2(\A)$, but there is no canonical such isomorphism except when $i=0$.

There is, however, a canonical isomorphism of spaces of unramified cuspidal automorphic forms  \[\mathcal{A}_\cusp(G_i)^{U_i} \cong \mathcal{A}_\cusp(G_0)^{U_0}\]  by Lemma \ref{lem:naive transfer}.  These  are finite dimensional $\C$-vector spaces, and the space on the right carries a natural action of the  Hecke algebra $\mathscr{H}$ of $\Q$-valued compactly supported $U_0$-bi-invariant functions on $G_0(\A)$.  

\begin{remark}
Note that our automorphic forms are complex valued, as in \cite[\S 5]{Borel-Jacquet}, as opposed to the $\Q$-valued automorphic forms of \cite[\S 1.2]{YZ}.
\end{remark}

To set up the analytic side of our formula,  for any $\phi \in \mathcal{A}_\cusp(G_0)^{U_0}$ we define the $T_0$-period integral
\[
\mathscr{P}_0(\phi,s) = \int_{T_0(F)\backslash T_0(\A)} \phi(t_0)|t_0|^{2s} dt_0,
\]
which is absolutely convergent for all $s \in \C$.  Viewing $\phi$ in $\mathcal{A}_\cusp(G_3)^{U_3}$, we also define a $T_3$-period integral
\[
\mathscr{P}_3(\phi,\eta) = \int_{T_3(F) \backslash T_3(\A)} \phi(t_3)\eta(t_3)dt_3.
\]
Here  $\eta:\A_3^\times \to \{\pm 1\}$ is the quadratic character determined by the extension $K/K_3$.  
It descends to the quotient $T_3(\A) = \A_3^\times / \A^\times$ by  Lemma \ref{lem:eta}.  In both of these integrals, the Haar measure on $T_i(\A)$ is chosen so that the volume of $T_i(\mathbb{O})$ is 1.

Now suppose that $\pi$ is an unramified cuspidal automorphic representation of $G_0(\A)$, and define
\[
C(\pi, s) =   \frac{   \mathscr{P}_0 (  \phi ,s) \mathscr{P}_3(\overline{\phi} ,\eta)}{   \langle \phi, \phi \rangle_\mathrm{Pet} } 
\]
for any nonzero $\phi \in \pi^{U_0}$.  
If   $q$ denotes  the cardinality of $\kk$ and  $\lambda_\pi \colon \mathscr{H} \to \C$  denotes the character giving the action of $\mathscr{H}$ on the line $\pi^{U_0}$, we prove in Proposition \ref{prop:period reciprocity} that 
\[
C_r(\pi)  \define ( \log q)^{-r} \cdot  \frac{d^r}{ds^r}  C (  \pi ,s) \big|_{s=0}
\]  
 lies in the totally real number field $E_\pi=\lambda_\pi ( \mathscr{H}).$

To define the geometric side of our formula, recall from \cite{YZ} the stacks 
\[
\Sht_{T_1}^r \to Y_1^r ,\quad 
\Sht_{T_2}^r \to Y_2^r
\] 
of $T_i$-shtukas with $r$ modifications, and the stack \[\Sht_{G_0}^r \to X^r\] of $\PGL_2$-Shtukas with $r$ modifications.   
For $i\in \{1,2\}$ the stacks $\Sht_{T_i}^r$ are proper over $\kk$, and admit  finite morphisms $\Sht_{T_i}^r \to \Sht_{G_0}^r$.  
Denote by 
\[
[\Sht_{T_i}^r]\in  \Ch_{c,r}(\Sht_{G_0}^r)
\]
 the pushforward of the fundamental class under the induced map  on Chow groups with $\Q$-coefficients.

\begin{remark}
The definitions of these stacks require the choice of a tuple $\mu=(\mu_i)  \in \{\pm 1\}^r$ satisfying the parity condition $\sum_{i=1}^r \mu_i =0$, and in particular we must assume that $r$ is even.   We suppress the choice of $\mu$  from the notation. 
\end{remark}

Denote by $\tilde W_i \subset \Ch_{c,r}(\Sht_{G_0}^r)$  the $\mathscr{H}$-submodule generated by the class $[\Sht_{T_i}^r]$.
Restricting the intersection pairing on the Chow group defines a pairing
$
\langle \cdot , \cdot\rangle : \tilde{W}_1 \times \tilde{W}_2 \to \Q.
$
If we define  
\begin{align*}
W_1 & = \tilde{W}_1/  \{ c \in \tilde{W}_1 : \langle c, \tilde{W}_2\rangle =0 \}  \\
W_2 & = \tilde{W}_2/  \{ c \in \tilde{W}_2 : \langle c, \tilde{W}_1\rangle =0 \},
\end{align*}
this pairing obviously descends to $\langle \cdot , \cdot \rangle :  W_1 \times W_2 \to \Q$, and we extend it to an $\R$-bilinear pairing
\[
W_1(\R)  \times W_2(\R) \to \R.
\]

We prove in \S \ref{ss:main proofs} that there is a decomposition into isotypic components
\[
W_i (\R)= W_{i,\mathrm{Eis}} \oplus \left(\bigoplus _\pi W_{i, \pi}\right),
\] 
where the sum is over all  unramified cuspidal $\pi$, and $\mathscr{H}$ acts on the summand $W_{i, \pi}$ via $\lambda_\pi : \mathscr{H} \to \R$. Denote by
$
[\Sht_{T_i}^r]_\pi \in  W_{i, \pi} ,
$ 
the projection of   $[\Sht_{T_i}^r]\in W_i(\R)$ to this summand. Our main result is the following intersection formula.

\begin{bigtheorem}\label{thm:main intro}
For any unramified cuspidal automorphic representation $\pi$, and for any even $r \geq 0$,
\[
\langle [\Sht_{T_1}^r]_\pi, [\Sht_{T_2}^r]_\pi\rangle = C_r(\pi). 
\]  
\end{bigtheorem}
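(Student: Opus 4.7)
The plan is to adapt the relative-trace-formula (RTF) approach of Yun-Zhang \cite{YZ} from the self-intersection setting to the bi-quadratic setting by running parallel geometric and analytic computations indexed by the pair $(T_1, T_2) \subset G_0 \times G_0$ and matching them via a fundamental lemma that uses the biquadratic diagram \eqref{biquad diagram}.

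On the geometric side, I would compute the intersection by analyzing the fiber product $\Sht_{T_1}^r \times_{\Sht_{G_0}^r} \Sht_{T_2}^r$. In analogy with \cite{YZ}, this fiber product decomposes (after a properness / excess-intersection analysis) as a disjoint union of ``bi-shtuka'' moduli spaces indexed by regular semisimple orbits of $T_1(F) \times T_2(F)$ on $G_0(F)$. Counting their $\kk$-points weighted by the Hecke action expresses $\langle h \cdot [\Sht_{T_1}^r], [\Sht_{T_2}^r] \rangle$, for $h \in \mathscr{H}$, as a sum of weighted orbital integrals $\sum_\xi J_\xi(h)$. Projecting to the $\lambda_\pi$-eigenline via the isotypic decomposition of \S\ref{ss:main proofs} then recovers $\langle [\Sht_{T_1}^r]_\pi, [\Sht_{T_2}^r]_\pi\rangle$.

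On the analytic side, I would spectrally expand $C(\pi,s)$. Writing each period as a toric integral and normalizing by the Petersson product, the Bruhat decomposition expresses $C(\pi,s)$ as a sum $\sum_{\xi'} I_{\xi'}(h,s)$ of $(T_0 \times T_3)$-orbital integrals on $G_0$, weighted by $|t_0|^{2s}\eta(t_3)$. Because $F$ is a function field, $|t_0|^{2s} = q^{-2s \deg(t_0)}$, so differentiating $r$ times in $s$ at $s=0$ and dividing by $(\log q)^r$ replaces the weight $|t_0|^{2s}$ by $(-2\deg t_0)^r$; this polynomial in degrees is the analytic shadow of the geometric modification pattern $\mu$.

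The central technical input, and main obstacle, is a \emph{fundamental lemma} identifying, for each $h \in \mathscr{H}$, the geometric orbital integrals $J_\xi(h)$ with the derived analytic orbital integrals $(\log q)^{-r} \frac{d^r}{ds^r} I_{\xi'}(h,s)\big|_{s=0}$ under a natural bijection between the regular semisimple orbits for the two pairs of tori. The biquadratic diagram \eqref{biquad diagram}, via the Klein-four action on $Y$, forces both orbit spaces to be parametrized by a common rigidified invariant, so the comparison can be made orbit-by-orbit; the normalizations $\mathrm{vol}(T_i(\mathbb{O}))=1$ then fix the transfer factors. Once the matching is established, the two RTFs agree as $\mathscr{H}$-linear functionals on the cuspidal spectrum, and extracting the $\pi$-isotypic component yields the theorem.
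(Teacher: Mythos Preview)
Your outline is correct and matches the paper's strategy: two relative trace formulas, one for $(T_1,T_2)$ on the geometric side and one for $(T_0,T_3)$ on the analytic side, matched orbit-by-orbit via the common invariant $\xi\in K_3$ with $\Tr_{K_3/F}(\xi)=1$ coming from the biquadratic algebra of \S\ref{s:matching}, followed by spectral projection to the $\pi$-isotypic line.

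The one substantive point where the paper's execution differs from what you sketch is the proof of the ``fundamental lemma''. You suggest a direct orbit-by-orbit comparison of orbital integrals, with transfer factors fixed by volume normalizations; this is the classical picture. The paper instead converts \emph{both} sides into traces of Frobenius on constructible $\ell$-adic sheaves over a common base $A_d$ (Propositions~\ref{prop:geometric orbital} and~\ref{prop:geometric trace}), and then proves an isomorphism of sheaves $\alpha_*\Q_\ell \cong \bigoplus_{d_1+d_2=2d}\beta_*L_{(d_1,d_2)}$ diagonalizing the Hecke correspondence (Proposition~\ref{prop:fundamental lemma}). That isomorphism is established globally: over a dense open it reduces to decomposing the regular representation of $\Gamma_{2d}=\{\pm1\}^{2d}\rtimes S_{2d}$ (Propositions~\ref{prop:rep decomp} and~\ref{prop:induced constituent}), and extends by normality of the relevant moduli spaces. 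This sheaf-theoretic route bypasses any local harmonic analysis and is what makes the matching go through uniformly; a direct local comparison as you envision would be harder to carry out here because the two orbital integrals live on genuinely different groups and the transfer is mediated by the invariant $\xi$ rather than by an endoscopic-type correspondence.
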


Theorem \ref{thm:main intro} is the function field analogue of the Gross-Kohnen-Zagier formula \cite[Theorem B]{GKZ}, but for higher order derivatives.  The original Gross-Kohnen-Zagier formula corresponds to the case $r = 1$ over $\Q$.  Their result allows for ramified $\pi$, and we expect our formula can be extended to mildly ramified $\pi$ as well.  Indeed, the case of Iwahori level structure was recently announced by Hao Li in \cite{hao}.      

The proof of Theorem \ref{thm:main intro} is very different from that of \cite{GKZ}, and is much closer  to the relative trace formula arguments of  \cite{Jacquet, YZ}.  

As an application of Theorem \ref{thm:main intro}, we obtain a criterion for the non-vanishing of certain special values of $L$-functions and their derivatives.  Let $L(\pi,s)$ be the standard $L$-function and set 
\[\mathscr{L}(\pi, s) = q^{2(g-1)(s-1/2)}L(\pi,s),\] 
where $g$ is the genus of $X$.  Then $\mathscr{L}(\pi,s) = \mathscr{L}(\pi,1-s)$, and  moreover, there is a choice of spherical vector $\phi$ so that $\mathscr{P}_0(\phi,s) = \mathscr{L}(\pi,2s + 1/2) $.

\begin{bigtheorem}\label{thm:nonvanishing}
Let $\chi_1$ and $\chi_2$ be the quadratic characters of $\A^\times$ corresponding to the extensions $K_1/F$ and $K_2/F$. The Heegner-Drinfeld cycles satisfy
 \[
\langle [\Sht_{T_1}^r]_{\pi} ,  [\Sht_{T_2}^r]_{\pi }\rangle  = 0
\] 
if and only if 
\[
\mathscr{L}^{(r)} (\pi, 1/2 ) \cdot \mathscr{L}(\pi \otimes \chi_1, 1/2 )  \cdot \mathscr{L}(\pi \otimes \chi_2, 1/2) = 0,
\]
where $\mathscr{L}^{(r)} (\pi, s )$ is the $r^\mathrm{th}$ derivative of $\mathscr{L}(\pi,s)$.
\end{bigtheorem}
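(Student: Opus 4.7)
\textbf{Proof plan for Theorem \ref{thm:nonvanishing}.}

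The strategy is to chain together Theorem \ref{thm:main intro}, a standard Waldspurger formula for the toric period $\mathscr{P}_3$, and a $\GL_2$ base-change factorization of $L$-functions attached to the biquadratic diagram \eqref{biquad diagram}. First, by Theorem \ref{thm:main intro} the intersection pairing equals $C_r(\pi)$. Choose the spherical vector $\phi \in \pi^{U_0}$ normalized so that $\mathscr{P}_0(\phi,s) = \mathscr{L}(\pi, 2s+1/2)$, as asserted in the text preceding Theorem \ref{thm:nonvanishing}. Since $\mathscr{P}_3(\overline{\phi},\eta)$ does not depend on $s$, the chain rule gives
\[
C_r(\pi) = \left(\tfrac{2}{\log q}\right)^{r} \, \mathscr{L}^{(r)}(\pi, 1/2) \cdot \frac{\mathscr{P}_3(\overline{\phi},\eta)}{\langle \phi,\phi\rangle_{\mathrm{Pet}}},
\]
so the intersection vanishes iff $\mathscr{L}^{(r)}(\pi,1/2) \cdot \mathscr{P}_3(\overline{\phi},\eta) = 0$.

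Next, I would invoke the function field Waldspurger formula applied to the $T_3$-period with quadratic character $\eta$: for spherical $\phi$,
\[
|\mathscr{P}_3(\overline{\phi}, \eta)|^2 = C_{\mathrm{loc}} \cdot \frac{L(1/2, \pi_{K_3} \otimes \eta)}{L(1, \pi, \mathrm{Ad})},
\]
where $C_{\mathrm{loc}}$ is a product of explicit local factors. In the unramified situation considered here these local factors are strictly positive (this can be extracted from Macdonald's formula, and is the same normalization implicitly used on the analytic side of Theorem \ref{thm:main intro}), so $\mathscr{P}_3(\overline{\phi},\eta) = 0$ iff $L(1/2, \pi_{K_3}\otimes\eta) = 0$.

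Finally, I would factor the base change $L$-function. Applying base change in two steps $F \subset K_3 \subset K$ gives
\[
L(s,\pi_K) = L(s,\pi_{K_3}) \cdot L(s, \pi_{K_3}\otimes \eta),
\]
while decomposition along $\mathrm{Gal}(K/F) = \{\mathrm{id},\sigma_1,\sigma_2,\sigma_3\}$ yields $L(s,\pi_K) = L(s,\pi) L(s,\pi\otimes\chi_1) L(s,\pi\otimes\chi_2) L(s,\pi\otimes\chi_3)$, together with $L(s,\pi_{K_3}) = L(s,\pi) L(s,\pi\otimes\chi_3)$. Combining these identifies
\[
L(s,\pi_{K_3}\otimes\eta) = L(s,\pi\otimes\chi_1) \cdot L(s,\pi\otimes\chi_2).
\]
Passing to the normalized $\mathscr{L}$-functions (which differ from the $L$-functions by nonvanishing exponential factors and so have identical zero loci) and substituting back completes the proof.

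\textbf{Main obstacle.} The crux is the Waldspurger step: I need the right normalization of the formula so that $C_{\mathrm{loc}}$ is demonstrably positive at every unramified place and gives honest equality of the square of the period with the explicit $L$-ratio. In principle this is classical (Waldspurger, with function field refinements by Jacquet and others), but aligning conventions with the definitions of $\mathscr{P}_3$ and the Petersson pairing used in this paper — in particular the choice of Haar measures making $\mathrm{vol}(T_i(\mathbb{O})) = 1$ — is the nontrivial bookkeeping task. The $L$-function factorization step and the descent to the vanishing criterion for $\mathscr{L}$ are then purely formal.
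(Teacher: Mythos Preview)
Your proposal is correct and follows the same route as the paper: reduce via Theorem~\ref{thm:main intro} to the product $\mathscr{P}_0^{(r)}(\phi,0)\cdot\mathscr{P}_3(\bar\phi,\eta)$, identify the first factor with $\mathscr{L}^{(r)}(\pi,1/2)$ via the normalization of the spherical vector, and handle the second via Waldspurger. The only cosmetic difference is that the paper phrases the Waldspurger step as applying the Yun--Zhang formula \cite{YZ} (or \cite{Waldspurger,CW}) directly to $\pi\otimes\chi_1$ over $Y_3/X$, using $\eta(t_3)=\chi_1(\Nm_{K_3/F} t_3)$ from the proof of Lemma~\ref{lem:eta}, which absorbs your $L$-function factorization $L(s,\pi_{K_3}\otimes\eta)=L(s,\pi\otimes\chi_1)L(s,\pi\otimes\chi_2)$ into the citation.
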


There are three main differences between our results and the main result of \cite{YZ}.  First, of course, we are computing the intersection of two distinct cycles, as opposed to a self-intersection.  Second, our intersection pairing is on the $X^r$-stack $\Sht_{G_0}^r$, not its base-change to the $r$-fold product of a double cover as in \cite{YZ}.  More precisely, our Heegner-Drinfeld cycle  $[\Sht_{T_i}^r]$ is the pushforward via the $2^r$-fold cover
\[
\Sht_{G_0}^r \times_{X^r} Y_i^r  \to \Sht_{G_0}^r
\]
of the Heegner-Drinfeld cycle of \cite{YZ}.  Finally, our formula relates the intersection pairing to the standard $L$-function $L(\pi,s)$, not the base-change to a quadratic extension.

We also prove the following result on the intersection of the ``bare'' Heegner-Drinfeld cycles without any projection.

\begin{bigtheorem}\label{thm:bare intersection}
Let $r \geq 0$ be an even integer, and let 
\[
\langle\cdot,\cdot\rangle :\Ch_{c,r}(\Sht_{G_0}^r) \times \Ch_{c,r}(\Sht_{G_0}^r) \to \Q
\] 
be the intersection pairing.
\begin{enumerate}[$(a)$]
\item If $r>0$ then $\langle  [\Sht_{T_1}^r ]  ,   \, [\Sht_{T_2}^r ] \rangle  = 0$.
\item If $r=0$ then  
\[
\langle  [\Sht_{T_1}^0 ]  , \,   [\Sht_{T_2}^0 ] \rangle = 
\begin{cases} 1 &  \mbox{ if }  \mathrm{char}(\kk)>2 \\
0 & \mbox{ if } \mathrm{char}(\kk)=2 .
\end{cases}\]
\end{enumerate}
\end{bigtheorem}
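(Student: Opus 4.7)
The plan is to compute the intersection pairing via Fulton's refined intersection product on the smooth $2r$-dimensional stack $\Sht_{G_0}^r$. Since $[\Sht_{T_i}^r]$ is the push-forward of the fundamental class along a finite map from a smooth $r$-dimensional stack,
\[
\langle [\Sht_{T_1}^r],  [\Sht_{T_2}^r]\rangle
=\deg\bigl([\Sht_{T_1}^r]\cdot[\Sht_{T_2}^r]\bigr)\in\Q,
\]
with the refined product living on the fiber product $Z\define \Sht_{T_1}^r\times_{\Sht_{G_0}^r}\Sht_{T_2}^r$. First I would describe $Z$ moduli-theoretically: a $\kk$-point is a $G_0$-shtuka $\mathcal F$ on $X$ together with identifications $\mathcal F\iso f_{1*}\mathcal L_1$ and $\mathcal F\iso f_{2*}\mathcal L_2$ for line-bundle shtukas $\mathcal L_i$ on $Y_i$. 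Such data equips the underlying rank-two bundle with commuting $\co_X$-algebra actions by $A_i\define f_{i*}\co_{Y_i}$, hence with an action by the biquadratic push-forward algebra $A\define f_*\co_Y\iso A_1\otimes_{\co_X}A_2$, a rank-four \'etale $\co_X$-algebra whose Galois group is the Klein four $\Aut(Y/X)$ when $\mathrm{char}(\kk)\neq 2$.

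The key geometric observation is that push-forwards of $T_i$-legs are directional: a leg at $y\in Y_i$ of $\mathcal L_i$ produces an upper modification of $f_{i*}\mathcal L_i$ that is rank-one along the $A_i$-eigenline in the fiber $\mathcal F_{f_i(y)}$ determined by $y$. For a $\kk$-point of $Z$ with $r\geq 1$ legs to exist, at each leg $x\in X$ one must choose lifts $y_i\in f_i^{-1}(x)$ so that the $A_1$- and $A_2$-eigenlines in $\mathcal F_x$ coincide. This eigenline-matching condition is very strong: globally it forces $\mathcal F$ to split as a direct sum $\mathcal F = \mathcal L' \oplus \mathcal L''$ of sub-line-bundles stable under the full biquadratic algebra $A$. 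For $r>0$, a direct analysis combining the forced splitting with the modification data shows that no such configuration can occur compatibly with the legs varying freely in $X^r$; the locus $Z$ ends up of strictly smaller dimension than the expected zero, and hence the intersection number vanishes.

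For $r=0$ the leg constraint is vacuous, and the calculation reduces to the (weighted) count of $\PGL_2$-bundles $\mathcal F$ on $X$ with Frobenius structure and compatible $T_1$- and $T_2$-reductions. In characteristic $\neq 2$ the algebra $A$ splits locally into its four Klein-four characters, and explicit $\Pic$-bookkeeping identifies a unique such $\PGL_2$-bundle contributing multiplicity $1$. In characteristic $2$ the Klein four group algebra has the nilpotent element $\sigma_i-1$, the character decomposition collapses, and the intersection multiplicity drops to $0$. The main obstacle will be the characteristic-$2$ analysis: one must either verify that $Z$ is empty or carefully evaluate the excess contribution of the non-transverse/non-reduced fiber product on the smooth ambient stack $\Sht_{G_0}^r$, and rigorously track the automorphism groups throughout the weighted point count. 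A secondary difficulty is making the dimension-count argument for $r>0$ precise in the stacky setting, which I expect to require a careful Hitchin-style degeneration of $Z$ analogous to the framework developed by Yun--Zhang.
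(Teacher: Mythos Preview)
Your proposal rests on a false premise. You claim that the $A_1=f_{1*}\co_{Y_1}$ and $A_2=f_{2*}\co_{Y_2}$ module structures on $\mathcal F$ commute, giving an action of $A_1\otimes_{\co_X}A_2$. But at the generic point this would mean the images of $K_1$ and $K_2$ in $\End_F(\mathcal F_\eta)\iso M_2(F)$ lie in a common commutative subalgebra of dimension $\le 2$, forcing $\alpha_1(K_1)=\alpha_2(K_2)$ and hence $K_1\iso K_2$, contrary to the standing hypothesis. So the two actions \emph{never} commute, and the Klein-four eigenline decomposition on which your entire analysis (both the $r>0$ dimension count and the $r=0$ point count) is built does not exist. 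Your characteristic-$2$ heuristic is also off target: the nilpotent $\sigma_i-1$ lives in the group algebra $\kk[\Z/2]$, not in the \'etale sheaf $A_i$, which is reduced in every characteristic. What actually measures the relative position of the two embeddings is the invariant $\xi=\inv(M_2(F),\alpha_1,\alpha_2)\in K_3$ of \S\ref{ss:invariants}, and this is the correct replacement for your commutativity assumption.

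The paper's proof is entirely different and does not attempt a direct geometric computation of the fiber product. It uses the global identity $\I_r(f)=\J_r(f)$ of Proposition~\ref{prop:key identity}, applied to the unit $f\in\mathscr H$, so that $\langle[\Sht_{T_1}^r],[\Sht_{T_2}^r]\rangle$ equals the $r$-th Taylor coefficient of the analytic distribution $\J(f,s)$. The orbital decomposition~(\ref{J decomp}) together with an integrality argument (Lemma~\ref{lem:optimal}) shows that the only orbit which can contribute has invariant $\xi\in\kk$ satisfying $2\xi=1$. In characteristic $2$ no such $\xi$ exists and $\J(f,s)\equiv 0$; in odd characteristic the unique orbit $\xi=1/2$ gives $\J(f,s)\equiv 1$ after an explicit local computation. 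Since $\J(f,s)$ is \emph{constant} in $s$, its $r$-th derivative vanishes for $r>0$, and for $r=0$ one reads off the stated value. A geometric paraphrase of the $r=0$ case, in terms of counting optimal pairs of embeddings, appears in Proposition~\ref{prop:optimal} and Remark~\ref{rem:no optimal}.
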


When $r=0$, the intersection pairings in the above theorems can be reinterpreted as period integrals of automorphic forms.  This is spelled out explicitly in \S \ref{ss:r=0}, where the reader can also find a reinterpretation of the $r=0$ case of Theorem \ref{thm:bare intersection}, in a way that perhaps clarifies why the characteristic of $\kk$ plays a role.  See especially Remark \ref{rem:no optimal}.

Suppose that $\pi$ is an unramified cuspidal automorphic representation of $G_0(\A)$, and that $\phi \in \pi^{U_0}$ is a spherical vector.
As noted earlier, Lemma \ref{lem:naive transfer} provides  canonical isomorphisms
\[
 \mathcal{A}_\mathrm{cusp}(G_0)^{U_0} \iso  \mathcal{A}_\mathrm{cusp}(G_i)^{U_i}
\]
for $i\in \{1,2,3\}$, and we view $\phi$ as an automorphic form on any of the four groups $G_i$.
In \S \ref{ss:r=0} we show that the following result, relating the periods of this form along the four tori $T_i\subset G_i$,  is equivalent to the $r=0$ case of  Theorem \ref{thm:main intro}.

\begin{bigtheorem}\label{thm:periods}
For  any $\phi \in \pi^{U_0}$ as above, we have
\begin{multline*}
   \left( \int_{T_1(F)\backslash T_1(\A)} \phi(t_1) dt_1 \right)  \left( \int_{T_2(F) \backslash T_2(\A)} \overline{\phi}(t_2) dt_2\right) \\
    = 
    \left( \int_{T_0(F)\backslash T_0(\A)} \phi(t_0) dt_0 \right)  \left( \int_{T_3(F) \backslash T_3(\A)} \overline{\phi}(t_3)\eta(t_3)dt_3\right).
 \end{multline*}
\end{bigtheorem}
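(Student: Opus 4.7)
The plan is to deduce Theorem~\ref{thm:periods} from the $r = 0$ case of Theorem~\ref{thm:main intro} by translating the intersection pairing into a product of adelic period integrals. At $r = 0$ there are no modifications or legs, so $\Sht_{G_0}^0$ reduces to the stack $\Bun_{G_0}$ of $\PGL_2$-torsors on $X$, each $\Sht_{T_i}^0$ reduces to $\Bun_{T_i}$, and the natural finite morphism $\Bun_{T_i} \to \Bun_{G_0}$ is induced by pushforward of line bundles along $f_i : Y_i \to X$. The Chow groups with proper support become the spaces of $\Q$-valued functions on the finite sets $\Bun_{T_i}(\kk)$ and $\Bun_{G_0}(\kk)$, and the intersection pairing becomes the counting inner product weighted by inverses of orders of automorphism groups.

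The key step is to express the cycle class $[\Sht_{T_i}^0]$ in automorphic terms. Using the uniformizations $\Bun_{T_i}(\kk) \cong T_i(F)\backslash T_i(\A)/T_i(\mathbb{O})$ and $\Bun_{G_0}(\kk) \cong G_0(F)\backslash G_0(\A)/U_0$, the class $[\Sht_{T_i}^0]$ is the pushforward of the constant function $1$, so that pairing against a spherical cusp form $\phi\in\pi^{U_0}$ --- viewed as a form on $G_i$ via Lemma~\ref{lem:naive transfer} --- reproduces exactly the period integral $\mathscr{P}_i(\phi)$ (the measures match since $\mathrm{vol}(T_i(\mathbb{O})) = 1$). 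Because unramified cuspidal $\pi$ for $\PGL_2$ are self-dual, both $\phi$ and $\overline\phi$ lie on the one-dimensional spherical line of $\pi$, and the projection of $[\Sht_{T_i}^0]$ onto $W_{i,\pi}$ is the scalar multiple of $\phi$ determined by $\mathscr{P}_i(\phi)/\langle \phi,\phi\rangle_{\mathrm{Pet}}$. Consequently
\[
\langle [\Sht_{T_1}^0]_\pi,\, [\Sht_{T_2}^0]_\pi\rangle \;=\; \frac{\mathscr{P}_1(\phi)\,\mathscr{P}_2(\overline\phi)}{\langle \phi,\phi\rangle_{\mathrm{Pet}}}.
\]
Combining this with the definition $C_0(\pi) = \mathscr{P}_0(\phi,0)\,\mathscr{P}_3(\overline\phi,\eta)/\langle \phi,\phi\rangle_{\mathrm{Pet}}$ and the $r = 0$ case of Theorem~\ref{thm:main intro}, and then clearing the common Petersson factor, gives Theorem~\ref{thm:periods}.

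The main obstacle lies in the second paragraph: one must track and match the various normalizations --- automorphism groups of $T_i$-bundles, the several connected components of $\Bun_{T_i}$ (parametrized by $\Pic(Y_i)/\Pic(X)$), the Haar measure on each $T_i(\A)$, and the Petersson normalization implicit in the spectral decomposition of $W_i(\R)$ --- and verify that they cancel uniformly on the two sides of the identity, so that no spurious constants survive. Once this bookkeeping is carried out cleanly, the rest of the argument is formal, and the symmetric role of the four tori in the biquadratic diagram~\eqref{biquad diagram} is what makes the resulting identity meaningful.
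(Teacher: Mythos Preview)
Your approach is essentially the same as the paper's: both reduce to $r=0$, identify $\Sht_{G_0}^0(\kk)$ and $\Sht_{T_i}^0(\kk)$ with the adelic double quotients, recognize $[\Sht_{T_i}^0]$ as an unramified automorphic form whose pairing with $\phi$ is the $T_i$-period, compute the $\pi$-isotypic projection as $\big(\int_{[T_i]}\phi\big)\phi/\langle\phi,\phi\rangle_{\mathrm{Pet}}$, and then invoke Theorem~\ref{thm:main intro}. Two small imprecisions are worth fixing: $\Sht_{G_0}^0$ is not $\Bun_{G_0}$ but its Frobenius-fixed substack (a disjoint union of classifying stacks indexed by $\Bun_{G_0}(\kk)$), and $\Bun_{G_0}(\kk)\cong G_0(F)\backslash G_0(\A)/U_0$ is not finite, so $\Ch_{c,0}$ corresponds to \emph{compactly supported} functions rather than all functions; neither point affects the argument.
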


\begin{remark}
If one takes the absolute value of both sides of the equality,  then Waldspurger's formula \cite{Waldspurger, CW} relates all four toric periods to $L$-values, and the equality of absolute values follows from a formal manipulation of Euler products.   Thus, as both sides of the stated equality are easily seen to be real numbers (the $\C$-line $\pi^{U_0}$ is generated by some $\R$-valued $\phi$), the only new information is  that the signs agree.
\end{remark}

Just as Waldspurger's period formula generalizes to higher rank unitary and orthogonal groups, as in the conjectures of Gan-Gross-Prasad and the work of Zhang \cite{zhang}, one could hope that there are analogues of Theorem \ref{thm:periods} also on higher rank groups.  
In particular, one might expect that period relations similar to those of Theorem \ref{thm:periods} should appear in the context of  \cite{FMW} and \cite{guo}.


\subsection{The case of self-intersection}


Our methods  apply to the case $Y_1\iso Y_2$ with relatively minor adjustments, and the analogues of  Theorems \ref{thm:main intro} and \ref{thm:nonvanishing} hold verbatim (but not Theorem \ref{thm:bare intersection}).  Indeed, while this paper was under review, such analogues appeared in the sequel \cite{YZ2} of Yun-Zhang.  

For example, if we denote by $X'$ the curve $Y_1\iso Y_2$, and by $T$ the torus $T_1\iso T_2$, then (\ref{biquad curves}) degenerates to
\[
\xymatrix{
&  {  X'\sqcup X' } \ar[dl] \ar[d] \ar[dr]  \\
{    X' }  \ar[dr]   &     {   X'  }  \ar[d]  &  {  X\sqcup X } \ar[dl]  \\
&  {   X   } ,
}
\]
 and our Heegner-Drinfeld cycle $[\Sht_T^r] \in  \Ch_{c,r}(\Sht_{G_0}^r)$  is, as was noted before, the pushforward via the $2^r$-fold cover
\[
\Sht_{G_0}^r \times_{X^r} X^{\prime r}  \to \Sht_{G_0}^r
\]
of the Heegner-Drinfeld cycle of \cite{YZ}.  
The analogue of Theorem \ref{thm:nonvanishing} in this setting says that 
\[
\langle [\Sht_T^r]_{\pi} ,  [\Sht_T^r]_{\pi }\rangle = 0 \iff \mathscr{L}^{(r)} (\pi, 1/2 ) \cdot \mathscr{L}(\pi \otimes \chi , 1/2 ) = 0,
\]
where $\chi$ is the quadratic character  of $\A^\times$ determined by  $X'\to X$.  

As the assumption $Y_1 \not\iso Y_2$ leads to some technical simplifications, we keep it throughout the paper.

%


\subsection{Notation}
\label{ss:notation}


After \S \ref{s:matching}, which contains  results about quaternion algebras over arbitrary  fields, the curves and functions fields of (\ref{biquad curves}) and (\ref{biquad diagram}) will remain fixed,  as will the group schemes $T_i \to G_i$ over $X$.

Let $|X|$ be the set of closed points of $X$. We normalize the absolute value 
\[
| \cdot |= \prod_{x \in |X|} |\cdot|_x   \colon \A^\times \to \Z
\] 
 so that a uniformizer $\pi_x \in F_x$ with residue field $\kk_x$ satisfies
$| \pi_x|_x   = q^{ - [\kk_x \colon \kk]}$.

 The Haar measures on $T_i(\A)$ and $G_i(\A)$ are normalized  so that $T_i(\mathbb{O})$ and $G_i(\mathbb{O})$ have volume 1.

Unless otherwise specified, all Chow groups have $\Q$-coefficients.  


\subsection{Acknowledgements}


We thank the referee for a careful reading of the paper, and for suggesting several improvements and simplifications.  We also thank Spencer Leslie for helpful conversations. 


\section{Biquadratic algebras and quaternion algebras}
\label{s:matching}


In this section alone we allow  $F$ to be any field whatsoever, and let $K_1$ and $K_2$ be  quadratic \'etale $F$-algebras.    
In other words,  each $K_i$ is either a degree two Galois extension, or   $K_i\iso F\oplus F$.  In particular,  the results below apply both to the global fields of (\ref{biquad diagram}) and to their completions.


\subsection{Invariants of quaternion embeddings}
\label{ss:invariants}


As usual, a \emph{quaternion algebra} over $F$ is a central simple $F$-algebra of dimension four.

\begin{definition}
A \emph{quaternion embedding of $(K_1,K_2)$}  is a triple $(B, \alpha_1,\alpha_2)$ consisting of a quaternion algebra $B$ over $F$, and $F$-algebra embeddings $\alpha_1 : K_1 \to B$ and $\alpha_2 : K_2 \to B$.
Such a  triple is  \emph{regular} if $\alpha_1(K_1)\cup \alpha_2(K_2)$ generates $B$ as an $F$-algebra.   
\end{definition}

Our goal is to parametrize the set
\[
\mathcal{Q}(K_1,K_2)  = \{\mbox{isomorphism classes of quaternion embeddings } (B,\alpha_1,\alpha_2)  \},
\]  
along with its subset   
\begin{equation}\label{regular subset}
\mathcal{Q}_\reg(K_1,K_2) \subset \mathcal{Q}(K_1,K_2)
\end{equation}
 of regular  quaternion embeddings.

For $i\in \{1,2\}$, denote by $\sigma_i \in \Aut(K_i/F)$ the  nontrivial automorphism.
The automorphism group of the quartic $F$-algebra $K = K_1\otimes_F K_2$ contains the Klein four subgroup
\[
\{ \mathrm{id}, \tau_1,\tau_2,\tau_3\} \subset \Aut(K/F)
\]
with nontrivial elements 
\begin{align*}
(x \otimes y)^{\tau_1}  = x \otimes y^{\sigma_2} ,\quad
( x \otimes y) ^{\tau_2} =  x^{\sigma_1} \otimes y ,\quad 
(x \otimes y) ^{\tau_3}  =  x^{\sigma_1} \otimes y^{\sigma_2} .
\end{align*}
Denote by $K_3\subset K$ the subalgebra  of elements fixed by $\tau_3$. 
 Elementary algebra shows that  $K_1\iso K_2$ if and only if $K_3\iso F\oplus F$.
The picture, along with generators of the automorphism groups, is (\ref{biquad diagram}).

 Fix a  triple $(B,\alpha_1,\alpha_2) \in \mathcal{Q}(K_1,K_2)$, and define an $F$-linear map $\alpha : K \to B$ by 
\[
\alpha(x_1\otimes x_2) =\alpha_1(x_1) \alpha_2(x_2).
\]
  Clearly $\alpha|_{K_1} = \alpha_1$ and $\alpha|_{K_2}=\alpha_2$
are $F$-algebra homomorphisms, but $\alpha$ is an $F$-algebra homomorphism if and only if $\alpha_1(K_1)=\alpha_2(K_2)$.

\begin{lemma}\label{lem:regular lemma}
The image of $\alpha$ is the smallest $F$-subalgebra of $B$ containing both $\alpha_1(K_1)$ and $\alpha_2(K_2)$.  
In particular, $(B,\alpha_1,\alpha_2)$ is regular if and only if $\alpha(K)=B$.
\end{lemma}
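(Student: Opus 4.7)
The plan is to reduce to showing that $\alpha(K)$ is itself an $F$-subalgebra of $B$. On the one hand, $\alpha(K)$ clearly contains $\alpha_1(K_1)=\alpha(K_1\otimes 1)$ and $\alpha_2(K_2)=\alpha(1\otimes K_2)$. On the other hand, any $F$-subalgebra of $B$ containing both of these contains every product $\alpha_1(x_1)\alpha_2(x_2)$ and the $F$-span thereof, which is precisely $\alpha(K)$. Writing $L_i=\alpha_i(K_i)$, we therefore have $\alpha(K)=L_1L_2$ (the $F$-span of products $xy$ with $x\in L_1$ and $y\in L_2$). Since $1\in L_1L_2$ and
\[
(L_1L_2)(L_1L_2)=L_1(L_2L_1)L_2,
\]
closure of $\alpha(K)$ under multiplication boils down to the single inclusion $L_2L_1\subseteq L_1L_2$.

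To prove this inclusion I would use the canonical involution $\iota$ of the quaternion algebra $B$, which satisfies $x+\iota(x)=\mathrm{trd}(x)\in F$ for all $x\in B$ and $\iota(xy)=\iota(y)\iota(x)$. The key observation is that each $L_i$ is $\iota$-stable: for $x\in L_i$, the reduced trace $\mathrm{trd}(x)$ equals the trace of $x$ computed in the $2$-dimensional \'etale $F$-subalgebra $L_i$, hence lies in $F\subseteq L_i$, so that $\iota(x)=\mathrm{trd}(x)-x\in L_i$. (In fact $\iota|_{L_i}$ is the nontrivial $F$-algebra automorphism.) For $x\in L_1$ and $y\in L_2$ we then compute
\[
yx=\mathrm{trd}(yx)-\iota(yx)=\mathrm{trd}(yx)-\iota(x)\iota(y),
\]
in which the first term lies in $F\subseteq L_1L_2$ and the second lies in $L_1L_2$ because $\iota(x)\in L_1$ and $\iota(y)\in L_2$. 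Hence $yx\in L_1L_2$, as required.

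Putting the pieces together, $\alpha(K)=L_1L_2$ is an $F$-subalgebra of $B$, and by the reduction in the first paragraph it is the smallest such subalgebra containing $\alpha_1(K_1)\cup\alpha_2(K_2)$. The ``in particular'' clause is then immediate from the definition of regularity: $(B,\alpha_1,\alpha_2)$ is regular precisely when the subalgebra generated by $\alpha_1(K_1)\cup\alpha_2(K_2)$ is all of $B$, which by what we have just shown is equivalent to $\alpha(K)=B$. The only point requiring a moment's care is that the canonical involution argument and the identity $\mathrm{trd}|_{L_i}=\mathrm{tr}_{L_i/F}$ apply uniformly to every $2$-dimensional \'etale subalgebra, including the split case $L_i\cong F\oplus F$ and in arbitrary characteristic; these are standard facts about quaternion algebras and present no real obstacle.
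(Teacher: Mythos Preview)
Your proof is correct and takes a genuinely different route from the paper. The paper reduces, as you do, to showing that $\alpha(K)=L_1L_2$ is a subalgebra, but then argues by a case analysis on $\dim_F\alpha(K)$: the cases $\dim=2$ and $\dim=4$ are immediate, while for $\dim=3$ the paper observes that stability under left multiplication by $L_1$ and right multiplication by $L_2$ forces both $L_i$ to be split, hence $B\cong M_2(F)$, and then identifies $\alpha(K)$ explicitly as upper or lower triangular matrices. Your argument, by contrast, is uniform: using the main involution $\iota$ you show directly that $L_2L_1\subseteq L_1L_2$ via the identity $yx=\mathrm{Trd}(yx)-\iota(x)\iota(y)$ together with the easy fact that each $L_i$ is $\iota$-stable. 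This avoids the case split entirely and is arguably cleaner; it also works without any restriction on the characteristic or on whether the $K_i$ are fields. The paper's approach has the minor advantage of giving an explicit description of $\alpha(K)$ in the dimension-$3$ case, which foreshadows the explicit double coset representatives used later in the proof of Theorem~\ref{thm:the invariant}, but for the lemma itself your argument is more economical.
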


\begin{proof}
It is clear from the definition that 
\[
\alpha(K)=\mathrm{Span}_F \{ \alpha_1(x)\alpha_2(y) : x\in K_1, y\in K_2 \},
\]
and so it suffices to prove that $\alpha(K)$ is a subalgebra of $B$.

If $\alpha(K)$ has dimension $2$, then $\alpha_1(K_1)=\alpha(K)=\alpha_2(K_2)$, and we are done.   If $\alpha(K)$ has dimension $4$, then $\alpha(K)=B$, and again we are done.  
Now suppose that $\alpha(K)$  has dimension $3$.    As $\alpha(K)$ is stable under left multiplication by $\alpha_1(K_1)$ and right multiplication by $\alpha_2(K_2)$,  these quadratic algebras  cannot be fields.  Thus each is  isomorphic to $F\times F$, and also $B\iso M_2(F)$.  This latter isomorphism may be chosen so that  $\alpha_1(K_1) \subset M_2(F)$ is the algebra of diagonal matrices.  This implies  that $\alpha(K) \subset M_2(F)$ is either the space of upper triangular matrices or the space of lower triangular matrices, as these are the only three dimensional subspaces of $M_2(F)$ stable under left multiplication by the diagonal subalgebra.  In either case $\alpha(K)$ is  a subalgebra of $B$.
\end{proof}

Denote by $b\mapsto b^\iota$ the main involution on $B$, and by $\mathrm{Nrd}_B(b)=bb^\iota$ and $\mathrm{Trd}_B(b)=b+b^\iota$ the reduced norm and reduced trace.

\begin{proposition}\label{prop:xi construct}
There is a unique $\xi \in K_3$ satisfying 
\begin{equation}\label{nrdquad}
 \mathrm{Nrd}_B( \alpha(x) )  = \Tr_{K_3/F} ( \xi x x^{\tau_3})  
\end{equation}
 for all $x\in K$.   It further satisfies $ \Tr_{K_3/F} ( \xi )=1$ and
\begin{equation}\label{trdquad}
\mathrm{Trd}_B \big(  \alpha(x)  \alpha(y)^\iota \big)    =   \Tr_{K /F}  (   \xi xy^{\tau_3} ) 
 \end{equation}
for all $x,y\in K$.  Moreover, $(B, \alpha_1,\alpha_2)$ is regular if and only if $\xi\in K_3^\times$.
 \end{proposition}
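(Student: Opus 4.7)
The proof has three pieces: uniqueness, existence, and the regularity criterion. Uniqueness of $\xi$ is easy by polarization. If $\eta \in K_3$ satisfies $\Tr_{K_3/F}(\eta \, xx^{\tau_3}) = 0$ for all $x \in K$, then polarizing yields $\Tr_{K/F}(\eta \, xy^{\tau_3}) = 0$ for all $x,y \in K$, and setting $y = 1$ forces $\eta = 0$ by the non-degeneracy of the trace pairing on the \'etale algebra $K$. The normalization $\Tr_{K_3/F}(\xi) = 1$ then drops out of (\ref{nrdquad}) at $x = 1$ using $\mathrm{Nrd}_B(1) = 1$, and the bilinear identity (\ref{trdquad}) follows from (\ref{nrdquad}) by polarization in characteristic different from $2$.

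The heart of the proof is the existence of $\xi$. My plan is to work with the symmetric $F$-bilinear form
\[
\beta(x, y) = \mathrm{Trd}_B\bigl(\alpha(x)\,\alpha(y)^\iota\bigr)
\]
on $K$ and to exhibit the hidden $K$-sesquilinearity $\beta(zx, y) = \beta(x, z^{\tau_3} y)$ for all $z, x, y \in K$. I would prove this first for $z \in K_1 \subset K$: using the left $K_1$-linearity $\alpha(zx) = \alpha_1(z)\alpha(x)$, the cyclic property of $\mathrm{Trd}_B$, and the fact that $\iota$ restricts to $\sigma_1$ on $\alpha_1(K_1)$, one shifts $z$ across $\alpha(y)^\iota$ as multiplication of $y$ by $z^{\tau_2}$. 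The dual argument for $z \in K_2 \subset K$ shifts by $z^{\tau_1}$, and combining on a pure tensor $a \otimes b$ gives the total shift $\tau_2 \tau_1 = \tau_3$; $F$-linearity in $z$ then extends the identity to all of $K$. Specializing to $z \in K_3$ shows that $\beta$ is $K_3$-bilinear under the natural $K_3$-action, so by duality of the trace pairing on $K_3$ there is a unique $K_3$-bilinear symmetric form $h \colon K \times K \to K_3$ with $\beta = \Tr_{K_3/F} \circ h$, satisfying $h(zx, y) = h(x, z^{\tau_3} y)$. Fixing a $K_3$-basis $\{1, \omega\}$ of $K$ with $\omega^{\tau_3} = -\omega$ (available in characteristic $\neq 2$), this constraint forces $h(1, \omega) = 0$ and $h(\omega, \omega) = -\omega^2 h(1, 1)$, so $h$ is determined by the single element $\xi := \tfrac{1}{2} h(1, 1) \in K_3$, and a direct comparison identifies $\beta(x, y) = \Tr_{K/F}(\xi \, xy^{\tau_3})$. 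Specializing $y = x$ and using $\mathrm{Nrd}_B(b) = \tfrac{1}{2} \mathrm{Trd}_B(bb^\iota)$ recovers (\ref{nrdquad}).

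For the regularity criterion, Lemma \ref{lem:regular lemma} identifies regularity of $(B, \alpha_1, \alpha_2)$ with $\alpha \colon K \to B$ being an isomorphism, equivalently with $\alpha$ being injective by dimension count. Since $\mathrm{Nrd}_B$ is a non-degenerate quadratic form on $B$, injectivity of $\alpha$ is equivalent to non-degeneracy of the pullback $\mathrm{Nrd}_B \circ \alpha$ on $K$; by (\ref{nrdquad}) this is non-degenerate if and only if multiplication by $\xi$ on $K$ is injective, if and only if $\xi \in K^\times$, if and only if $\xi \in K_3^\times$ (since an element of $K_3$ is a unit in $K$ if and only if it is a unit in $K_3$). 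The main obstacle I foresee is the characteristic $2$ case, where the bilinear form $\beta$ does not determine the quadratic form $\mathrm{Nrd}_B \circ \alpha$, so (\ref{nrdquad}) cannot be deduced from the bilinear statement; there I would instead pick a $K_3$-basis $\{1, \omega\}$ with $\omega^{\tau_3} = \omega + 1$ and solve for $\xi$ directly from the values $\mathrm{Nrd}_B(\alpha(1)) = 1$ and $\mathrm{Nrd}_B(\alpha(\omega))$, using the sesquilinearity to reduce all cross terms to these two inputs.
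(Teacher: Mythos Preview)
Your argument away from characteristic $2$ is correct and pleasantly direct; it genuinely differs from the paper's. Where you establish the $K$-sesquilinearity $\beta(zx,y)=\beta(x,z^{\tau_3}y)$ of the $\mathrm{Trd}$-form and then lift through $\Tr_{K_3/F}$, the paper instead base changes $B$ to $K_3$, splits $\mathbb{B}=B\otimes_F K_3$ into eigenspaces $\mathbb{B}_+\oplus\mathbb{B}_-$ for the two $K_3$-actions, and defines a refined quadratic form $\mathrm{Nrd}_B^\dagger:B\to K_3$ by $b\mapsto \mathrm{Nrd}_{\mathbb{B}}(b_+)$, setting $\xi=\mathrm{Nrd}_B^\dagger(1)$. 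Your route is more elementary; the paper's buys uniformity in the characteristic, since the refined norm is a genuine quadratic refinement rather than a bilinear one.

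That is exactly where your proposal has a gap. In characteristic $2$ the sesquilinearity of $\beta$ constrains only the polar form of $Q(x)=\mathrm{Nrd}_B(\alpha(x))$, so once you know $\beta$ you still need the diagonal values of $Q$ on an $F$-basis of the four-dimensional space $K$, not merely on $\{1,\omega\}$. Your sketch ``solve for $\xi$ from $Q(1)$ and $Q(\omega)$, using the sesquilinearity to reduce all cross terms'' cannot work as stated: for $a\in K_3$ one has $Q(a+b\omega)=Q(a)+Q(b\omega)+\beta(a,b\omega)$, and the terms $Q(a)$, $Q(b\omega)$ with $a,b\in K_3\smallsetminus F$ are not reducible to $Q(1),Q(\omega)$ via $\beta$ alone. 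What is missing is the multiplicative equivariance
\[
Q(z_1x)=\Nm_{K_1/F}(z_1)\,Q(x),\qquad Q(z_2x)=\Nm_{K_2/F}(z_2)\,Q(x)
\]
for $z_i\in K_i$, coming from $\alpha(z_1x)=\alpha_1(z_1)\alpha(x)$ and $\alpha(xz_2)=\alpha(x)\alpha_2(z_2)$ together with multiplicativity of $\mathrm{Nrd}_B$. With this extra input (and a choice of Artin--Schreier generators of $K_1,K_2$) you can indeed pin down $Q$ on an $F$-basis and complete the argument, but that is substantially more than your sketch indicates; the paper's $\mathrm{Nrd}_B^\dagger$ construction sidesteps the case split entirely.

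One small remark: the implication (\ref{nrdquad}) $\Rightarrow$ (\ref{trdquad}) by polarization holds in every characteristic, since $\mathrm{Nrd}_B(a+b)-\mathrm{Nrd}_B(a)-\mathrm{Nrd}_B(b)=\mathrm{Trd}_B(ab^\iota)$ and $(x+y)(x+y)^{\tau_3}-xx^{\tau_3}-yy^{\tau_3}=\Tr_{K/K_3}(xy^{\tau_3})$ need no division by $2$. Your uniqueness and regularity arguments are fine in all characteristics.
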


\begin{proof}
Endow $B$ with the structure of a $K$-module  via the action  
\[
(x_1 \otimes x_2) \action b = \alpha_1(x_1) \cdot b \cdot \alpha_2(x_2),
\]
 so that the $F$-linear map $\alpha : K \to B$ defined above is  $\alpha(x) = x\action 1$.

The action of $K$ on $B$ induces an action of $\mathbb{K}=K\otimes_F K_3$ on  $\mathbb{B}=B\otimes_F K_3$, and the orthogonal idempotents in  $\mathbb{K}\iso K\times K$ determines a splitting $\mathbb{B} = \mathbb{B}_+ \oplus \mathbb{B}_-$. More precisely, 
\begin{align*}
\mathbb{B}_+ & = \{ b \in \mathbb{B} : \forall x\in K_3,  x\action   b = b\cdot x  \} \\
\mathbb{B}_- & = \{ b \in \mathbb{B} : \forall x\in K_3, x\action  b =   b\cdot x^{\sigma_3}  \} .
\end{align*}
The projection map $\mathbb{B} \to \mathbb{B}_\pm$ is denoted $b\mapsto b_\pm$.   Each summand  is $K$-stable,  the projections are $K$-linear, and the decomposition is orthogonal with respect to the reduced norm $\mathrm{Nrd}_\mathbb{B} : \mathbb{B} \to K_3$.

By restricting the action of $K$, we view $B$ as a free $K_3$-module of rank $2$, and define a  quadratic form 
$
\mathrm{Nrd}_B^\dagger : B \to K_3
$
 by $\mathrm{Nrd}_B^\dagger (b) = \mathrm{Nrd}_\mathbb{B}( b_+)$.  We leave it as an exercise to the reader to verify the relations
\begin{align*}
\mathrm{Nrd}_B^\dagger( x\action b) & = x x^{\tau_3} \mathrm{Nrd}_B^\dagger(b)  \\
 \mathrm{Nrd}_B(b)&= \Tr_{K_3/F} ( \mathrm{Nrd}_B^\dagger(b) )  .
\end{align*}
for all $x\in K$ and $b\in B$.

Setting $\xi = \mathrm{Nrd}_B^\dagger (1)$, the first of these relations implies 
\[
 \xi xx^{\tau_3}    =  x x^{\tau_3} \mathrm{Nrd}_B^\dagger (1)   =  \mathrm{Nrd}_B^\dagger (x\action 1)    = \mathrm{Nrd}_B^\dagger ( \alpha(x)) ,
\]
and hence the second implies  
\[
\mathrm{Nrd}_B(\alpha(x) )     = \Tr_{K_3/F} ( \xi xx^{\tau_3} ).
\]
This proves the equality of quadratic forms (\ref{nrdquad}), which then implies the equality (\ref{trdquad}) of corresponding bilinear forms.
Taking $x=1$ in  (\ref{nrdquad})  shows that  $ \Tr_{K_3/F} ( \xi )=1$.

  If $\xi'\in K_3$ also satisfies (\ref{nrdquad}), then it also satisfies (\ref{trdquad}), and hence $\zeta=\xi-\xi'$ satisfies
$\Tr_{K /F}  (   \zeta xy ) =0$ for all $x,y\in K$.  Thus $\zeta=0$ and  $\xi=\xi'$.

Finally, it is easy to see  from Lemma \ref{lem:regular lemma} that each of the following statements is equivalent to the next.
\begin{itemize}
\item
The quaternion embedding $(B, \alpha_1,\alpha_2)$ is regular.
\item
The $F$-linear map $\alpha : K \to B$ is an isomorphism.
\item
The radical of the bilinear form (\ref{trdquad}) is trivial.
\item
The element $\xi\in K_3$ is a unit.
\end{itemize}
This completes the proof of Proposition \ref{prop:xi construct}.
\end{proof}

Proposition \ref{prop:xi construct} attaches to each $(B,\alpha_1,\alpha_2) \in \mathcal{Q}(K_1,K_2)$ an element $\xi\in K_3$.  
This defines the \emph{invariant}
\begin{equation}\label{basic invariant}
\inv : \mathcal{Q} (K_1,K_2) \to \{ \xi \in K_3 :  \Tr_{K_3/F}( \xi) =1 \},
\end{equation}
which satisfies
\[
 \mathcal{Q}_\reg(K_1,K_2) = \{ (B,\alpha_1,\alpha_2) \in \mathcal{Q}(K_1,K_2) : \inv(B,\alpha_1,\alpha_2) \in K_3^\times \}.
\]

\begin{theorem}\label{thm:the invariant}
The invariant (\ref{basic invariant}) restricts to a bijection
\begin{equation}\label{regular invariant}
\inv : \mathcal{Q}_\reg(K_1,K_2)  \iso \{ \xi \in K_3^\times :  \Tr_{K_3/F}( \xi) =1 \}.
\end{equation}
\end{theorem}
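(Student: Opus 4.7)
The plan is to show that a regular quaternion embedding is determined up to isomorphism by an explicit formula for the multiplication on $K$ via $\alpha$, and that this formula can be read off from $\xi$ alone; then to verify that every admissible $\xi$ arises from this construction, giving both directions of the bijection at once.

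First, for injectivity, I would start by observing that $\alpha:K\to B$ is an $F$-linear isomorphism in the regular case, so every $b\in B$ equals $\alpha(v)$ for a unique $v\in K$. Specializing (\ref{trdquad}) to $y=1$ gives $\mathrm{Trd}_B(\alpha(v)) = \Tr_{K/F}(\xi v)$, and the universal identity $b^\iota = \mathrm{Trd}_B(b)\cdot 1 - b$ valid in any quaternion algebra then shows that the main involution on $B$ is transported to the explicit $F$-linear map $v\mapsto v^\star := \Tr_{K/F}(\xi v)\cdot 1 - v$ on $K$, which manifestly depends only on $\xi$. Using $\alpha_i(x)^\iota = \alpha_i(x^{\sigma_i})$ and the anti-multiplicativity of $\iota$, I would then compute
\[
\alpha_2(x_2)\alpha_1(y_1) \;=\; \bigl(\alpha_1(y_1^{\sigma_1})\alpha_2(x_2^{\sigma_2})\bigr)^{\iota} \;=\; \alpha\bigl((y_1^{\sigma_1}\otimes x_2^{\sigma_2})^{\star}\bigr).
\]
Combined with the tautological fact that $\alpha$ intertwines left (resp.~right) multiplication by $\alpha_1(K_1)$ (resp.~$\alpha_2(K_2)$) on $B$ with multiplication by $K_1\otimes 1$ (resp.~$1\otimes K_2$) on $K$, this yields a closed-form expression for $\alpha(x)\alpha(y)$ in terms of $\xi$. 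Consequently, if $(B',\alpha_1',\alpha_2')$ is another regular embedding with the same invariant, the $F$-linear bijection $\alpha'\circ\alpha^{-1}:B\to B'$ is forced to be multiplicative, hence an isomorphism of quaternion embeddings.

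For surjectivity, given $\xi\in K_3^\times$ with $\Tr_{K_3/F}(\xi)=1$, I would take $B:=K$ as an $F$-vector space with unit $1$ and with multiplication defined by the formula from the previous step, and take $\alpha_1,\alpha_2$ to be the natural inclusions. One then verifies that the multiplication is associative, that $\alpha_1,\alpha_2$ are $F$-algebra embeddings, that $v\mapsto v^\star$ is an $F$-linear anti-involution, and that $b\cdot b^\star = \Tr_{K_3/F}(\xi b b^{\tau_3})\cdot 1 \in F$ for all $b\in B$. Nondegeneracy of the resulting norm form, a consequence of $\xi\in K_3^\times$ together with the nondegeneracy of the trace pairing on $K_3/F$, combined with its multiplicativity, identifies $B$ as a four-dimensional composition algebra, hence a quaternion algebra; by construction its invariant equals $\xi$.

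The hard part will be the associativity check in the surjectivity step, which requires careful bookkeeping with the Klein four-group $\{1,\tau_1,\tau_2,\tau_3\}$ acting on $K$ and with the star operation, using crucially the identity $(v^\star)^\star=v$ (a consequence of $\Tr_{K/F}(\xi)=2\Tr_{K_3/F}(\xi)$, which holds in every characteristic). Everything else in the argument reduces to Proposition~\ref{prop:xi construct} and routine manipulations with the reduced trace and norm on quaternion algebras.
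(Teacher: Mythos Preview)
Your proposal is correct in outline, but it takes a genuinely different route from the paper.

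The paper proves the theorem by Galois descent: it base-changes to a Galois extension $\mathbf{F}/F$ over which both $K_1$ and $K_2$ split, observes that a regular quaternion embedding has trivial automorphism group (so descent applies), and then reduces to verifying the bijection in the fully split case. There the set $\bm{\mathcal{Q}}$ is identified with the double-coset space $A^\times\backslash\GL_2(\mathbf{F})/A^\times$ (as in Jacquet), the invariant is computed explicitly on representatives, and bijectivity is read off by inspection.

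Your approach is instead direct and constructive: you transport the ring structure of $B$ to $K$ via the linear isomorphism $\alpha$, show that the transported multiplication is determined by $\xi$ alone (giving injectivity), and then for surjectivity you \emph{define} a multiplication on $K$ by the same formula and verify that the resulting algebra is a quaternion algebra with the correct invariant. This avoids Galois descent and the double-coset combinatorics entirely; the price is that you must check associativity and the quaternion-algebra axioms by hand, which you rightly identify as the main burden. Your appeal to the classification of composition algebras for the last step is legitimate (a four-dimensional unital composition algebra with nondegenerate norm is a quaternion algebra, in all characteristics), though you could also argue directly that nondegeneracy of $N$ together with the anti-involution forces simplicity and centrality.

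In short: the paper trades your associativity computation for descent plus an explicit split-case analysis; your argument trades that machinery for a hands-on construction that yields an explicit model of the quaternion algebra attached to $\xi$. Either is a complete proof.
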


\begin{proof}
The strategy is to reduce to the case where $K_1$ and $K_2$ are split.

Abbreviate (\ref{regular subset})  to $\mathcal{Q}_\reg \subset \mathcal{Q}$.
Let $\mathbf{F}/F$ be a Galois extension large enough that both $\mathbf{F}$-algebras $\mathbf{K}_1=K_1\otimes_F\mathbf{F}$ and  $\mathbf{K}_2=K_2\otimes_F\mathbf{F}$ are isomorphic to $\mathbf{F}\times\mathbf{F}$.  
Applying $\otimes_F \mathbf{F}$ to the diagram (\ref{biquad diagram}) gives a new diagram of $\mathbf{F}$-algebras 
\begin{equation}\label{biquad extension}
\xymatrix{
&  { \mathbf{K}}  \\
{   \mathbf{K}_1 }  \ar@{-}[ur]^{  \tau_1}  &     {   \mathbf{K}_2   }  \ar@{-}[u]_{\tau_2} &  {  \mathbf{K}_3  } \ar@{-}[ul]_{\tau_3 }  \\
&  {   \mathbf{F}  , }  \ar@{-}[ul]^{\sigma_1}  \ar@{-}[u]_{\sigma_2} \ar@{-}[ur]_{\sigma_3}
}
\end{equation}
and we consider the set $\bm{\mathcal{Q}}$  of isomorphism classes of quaternion embeddings of $(\mathbf{K}_1,\mathbf{K}_2)$.  That is to say, isomorphism classes of triples $(\mathbf{B} , \alpha_1 ,\alpha_2)$ where $\mathbf{B}$ is a  quaternion algebra over $\mathbf{F}$,  and 
$\alpha_1 : \mathbf{K}_1 \to \mathbf{B}$ and  $\alpha_2 : \mathbf{K}_2 \to \mathbf{B}$ are $\mathbf{F}$-algebra embeddings.  Of course such embeddings can only exist if $\mathbf{B}\iso M_2(\mathbf{F})$.  
Again let  $\bm{\mathcal{Q}}_\reg \subset \bm{\mathcal{Q}}$ denote the subset of regular triples.

The Galois group $\Gal(\mathbf{F}/F)$ acts on the set $\bm{\mathcal{Q}}_\reg$ and on the rings  (\ref{biquad extension}) in a natural way, and the construction of (\ref{regular invariant})  defines a $\Gal(\mathbf{F}/F)$-invariant function 
\begin{equation}\label{separable invariant}
\inv : \bm{\mathcal{Q}}_\reg   \to \{ \xi \in \mathbf{K}_3^\times : \Tr_{ \mathbf{K}_3 / \mathbf{F}}(\xi) =1\} .
\end{equation}
Extension of scalars from $F$ to $\mathbf{F}$ defines the vertical arrows in the commutative diagram
\begin{equation}\label{invariant diagram}
\xymatrix{
{  \mathcal{Q}_\reg    } \ar[rrr]^{\inv} \ar[d] & & &  {     \{ \xi \in  K_3^\times : \Tr_{ K_3 / F}(\xi) =1\}    }  \ar[d] \\
{  \bm{ \mathcal{Q} }_\reg ^{\Gal(\mathbf{F}/F) }   }  \ar[rrr]^{\inv} & & & {    \{ \xi \in \mathbf{K}_3^\times : \Tr_{ \mathbf{K}_3 / \mathbf{F}}(\xi) =1\}^{\Gal(\mathbf{F}/F) } .  }      
}
\end{equation}

\begin{lemma}\label{lem:inv galois descent}
The function
$
\mathcal{Q}  _\reg  \to \bm{\mathcal{Q}}_\reg^{\Gal(\mathbf{F}/F) } 
$
is a bijection.
\end{lemma}

\begin{proof}
This is immediate from the theory of Galois descent.  The only thing to check is that a regular quaternion embedding $(\mathbf{B},\alpha_1,\alpha_2)\in \bm{\mathcal{Q}}_\reg$   has no nontrivial automorphisms.  Indeed, any $\mathbf{F}$-algebra automorphism of $\mathbf{B}$ is given by conjugation by some $b\in \mathbf{B}^\times$.  If $b$ defines an automorphism of the triple $(\mathbf{B},\alpha_1,\alpha_2)$, then $b$ centralizes both $\alpha_1(\mathbf{K}_1)$ and $\alpha_2(\mathbf{K}_2)$.  These subalgebras are equal to their own centralizers, and so  $b$ lies in the intersection  $\alpha_1( \mathbf{K}_1)^\times \cap \alpha_2( \mathbf{K}_2)^\times$.  The regularity of $(\mathbf{B},\alpha_1,\alpha_2)$  implies that this intersection is equal to $\mathbf{F}^\times$, and so conjugation by $b$ is trivial.
\end{proof}

\begin{lemma}\label{lem:inv closure}
The function (\ref{separable invariant}) is a bijection.
\end{lemma}

\begin{proof}
Fix isomorphisms $\mathbf{K}_i\iso \mathbf{F}\times \mathbf{F}$ for $i\in \{1,2,3\}$,  and an isomorphism 
$
\mathbf{K} \iso \mathbf{F}\times \mathbf{F}\times\mathbf{F}\times\mathbf{F},
$
in such a way that 
\begin{align*}
(x_1,x_2,x_3,x_4)^{\tau_1}  & = ( x_3, x_4 , x_1, x_2     ) \\
(x_1,x_2,x_3,x_4)^{\tau_2}  & = ( x_4, x_3, x_2, x_1     ) \\
(x_1,x_2,x_3,x_4)^{\tau_3}  & = ( x_2, x_1, x_4, x_3     ),
\end{align*}
and the inclusions of $\mathbf{K}_1 \iso \mathbf{K}_2\iso \mathbf{K}_3$ into $\mathbf{K}$ are identified (respectively) with
\[
(u,v) \mapsto (u,v,u,v),\quad (u,v)\mapsto (u,v,v,u), \quad (u,v)\mapsto (u,u,v,v).
\]

Define embeddings   $\alpha_1:\mathbf{K}_1 \to M_2(\mathbf{F}) $ and $\alpha_2: \mathbf{K}_2 \to M_2(\mathbf{F}) $ by 
\[
(u,v) \mapsto \left(\begin{matrix} u & \\  & v \end{matrix}\right).
\]
If we let $A\subset M_2(\mathbf{F})$ denote the subalgebra of diagonal matrices,  there is an induced bijection
\begin{equation}\label{split embedding param}
A^\times \backslash \GL_2(\mathbf{F}) / A^\times  \iso  \bm{\mathcal{Q}}
\end{equation}
sending 
$
\gamma= \left(\begin{smallmatrix}  a & b \\ c& d \end{smallmatrix}\right)
$
to the quaternion embedding  $( M_2(\mathbf{F}) ,   \alpha_1  , \gamma \alpha_2\gamma ^{-1} )$.
In particular, $\gamma$ determines an $\mathbf{F}$-linear map 
\[
 \mathbf{F} \times \mathbf{F} \times \mathbf{F} \times \mathbf{F}   = \mathbf{K} \map{\alpha} M_2(\mathbf{F})
\] 
and an element $\inv(\gamma)  \in  \mathbf{K}_3$  of trace $1$.  Direct calculation shows that 
\[
\alpha(x_1, x_2,x_3,x_4)  
 =   \frac{1}{ad-bc}  \left(\begin{matrix}  adx_1 -bcx_3 &  ab (x_3 - x_1) \\   cd(x_4-x_2)    &    ad x_2-bc x_4 \end{matrix}\right) 
\]
and 
\[
\inv(\gamma) = \left( \frac{ad}{ad-bc}  , \frac{-bc}{ad-bc} \right) \in \mathbf{F}\times\mathbf{F} = \mathbf{K}_3.
\]

In \cite[\S 1.3]{Jacquet}  one can find a complete set of representatives for the double coset space (\ref{split embedding param}).
There are six non-regular elements, represented by 
\[
\left( \begin{smallmatrix} 1 & \\ & 1 \end{smallmatrix}\right),\quad 
\left( \begin{smallmatrix}  & 1 \\ 1&  \end{smallmatrix}\right),\quad 
\left( \begin{smallmatrix} 1 &  1\\ & 1 \end{smallmatrix}\right),\quad 
\left( \begin{smallmatrix} 1 & \\ 1& 1 \end{smallmatrix}\right),\quad 
\left( \begin{smallmatrix}  &  1  \\ 1 & 1 \end{smallmatrix}\right),\quad 
\left( \begin{smallmatrix} 1 & 1\\ 1 &  \end{smallmatrix}\right). 
\]
(Compare with the proof of Lemma \ref{lem:regular lemma}. For the first two double cosets the corresponding map $\alpha:\mathbf{K}\to M_2(\mathbf{F})$ has image of dimension two.   For the remaining four double cosets the image has dimension three.)  
A complete set of  representatives for the regular  elements is given by
\[
\left\{ \left(\begin{matrix}  1 & x \\ 1 & 1  \end{matrix}\right) : x\in \mathbf{F}\smallsetminus \{ 0,1\} \right\}
\iso \bm{\mathcal{Q}}_\reg .
\]
For this set of coset representatives,  the function 
\[
\inv: \bm{\mathcal{Q}}_\reg \to \{ ( u , v)  \in \mathbf{F}^\times \times\mathbf{F}^\times :  u+v=1\}
\]
of (\ref{separable invariant})  takes the explicit form
\[
\left(\begin{matrix}  1 & x \\ 1 & 1  \end{matrix}\right) \mapsto  \left( \frac{1}{1-x}  , \frac{-x}{1-x} \right) ,
\]
and the reader will have no difficulty in checking that this is a bijection.
\end{proof}

The bijectivity of   (\ref{regular invariant})   is clear from the  two lemmas above.  
Indeed, the left vertical arrow in (\ref{invariant diagram}) is  bijective by Lemma \ref{lem:inv galois descent}, the right vertical arrow is obviously bijective, and the bottom horizontal arrow is bijective by Lemma \ref{lem:inv closure}.  
Therefore the top horizontal arrow is a bijection.  This completes the proof of Theorem \ref{thm:the invariant}.
\end{proof}

\begin{remark}
Although we will not explicitly need to do so, one can understand Theorem \ref{thm:the invariant} as a parametrization of certain double coset spaces, in the spirit of \cite{Jacquet}.  Call a quaternion algebra $B$ \emph{relevant} if it admits embeddings of both $K_1$ and $K_2$.  For each relevant $B$, fix such embeddings once and for all. These  choices determine bijections
\[
B^\times / K_i^\times \iso \{ \mbox{embeddings } K_i \to B\}
\]
 and 
\[
\bigsqcup_{ \mathrm{relevant\, }B}  K_1^\times  \backslash B^\times / K_2^\times \iso 
\bigsqcup_{ \mathrm{relevant\, }B}  B^\times \backslash \big(  B^\times / K_1^\times \times B^\times/K_2^\times \big)
\iso \mathcal{Q}(K_1,K_2).
\]
With these identifications, the invariant (\ref{basic invariant}) determines a function
\[
\bigsqcup_{ \mathrm{relevant\, }B}  K_1^\times  \backslash B^\times / K_2^\times \to    \{ \xi \in K_3 : \Tr_{K_3/F}(\xi)=1 \} .
\]
The content of Theorem \ref{thm:the invariant} is that this is very nearly a bijection.  In fact, if $K_1\not\iso K_2$ are both fields, then $K_3$ is a field, $\mathcal{Q}(K_1,K_2)=\mathcal{Q}_\reg(K_1,K_2)$, and  Theorem \ref{thm:the invariant}  asserts the bijectivity of this function.
\end{remark}


\subsection{Invariants of double cosets}
\label{ss:coset invariants}


Let $\mathrm{Iso}(K_2,K_1)$ be the set of isomorphisms of $F$-vector spaces $K_2\to K_1$.  
The group $K_1^\times$ acts on the left by postcomposition, and the group $K_2^\times$ acts on the right by precomposition.  
We use similar notation if $K_1$ and $K_2$ are replaced by other $F$-algebras.  
Our goal is to define a function
\begin{equation}\label{new invariant}
K_1^\times \backslash \mathrm{Iso}(K_2,K_1)  /K_2^\times   \map{\inv}    \{ \xi \in K_3 : \Tr_{K_3/F}(\xi)=1 \} ,
\end{equation}
intimately related to the invariant (\ref{basic invariant}).

There are canonical $F$-algebra isomorphisms 
 \[
 K_1\otimes_F K \iso  K\oplus K ,\quad    K_2\otimes_F K \iso K\oplus K ,
 \]
the first defined by $x\otimes y \mapsto (xy,x^{\sigma_1}y)$,  and the second defined similarly.  
 Any $\phi \in \Hom(K_2,K_1)$ therefore induces a $K$-linear map 
 \[
 K\oplus K \iso K_2\otimes_F K \map{\phi \otimes \mathrm{id} } K_1\otimes_F K \iso  K\oplus K,
 \]
represented by a matrix $\left(\begin{smallmatrix} a & b \\ c & d \end{smallmatrix}\right)\in M_2(K)$ whose entries are related by
\begin{equation}\label{abcd}
b=a^{\tau_1},\quad c=a^{\tau_2},\quad d=a^{\tau_3}.
\end{equation}

Define a one-dimensional $F$-vector space  \[ \Delta= \Hom_F(\det\nolimits_F(K_2),\det\nolimits_F(K_1)),\]
and define 
$
\newdet(\phi) \in \Delta \otimes_F K 
$
by
\[
\xymatrix{
{  \det_F(K_2) \otimes_F K} \ar@{=}[r]  \ar[d]_{  \newdet(\phi) } & {  \det_K(K_2\otimes_F K) }   \ar@{=}[r]   &  {   \det_K(K\oplus K) } \ar[d]^{ ad } \\
{  \det_F(K_1) \otimes_F K}  \ar@{=}[r] & {  \det_K(K_1\otimes_F K) }   \ar@{=}[r] &  {   \det_K(K\oplus K) } 
}
\] 
where   $ad$ is the map  $e_1\wedge e_2 \mapsto (a e_1) \wedge (d e_2)$.

  Using the  relations (\ref{abcd}), one can check that  $\newdet(\phi)$ actually lies in  
$\Delta\otimes_F K_3$, and its trace to $\Delta$   is the usual determinant  $\det(\phi) \in \Delta$.
In other words, we have  a commutative diagram
\[
\xymatrix{
{  \Hom_F(K_2,K_1)  } \ar[drr]_{ \det} \ar[rr]^{ \newdet}  & &   {   \Delta\otimes_F K_3   } \ar[d]^{\Tr_{K_3/F} }\\
&  &  {  \Delta,} 
}
\]
and the desired function (\ref{new invariant}) is  defined by
\[
\inv(\phi) =  \newdet(\phi)  / \det(\phi)  \in \{ \xi \in K_3 : \Tr_{K_3/F}(\xi)=1\}.
\]

\begin{proposition}
If we define
\[
\mathrm{Iso}_\reg(K_2,K_1)   = \{ \phi\in \mathrm{Iso}(K_2,K_1) : \inv(\phi)\in K_3^\times \},
\]
the function (\ref{new invariant}) restricts to an injection
\[
K_1^\times \backslash \mathrm{Iso}_\reg(K_2,K_1)  /K_2^\times       \map{\inv}    \{ \xi \in K_3^\times : \Tr_{K_3/F}(\xi)=1 \} .
\]
\end{proposition}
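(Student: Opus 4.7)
My plan has two parts: first showing that $\inv$ descends from $\mathrm{Iso}(K_2, K_1)$ to double cosets, and second showing injectivity on the regular part. Well-definedness on double cosets is a direct calculation. For $\lambda \in K_1^\times$, the $K$-linear extension of $L_\lambda : K_1 \to K_1$ acts diagonally on $K_1 \otimes_F K \iso K \oplus K$ with entries $(\lambda, \lambda^{\sigma_1})$, so a check using the relations \eqref{abcd} shows that the matrix of $L_\lambda \circ \phi$ is $\bigl(\begin{smallmatrix} \lambda a & \lambda b \\ \lambda^{\sigma_1} c & \lambda^{\sigma_1} d \end{smallmatrix}\bigr)$. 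Hence $\newdet(L_\lambda \circ \phi) = \lambda \lambda^{\sigma_1} \cdot ad = N_{K_1/F}(\lambda) \cdot \newdet(\phi)$ and $\det(L_\lambda \circ \phi) = N_{K_1/F}(\lambda) \cdot \det(\phi)$, so $\inv$ is unchanged. A symmetric computation, with $L_\mu$ acting by $(\mu, \mu^{\sigma_2})$, handles right composition by $\mu \in K_2^\times$.

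For injectivity, the plan is to connect our invariant to the quaternion embedding invariant of Theorem \ref{thm:the invariant} via the map
\[
\Phi : K_1^\times \backslash \mathrm{Iso}(K_2, K_1) / K_2^\times \to \mathcal{Q}(K_1, K_2),
\]
sending $\phi$ to the triple $(B, \alpha_1, \alpha_2)$ with $B = \End_F(K_1) \iso M_2(F)$, $\alpha_1 : K_1 \hookrightarrow B$ left multiplication, and $\alpha_2(x) = \phi \circ L_x \circ \phi^{-1}$. Replacing $\phi$ by $L_\lambda \circ \phi$ conjugates $\alpha_2$ by $\alpha_1(\lambda) \in B^\times$ and so yields an isomorphic triple, while replacing $\phi$ by $\phi \circ L_\mu$ leaves $\alpha_2$ unchanged by commutativity of $K_2$; thus $\Phi$ descends to double cosets. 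Granting the compatibility $\inv(\Phi(\phi)) = \inv(\phi)$ in $K_3$, the map $\Phi$ carries $\mathrm{Iso}_\reg$ into $\mathcal{Q}_\reg$ by Proposition \ref{prop:xi construct}. If $\phi, \phi' \in \mathrm{Iso}_\reg(K_2, K_1)$ then share the same invariant, the triples $\Phi(\phi)$ and $\Phi(\phi')$ are regular with equal invariants and so isomorphic by Theorem \ref{thm:the invariant}. Any such isomorphism is inner by some $\gamma \in \GL_F(K_1)$; the condition $\gamma \alpha_1 \gamma^{-1} = \alpha_1$ forces $\gamma$ to centralize $\alpha_1(K_1)$ in $B$, so $\gamma = L_\lambda$ for some $\lambda \in K_1^\times$ (since $L_{K_1}$ is self-centralizing in $\End_F(K_1)$). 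The condition $\gamma \alpha_2 \gamma^{-1} = \alpha_2'$ then forces $\phi'^{-1} \circ L_\lambda \circ \phi$ to centralize $L_{K_2}$ in $\End_F(K_2)$, so equals $L_\mu$ for some $\mu \in K_2^\times$. Thus $\phi' = L_\lambda \circ \phi \circ L_\mu^{-1}$ lies in the same double coset as $\phi$.

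The main obstacle is the compatibility $\inv(\Phi(\phi)) = \inv(\phi)$. Both invariants are functorial under base change, so by the Galois descent strategy of Theorem \ref{thm:the invariant} it suffices to verify the identity when $K_1$ and $K_2$ are both split. In this setting $\Phi$ matches precisely the parametrization \eqref{split embedding param}: the quaternion algebra is $M_2(\mathbf{F})$, $\alpha_1$ is the diagonal torus, and $\alpha_2 = \gamma \cdot \alpha_1 \cdot \gamma^{-1}$ where $\gamma \in \GL_2(\mathbf{F})$ is the matrix of $\phi$ in the standard bases. The quaternion invariant is then the explicit expression $(ad, -bc)/(ad - bc) \in \mathbf{F} \times \mathbf{F} = \mathbf{K}_3$ computed in the proof of Lemma \ref{lem:inv closure}. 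On the other hand, a direct unwinding of the idempotent decomposition $\mathbf{K}_i \otimes_\mathbf{F} \mathbf{K} \iso \mathbf{K} \oplus \mathbf{K}$ gives $\phi \otimes 1$ the matrix $P \gamma Q^{-1}$, where $P, Q$ are the explicit change-of-basis matrices determined by the idempotents of $\mathbf{K}_1$ and $\mathbf{K}_2$. Extracting $\newdet$ and $\det$ from this matrix and cancelling yields the same element of $\mathbf{K}_3$. This matrix bookkeeping — careful tracking of the identifications, idempotents, and the embedding $\mathbf{K}_3 \hookrightarrow \mathbf{K}$ — is the technical heart of the argument, after which Galois descent completes the proof.
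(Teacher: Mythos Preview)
Your approach is essentially the same as the paper's: identify $K_1^\times \backslash \mathrm{Iso}(K_2,K_1) / K_2^\times$ with the subset of $\mathcal{Q}(K_1,K_2)$ consisting of quaternion embeddings with underlying algebra $M_2(F)$, verify that the two notions of invariant agree under this identification, and then invoke the bijectivity of \eqref{regular invariant} from Theorem~\ref{thm:the invariant}. The paper compresses your well-definedness check, the injectivity of $\Phi$, and the compatibility of invariants into a single ``tedious but elementary calculation,'' while you spell out each step and reduce the compatibility to the split case via Galois descent; both are valid and structurally identical.
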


\begin{proof}
Any pair 
\[
( a_1,a_2) \in \mathrm{Iso}(K_1, F\oplus F ) \times \mathrm{Iso}(K_2, F\oplus F)
\] 
determines  an $F$-linear isomorphism $a^{-1}_1\circ a_2 : K_2\to K_1$,  as well as $F$-algebra embeddings  
$\alpha_1 : K_1 \to M_2(F)$ and  $\alpha_2:K_2\to M_2(F)$.
  These constructions induce  canonical identifications
\begin{eqnarray}\lefteqn{
K_1^\times \backslash \mathrm{Iso}(K_2,K_1)  / K_2^\times  }  \label{coset to quaternion}  \\
 & \iso &
  \GL_2(F) \backslash  \big(  \mathrm{Iso}(K_1, F\oplus F) /K_1^\times \times \mathrm{Iso}(K_2, F\oplus F)/K_2^\times \big)  \nonumber \\
 & \iso &
 \GL_2(F) \backslash    \left\{  \begin{array}{c}  \mbox{pairs of embeddings }  \\     \alpha_1: K_1 \to M_2(F)  \\   \alpha_2 :K_2 \to M_2(F)  \end{array} \right\} , \nonumber
\end{eqnarray}
which realize 
\[
K_1^\times \backslash \mathrm{Iso} (K_2,K_1)  /K_2^\times   \subset \mathcal{Q}(K_1,K_2)
\]
as the set of all quaternion embeddings whose underlying quaternion algebra is  $M_2(F)$.  A tedious but elementary calculation shows that 
the invariant (\ref{new invariant}) is simply the restriction of (\ref{basic invariant})  to this subset,  and so the claim follows from the bijectivity of (\ref{regular invariant}).
\end{proof}


\subsection{The dual picture}
\label{ss:split coset invariant}


In this subsection and the next, we let 
\[
K_0=F\oplus F.
\]
The initial input to the constructions of \S \ref{ss:invariants} and \S \ref{ss:coset invariants} was a pair of quadratic algebras $K_1$ and $K_2$, 
from which we produced a diagram  (\ref{biquad diagram}).

We may repeat these constructions, but now take the initial input to be $K_0$ and $K_3$.  In this case $K_0\otimes_F K_3\iso K_3\times K_3$, and 
 the  diagram (\ref{biquad diagram}) is replaced by
\[
\xymatrix{
&  { K_3\times  K_3 }  \\
{   K_0 }  \ar@{-}[ur]  &     {   K_3   }  \ar@{-}[u] &  {  K_3  } \ar@{-}[ul]  \\
&  {   F  . }  \ar@{-}[ul]  \ar@{-}[u] \ar@{-}[ur]
}
\]
Here the upper left inclusion is just $K_0=F\times F\subset K_3\times K_3$, the middle inclusion $K_3\to K_3\times K_3$ is the diagonal embedding, and the inclusion $K_3\to K_3\times K_3$ on the right is  the twisted diagonal $y_3\mapsto (y_3,y_3^{\sigma_3})$.

Repeating the construction of (\ref{new invariant}) in this new setting yields a function
\begin{equation}\label{new split invariant}
K_0^\times \backslash \mathrm{Iso}(K_3,K_0)  /K_3^\times   \map{\inv}    \{ \xi \in K_3 : \Tr_{K_3/F}(\xi)=1 \} .
\end{equation}
In fact, the construction  of this function simplifies slightly because  $K_0$ is split.   For comparison with later constructions (see especially the proof of Lemma \ref{lem:split fiber bijection}) we now make this completely explicit.

There are canonical isomorphisms of $F$-algebras
\[
K_0\otimes_F K_3\iso K_3\oplus K_3,\quad  K_3\otimes_F K_3 \iso K_3\oplus K_3.
\]
The first  is the $K_3$-linear extension of $K_0=F\oplus F \subset K_3\oplus K_3$, and the second is  
$x\otimes y\mapsto (xy, x^{\sigma_3}y)$.
Any $\phi \in \Hom_F(K_3,K_0)$ induces a $K_3$-linear map
\[
K_3\oplus K_3 \iso K_3\otimes_F K_3 \map{\phi \otimes \mathrm{id} } K_0 \otimes_F K_3 \iso K_3\oplus K_3
\]
represented by a matrix $\left(\begin{smallmatrix} a & b \\ c & d \end{smallmatrix}\right)\in M_2(K_3)$ satisfying
$b=a^{\sigma_3}$ and $c=d^{\sigma_3}$.

Define a one-dimensional $F$-vector space  \[ \Delta= \Hom_F(\det\nolimits_F(K_3),\det\nolimits_F(K_0)),\]
and define
$
\newdet(\phi) \in \Delta \otimes_F K_3 
$
by
\[
\xymatrix{
{  \det_F(K_3) \otimes_F K_3} \ar@{=}[r]  \ar[d]_{  \newdet(\phi) } & {  \det_{K_3}(K_3\otimes_F K_3) }   \ar@{=}[r]   &  {   \det_{K_3}(K_3\oplus K_3) } \ar[d]^{ad} \\
{  \det_F(K_0) \otimes_F K_3}  \ar@{=}[r] & {  \det_{K_3}(K_0\otimes_F K_3) }   \ar@{=}[r] &  {   \det_{K_3}(K_3\oplus K_3) .} 
}
\] 
Here $ad$ denotes the map $e_1\wedge e_2\mapsto (ae_1)\wedge (d e_2)$.  Exactly as before, we have  a commutative diagram
\[
\xymatrix{
{  \Hom_F(K_3,K_0)  } \ar[drr]_{ \det} \ar[rr]^{ \newdet}  & &   {   \Delta\otimes_F K_3   } \ar[d]^{\Tr_{K_3/F} }\\
&  &  {  \Delta,} 
}
\]
and  (\ref{new split invariant}) is  given  by $\inv(\phi) =  \newdet(\phi)  / \det(\phi)$.

\begin{proposition}\label{prop:split coset regular}
If we define
\[
\mathrm{Iso}_\reg(K_3,K_0)   = \{ \phi\in \mathrm{Iso}(K_3,K_0) : \inv(\phi)\in K_3^\times \},
\]
the function (\ref{new split invariant}) restricts to a bijection
\[
K_0^\times \backslash \mathrm{Iso}_\reg(K_3,K_0)  /K_3^\times       \map{\inv}    \{ \xi \in K_3^\times : \Tr_{K_3/F}(\xi)=1 \} .
\]
\end{proposition}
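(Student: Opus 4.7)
The strategy is to deduce the proposition from Theorem \ref{thm:the invariant} applied with the pair $(K_0,K_3)$ in place of $(K_1,K_2)$, in exact analogy with the way the injectivity of (\ref{new invariant}) was reduced to Theorem \ref{thm:the invariant} in the proof of the preceding proposition.

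The first step is to note that since $K_0=F\oplus F$ contains the nontrivial idempotent $(1,0)$, any $F$-algebra embedding of $K_0$ into a quaternion algebra $B$ forces $B\iso M_2(F)$. Hence $\mathcal{Q}(K_0,K_3)$ consists only of triples with underlying quaternion algebra $M_2(F)$. The second step is to reproduce the canonical identification (\ref{coset to quaternion}) with the pair $(K_0,K_3)$ substituted for $(K_1,K_2)$. Given any $F$-linear isomorphisms $a_0\in\mathrm{Iso}(K_0,F\oplus F)$ and $a_3\in\mathrm{Iso}(K_3,F\oplus F)$ (which always exist, since both are two-dimensional $F$-vector spaces), one obtains both an $F$-linear isomorphism $\phi=a_0^{-1}\circ a_3\in\mathrm{Iso}(K_3,K_0)$ and a pair of $F$-algebra embeddings $\alpha_0:K_0\to M_2(F)$ and $\alpha_3:K_3\to M_2(F)$ obtained by transporting the regular representations along $a_0$ and $a_3$. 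Quotienting appropriately produces a bijection
\[
K_0^\times \backslash \mathrm{Iso}(K_3,K_0) / K_3^\times \iso \mathcal{Q}(K_0,K_3).
\]

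The third step, which is the only one requiring actual work, is to verify that this bijection intertwines the invariant $\phi\mapsto \newdet(\phi)/\det(\phi)$ of \S\ref{ss:split coset invariant} with the quaternion embedding invariant of Proposition \ref{prop:xi construct}. This is the direct analogue of the ``tedious but elementary'' calculation alluded to in the proof of the preceding proposition: writing $\phi$ as a matrix $\bigl(\begin{smallmatrix} a & a^{\sigma_3} \\ d^{\sigma_3} & d \end{smallmatrix}\bigr)\in M_2(K_3)$ with the prescribed $\sigma_3$-symmetries, one computes $\newdet(\phi)/\det(\phi)$ in closed form and matches it, coordinate by coordinate, with the element $\xi=\mathrm{Nrd}_B^\dagger(1)\in K_3$ produced from the associated quaternion embedding via the recipe in the proof of Proposition \ref{prop:xi construct}. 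One must also identify the ``middle'' quadratic algebra of the biquadratic diagram attached to $(K_0,K_3)$ with $K_3$ itself; this identification is canonical via the twisted diagonal $y_3\mapsto(y_3,y_3^{\sigma_3})$ described at the start of \S\ref{ss:split coset invariant}, and under it the two trace maps to $F$ agree.

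Granting this compatibility of invariants, Theorem \ref{thm:the invariant} applied to the pair $(K_0,K_3)$ immediately implies the proposition: the regularity condition $\inv(\phi)\in K_3^\times$ on the coset side corresponds exactly to the regularity of the associated quaternion embedding, so the displayed bijection above restricts to a bijection on regular elements, which the theorem then identifies with $\{\xi\in K_3^\times:\Tr_{K_3/F}(\xi)=1\}$. The only potential obstacle is the invariant compatibility in the third step, but as in the preceding proposition this is a routine unwinding of the explicit formulas defining both invariants; the substantive content is already packaged in Theorem \ref{thm:the invariant}.
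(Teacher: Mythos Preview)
Your proposal is correct and follows essentially the same approach as the paper's proof: both establish the canonical bijection $K_0^\times \backslash \mathrm{Iso}(K_3,K_0)/K_3^\times \iso \mathcal{Q}(K_0,K_3)$ (using that $K_0$ split forces $B\iso M_2(F)$), verify that the invariant of \S\ref{ss:split coset invariant} matches the quaternion-embedding invariant of Proposition~\ref{prop:xi construct} via a ``tedious but elementary'' calculation, and then invoke Theorem~\ref{thm:the invariant} applied to the pair $(K_0,K_3)$.
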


\begin{proof}
Exactly as in \S \ref{ss:invariants}, denote by $\mathcal{Q}(K_0, K_3)$ the set of isomorphism classes of quaternion embeddings $(B,\alpha_0,\alpha_3)$.  The subset of regular pairs is again denoted by   $\mathcal{Q}_\reg(K_0, K_3) \subset \mathcal{Q}(K_0, K_3).$

 Exactly as in   (\ref{coset to quaternion}), there is a  canonical bijection
\[
K_0^\times \backslash \mathrm{Iso}(K_3,K_0)  / K_3^\times    \iso 
 \GL_2(F) \backslash    \left\{  \begin{array}{c}  \mbox{pairs of embeddings }  \\     \alpha_0: K_0 \to M_2(F)  \\   \alpha_3 :K_3 \to M_2(F)  \end{array} \right\} .
 \]
 As $K_0$ is split, any quaternion embedding $(B,\alpha_0,\alpha_3)$ must have $B\iso M_2(F)$.  Thus we obtain a canonical bijection
 \[
 K_0^\times \backslash \mathrm{Iso}(K_3,K_0)  / K_3^\times    \iso  \mathcal{Q}(K_0,K_3).
 \]
  Applying the construction of (\ref{basic invariant}) with the triple of algebras $(K_1,K_2,K_3)$ replaced  by $(K_0, K_3,K_3)$ yields a function
 \[
\mathcal{Q}(K_0,K_3) \map{\inv}  \{ \xi \in K_3^\times : \Tr_{K_3/F}(\xi)=1 \} ,
 \]
 which, as a tedious but elementary calculation shows,  agrees with (\ref{new split invariant}) under the above identification.
 Thus the claim follows from the bijectivity of (\ref{regular invariant}).
 \end{proof}


\section{The analytic calculation}
\label{s:analytic}


For the remainder of the paper we return to the situation of the introduction, so that $F=\kk(X)$ and all $F$-algebras appearing in (\ref{biquad diagram}) are Galois field extensions of $F$.


\subsection{Automorphic forms}
\label{ss:basic automorphic}


Denote by $\mathcal{A}(G_0)$ the space of automorphic forms \cite[\S 5]{Borel-Jacquet} on $G_0(\A)$, and by 
$
\mathcal{A}_\cusp(G_0) \subset \mathcal{A}(G_0)
$
the subspace of cuspidal automorphic forms.  
The subspace of unramified (that is, $U_0$-invariant) cuspforms is finite-dimensional, and admits a decomposition
\[
\mathcal{A}_\cusp(G_0)^{U_0} = \bigoplus_{\mathrm{unr.\, cusp.\, }\pi } \pi^{U_0}
\]
as a direct sum of lines, where the sum is over the  unramified cuspidal automorphic representations $\pi \subset \mathcal{A}_\cusp(G_0)$.

Denote by $\mathscr{H}$ the (commutative) Hecke algebra of all compactly supported functions $f: U_0 \backslash G_0(\A) /U_0 \to \Q$.
The $\mathscr{H}$-module of compactly supported unramified $\Q$-valued automorphic forms is denoted
\[
\mathscr{A} = C_c^\infty(  G_0(F) \backslash G_0(\A) / U_0 ,\Q ).
\]
We let $\mathscr{A}_\C = \mathscr{A} \otimes \C$ denote  the corresponding complex space, so that 
\begin{equation}\label{harder}
\mathcal{A}_\cusp(G_0)^{U_0} \subset \mathscr{A}_\C \subset \mathcal{A} (G_0)^{U_0}.
\end{equation}
Note that the first inclusion follows from Harder's theorem \cite[Proposition 5.2]{Borel-Jacquet} that every cuspidal automorphic form on $G_0(\A)$ is compactly supported.

According to  \cite[\S 4.1]{YZ}, the Satake transform induces a canonical $\Q$-algebra surjection
$
a_{\mathrm{Eis}} : \mathscr{H} \to \Q  [\Pic_X(\kk)]^{\iota_\Pic},
$ 
for a particular involution $\iota_\Pic$ of $ \Q [\Pic_X(\kk)]$.   The Eisenstein ideal  is defined by
\begin{equation}\label{eisenstein def}
\mathcal{I}^{\mathrm{Eis}} = \ker \big( a_\Eis  :  \mathscr{H} \to \Q  [\Pic_X(\kk)]^{\iota_\Pic}  \big).
\end{equation}

As in \cite[\S 7.3]{YZ}, define $\Q$-algebras
\begin{align*}
\mathscr{H}_\mathrm{aut} & = \mathrm{Image} \big( \mathscr{H}  \to \End_\Q (\mathscr{A}) \times \Q[\Pic_X(\kk)]^{\iota_\Pic} \big) \\
\mathscr{H}_\cusp &= \mathrm{Image}  \left(  \mathscr{H} \to \End_\C(  \mathcal{A}_\cusp(G_0)^{U_0} ) \right).
\end{align*}
It follows from  (\ref{harder}) that the quotient map   $\mathscr{H}\to \mathscr{H}_\cusp$  factors through  $\mathscr{H}_\mathrm{aut}$,
and the resulting map
\begin{equation}\label{automorphic decomp}
\mathscr{H}_\mathrm{aut} \to \mathscr{H}_\cusp \times \Q  [\Pic_X(\kk)]^{\iota_\Pic} 
\end{equation}
is an isomorphism by \cite[Lemma 7.16]{YZ}.

For each unramified cuspidal automorphic representation $\pi \subset \mathcal{A}_\cusp(G_0)$, denote by
\[
\lambda_\pi : \mathscr{H} \to \C
\] 
the character through which the Hecke algebra acts on the line $\pi^{U_0}$. 
 As in \cite[\S 7.5.1]{YZ}, the $\Q$-algebra $\mathscr{H}_\cusp$
is isomorphic to a finite product of number fields, and the product of all characters $\lambda_\pi$ induces an isomorphism
\[
 \mathscr{H}_\cusp \otimes \C  \iso \bigoplus_{    \mathrm{unr.\, cusp.\, }\pi   }  \C.
\] 

\begin{remark}\label{rem:auto galois}
There is an action of $\Aut(\C/\Q)$ on the finite set of unramified cuspidal automorphic representations $\pi \subset \mathcal{A}_\cusp(G_0)$, characterized by the relation
$
\lambda_{\pi^\sigma} = \sigma \circ \lambda_\pi.
$
\end{remark}

\begin{remark}\label{rem:real valued}
The Petersson inner product identifies the contragredient of $\pi \subset \mathcal{A}_\cusp(G_0)$ with the space $\overline{\pi}$ of complex conjugate functions.  As $\pi$ has trivial central character, multiplicity one implies that $\pi = \pi^\vee = \overline{\pi}$.  From this it is easy to see first that each character $\lambda_\pi : \pi \to \C$ is real-valued, and then that  $\mathscr{H}_\cusp$ is isomorphic to a product of totally real number fields.
\end{remark}

All of the above was for the group scheme $G_0\iso \PGL_2$ over $X$, but the same discussion holds word-for-word if  $G_0$ is replaced by  $G_1$, $G_2$, or $G_3$.

\begin{lemma}\label{lem:naive transfer}
For any $i\in \{1,2,3\}$ there is a canonical bijection
\[
G_0(F) \backslash G_0(\A) / U_0 \to G_i(F) \backslash G_i(\A) / U_i.
\]
It induces an isomorphism of $\C$-vector spaces
\[
\mathcal{A}(G_0)^{U_0} \iso \mathcal{A}(G_i)^{U_i}
\]
respecting the subspaces of cusp forms.
\end{lemma}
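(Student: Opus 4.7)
The plan is to establish both parts through the geometric interpretation of the double coset sets as moduli of bundles. For each $i\in\{0,1,2,3\}$, the standard adelic-to-geometric dictionary identifies $G_i(F)\backslash G_i(\A)/U_i$ with the set of isomorphism classes of $G_i$-torsors on $X$ (using only that $G_i$ is smooth affine over $X$ with $U_i=G_i(\mathbb{O})$). Thus it suffices to produce a canonical bijection between isomorphism classes of $G_0$-torsors and $G_i$-torsors on $X$.

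To do this, I would exploit the rank-$2$ vector bundle $V_i:=f_{i*}\co_{Y_i}$ (with $V_0=\co_X\oplus\co_X$). By construction $\tilde G_i=\underline{\Aut}_{\co_X}(V_i)$, and the tautological action of $\tilde G_i$ on $V_i$ yields an equivalence of groupoids between $\tilde G_i$-torsors on $X$ and rank-$2$ vector bundles on $X$, sending $P$ to $P\wedge^{\tilde G_i}V_i$, with inverse $E\mapsto \underline{\mathrm{Iso}}_{\co_X}(V_i,E)$. Quotienting by the central $\G_m$-action yields an equivalence between $G_i$-torsors on $X$ and $\mathbb{P}^1$-bundles on $X$ (equivalently, rank-$2$ projective bundles). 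Composing the equivalence for $i=0$ with the inverse of the equivalence for a general $i$ produces the desired canonical bijection on isomorphism classes.

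For the second claim, pullback along the bijection is manifestly a $\C$-linear isomorphism of function spaces on the double cosets, so only cuspidality needs checking. Cuspidality is the vanishing of constant terms along all $F$-rational proper parabolics. Under the bundle interpretation, $F$-rational Borel subgroups of $G_i$ correspond exactly to reductions of the associated $\mathbb{P}^1$-bundle to a section, i.e., to line subbundles of the underlying rank-$2$ bundle at the generic point; the constant-term integral along the unipotent radical becomes a combinatorial sum indexed by extensions of such a reduction to an integral model. Since this description is entirely intrinsic to the associated $\mathbb{P}^1$-bundle, it is preserved by the canonical bijection, and the main obstacle is precisely this last verification: one must confirm that the adelic measures on the unipotent radicals — not just the set of parabolic reductions — match under the identification. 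This reduces to checking compatibility at each place $x\in|X|$, where it follows from the fact that the equivalence $P\mapsto P\wedge^{\tilde G_i}V_i$ intertwines the stabilizer of a line subbundle with the upper-triangular subgroup of $\tilde G_i$ determined by the induced flag on $V_i$.
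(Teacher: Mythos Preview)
Your proposal is correct but takes a genuinely different route from the paper. The paper's proof is a direct elementary construction: fix any $F$-linear isomorphism $\rho\colon K_0\to K_i$, which induces an isomorphism of algebraic groups $\rho\colon G_{0/F}\to G_{i/F}$ by conjugation; since $\rho(\mathbb{O}_0)=h\,\mathbb{O}_i$ for some $h\in\tilde G_i(\A)$, the map $g\mapsto\rho(g)h$ carries $G_0(F)\backslash G_0(\A)/U_0$ bijectively onto $G_i(F)\backslash G_i(\A)/U_i$, and one checks this is independent of the choices of $\rho$ and $h$. Preservation of cusp forms is then immediate, because $\rho$ is an isomorphism of reductive groups over $F$ and hence matches parabolics, unipotent radicals, and Haar measures on the nose.

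Your approach, going through Weil uniformization and the common identification of $G_i$-torsors on $X$ with $\mathbb{P}^1$-bundles, is more conceptual: canonicity is manifest from the outset with no auxiliary choices to quotient out. The cost is that it invokes more machinery, and the cuspidality check becomes less transparent---you correctly flag the measure compatibility on unipotent radicals as the remaining obstacle, whereas in the paper's approach this is automatic from the group isomorphism $\rho$. Both arguments ultimately yield the same bijection.
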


\begin{proof}
Fix an isomorphism $\rho : K_0 \to K_i$ of $F$-vector spaces.  The induced isomorphism $\rho: \A_0 \to \A_i$ satisfies 
$\rho(\mathbb{O}_0) = h \mathbb{O}_i$ for some $h \in \tilde{G}_i(\A)$. 
The choice of $\rho$ also determines an isomorphism  $\rho : G_{0}(F) \to G_{i}(F)$  by 
$
  \rho(g)= \rho \circ g \circ \rho^{-1},
$
and the desired bijection is
\[
G_0(F) \backslash G_0(\A) / U_0 \map{ g \mapsto  \rho(g) h} G_i(F) \backslash G_i(\A) / U_i.
\]
This is easily seen to be  independent of the choices of $\rho$ and $h$.
\end{proof}


\subsection{The analytic distribution}
\label{ss:orbital notation}


The $X$-scheme
\[
\tilde{J} = \underline{\mathrm{Iso}}_{\co_X}( f_{3*} \co_{Y_3} ,  f_{0*} \co_{Y_0} ) 
\]
is both a left $\tilde{G}_0$-torsor and a right $\tilde{G}_3$-torsor, and similarly $
J=\tilde{J}  /\G_m$ is both a left $G_0$-torsor and a right $G_3$-torsor.
There are canonical identifications
\[
T_0(F) \backslash J(F) / T_3(F) = \tilde{T}_0(F) \backslash \tilde{J}(F) / \tilde{T}_3(F) = K_0^\times \backslash \mathrm{Iso}( K_3 , K_0 ) / K_3^\times,
\]
and so (\ref{new split invariant}) defines a bijective function
\begin{equation}\label{orbit invariant}
T_0(F) \backslash J(F) / T_3(F) \map{\inv}  \{ \xi \in K_3 : \Tr_{K_3/F}(\xi)=1 \}.
\end{equation}
The bijectivity follows from Proposition \ref{prop:split coset regular}, as $K_3$ is now a field.

\begin{lemma}\label{lem:coset transfer}
There is a canonical homeomorphism
\begin{equation}\label{coset switch}
 U_0 \backslash J(\A) / U_3 \iso U_0 \backslash G_0(\A) /U_0.
\end{equation}
\end{lemma}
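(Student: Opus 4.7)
The plan is to trivialize the bitorsor $J$ by choosing an integral base point, and then observe that the resulting identification is canonical.

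First, I would verify that $J(\mathbb{O}) \neq \emptyset$. At each closed point $x \in |X|$, the stalks $(f_{0*}\co_{Y_0})_{\hat{x}}$ and $(f_{3*}\co_{Y_3})_{\hat{x}}$ are both free $\hat{\co}_x$-modules of rank $2$, hence abstractly isomorphic; any such isomorphism descends to a point of $J(\hat{\co}_x)$, so $J(\mathbb{O}) = \prod_{x \in |X|} J(\hat{\co}_x)$ is nonempty. Since $J$ is a $(G_0,G_3)$-bitorsor over $X$ (built from the scheme of isomorphisms between two locally free rank $2$ modules), its set of $\mathbb{O}$-points is a $(U_0, U_3)$-bitorsor in the set-theoretic sense: fixing any $j_0 \in J(\mathbb{O})$, one has $U_0 \cdot j_0 = J(\mathbb{O}) = j_0 \cdot U_3$, and in particular $j_0^{-1} U_0 j_0 = U_3$.

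Next, any such $j_0$ determines a $G_0(\A)$-equivariant homeomorphism
\[
\iota_{j_0} : G_0(\A) \iso J(\A), \qquad g \mapsto g \cdot j_0.
\]
The identity $j_0^{-1} U_0 j_0 = U_3$ shows that $\iota_{j_0}$ intertwines the right $U_0$-action on the source with the right $U_3$-action on the target, so passing to quotients yields a homeomorphism
\[
U_0 \backslash G_0(\A) / U_0 \iso U_0 \backslash J(\A) / U_3.
\]

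Finally, I would check independence of the auxiliary choice. If $j_0' \in J(\mathbb{O})$ is any other base point, write $j_0' = u \cdot j_0$ with $u \in U_0$. Then $\iota_{j_0'}(g) = gu \cdot j_0 = \iota_{j_0}(gu)$, and $gu$ represents the same class as $g$ in $U_0 \backslash G_0(\A) / U_0$, so the induced bijection on double cosets does not depend on $j_0$. The only place that requires a moment of care is confirming that $J(\mathbb{O})$ is nonempty as a bitorsor under $(U_0, U_3)$, but this is immediate from local freeness of the two direct-image sheaves; after that the rest of the argument is entirely formal.
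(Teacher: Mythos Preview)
Your proof is correct and is essentially the same argument as the paper's: both trivialize the bitorsor by choosing an integral point (your $j_0 \in J(\mathbb{O})$ corresponds to the paper's choice of an $\mathbb{O}$-isomorphism $a:\A_0\to\A_3$ with $a(\mathbb{O}_0)=\mathbb{O}_3$, i.e.\ to $a^{-1}\in\tilde{J}(\mathbb{O})$), use it to identify $J(\A)$ with $G_0(\A)$, and then verify that the resulting map on double cosets is independent of the choice. Your write-up is slightly more detailed in spelling out the bitorsor language and the independence check, but the underlying idea is identical.
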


\begin{proof}
As $\mathbb{O}_0$ and $\mathbb{O}_3$ are both free $\mathbb{O}$-modules of rank two, we may choose an 
 isomorphism of $\A$-modules  $a: \A_0 \to \A_3$ in such a way that $a(\mathbb{O}_0)=\mathbb{O}_3$.   This determines a homeomorphism
\[
\tilde{J}(\A)  = \mathrm{Iso} ( \A _3, \A_0)  \map{\phi \mapsto \phi \circ a} \mathrm{Iso}( \A_0 , \A_0) = \tilde{G}_0(\A).
\]
 It is easy to check that this descends to a homeomorphism (\ref{coset switch}),  which is independent of the choice of $a$. 
\end{proof}

Now fix $f\in \mathscr{H}$.  Slightly switching the point of view, we use the bijection of Lemma \ref{lem:coset transfer} to instead view 
$
f : U_0 \backslash J(\A) / U_3 \to \Q,
$
and  define a function on $G_0(\A) \times G_3(\A)$ by
 \begin{equation}\label{the kernel}
\K_f(g_0,g_3) = \sum_{\gamma \in J(F)} f(g_0^{-1}\gamma g_3).
\end{equation}
  
 \begin{remark}\label{rem:usual kernel}
Using Lemma  \ref{lem:naive transfer}, we may instead view $\K_f$ as a function  on $G_0(\A) \times G_0(\A)$.
 This agrees with the  kernel function defined in  \cite[(2.3)]{YZ}.
 \end{remark}
 
 We adopt the usual notation 
\[
[ T_i ] = T_i (F)\backslash T_i(\A) ,\quad 
[G_i]=G_i(F) \backslash G_i(\A) ,
\]
and recall the normalization of Haar measures of \S \ref{ss:notation}.
Define a distribution on $\mathscr{H}$ by 
\begin{equation}\label{J distribution def}
\J(f,s) = \int^\reg_{[T_0] \times [T_3]} \K_f(t_0, t_3)\,  |t_0|^{2s}  \eta(t_3) \, dt_0 \, dt_3.
\end{equation}
We will explain what this regularized integral means momentarily.  For the other notation, recalling that  $T_0 \subset G_0=\PGL_2$ is the diagonal torus,  let  \[ |\cdot| : T_0(\A) \to \R^\times\] be the homomorphism  $\left| \left(\begin{smallmatrix} a & \\ & d \end{smallmatrix} \right)\right| = |a/d|$.  The character $\eta$  is defined by the following lemma.

\begin{lemma}\label{lem:eta}
The quadratic character 
$
\eta: \A_3^\times \to \{\pm{1}\}
$ 
determined by $Y/Y_3$ factors through the quotient $T_3(\A) = \A_3^\times / \A^\times$, and hence defines a character 
\[
\eta : [T_3] \to \{\pm 1\}.
\]
\end{lemma}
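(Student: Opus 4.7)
The plan is to exhibit $\eta$ as the pullback of the quadratic character $\chi_1 : \A^\times \to \{\pm 1\}$ of $K_1/F$ along the norm map $\Nm_{K_3/F} : \A_3^\times \to \A^\times$. Once this is done, the restriction of $\Nm_{K_3/F}$ to $\A^\times \subset \A_3^\times$ is simply squaring, so $\eta|_{\A^\times} = \chi_1^2 = 1$, which is exactly the desired triviality.

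First I would verify that the quadratic extension $K/K_3$ is the base change of $K_1/F$ along $F \hookrightarrow K_3$; concretely, that there is a canonical isomorphism $K \iso K_1 \otimes_F K_3$. The elements $\tau_1, \tau_3 \in \Aut(K/F)$ are distinct by construction, so their fixed fields $K_1$ and $K_3$ are distinct quadratic subfields of $K$. Hence $K_1 \otimes_F K_3$ is a field (not $K_1 \oplus K_1$), and a dimension count forces it to equal $K$. Geometrically this just says $Y \iso Y_1 \times_X Y_3$ as \'etale double covers of $Y_3$, which is a direct check from the biquadratic picture (\ref{biquad curves}).

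Next I would invoke functoriality of global class field theory for quadratic extensions: if $\chi_1$ corresponds to $K_1/F$, then the character associated to the base-changed extension $K_3 K_1 / K_3 = K/K_3$ is exactly $\chi_1 \circ \Nm_{K_3/F}$. This gives the identification $\eta = \chi_1 \circ \Nm_{K_3/F}$.

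Finally, for any $a \in \A^\times$ diagonally embedded in $\A_3^\times$, one has $\Nm_{K_3/F}(a) = a \cdot \sigma_3(a) = a^2$, so $\eta(a) = \chi_1(a)^2 = 1$. Thus $\eta$ is trivial on $\A^\times$ and descends to a character of $T_3(\A) = \A_3^\times/\A^\times$. Since $\eta$ is a Hecke character it is automatically trivial on $K_3^\times$, so the descended character is also trivial on $T_3(F) = K_3^\times/F^\times$ and defines a character on $[T_3]$. There is no substantive obstacle; the only structural input is the identification $K = K_1 \otimes_F K_3$, after which everything reduces to the standard fact that $\Nm_{K_3/F}|_{\A^\times}$ is squaring.
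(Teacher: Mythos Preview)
Your proof is correct and follows essentially the same route as the paper: identify $\eta = \chi_1 \circ \Nm_{K_3/F}$ via class field theory (the paper records this as $\chi_1(\Nm(x)) = \eta(x)$, left as an exercise), then use that $\Nm_{K_3/F}$ restricted to $\A^\times$ is squaring. The paper additionally notes the symmetric relation $\eta = \chi_2 \circ \Nm_{K_3/F}$, which is not needed for the lemma itself but is invoked later in the proof of Proposition~\ref{prop:etale fiber action}.
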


\begin{proof}
Denote by 
$
\chi_i : \A^\times \to \{\pm 1\}
$ 
 the quadratic character determined by $Y_i/X$.  An exercise in class field theory shows that 
\[
\chi_1(\Nm(x)) = \eta(x) = \chi_2(\Nm(x))
\] 
for all $x\in \A_3^\times$, where $\Nm:\A_3^\times \to \A^\times$ is the norm. The claim is immediate from this,
and the fact  that  $\Nm(t)=t^2$ for all  $t\in \A^\times$.
\end{proof}

The integral in (\ref{J distribution def}) need not converge absolutely, and we now explain how it is regularized.
First define 
\[
T_0(\A)_n = \left\{t_0 \in T_0(\A) :   |t_0| = q^{-n} \right\} 
\]
and $[T_0]_n = T_0(F) \backslash T_0(\A)_n$, and set
\begin{align}\label{partial distribution}
\J_n(f,s) & = \int_{[T_0]_n \times [T_3]} \K_f(t_0, t_3) |t_0|^{2s} \eta(t_3)\,  dt_0 \, dt_3 \\
& =  q^{-2ns} \int_{[T_0]_n \times [T_3]} \K_f(t_0, t_3)  \eta(t_3)\,  dt_0 \, dt_3 \nonumber
\end{align}
This integral is absolutely convergent. Indeed, the set $[T_0]_n$ is compact, and the finiteness of 
\[
 T_3(F) \backslash T_3(\A) /(U_3 \cap T_3(\A)) \iso K_3^\times \backslash \A_3^\times /\A^\times \mathbb{O}_3^\times \iso \Pic(Y_3) /f_3^*\Pic(X)
\]
implies that $[T_3]$ is also compact.

\begin{proposition}\label{prop:convergence}
The integral $\J_n(f,s)$ vanishes for $|n|$ sufficiently large.  
\end{proposition}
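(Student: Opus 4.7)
The plan is to unfold $\J_n(f,s)$ along the orbits of $T_0(F)\times T_3(F)$ on $J(F)$ parametrized by (\ref{orbit invariant}), then use an adelic extension of the invariant $\inv$ to show that for compactly supported $f$ only finitely many $n$ can contribute.

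For each $\xi \in K_3$ with $\Tr_{K_3/F}(\xi)=1$, choose a representative $\gamma_\xi\in J(F)$. I first verify that the stabilizer of $\gamma_\xi$ in $T_0(F)\times T_3(F)$ is trivial: a stabilizing pair lifts to $(\tilde t_0,\tilde t_3)\in K_0^\times\times K_3^\times$ with $\tilde t_0=c\,\tilde\gamma_\xi \tilde t_3\tilde\gamma_\xi^{-1}$ for some $c\in F^\times$, and the right-hand side lies in $\alpha_0(K_0)\cap\alpha_3(K_3)\subset M_2(F)$, which reduces to $F\cdot 1$ because the quaternion embedding attached to $\gamma_\xi$ is regular (Proposition \ref{prop:split coset regular}) and $K_0\not\iso K_3$. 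Using that $|t_0|^{2s}$ is $T_0(F)$-invariant (product formula) and $\eta$ is $T_3(F)$-invariant (Lemma \ref{lem:eta}), the standard unfolding gives
\[
\J_n(f,s) \,=\, \sum_\xi\,\int_{T_0(\A)_n\times T_3(\A)} f(t_0^{-1}\gamma_\xi t_3)\,|t_0|^{2s}\eta(t_3)\,dt_0\,dt_3.
\]

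Next, the formula $\inv = \newdet/\det$ of \S\ref{ss:split coset invariant} defines a morphism of $F$-schemes $J\to\{x\in K_3:\Tr(x)=1\}$, hence a continuous, $T_0(\A)\times T_3(\A)$-invariant map $\inv:J(\A)\to \A_{K_3}$. Since $\mathrm{supp}(f)$ is compact and $K_3$ is discrete in $\A_{K_3}$, only finitely many $\xi$ satisfy $\xi\in\inv(\mathrm{supp}(f))$, and only these contribute. For each such $\xi$, I will show that the orbit map
\[
\mu_\xi:T_0\times T_3\to J, \qquad (t_0,t_3)\mapsto t_0^{-1}\gamma_\xi t_3,
\]
is an isomorphism of $F$-schemes onto the closed subscheme $\inv^{-1}(\xi)\subset J$: it is injective with trivial infinitesimal stabilizer by the calculation above, source and target are smooth of dimension $2$, and bijectivity on $\bar F$-points follows by base-changing to $\bar F$ and invoking the explicit split-case parametrization in the proof of Lemma \ref{lem:inv closure}. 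Passing to adelic points, $\mu_\xi^{-1}\bigl(\mathrm{supp}(f)\cap\inv^{-1}(\xi)(\A)\bigr)$ is then a compact subset of $T_0(\A)\times T_3(\A)$, whose projection to $T_0(\A)$ meets $T_0(\A)_n$ for only finitely many $n$, say $|n|\le N_\xi$. Taking $N=\max_\xi N_\xi$ over the finitely many contributing orbits gives $\J_n(f,s)=0$ for $|n|>N$.

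The main technical point I expect to have to pin down is the scheme-theoretic isomorphism for $\mu_\xi$; once that is in hand, the stabilizer calculation, the continuity of $\inv$, and the compactness arguments are routine.
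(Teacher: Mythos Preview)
Your argument follows the same architecture as the paper's: decompose $\J_n(f,s)$ into orbit integrals, use the adelic extension of $\inv$ together with compactness of $\mathrm{supp}(f)$ and discreteness of $K_3$ in $\A_3$ to reduce to finitely many orbits (this is the paper's Lemma~\ref{finiteness}), and for each surviving orbit use that $\mu_\xi$ is a closed immersion so that $f\circ\mu_\xi$ has compact support on $T_0(\A)\times T_3(\A)$ (Lemma~\ref{lem:finiteness 2}). The one point of divergence is how the closed immersion is established. You aim to identify the orbit scheme-theoretically with $\inv^{-1}(\xi)$ via smoothness and bijectivity on geometric points, whereas the paper (Lemma~\ref{lem:S trivial}) base-changes to $K_3$, which splits $T_3$, and then invokes the regular-semisimple orbit description already available in \cite{YZ}. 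The paper's route is a quick reduction to a known result; yours is more self-contained but, as you anticipate, has a technical wrinkle. Note that you actually need less than the full scheme isomorphism $T_0\times T_3\cong\inv^{-1}(\xi)$: it suffices that $\mu_\xi$ be a closed immersion into $J$, and this already follows once you know the orbit is locally closed (true for any algebraic-group orbit), has trivial stabilizer scheme, and exhausts $\inv^{-1}(\xi)$ set-theoretically over $\bar F$, since then the orbit coincides with the closed set $\inv^{-1}(\xi)_{\mathrm{red}}$.
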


\begin{proof}
For any $\gamma \in  J(F) $, and for any $g_0\in [G_0]$ and $g_3\in [G_3]$,  we set 
\begin{equation}\label{coset kernel} 
\K_{f,\gamma}(g_0,g_3) = \sum_{\delta \in  T_0(F) \gamma T_3(F) } f(g_0^{-1}\delta g_3)  
\end{equation}
so that 
\begin{equation*}
\K_f(g_0,g_3)   = \sum_{\gamma \in T_0(F)\backslash J(F) / T_3(F)} \K_{f,\gamma}(g_0,g_3) .
\end{equation*}
We also set 
\begin{equation} \label{trunc J}
\J_n(\gamma, f,s)  = \int_{[T_0]_n \times [T_3]} \K_{f,\gamma}(t_0, t_3) |t_0|^{2s} \eta(t_3)\,  dt_0 \, dt_3,
\end{equation}
so that 
\begin{equation*}
\J_n(f,s)   = \sum_{\gamma \in T_0(F)\backslash J(F) / T_3(F)} \J_n(\gamma, f, s).
\end{equation*}

\begin{lemma}\label{finiteness}
For all but finitely many  $\gamma \in T_0(F) \backslash J(F)/  T_3(F)$, the function 
(\ref{coset kernel}) vanishes identically on $[T_0] \times [T_3]$.
 \end{lemma}

\begin{proof}
The function (\ref{orbit invariant}) extends in a natural way to a continuous function on adelic points 
\[
\inv :   T_0(\A) \backslash J(\A) / T_3(\A)  \to \{ \xi \in \A_3 : \Tr_{K_3/F} (\xi)=1 \}.
\]
Indeed, the function (\ref{orbit invariant}) was defined using the constructions of \S \ref{ss:split coset invariant}, which can be applied 
locally at every place of $F$.   (Of course,  locally $K_3$ need not be a field, and so this function need not be a bijection).

Let $C\subset \A_3$ be the image under $\inv: J(\A) \to \A_3$ of the support of $f$.  This is a compact set, and so has 
finite intersection with the (discrete) image of the injection
\[
 T_0(F) \backslash J(F) / T_3(F) \map{\inv} K_3 \subset \A_3.
\]
Thus there are only finitely many $\gamma \in T_0(F) \backslash J(F)/  T_3(F)$ with $\inv(\gamma) \in C$.

It is clear from the definitions that if $\inv(\gamma) \not\in C$ then (\ref{coset kernel}) vanishes identically on $[T_0]\times [T_3]$, proving the claim.
\end{proof}

 To unfold the integral (\ref{trunc J}), we need to understand the stabilizer
\begin{equation}\label{S def}
S_\gamma = \{ (t_0,t_3) : t_0^{-1} \gamma t_3 = \gamma \} \subset T_{0F}\times T_{3F},
\end{equation}
which is an algebraic group over $F$.

\begin{lemma}\label{lem:S trivial}
The morphism $T_{0F}\times T_{3F} \to J_F$ given by $(t_0, t_3) \mapsto t_0^{-1}\gamma t_3$ is a closed immersion. In particular, $S_\gamma$ is trivial.
\end{lemma}

\begin{proof}
To prove that this is a closed immersion, we may base change to $K_3$, over which $T_3$ becomes isomorphic to $T_0$.  Then it is enough to check that the orbit of $\gamma$ is regular semisimple in the sense of \cite{YZ}, or in other words, that the invariant is not $(1,0)$ or $(0,1)$, when viewed as trace-one elements of $K_3 \otimes_F K_3 \simeq K_3 \times K_3$.  But $\inv(\gamma)$ is a unit in $K_3$, so it cannot be $(1,0)$ or $(0,1)$.  
\end{proof}

 \begin{lemma}\label{lem:finiteness 2}
For fixed  $\gamma$ and $f$ as above, the integral (\ref{trunc J}) vanishes  for all but finitely many $n$.
 \end{lemma}
\begin{proof}
Since $S_\gamma$ is trivial, we may unfold the integral (\ref{trunc J}) to obtain
\[
\J_n(\gamma, f, s) = \int_{ T_0(\A)_n \times T_3(\A)}  f(t_0^{-1}\gamma t_3) \,  |t_0|^{2s} \eta(t_3) \, dt_0 \, dt_3.
\]
The map $i \colon T_0(\A) \times T_3(\A) \to J(\A)$ given by $(t_0, t_3) \mapsto t_0^{-1}\gamma t_3$ is a closed embedding, so $f \circ i$ has  compact support.  It follows that $\J_n(\gamma, f,s)$ vanishes for $|n|$ large enough.   
\end{proof}

%
%
%

We now complete the proof of Proposition \ref{prop:convergence}.     By Lemma \ref{finiteness}, 
\[
\J_n(f,s)   = \sum_{\gamma \in \Gamma_f}  \J_n(\gamma,f,s) 
\]
for some finite subset $\Gamma_f \subset T_0(F) \backslash J(F) /T_3(F)$ independent of $n$.
 Lemma \ref{lem:finiteness 2} now implies that $\J_n(f,s)$ vanishes  for $|n|\gg 0$.
\end{proof}

Using Proposition \ref{prop:convergence},  the regularized integral  (\ref{J distribution def}) is defined as
\[
\J(f,s) = \sum_{n \in \Z} \J_n(f,s).
\] 
This is a Laurent polynomial in $q^s$.
Recalling (\ref{trunc J}), we also define
\[
\J(\gamma,f,s) =\sum_{n\in \Z} \J_n(\gamma,f,s),
\]
 so that there are decompositions
\begin{equation}\label{J decomp}
\J(f,s)   =   \sum_{\gamma \in T_0(F) \backslash J(F)/T_3(F)}  \J(\gamma, f,s)  = \sum_{\substack{ \xi\in K_3 \\ \Tr_{K_3/F}(\xi) =1  }} \J( \xi ,f,s).
\end{equation}
Here, in the final expression,  we use (\ref{orbit invariant}) to define
\[
\J(\xi,f,s)= \J(\gamma,f,s)
\] 
for the unique double coset $\gamma \in T_0(F) \backslash J(F)/T_3(F)$ satisfying $\inv(\gamma)=\xi$.


\subsection{Spectral decomposition}
\label{ss:spectral}


Using the normalization of Haar measures of \S \ref{ss:notation}, 
for any $\phi \in \mathcal{A}_\cusp(G_0)^{U_0}$ we define a period integral
\[
\mathscr{P}_0(\phi , s ) = \int_{[T_0]} \phi(t_0) |t_0|^{2s} \, dt_0.
\]
This integral is absolutely convergent for all $s\in \C$.
Using the isomorphism of Lemma \ref{lem:coset transfer} to view $\phi \in \mathcal{A}_\cusp(G_3)^{U_3}$, we define another period integral
\[
\mathscr{P}_3(\phi , \eta ) = \int_{[T_3]} \phi(t_3) \eta(t_3) \, dt_3.
\]
As $[T_3]$ is compact, this integral is also absolutely convergent.

Recall the Eisenstein ideal $\mathcal{I}^\Eis \subset \mathscr{H}$ of (\ref{eisenstein def}).

\begin{proposition}\label{J pi decomp}
Every $f\in\mathcal{I}^\Eis$ satisfies 
\begin{equation}\label{spectral}
\J (f,s) =  \sum_{\mathrm{unr.\, cusp.\,}\pi }  \lambda_\pi(f)   \frac{  \mathscr{P}_0(  \phi ,s) \mathscr{P}_3(\overline{\phi} ,\eta)}{\langle \phi, \phi \rangle },
\end{equation}
where the sum is over all unramified cuspidal automorphic representations $\pi \subset \mathcal{A}_\cusp(G_0)$, and $\phi \in \pi^{U_0}$ is any nonzero vector.
Moreover, $\J(f,s)$ only depends on the image of $f$ under the quotient map  $\mathscr{H} \to \mathscr{H}_\mathrm{aut}$.
\end{proposition}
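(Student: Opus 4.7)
The plan is to spectrally expand the kernel $\K_f(g_0,g_3)$ and then integrate term by term against $|t_0|^{2s}\eta(t_3)$ on $[T_0]\times [T_3]$. Using Lemma \ref{lem:coset transfer} to identify $U_0\backslash J(\A)/U_3$ with $U_0\backslash G_0(\A)/U_0$, and Lemma \ref{lem:naive transfer} to canonically identify unramified automorphic forms on $G_3$ with those on $G_0$, the kernel $\K_f(g_0,g_3)$ is transported to the standard convolution kernel describing the action of $f$ on $L^2([G_0])$. Standard spectral theory for $\PGL_2$ over a function field yields a decomposition
\[
\K_f(g_0,g_3) \;=\; \K_f^{\cusp}(g_0,g_3) \;+\; \K_f^{\Eis}(g_0,g_3),
\]
in which the cuspidal piece is a finite sum
\[
\K_f^{\cusp}(g_0,g_3) \;=\; \sum_{\pi}\lambda_\pi(f)\,\frac{\phi_\pi(g_0)\,\overline{\phi_\pi}(g_3)}{\langle\phi_\pi,\phi_\pi\rangle},
\]
indexed by unramified cuspidal $\pi$ with spherical vector $\phi_\pi$, and $\K_f^{\Eis}$ captures the residual and continuous spectrum, whose $\mathscr{H}$-action factors through $a_{\Eis}:\mathscr{H}\to\Q[\Pic_X(\kk)]^{\iota_\Pic}$.

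Next, for $f\in\mathcal{I}^{\Eis}=\ker(a_{\Eis})$, by construction $f$ annihilates the entire Eisenstein part of the spectrum, so $\K_f^{\Eis}=0$ and the kernel reduces to $\K_f^{\cusp}$. Inserting this into the definition $\J(f,s)=\sum_n\J_n(f,s)$, Harder's theorem guarantees that each $\phi_\pi$ is compactly supported on $[G_0]$, so only finitely many $\J_n$ are nonzero for each $\pi$ and the finite spectral sum may be interchanged with the $t_0$ and $t_3$ integrals. The $t_0$-integration against $|t_0|^{2s}$ produces $\mathscr{P}_0(\phi_\pi,s)$; since $\eta$ is real-valued, the $t_3$-integration against $\overline{\phi_\pi}(t_3)\eta(t_3)$ produces $\mathscr{P}_3(\overline{\phi_\pi},\eta)$. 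Combining these gives the claimed identity (\ref{spectral}).

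For the "moreover" clause, I would argue that $\J(f,s)=0$ whenever $f$ lies in $\ker(\mathscr{H}\to\mathscr{H}_{\mathrm{aut}})$. Such an $f$ acts as zero on the space $\mathscr{A}$ of compactly supported unramified automorphic forms, which in particular forces $\lambda_\pi(f)=0$ for every unramified cuspidal $\pi$, so the cuspidal contribution vanishes; simultaneously $a_{\Eis}(f)=0$, so the Eisenstein contribution vanishes as well. By (\ref{automorphic decomp}) these two conditions together characterize the kernel of $\mathscr{H}\to\mathscr{H}_{\mathrm{aut}}$, so $\J(f,s)$ descends to a distribution on $\mathscr{H}_{\mathrm{aut}}$. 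The principal obstacle is the Eisenstein contribution: the regularization $\J(f,s)=\sum_n\J_n(f,s)$ was introduced precisely because the Eisenstein part of $\K_f$ fails to decay on $[T_0]\times[T_3]$, so establishing both the vanishing of $\K_f^{\Eis}$ on $\mathcal{I}^{\Eis}$ and the correct factorization for general $f$ requires a careful unfolding of the regularized integral in terms of truncated Eisenstein series and justification of interchange of sums and integrals.
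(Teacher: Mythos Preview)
Your overall architecture matches the paper's: transport $\K_f$ to the usual convolution kernel on $G_0(\A)\times G_0(\A)$, expand spectrally, integrate term by term, and for the final clause observe that vanishing in $\mathscr{H}_{\mathrm{aut}}$ forces both $a_{\Eis}(f)=0$ and all $\lambda_\pi(f)=0$. That part is fine.

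The substantive divergence is in how the non-cuspidal contribution is eliminated. You assert that for $f\in\mathcal{I}^{\Eis}$ the entire Eisenstein kernel $\K_f^{\Eis}$ vanishes pointwise, on the grounds that the $\mathscr{H}$-action on the residual and continuous spectrum factors through $a_{\Eis}$. The paper does \emph{not} argue this way. It invokes \cite[Theorem 4.3]{YZ}, which for $f\in\mathcal{I}^{\Eis}$ yields a decomposition $\K_f=\K_{f,\cusp}+\K_{f,\mathrm{sp}}$ in which the continuous Eisenstein part is already absent but the residual term
\[
\K_{f,\mathrm{sp}}(g_0,g_3)=\sum_{\mathrm{unr.\ quad.\ }\chi}\lambda_\chi(f)\,\chi(\det g_0)\,\chi(\det g_3)
\]
remains. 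The paper then disposes of $\K_{f,\mathrm{sp}}$ not by claiming $\lambda_\chi(f)=0$, but by showing that each $\J_n^\chi(f,s)$ vanishes via an orthogonality argument on the tori: if $\chi|_{\Pic^0(X)}$ is nontrivial the integral over $[T_0]_n$ vanishes, while if it is trivial the integral over $[T_3]$ vanishes (here the twist by $\eta$ is essential). This is the step your proposal skips.

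Your route could in principle work, but the assertion that the residual characters $\lambda_\chi$ factor through $a_{\Eis}$ (so that $\mathcal{I}^{\Eis}\subset\ker\lambda_\chi$) requires checking the precise normalization of the Satake map in \cite{YZ}; it is not automatic, and you have not supplied it. You yourself flag the Eisenstein contribution as ``the principal obstacle'' without resolving it. The paper's orthogonality argument on $[T_0]_n\times[T_3]$ sidesteps this entirely and is the missing ingredient in your sketch.
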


\begin{proof}
If we use Remark \ref{rem:usual kernel} to view  (\ref{the kernel}) as a function on $G_0(\A) \times G_0(\A)$,  then invoke the decomposition 
\[
\K_f (x,y) = \K_{f,\cusp} (x,y)  +  \K_{f,\mathrm{sp}} (x,y)  
\]
of \cite[Theorem 4.3]{YZ}, and then convert all three terms back into functions on  $G_0(\A) \times G_3(\A)$, the result is a decomposition
\begin{align*}
\K_f (g_0,g_3)   &  =  \sum_{\mathrm{unr.\, cusp.\,}\pi } \lambda_\pi(f)  \cdot  \frac{ \phi(g_0) \overline{\phi(g_3)} }{ \langle \phi ,\phi \rangle}  \\
&\quad  + \sum_{\mathrm{unr.\, quad.\,}\chi } \lambda_\chi(f)   \cdot  \chi(\det(g_0))  \cdot   \chi(\det(g_3))  . 
\end{align*}
The first  sum is over all unramified cuspidal representations $\pi$,  and $\phi \in \pi^{U_0}$ is any nonzero vector.
The second sum is over all unramified quadratic characters 
\[
\Pic(X) \iso  F^\times \backslash \A^\times / \mathbb{O}^\times  \map{\chi} \{ \pm 1\},
\]
and 
\[
\lambda_\chi(f) = \int_{G_0(\A)} f(g) \chi(\det(g))\, dg.
\]

The distribution (\ref{partial distribution}) now decomposes as
\[
\J_n(f,s) =   \sum_{\mathrm{unr.\, cusp.\,}\pi }  \J_n^\pi(f,s) 
+  \sum_{\mathrm{unr.\, quad.\,}\chi } \J_n^\chi(f,s) ,
\]
where we have set
\[
\J^\pi _n(f,s) =  \frac{ \lambda_\pi(f)  }{  \langle \phi ,\phi \rangle }
\left( \int_{[T_0]_n } \phi(t_0)   |t_0|^{2s} \,  dt_0  \right)
\left( \int_{ [T_3]}   \overline{\phi(t_3)}   \eta(t_3)\,   dt_3 \right)
\]
and
\[
\J^\chi _n(f,s)  = \lambda_\chi(f)  
\left(   \int_{[T_0]_n } \chi(\det(t_0))  |t_0|^{2s} \,  dt_0    \right)
\left(  \int_{ [T_3]}   \chi(\det(t_3))   \eta(t_3)\,   dt_3 \right) .
\]

In fact,  $\J_n^\chi(f,s)=0$ for all $n$ and all $\chi$.    The proof is similar to that of \cite[Lemma 4.4]{YZ}.  Briefly,  if the restriction of $\chi$ to 
\[
\Pic^0(X)=  F^\times \backslash  \{ a \in \A^\times : |a|=1 \} / \mathbb{O}^\times
  \]
 is nontrivial then the integral over $[T_0]_n$ vanishes.  If the restriction of $\chi$ to $\Pic^0(X)$ is trivial then the integral over $[T_3]$ vanishes.   This leaves us with
 \[
\J_n(f,s) =    \sum_{\mathrm{unr.\, cusp.\,}\pi }   \J_n^\pi(f,s) ,
\]
and (\ref{spectral}) follows by summing both sides over $n$.

For the final claim, suppose $f$ has trivial image under $\mathscr{H}\to\mathscr{H}_\mathrm{aut}$.  This implies that $f$ annihilates $\mathscr{A}_\C$, 
and lies in $\mathcal{I}^\Eis$.  The first inclusion in  (\ref{harder}) implies that  $\lambda_\pi(f)=0$ for all $\pi$, and so $\J(f,s)=0$ by (\ref{spectral}).
\end{proof}

The Hecke algebra $\mathscr{H}$ has a $\Q$-basis $\{ f_D \}$ indexed by the effective divisors $D\in \Div(X)$, and defined as follows (see also \cite[\S 3.1]{YZ}).  Let $S_D$ be the image of the set 
\[\left\{ X \in M_2(\mathbb{O})\colon \mathrm{div}(\det X) = D\right\}\]
in $\PGL_2(\A) = G_0(\A)$.  Then $f_D \colon U_0\backslash G_0(\A)/U_0 \to \Q$ is the characteristic function of $S_D$.

The remainder of \S \ref{s:analytic} is devoted to interpreting the $r^\mathrm{th}$-derivative
\[
\frac{d^r}{ds^r} \J (f_D ,s) |_{s=0}
= \sum_{\substack{ \xi\in K_3 \\ \Tr_{K_3/F}(\xi) =1  }} \frac{d^r}{ds^r} \J( \xi ,f_D,s)|_{s=0},
\]
in terms of algebraic geometry, for each such $D$.  This interpretation can be found in \S \ref{ss:geometric orbital} below (see especially Proposition \ref{prop:geometric orbital}), after the definitions and preliminary results of \S \ref{ss:analytic moduli} and \ref{ss:orbital local}.


\subsection{Some moduli spaces}
\label{ss:analytic moduli}


Fix an integer $d$.    Our  goal is to describe a commutative diagram of $\kk$-schemes
\begin{equation}\label{fundamental 2}
\xymatrix{
 {  N_{ (d_1,d_2) }  }   \ar[d]_{\beta}  \ar[r] &  { \Sigma_{d_1}(Y_3) \times_\kk \Sigma_{d_2}(Y_3) } \ar[d]^{\otimes}  \\
 {    A_d  }  \ar[d]_{\Tr}   \ar[r]_{f_3^\sharp} &    \Sigma_{2d}(Y_3)  \\
  {    \Sigma_d(X)  } 
}
\end{equation}
for any pair of  non-negative integers with $d_1+d_2=2d$, in such a way that the square is cartesian.
   We will define these  schemes by specifying their functors of points.
Let $S$ be a $\kk$-scheme.

Denote by    $\Sigma_d(X)(S)$  the set of isomorphism classes of pairs $(\Delta, \zeta )$  consisting of
\begin{itemize}
\item
 a line bundle $\Delta$ on $X_S=X\times_\kk S$ of degree $d$,
\item
a nonzero section $\zeta \in  H^0(X_S, \Delta)$.
\end{itemize}
Here (and hereafter) these conditions should be interpreted fiber-by-fiber: for every $s\in S$ we assume $\deg(\Delta_s)=d$ and  $\zeta_s\neq 0$.  With this convention, it follows from \cite[Proposition I.2.5]{Milne} that $\mathrm{div}(\zeta)$ is flat over $S$.  Moreover, this divisor determines the pair  $(\Delta,\zeta)$ up to isomorphism,  and so $\Sigma_d(X)$ is identified with the Hilbert scheme parametrizing effective relative Cartier divisors on $X$ of degree $d$.  It follows from the discussion of  \cite[\S 9.3]{BLR} that there is a canonical isomorphism
\begin{equation}\label{symmetric id}
\Sigma_d(X) \iso \mathrm{Sym}^d(X) \iso S_d\backslash X^d
\end{equation}
(the rightmost scheme is the  GIT quotient), and that $\Sigma_d(X)$ is a smooth projective $\kk$-scheme.
The schemes   $\Sigma_{2d}(Y_3)$ and $\Sigma_{d_i}(Y_3)$  are defined similarly, and the vertical arrow in (\ref{fundamental 2})  labeled $\otimes$ has the obvious meaning.

Let  $A_d(S)$  be set of isomorphism classes of  pairs  $(\Delta, \xi )$ consisting of
\begin{itemize}
\item
a  line bundle $\Delta$ on $X_{S}$ of degree $d$,
\item
a section $\xi \in H^0( Y_{3S} , f_3^*\Delta)$ with nonzero trace 
\[
\mathrm{Tr}_{Y_3/X}(\xi) = \xi+\xi^{\sigma_3} \in H^0(X_S,\Delta).
\] 
\end{itemize}
The arrows in (\ref{fundamental 2}) emanating from $A_d$ are
\[
\Tr (\Delta,\xi) = (\Delta , \Tr_{Y_3/X} (\xi) ) \quad\mbox{ and }\quad  f_3^\sharp (\Delta, \xi)  = (f_3^*\Delta ,\xi).
\]
It is easy to check that $\Tr$ is a quasi-projective morphism, and hence  $A_d$ is a quasi-projective $\kk$-scheme.

Now define  $\tilde{N}_{d_1,d_2}(S)$ to be the groupoid of quadruples $(\mathcal{L}_1, \mathcal{L}_2,  \mathcal{L}_3, \phi)$ consisting of
\begin{itemize}
\item line bundles $\mathcal{L}_1, \mathcal{L}_2\in \mathrm{Pic}(X_S)$ and $\mathcal{L}_3\in \mathrm{Pic}(Y_{3S})$ satisfying 
\[
2\deg(\mathcal{L}_1) - d_1=  \deg(\mathcal{L}_3)  =2\deg(\mathcal{L}_2) - d_2 ,
\]
\item a morphism  $\phi  :   f_{3*}\mathcal{L}_3 \to \mathcal{L}_1 \oplus \mathcal{L}_2 $ of rank 2 vector bundles on $X_S$
with nonzero determinant.
\end{itemize}
The functor $\tilde{N}_{d_1,d_2}$ from $\kk$-schemes to groupoids is represented by an Artin stack.
The Picard group $\Pic(X_S)$ acts on $\tilde{N}_{ d_1,d_2}(S)$ by twisting
\[
( \mathcal{L}_1,\mathcal{L}_2 , \mathcal{L}_3, \phi)  \otimes \mathcal{L}
 = 
 (   \mathcal{L}_1  \otimes \mathcal{L}  ,     \mathcal{L}_2   \otimes \mathcal{L}   ,   \mathcal{L}_3  \otimes  f_3^* \mathcal{L} ,  \phi \otimes  \mathrm{id} ),
 \]   
defining an action of the Picard stack $\Pic_X$ on $\tilde{N}_{d_1,d_2}$.  Recall $\Pic_X$ is a smooth Artin stack over $\kk$ of dimension $g-1$.  The representability of the quotient stack
\[
N_{ (d_1,d_2) } = \tilde{N}_{(d_1,d_2)} /\Pic_X 
\]
(see \cite[\S 4]{Ngo} for a discussion of the meaning of such a quotient)
by a scheme is part of the following result, which also defines the arrows in (\ref{fundamental 2}) emanating from $N_{(d_1,d_2)}$.

\begin{proposition}\label{prop:fundamental 2 cartesian}
There is a canonical isomorphism 
\[
N_{(d_1,d_2)} \iso A_d \times_{  \Sigma_{2d}(Y_3) } \big(  \Sigma_{d_1}(Y_3) \times_\kk \Sigma_{d_2}(Y_3)  \big).
\]
\end{proposition}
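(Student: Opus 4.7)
My plan is to construct mutually inverse morphisms, working first at the level of $\tilde{N}_{(d_1,d_2)}$ and then descending through the $\Pic_X$-quotient. Starting from a family $(\mathcal{L}_1,\mathcal{L}_2,\mathcal{L}_3,\phi) \in \tilde{N}_{(d_1,d_2)}(S)$, I would decompose $\phi = (\phi_1,\phi_2)$ according to its target and use the adjunction $\Hom_X(f_{3*}\mathcal{L}_3,\mathcal{L}_i) \cong H^0(Y_{3S}, \Delta_i)$ with $\Delta_i := f_3^*\mathcal{L}_i \otimes \mathcal{L}_3^{-1}$, valid because $f_3$ is finite étale (so $f_3^! = f_3^*$). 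This produces adjoint sections $\tilde{\phi}_i$ of the degree-$d_i$ line bundles $\Delta_i$ on $Y_3$, together with divisors $D_i := \divis(\tilde{\phi}_i) \in \Sigma_{d_i}(Y_3)(S)$. Simultaneously the nonzero determinant $\det(\phi)$ gives a nonzero section of the degree-$d$ line bundle $\Delta := \mathcal{L}_1 \otimes \mathcal{L}_2 \otimes (\det f_{3*}\mathcal{L}_3)^{-1}$ on $X_S$, the degree count using the fact that $\deg\det(f_{3*}\mathcal{L}_3) = \deg\mathcal{L}_3$ for the étale double cover $f_3$.

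To complete the forward map I must produce $\xi \in H^0(Y_{3S}, f_3^*\Delta)$ with $\divis_Y(\xi) = D_1 + D_2$ and $\Tr_{Y_3/X}(\xi) \ne 0$. The key identifications are
\[
f_3^*f_{3*}\mathcal{L}_3 \;\cong\; \mathcal{L}_3 \oplus \sigma_3^*\mathcal{L}_3, \qquad \mathcal{L}_3 \otimes \sigma_3^*\mathcal{L}_3 \;\cong\; f_3^*\Nm_{Y_3/X}(\mathcal{L}_3), \qquad f_3^*\mathcal{N} \;\cong\; \mathcal{O}_{Y_3},
\]
where $\mathcal{N} := \det f_{3*}\mathcal{O}_{Y_3}$ is $2$-torsion (coming from $Y_3 \times_X Y_3 \cong Y_3 \sqcup Y_3$). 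These, together with the matrix presentation of $f_3^*\phi$ whose entries are $\tilde{\phi}_i$ and $\sigma_3^*\tilde{\phi}_i$, yield $\xi$ as the appropriate combination of entries, constructed so that its divisor equals $D_1 + D_2$ once one absorbs the sign arising from the $\sigma_3$-swap on $\det(\mathcal{L}_3 \oplus \sigma_3^*\mathcal{L}_3)$. The $\Pic_X$-invariance is transparent since the twist $(\mathcal{L}_1,\mathcal{L}_2,\mathcal{L}_3) \mapsto (\mathcal{L}_1 \otimes \mathcal{L}, \mathcal{L}_2 \otimes \mathcal{L}, \mathcal{L}_3 \otimes f_3^*\mathcal{L})$ leaves $\Delta_i$, $\tilde{\phi}_i$, $\Delta$, $\det(\phi)$---and hence $\xi$, $D_i$---all unchanged, so the forward map descends to $N_{(d_1,d_2)}$.

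For the inverse, given $(\Delta, \xi, D_1, D_2)$ in the fiber product, I would set $\Delta_i := \mathcal{O}_{Y_3}(D_i)$ (with its canonical section), pick an arbitrary line bundle $\mathcal{L}_3$ on $Y_3$ of the compatible degree (reflecting the $\Pic_X$-ambiguity that has been quotiented out), and then recover $\mathcal{L}_i$ as the descent of $\Delta_i \otimes \mathcal{L}_3$ from $Y_3$ to $X$. The descent datum is supplied by the fiber product condition $\divis_Y(\xi) = D_1 + D_2$ together with $\xi \in H^0(Y_3, f_3^*\Delta)$, which provides the identification $\mathcal{O}_{Y_3}(D_1 + D_2) \cong f_3^*\Delta$. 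The morphism $\phi$ is then reconstituted from the pair $(\tilde{\phi}_1, \tilde{\phi}_2)$ by reverse adjunction. Checking mutual inverseness is routine once the forward construction is in place.

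The main obstacle is the careful bookkeeping of natural line bundle identifications: in particular, comparing $\Delta_1 \otimes \Delta_2 \cong f_3^*(\mathcal{L}_1 \otimes \mathcal{L}_2) \otimes \mathcal{L}_3^{-2}$ with $f_3^*\Delta \cong f_3^*(\mathcal{L}_1 \otimes \mathcal{L}_2) \otimes (\mathcal{L}_3 \otimes \sigma_3^*\mathcal{L}_3)^{-1}$ (which differ a priori by $\sigma_3^*\mathcal{L}_3 \otimes \mathcal{L}_3^{-1}$), and tracking the sign associated to the $\sigma_3$-swap on $\det(\mathcal{L}_3 \oplus \sigma_3^*\mathcal{L}_3)$ so that $\det(f_3^*\phi) = f_3^*\det(\phi)$ holds with the correct identifications. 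These are standard features of finite étale double covers, but must be made explicit for $\divis_Y(\xi) = D_1 + D_2$ to hold on the nose.
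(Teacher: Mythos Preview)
Your forward map has a genuine gap, and the ``obstacle'' you flag at the end is fatal rather than cosmetic. Via the adjunction you obtain $\tilde\phi_1 \in H^0(Y_{3S}, f_3^*\mathcal{L}_1 \otimes \mathcal{L}_3^{-1})$ and $\tilde\phi_2 \in H^0(Y_{3S}, f_3^*\mathcal{L}_2 \otimes \mathcal{L}_3^{-1})$; in the $2\times 2$ matrix presentation of $f_3^*\phi \colon \mathcal{L}_3 \oplus \sigma_3^*\mathcal{L}_3 \to f_3^*\mathcal{L}_1 \oplus f_3^*\mathcal{L}_2$ these are the two entries of the \emph{first column}. But then $\Delta_1\otimes\Delta_2 \cong f_3^*(\mathcal{L}_1\otimes\mathcal{L}_2)\otimes\mathcal{L}_3^{-2}$, and this line bundle is in general not even $\sigma_3$-invariant in $\Pic(Y_{3S})$, hence not isomorphic to $f_3^*\Delta$ for any $\Delta$ on $X_S$. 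Consequently there is no $(\Delta,\xi)\in A_d(S)$ with $\divis(\xi)=D_1+D_2$, and your data does not give a point of the fiber product at all. The discrepancy $\sigma_3^*\mathcal{L}_3\otimes\mathcal{L}_3^{-1}$ you noticed is not a sign to be tracked; it is an honest obstruction.

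The paper resolves this by using the \emph{diagonal} entries of the matrix rather than a column: it keeps $\bm{a}=\tilde\phi_1$ as the section of $\mathcal{K}_1:=f_3^*\mathcal{L}_1\otimes\mathcal{L}_3^{-1}$, but takes $\bm{d}:=\sigma_3^*\tilde\phi_2$, a section of $\mathcal{K}_2:=f_3^*\mathcal{L}_2\otimes(\sigma_3^*\mathcal{L}_3)^{-1}$. Now $\mathcal{K}_1\otimes\mathcal{K}_2 \cong f_3^*(\mathcal{L}_1\otimes\mathcal{L}_2)\otimes(\mathcal{L}_3\otimes\sigma_3^*\mathcal{L}_3)^{-1}$ is canonically the pullback of $\Delta:=\underline{\Hom}(\det f_{3*}\mathcal{L}_3,\ \mathcal{L}_1\otimes\mathcal{L}_2)$, and the product section $\xi:=\bm{a}\bm{d}$ lives naturally in $H^0(Y_{3S},f_3^*\Delta)$. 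Because the off-diagonal entries are the $\sigma_3$-conjugates of the diagonal ones, one gets $\Tr_{Y_3/X}(\bm{a}\bm{d})=\det(\phi)$ on the nose, with no twist or sign left to absorb. The asymmetric choice of $(\bm{a},\bm{d})$ rather than $(\tilde\phi_1,\tilde\phi_2)$ is precisely the missing idea; once you make it, your inverse construction also simplifies accordingly.
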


\begin{proof}
Suppose $( \mathcal{L}_1, \mathcal{L}_2 ,  \mathcal{L}_3 , \phi) \in \tilde{N}_{(d_1,d_2) }(S)$.
The pullback of $\phi$ via $f_3:Y_{3S} \to X_S$ is a morphism
\[
 \mathcal{L}_3 \oplus  \mathcal{L}_3^{\sigma_3}  \to  f_3^*\mathcal{L}_1 \oplus f_3^*\mathcal{L}_2   ,
\]
 encoded by four maps
\begin{eqnarray}\label{more matrix entries}
 \mathcal{L}_3    &\map{\quad \bm{a} \quad } &f_3^*\mathcal{L}_1    \\
  \mathcal{L}_3^{\sigma_3}     &\map{ \bm{b}=\bm{a}^{\sigma_3}  }  & f_3^*\mathcal{L}_1  \nonumber  \\
  \mathcal{L}_3      &\map{\quad \bm{c} \quad } & f_3^*\mathcal{L}_2 \nonumber  \\
    \mathcal{L}_3^{\sigma_3}    &\map{ \bm{d} =\bm{c}^{\sigma_3}} & f_3^*\mathcal{L}_2  \nonumber  .
\end{eqnarray}

By viewing $\bm{a}$ and $\bm{d}$ as global sections of the line bundles 
\begin{equation}\label{K bundles}
\mathcal{K}_1=\underline{\Hom} (    \mathcal{L}_3 , f_3^*\mathcal{L}_1 ) ,\quad  \mathcal{K}_2=\underline{\Hom} (  \mathcal{L}_3^{\sigma_3} ,  f_3^*\mathcal{L}_2  ) 
\end{equation}
of degree $d_1$ and $d_2$, respectively, we obtain $S$-points of $\Sigma_{d_1}(Y_3)$ and $\Sigma_{d_2}(Y_3)$.
This defines a morphism
\[
\tilde{N}_{(d_1,d_2) } \to \Sigma_{ d_1}(Y_3) \times_\kk \Sigma_{d_2}(Y_3) ,
\]
which is easily seen to descend to the quotient $N_{(d_1,d_2)}$.

Define a degree $d$ line bundle
\begin{equation}\label{first det bundle}
\Delta  = \underline{\Hom}(    \det( f_{3*} \mathcal{L}_3)   ,  \det( \mathcal{L}_1\oplus \mathcal{L}_2)    )
\end{equation}
on $X_S$.  Its pullback to $Y_{3S}$ is  
\[
f_3^*\Delta   \iso \underline{\Hom} \big(    \mathrm{det}(\mathcal{L}_3 \oplus \mathcal{L}_3^{\sigma_3})   ,  \mathrm{det}( f_3^*\mathcal{L}_1\oplus f_3^*\mathcal{L}_2 ) \big) 
\]
which has a global section $\bm{a}\bm{d}$ defined by $ e_1\wedge e_2 \mapsto \bm{a}(e_1)\wedge \bm{d}(e_2)$ for  local sections $e_1$ and $e_2$ of $\mathcal{L}_3$ and $\mathcal{L}_3^{\sigma_3}$, respectively.  The equality of global sections
\[
\det(\phi) = \mathrm{Tr}_{Y_3/X}(\bm{a}\bm{d}) \in H^0(X_S, \Delta),
\]
 shows that $\mathrm{Tr}_{Y_3/X}(\bm{a}\bm{d})$ is nonzero.
Thus the pair $(\Delta, \bm{a} \bm{d})$ defines an $S$-point of  $A_d$, and we have  constructed a morphism
\[
 \tilde{N}_{(d_1,d_2) } \to A_d.
\]
Again, this is easily seen to descend to the quotient $N_{(d_1,d_2)}$.

Combining the above constructions defines a morphism
\begin{equation}\label{cartesian moduli map 2}
 N_{(d_1,d_2) } \to A_d \times_{ \Sigma_{2d}(Y_3) }  \big( \Sigma_{ d_1}(Y_3) \times_\kk \Sigma_{d_2}(Y_3) \big).
\end{equation}
To show it is an isomorphism we construct the inverse.  An $S$-point of the fiber product consists of a line bundle $\Delta$ on $X_S$  of a degree $d$,
line bundles $\mathcal{K}_1$ and $\mathcal{K}_2$ on $Y_{3S}$ of degrees $d_1$ and $d_2$, global sections 
\[
\xi \in H^0(Y_{3S}, f_3^*\Delta),\quad 
\bm{a} \in H^0(Y_{3S} , \mathcal{K}_1)  , \quad \bm{d}\in H^0(Y_{3S} , \mathcal{K}_2) ,
\]
such that $\mathrm{Tr}_{Y_3/X} (\xi) \in H^0(X_S,\Delta)$ is nonzero,
and an isomorphism $f_3^* \Delta \iso \mathcal{K}_1\otimes \mathcal{K}_2$  identifying $\xi = \bm{a}\otimes\bm{d}$.

Starting from this data we define $(\mathcal{L}_1,\mathcal{L}_2,\mathcal{L}_3,\phi)$ as follows.  Let $\mathcal{L}_1$ be any line bundle on $X_S$.
If we define  $\mathcal{L}_3 = \mathcal{K}_1^{-1} \otimes f_3^*\mathcal{L}_1$, there are canonical isomorphisms
\begin{align*}
\big( \mathcal{L}_3^{\sigma_3}  \otimes \mathcal{K}_2 \big)^{\sigma_3}
& \iso \mathcal{L}_3 \otimes ( \mathcal{K}_1^{-1}  \otimes f_3^*\Delta )^{\sigma_3} \\
& \iso  \mathcal{L}_3 \otimes ( \mathcal{L}_3  \otimes f_3^*\mathcal{L}_1^{-1} \otimes f_3^*\Delta )^{\sigma_3} \\
& \iso  \mathcal{L}_3^{\sigma_3} \otimes ( \mathcal{L}_3  \otimes f_3^*\mathcal{L}_1^{-1} \otimes f_3^*\Delta ) \\
& \iso  \mathcal{L}_3^{\sigma_3} \otimes ( \mathcal{K}_1^{-1}  \otimes  f_3^*\Delta ) \\
& \iso \mathcal{L}_3^{\sigma_3} \otimes \mathcal{K}_2.
\end{align*}
Viewing this as descent data relative to $Y_3/X$, we obtain a line bundle $\mathcal{L}_2$ on $X_S$ endowed with an isomorphism 
$ f_3^* \mathcal{L}_2 \iso  \mathcal{K}_2  \otimes \mathcal{L}_3^{\sigma_3} $.  We now view $\bm{a}$ and $\bm{d}$ as global sections of the line bundles
\[
 \underline{\Hom}( \mathcal{L}_3  ,  f_3^*\mathcal{L}_1     )  \iso \mathcal{K}_1  ,\quad 
 \underline{\Hom}(  \mathcal{L}_3^{\sigma_3}  ,  f_3^*\mathcal{L}_2   )  \iso   \mathcal{K}_2  ,
\]
and use (\ref{more matrix entries}) to define two more global sections $\bm{b}$ and $\bm{c}$.  These four sections define a morphism
\[
\mathcal{L}_3 \oplus  \mathcal{L}_3^{\sigma_3}    \to  f_3^*\mathcal{L}_1 \oplus f_3^*\mathcal{L}_2   
\]
of line bundles on $Y_{3S}$, 
which descends to a morphism $\phi :  f_{3*}\mathcal{L}_3 \to \mathcal{L}_1\oplus \mathcal{L}_2 $ of vector bundles on $X_S$.
The composition
\begin{align*}
f_3^*\Delta & \iso \mathcal{K}_1\otimes\mathcal{K}_2  \\
& \iso \underline{\Hom}( \mathcal{L}_3 \otimes  \mathcal{L}_3^{\sigma_3}  ,  f_3^*\mathcal{L}_1   \otimes  f_3^*\mathcal{L}_2   ) \\
& \iso f_3^*\underline{\Hom}(    \det( f_{3*} \mathcal{L}_3)   ,  \det( \mathcal{L}_1\oplus \mathcal{L}_2)    ).
\end{align*}
is compatible with the descent data on both sides, and descends to an isomorphism 
\[
\Delta  \iso  \underline{\Hom}(    \det( f_{3*} \mathcal{L}_3)   ,  \det( \mathcal{L}_1\oplus \mathcal{L}_2)    )
\]
sending $ \mathrm{Tr}_{Y_3/X} (\xi) \mapsto \det(\phi)$.  Thus $\det(\phi)$ is nonzero.

This construction defines the desired inverse to (\ref{cartesian moduli map 2}).
\end{proof}

\begin{proposition}\label{prop:Nsmooth}
Let $g$ and $g_3$ be the genera of $X$ and $Y_3$, respectively, so that $g_3=2g-1$.
\begin{enumerate}
\item
The vertical morphisms in (\ref{fundamental 2}) labeled $\beta$ and $\otimes$ are finite.   
\item
 If $d\ge 2 g_3-1 $ then $N_{(d_1,d_2)}$ is  smooth over $\kk$ of  dimension $2d -g +1$.
\end{enumerate}
\end{proposition}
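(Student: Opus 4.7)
For part (1), the morphism $\otimes$ is proper (source and target are projective over $\kk$) and has finite fibres (each effective divisor of degree $2d$ on $Y_3$ has only finitely many factorisations into effective pieces of degrees $d_1$ and $d_2$), so it is finite.  Since the square in (\ref{fundamental 2}) is cartesian by Proposition \ref{prop:fundamental 2 cartesian}, its base-change $\beta$ is finite as well.

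For part (2), rather than analysing $\beta$ directly (which is ramified, so $N_{(d_1,d_2)}$ is not a priori smooth over $A_d$), my plan is to re-express $N_{(d_1,d_2)}$ as a fibre of a morphism between smooth varieties.  Because the section of $\mathcal{O}(D_1+D_2)$ with prescribed divisor $D_1+D_2$ is unique up to a non-zero scalar (which is absorbed in the isomorphism class), a point of $N_{(d_1,d_2)}$ is equivalent to a triple $(\Delta,D_1,D_2)$ with $f_3^*\Delta\iso\mathcal{O}(D_1+D_2)$ satisfying the open condition $\mathrm{Tr}\,\xi\neq 0$.  Introducing
\[
\Psi\colon \mathrm{Pic}^d(X)\times \Sigma_{d_1}(Y_3)\times\Sigma_{d_2}(Y_3)\longrightarrow \mathrm{Pic}^0(Y_3),\quad (\Delta,D_1,D_2)\mapsto f_3^*\Delta\otimes\mathcal{O}(-(D_1+D_2)),
\]
the scheme $N_{(d_1,d_2)}$ is then identified with an open subscheme of $\Psi^{-1}(\mathcal{O}_{Y_3})$.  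Since the source of $\Psi$ is smooth of dimension $g+2d$ and the target is smooth of dimension $g_3=2g-1$, proving that $\Psi$ is smooth at every point of $\Psi^{-1}(\mathcal{O}_{Y_3})$ will yield that $N_{(d_1,d_2)}$ is smooth of dimension $g+2d-g_3=2d-g+1$.

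Writing $\delta_i$ for the Abel--Jacobi connecting map for $D_i$, the tangent map of $\Psi$ is
\[
f_3^*-\delta_1-\delta_2\colon H^1(X,\mathcal{O}_X)\oplus\bigoplus_i H^0(D_i,\mathcal{O}(D_i)|_{D_i})\longrightarrow H^1(Y_3,\mathcal{O}_{Y_3}),
\]
so it suffices to prove $\delta_1+\delta_2$ is surjective.  By functoriality of Abel--Jacobi this factors as $\delta_1+\delta_2=\delta_D\circ d\otimes$ with $D=D_1+D_2$, where $\delta_D\colon H^0(D,\mathcal{O}(D)|_D)\to H^1(Y_3,\mathcal{O})$ is surjective with kernel the image of $H^0(Y_3,\mathcal{O}(D))$, because $\deg\mathcal{O}(D)=2d\geq 2g_3-1$ forces $H^1(\mathcal{O}(D))=0$.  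The main obstacle is therefore the behaviour of $d\otimes$ at shared points $p$ of $D_1$ and $D_2$.  A local uniformiser computation shows that at such a $p$ with multiplicities $n_{1,p},n_{2,p}$, the image of $d\otimes$ at $p$ is spanned by $\{t^{\min(n_{1,p},n_{2,p})},\ldots,t^{n_{1,p}+n_{2,p}-1}\}$, so the cokernel at $p$ is the $m_p$-dimensional low-jet subspace of $\mathcal{O}(D)|_{(n_{1,p}+n_{2,p})p}$, where $m_p=\min(n_{1,p},n_{2,p})$.  Setting $c=\sum_p m_p\leq\min(d_1,d_2)\leq d$, the hypothesis $d\geq 2g_3-1$ gives $2d-c\geq 2g_3-1$, whence $H^1(\mathcal{O}(D-\sum_p m_p p))=0$; this is exactly the joint jet-surjectivity $H^0(\mathcal{O}(D))\twoheadrightarrow \bigoplus_p\mathcal{O}(D)|_{m_p p}$.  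Combined with the fact that $d\otimes$ is an isomorphism at points of disjoint support and covers the high-jet complement at each overlap, this yields $\mathrm{Im}(d\otimes)+\ker(\delta_D)=H^0(D,\mathcal{O}(D)|_D)$, and therefore $\delta_1+\delta_2=\delta_D\circ d\otimes$ is surjective, completing the argument.
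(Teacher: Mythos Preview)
Your proof is correct; part (1) matches the paper.  For part (2) you take a genuinely different route from the paper.

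The paper does not analyse the combined map $\Psi$ at all.  Instead it produces a second cartesian square
\[
\xymatrix{
{  N_{(d_1,d_2) }  } \ar[r]\ar[d]  &  {  \Sigma_{d_1}(Y_3)   }  \ar[d] \\
{  \Pic^d_X \times _\kk \Sigma_{d_2}(Y_3)   } \ar[r]  & { \Pic^{d_1}_{Y_3} ,}
}
\]
where the right arrow is the Abel--Jacobi map $(\mathcal{K}_1,\bm{a})\mapsto \mathcal{K}_1$ and the bottom arrow sends $(\Delta,\mathcal{K}_2,\bm{d})\mapsto f_3^*\Delta\otimes\mathcal{K}_2^{-1}$.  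Since $d_1+d_2=2d\ge 2(2g_3-1)$, at least one of $d_1,d_2$ is $\ge 2g_3-1$; taking it to be $d_1$, the right arrow is smooth of relative dimension $d_1-g_3+1$, and base change finishes in one line.

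The paper's trick is to keep $D_1$ and $D_2$ separated, so that only a \emph{single} Abel--Jacobi map $\Sigma_{d_1}(Y_3)\to\Pic^{d_1}_{Y_3}$ governs smoothness, and overlap between $D_1$ and $D_2$ never enters.  Your approach first passes through $\otimes$ and then has to repair the resulting ramification: the local computation of $d\otimes$ at shared points and the bound $2d-c\ge 2g_3-1$ forcing $H^1(\mathcal{O}(D-\sum m_p p))=0$ are exactly what is needed, and the argument goes through.  So your method is sound and self-contained, but the paper's cartesian square bypasses the entire jet analysis.
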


\begin{proof}

For the first  claim,  the morphism in (\ref{fundamental 2}) labeled $\otimes$ can be identified with the canonical morphism
\[
\mathrm{Sym}^{d_1}(Y_3) \times \mathrm{Sym}^{d_2}(Y_3) \to \mathrm{Sym}^{2d}(Y_3) ,
\]
using the obvious analogues of (\ref{symmetric id}).
This is obviously finite.   Proposition \ref{prop:fundamental 2 cartesian}, which asserts that the square in (\ref{fundamental 2}) is cartesian,  therefore implies the finiteness of $\beta$.

The proof of the second claim is similar to \cite[Proposition 3.1(2)]{YZ}.
Examining the proof of Proposition \ref{prop:fundamental 2 cartesian} yields  a cartesian diagram
\begin{equation}\label{deep cartesian}
\xymatrix{
{  N_{(d_1,d_2) }  } \ar[r]\ar[d]  &  {  \Sigma_{d_1}(Y_3)   }  \ar[d] \\
{  \Pic^d_X \times _\kk \Sigma_{d_2}(Y_3)   } \ar[r]  & { \Pic^{d_1}_{Y_3} }  .
}
\end{equation}
Here $\Pic_X^d \subset \Pic_X$ is the substack of line bundles of degree $d$, and  
$ \Pic^{d_1}_{Y_3}$ is defined similarly.
To define the various morphisms, start with a quadruple  $(\mathcal{L}_1,\mathcal{L}_2,\mathcal{L}_3,\phi)\in N_{(d_1,d_2)}(S)$.  
Recall the  line bundles 
\[
\Delta \in \Pic^d(X_S) , \quad \mathcal{K}_1\in \Pic^{d_1}(Y_{3S}) ,  \quad \mathcal{K}_2\in \Pic^{d_2}(Y_{3S}) 
\]
 of (\ref{first det bundle}) and (\ref{K bundles}), which are related by 
 \[
 f_3^* \Delta \iso \underline{\Hom}(\mathcal{L}_3\otimes \mathcal{L}_3^{\sigma_3} ,  f_3^*\mathcal{L}_1 \otimes f_3^*\mathcal{L}_2)
 \iso \mathcal{K}_1\otimes \mathcal{K}_2,
 \]
 and the global  sections $\bm{a}\in H^0(Y_{3S} , \mathcal{K}_1)$ and  $\bm{b}\in H^0(Y_{3S} , \mathcal{K}_2)$.
 The top horizontal arrow sends $(\mathcal{L}_1,\mathcal{L}_2,\mathcal{L}_3,\phi)\mapsto (\mathcal{K}_1,\bm{a})$.  The vertical arrow on the left sends $(\mathcal{L}_1,\mathcal{L}_2,\mathcal{L}_3,\phi)\mapsto (\Delta, \mathcal{K}_2,\bm{d})$, and the bottom horizontal arrow sends 
 $(\Delta, \mathcal{K}_2,\bm{d}) \mapsto f_3^*\Delta \otimes \mathcal{K}_2^{-1}$.  The right vertical arrow sends $(\mathcal{L}_1,\bm{a}) \mapsto \mathcal{L}_1$.

Now assume that   $d \ge 2 g_3-1$.  This implies that either  $d_1 \ge 2 g_3-1$ or $d_2 \ge 2g_3-1$, and  without loss of generality we assume the former inequality.  This implies that the vertical arrow on the right in (\ref{deep cartesian}) is smooth of relative dimension $d_1-g_3+1$, and hence the same is true of the vertical arrow on the left.
As the target of the left arrow is smooth over $\kk$ of dimension $g-1+d_2$, we deduce that $N_{(d_1,d_2)}$ is  smooth of dimension $2d - g+ 1$.
\end{proof}


\subsection{A local system}
\label{ss:orbital local}


Let $d$ be any non-negative integer.  By identifying $\{ \pm 1 \}^d \iso \Aut(Y/Y_3)^d$, the group $\Gamma_d=\{ \pm 1\}^d \semi S_d$ acts on $Y^d$.   As in  (\ref{symmetric id}), there are canonical isomorphisms
\[
\Sigma_d(Y_3) \iso \mathrm{Sym}^d(Y_3)  \iso \Gamma_d \backslash Y^d.
\]

Denote by  $\eta_d: \{\pm 1\}^d \to \{ \pm 1\}$ the product character,  extend it to a character $\eta_d:\Gamma_d\to \{ \pm 1\}$ trivial on $S_d$, and form  the GIT quotient
\[
\Sigma_d(Y/Y_3) =  \mathrm{Ker} (\eta_d) \backslash Y^d.
\]

\begin{proposition}
The canonical morphism 
\begin{equation}\label{sym cover}
\Sigma_d(Y/Y_3) \to \Sigma_d(Y_3)
\end{equation}
 is a finite \'etale double cover.
\end{proposition}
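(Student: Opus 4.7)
The plan is to reduce the claim to showing that the quotient group $\Gamma_d/\ker(\eta_d) \cong \{\pm 1\}$ acts freely on $\Sigma_d(Y/Y_3) = \ker(\eta_d)\backslash Y^d$. If so, then $\Sigma_d(Y/Y_3)\to\Sigma_d(Y_3)$ is the quotient map for a free action of a finite group on a quasi-projective scheme, hence a finite \'etale double cover.

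Before the main computation, I would record that $Y\to Y_3$ is itself a nontrivial \'etale double cover. Indeed, $Y\to X$ is the composition of \'etale morphisms $Y\to Y_1\to X$ (since $Y=Y_1\times_X Y_2$ and $Y_2\to X$ is \'etale), and $Y_3\to X$ is \'etale, so by the cancellation property $Y\to Y_3$ is \'etale. Its degree is $4/2=2$. Since $Y$ is geometrically connected (part of the paper's standing hypotheses), this cover is nontrivial, so the nontrivial deck transformation $\sigma\in \mathrm{Gal}(Y/Y_3)$ acts without fixed points on $Y$. Throughout, I write the action of $\{\pm 1\}^d$ on $Y^d$ using $\sigma$ on each coordinate: the sign $-1$ in position $i$ acts by $\sigma$ on the $i$-th coordinate.

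The key computation is the stabilizer calculation. Suppose $\gamma=(\epsilon,\tau)\in\Gamma_d$ fixes $(y_1,\dots,y_d)\in Y^d$, so $\epsilon_i\cdot y_{\tau^{-1}(i)}=y_i$ for each $i$. For a cycle $(i_1,i_2,\dots,i_\ell)$ of $\tau$, iterating these relations around the cycle gives
\[
y_{i_1}=\bigl(\epsilon_{i_1}\epsilon_{i_\ell}\cdots\epsilon_{i_2}\bigr)\cdot y_{i_1},
\]
and since $\sigma$ is fixed-point free, the product of the $\epsilon$'s over any cycle must equal $+1$. Taking the product over all cycles of $\tau$ yields $\prod_i\epsilon_i=+1$, i.e.\ $\gamma\in\ker(\eta_d)$. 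Thus every $\Gamma_d$-stabilizer is contained in $\ker(\eta_d)$, which is precisely the statement that $\Gamma_d/\ker(\eta_d)$ acts freely on $\ker(\eta_d)\backslash Y^d=\Sigma_d(Y/Y_3)$.

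With freeness in hand, the remaining points are formal: the quotient of a quasi-projective $\kk$-scheme by a free action of a finite group exists and the quotient map is a finite \'etale Galois cover with the given group, so $\Sigma_d(Y/Y_3)\to\Sigma_d(Y_3)$ is finite \'etale of degree $2$. The main obstacle, and the only nontrivial content of the argument, is the cycle-wise analysis above together with the input that $Y\to Y_3$ is a nontrivial \'etale double cover (so that $\sigma$ has no fixed points); everything else is standard quotient formalism.
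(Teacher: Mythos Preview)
Your argument is correct and takes a genuinely different route from the paper's. The paper verifies \'etaleness by a direct computation on completed \'etale local rings: it models the situation locally by $D=\kk^{\mathrm{alg}}[[x]]$ and $E=D\oplus D$, forms the $d$-fold completed tensor products $D_d$ and $E_d$ with their $S_d$- and $\Gamma_d$-actions, and exhibits an explicit idempotent decomposition $E_d^{\ker(\eta_d)}\cong D_d^{S_d}\oplus D_d^{S_d}$ to conclude $B\cong A\oplus A$. Your approach instead reduces everything to the single group-theoretic fact that every $\Gamma_d$-stabilizer of a point of $Y^d$ lies in $\ker(\eta_d)$, proved by the cycle argument, and then invokes the standard principle that a free action of a finite constant group on a quasi-projective scheme yields an \'etale quotient map. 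Your method is cleaner and more conceptual, and it makes transparent why the nontriviality of the cover $Y\to Y_3$ (hence fixed-point freeness of $\sigma$) is exactly the needed input; the paper's idempotent calculation, on the other hand, is entirely self-contained and avoids appealing to general quotient formalism, which may be why the authors preferred it.
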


\begin{proof}
The finiteness claim is clear.  \'Etaleness can be verified on the level of completed \'etale local rings.  Accordingly, let $A$ be the completed \'etale local ring  at a point of $\Sigma_d(Y_3) ( \kk^{\mathrm{alg}})$, and let $B$ be the product of the completed \'etale local rings at all points
of $\Sigma_d(Y/Y_3)(\kk^{\mathrm{alg}})$ above it.  We now have a cartesian diagram
\[
\xymatrix{
{ \Spec(B) } \ar[r]  \ar[d]  & {  \Sigma_d(Y/Y_3) } \ar[d]  \\
{ \Spec(A) } \ar[r]   &  {  \Sigma_d(Y_3),} 
}
\]
and it suffices to prove that $B\iso A\oplus A$ as  $A$-algebras.

Set $D= \kk^{\mathrm{alg}}[[x]]$, and consider the $D$-algebra $E=D\oplus D$.  
Let $D_d$ and $E_d$ be the $d$-fold completed tensor products  (over $\kk^{\mathrm{alg}}$) of $D$ and $E$ with themselves.  
These are complete local $\kk$-algebras 
carrying actions of $S_d$ and $\Gamma_d$, respectively, and $D_d^{S_d} = E_d^{\Gamma_d}$.
As $Y\to Y_3$ is an \'etale double cover of smooth curves, we may identify $A\to B$ with 
\[
D_d^{S_d}  \to E_d^{\mathrm{Ker}(\eta_d)}.
\]

Label the orthogonal idempotents in $E$ as $e_+, e_-\in E$.  Each tuple of signs $x=(x_1,\ldots, x_d) \in \{ \pm 1\}^d$ determines an idempotent
\[
e_x = e_{x_1} \otimes \cdots \otimes e_{x_d} \in E_d.
\]
Each of the orthogonal idempotents 
\[
f_+  = \sum_{\substack{    x\in \{\pm 1\}^d \\ \eta_d(x) = 1   } } e_x ,
\qquad 
f_-  = \sum_{\substack{    x\in \{\pm 1\}^d \\ \eta_d(x) = -1   } } e_x 
\]
in $E_d$  is fixed by the action of the subgroup $\mathrm{Ker}(\eta_d) \subset \Gamma_d$, 
and the composition  $D_d \to E_d \to f_\pm E_d$ restricts to an isomorphism 
\[
D_d^{S_d} \iso ( f_\pm E_d) ^{\mathrm{ker}(\eta_d)}.
\]
Thus we obtain the desired decomposition
\[
E_d^{\mathrm{ker}(\eta_d)} = ( f_+ E_d) ^{\mathrm{ker}(\eta_d)}  \oplus  (f_- E_d)^{\mathrm{ker}(\eta_d)}  \iso D_d^{S_d} \oplus D_d^{S_d}.
\]
\end{proof}

Choose any prime $\ell$ different from the characteristic of $\kk$.  
The \'etale double cover (\ref{sym cover}) determines a quadratic character of the \'etale fundamental group of $\Sigma_d(Y_3)$,
which then determines a rank one  \'etale local system $L_d$ of $\Q_\ell$-vector spaces on $\Sigma_d(Y_3)$.

Now fix $d_1,d_2\ge 0$,  and let 
\[
j: N_{(d_1,d_2)} \to \Sigma_{d_1}(Y_3) \times_\kk \Sigma_{d_2}(Y_3)
\]
be as in (\ref{fundamental 2}).  Define a  rank one local system
\begin{equation}\label{local system}
 L_{(d_1,d_2)}  =  j^* (L_{d_1} \boxtimes \Q_\ell ) 
 \end{equation}
 of $\Q_\ell$-vector spaces on $N_{(d_1,d_2)}$.

\begin{proposition}\label{prop:etale fiber action}
Suppose  $z=(\mathcal{L}_1,\mathcal{L}_2,\mathcal{L}_3,\phi)\in N_{(d_1,d_2)} (\kk)$
is a $\kk$-point, and $\bar{z} \to z$ is a geometric point above it.
Recalling the character 
\[
\Pic(Y_3) \iso K_3^\times \backslash \A_3^\times / \mathbb{O}_3^\times \map{\eta} \{ \pm 1\}
\]
of Lemma \ref{lem:eta}, the Frobenius $\Frob_z\in \Aut(\bar{z}/z)$ acts on  $L_{(d_1,d_2) , \bar{z}}$ via the scalar  $\eta(\mathcal{L}_3)$.
\end{proposition}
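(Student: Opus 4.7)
\medskip

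\noindent\textbf{Proof proposal.} Since $L_{(d_1,d_2)} = j^*(L_{d_1}\boxtimes\Q_\ell)$, and the second factor is the constant sheaf, the Frobenius action on $L_{(d_1,d_2),\bar{z}}$ is the Frobenius action on $L_{d_1,\bar{w}}$, where $w\in\Sigma_{d_1}(Y_3)(\kk)$ is the image of $z$ under the composition
\[
N_{(d_1,d_2)} \to \Sigma_{d_1}(Y_3)\times_\kk\Sigma_{d_2}(Y_3) \to \Sigma_{d_1}(Y_3).
\]
The plan is to carry this out in three steps.

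First, I will identify $w$ explicitly. Tracing through the proof of Proposition \ref{prop:fundamental 2 cartesian}, the image of $z=(\mathcal{L}_1,\mathcal{L}_2,\mathcal{L}_3,\phi)$ in $\Sigma_{d_1}(Y_3)$ is the pair $(\mathcal{K}_1,\bm{a})$, where $\mathcal{K}_1=\underline{\Hom}(\mathcal{L}_3,f_3^*\mathcal{L}_1)$ and $\bm{a}$ is the global section of $\mathcal{K}_1$ arising as the top-left entry of the pullback of $\phi$ to $Y_{3}$. In particular, $w$ corresponds to the effective divisor $\mathrm{div}(\bm{a})$ on $Y_3$, whose associated line bundle is $\mathcal{K}_1 = f_3^*\mathcal{L}_1\otimes\mathcal{L}_3^{-1}$.

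Second, I will compute the Frobenius action on stalks of $L_d$ on $\Sigma_d(Y_3)$ for arbitrary $d$. By construction $L_d$ is obtained from the finite \'etale double cover $\Sigma_d(Y/Y_3)\to\Sigma_d(Y_3)$, realized as $\ker(\eta_d)\backslash Y^d \to \Gamma_d\backslash Y^d$. A geometric point above a $\kk$-rational divisor $D=\sum y_i$ corresponds to a choice of lift of each $y_i$ to $Y(\bar{\kk})$; Frobenius permutes these lifts, and the resulting element of $\Gamma_d/\ker(\eta_d)\iso\{\pm 1\}$ records the parity of the number of $y_i$ that are inert in $Y/Y_3$. By class field theory, this parity is exactly the value of the character $\eta$ of Lemma \ref{lem:eta} on the line bundle $\mathcal{O}(D)\in\Pic(Y_3)$. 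So $\Frob_w$ acts on $L_{d,\bar{w}}$ via the scalar $\eta(\mathcal{K}_1)$.

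Third, I will simplify $\eta(\mathcal{K}_1)=\eta(f_3^*\mathcal{L}_1)\cdot\eta(\mathcal{L}_3)^{-1}$. Since $\eta$ is quadratic, $\eta(\mathcal{L}_3)^{-1}=\eta(\mathcal{L}_3)$, so it suffices to show $\eta(f_3^*\mathcal{L}_1)=1$. The character $\eta:\A_3^\times\to\{\pm 1\}$ satisfies $\eta(x)=\chi_1(\Nm_{K_3/F}(x))$ by (the proof of) Lemma \ref{lem:eta}; if $x$ comes from $\A^\times$ via the inclusion $F\hookrightarrow K_3$ then $\Nm(x)=x^2$, and so $\eta(x)=\chi_1(x^2)=1$. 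The pullback map $\Pic(X)\to\Pic(Y_3)$ is induced by this inclusion on ideles, so $\eta(f_3^*\mathcal{L}_1)=1$, which yields the desired formula $\Frob_z$ acts by $\eta(\mathcal{L}_3)$.

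The main substantive content is step two, identifying the Frobenius action on $L_d$ with the class-field-theoretic character $\eta$; steps one and three are bookkeeping involving the construction of $j$ and elementary manipulations of the norm/pullback maps.
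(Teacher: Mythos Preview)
Your proof is correct and follows essentially the same route as the paper: both reduce to computing the Frobenius action on $L_{d_1}$ at the point $(\mathcal{K}_1,\bm{a})$ with $\mathcal{K}_1=\underline{\Hom}(\mathcal{L}_3,f_3^*\mathcal{L}_1)$, identify that action with $\eta(\mathcal{K}_1)$, and then use Lemma~\ref{lem:eta} to kill the $\eta(f_3^*\mathcal{L}_1)$ factor. Your step two spells out the class-field-theory justification that the paper leaves as ``easy to see from the definitions,'' but otherwise the arguments coincide.
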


\begin{proof}
If  $y=(\mathcal{K} , \bm{a}) \in \Sigma_n(Y_3)(\kk)$ is a $\kk$-point, and $\bar{y} \to y$ is a geometric point above it,
it is easy to see from the definitions that $\Frob_y$ acts on the fiber $L_{n,\bar{y}}$ as $\eta(\mathcal{K})$.   Recalling the construction of the map $j$ from the proof of Proposition \ref{prop:fundamental 2 cartesian}, especially (\ref{K bundles}), it follows that $\Frob_z$ acts on $L_{(d_1,d_2) , \bar{z}}$ by the scalar 
$
\eta(\mathcal{K}_1) =   \eta(\mathcal{L}_3) \eta(f_3^*\mathcal{L}_1).
$
The proof of Lemma \ref{lem:eta} shows that $\eta(f_3^*\mathcal{L}_1) = \chi_1(\mathcal{L}_1^{\otimes 2}) =1$, completing the proof.
\end{proof}


\subsection{Geometric interpretation of orbital integrals}
\label{ss:geometric orbital}


Let $D$ be an effective  divisor on $X$ of degree $d$.   The constant function $1$ defines a global section of $\co_X(D)$, defining a point
$(\co_X(D), 1) \in \Sigma_d(X)(\kk)$.  We define $A_D$ as the fiber product
\[
\xymatrix{
{ A_D } \ar[rr] \ar[d] & &  { A_d } \ar[d] \\
{ \Spec(\kk) } \ar[rr]^{  ( \co_X(D) ,1 )  } & & { \Sigma_d(X) .}
}
\]

\begin{proposition}\label{prop:invariant domain}
There is a canonical bijection
\[
A_D(\kk) \iso  \left\{ \xi \in K_3 : \begin{array}{c} \Tr_{K_3/F}(\xi) =1 \\ \mathrm{div}(\xi) + f_3^*D \ge 0   \end{array}  \right\} .
\]
\end{proposition}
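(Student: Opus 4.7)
The plan is to unwind the definitions in a purely formal way; there is no real obstacle, only careful bookkeeping.

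First, by the construction of $A_D$ as a fiber product, a $\kk$-point of $A_D$ consists of a pair $(\Delta,\xi) \in A_d(\kk)$ together with an isomorphism of pairs $(\Delta, \mathrm{Tr}_{Y_3/X}(\xi)) \cong (\co_X(D), 1)$. Because the sections in question are nonzero, such an isomorphism of line bundles (if it exists) is unique: an automorphism of $\co_X(D)$ is a nonzero scalar, and demanding that it send $1$ to $1$ forces it to be the identity. Thus $A_D(\kk)$ may be identified with the set of sections $\xi \in H^0(Y_3, f_3^*\co_X(D))$ satisfying $\mathrm{Tr}_{Y_3/X}(\xi) = 1 \in H^0(X, \co_X(D))$.

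Next, the standard correspondence between global sections of $\co(D')$ on a smooth projective curve and rational functions with bounded poles identifies
\[
H^0(Y_3, f_3^*\co_X(D)) \;\iso\; \{\xi \in K_3 : \mathrm{div}(\xi) + f_3^*D \ge 0\}.
\]
Under this identification, the trace map on global sections $\mathrm{Tr}_{Y_3/X} : H^0(Y_3, f_3^*\co_X(D)) \to H^0(X, \co_X(D))$ is exactly the sum $\xi \mapsto \xi + \xi^{\sigma_3} = \mathrm{Tr}_{K_3/F}(\xi)$, since the trace of the structure sheaf is defined so as to be compatible with the algebraic trace on rational functions. The section $1 \in H^0(X, \co_X(D))$ corresponds to the constant rational function $1 \in F$.

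Combining these two steps, the condition that $(\Delta,\xi)$ lifts to an element of $A_D(\kk)$ becomes the pair of conditions $\mathrm{div}(\xi)+f_3^*D\ge 0$ and $\mathrm{Tr}_{K_3/F}(\xi)=1$, giving the claimed bijection. The main point to verify is that there is no ambiguity from automorphisms of the pair $(\co_X(D),1)$, which we handled above; everything else is a direct translation between algebraic and sheaf-theoretic descriptions of $K_3$-rational data.
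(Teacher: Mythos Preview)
Your proof is correct and follows the same approach as the paper's own proof, which also unwinds the fiber product definition and identifies $H^0(Y_3, f_3^*\co_X(D))$ with rational functions in $K_3$ having poles bounded by $f_3^*D$. You add the explicit observation that the isomorphism of pairs $(\Delta,\Tr_{Y_3/X}(\xi)) \cong (\co_X(D),1)$ is unique, which the paper leaves implicit; this is a welcome clarification but not a different argument.
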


\begin{proof}
A $\kk$-point of $A_D$ consists of a line bundle $\Delta \in \Pic^d(X)$ and a global section $\xi \in H^0(Y_3,f_3^*\Delta)$, together with  an 
isomorphism $\Delta\iso \co_X(D)$ identifying $\Tr_{Y_3/X}(\xi)=1$.  In other words, a trace $1$ element of
\[
 H^0(Y_3, f_3^*\co_X(D) ) = H^0(Y_3, \co_{Y_3}(f_3^*D) ) = 
 \left\{ \xi \in K_3 :  \mathrm{div}(\xi) + f_3^*D \ge 0   \right\} .
\]
\end{proof}

Recall the canonical $\Q$-basis $\{f_D\} \subset \mathscr{H}$ indexed by effective divisors $D\in \mathrm{Div}(X)$. 
We are now ready to give a geometric interpretation of the orbital integral $\J(\xi,f_D,s)$ appearing in (\ref{J decomp}).

 Using the homeomorphism of Lemma \ref{lem:coset transfer}, we change the point of view and regard $f_D$  as a compactly supported function 
\begin{equation}\label{D hecke}
f_D : U_0 \backslash J(\A) / U_3 \to \Q.
\end{equation}
To make this more explicit, define a free  rank one $\mathbb{O}$-module
\[
\Delta =  \Hom\big( \mathrm{det}(\mathbb{O}_3) , \mathrm{det}( \mathbb{O}_0 ) \big),
\]
and define $\tilde{\Omega}_D$ as the set of all  
$
\phi \in \tilde{J}(\A)= \mathrm{Iso} ( \A_3  , \A_0) 
$ 
such that  
\begin{itemize}
\item
$\phi( \mathbb{O}_3)  \subset \mathbb{O}_0 $, 
\item
 $\det(\phi)  \in \Delta$ generates  $ a_D \Delta$ as an $\mathbb{O}$-module.
\end{itemize}
Here $a_D \in \A^\times/ \mathbb{O}^\times \iso \mathrm{Dix}(X)$ represents the divisor $D$.  Now let $\Omega_D$ be the image of $\tilde{\Omega}_D$ under the quotient map  $\tilde{J}(\A) \to J(\A)$.  The  function (\ref{D hecke})  is the characteristic function of $\Omega_D$.

Recall the local system $L_{(d_1,d_2)}$ of (\ref{local system}).
Its pushforward   via the finite morphism  $\beta$  of (\ref{fundamental 2}) is a  constructible sheaf on $A_d$.
  The following result relates  this sheaf to the distribution (\ref{J decomp}).

\begin{proposition}\label{prop:geometric orbital}
Fix  $\xi\in K_3$ with $\Tr_{K_3/F}(\xi)=1$,  and view $A_D(\kk) \subset K_3$ using Proposition $\ref{prop:invariant domain}$.
\begin{enumerate}
\item If $\xi \not\in A_D(\kk)$  then $\J( \xi , f_D,s) = 0$.
\item If  $\xi \in A_D(\kk)$  then 
 \[
 \J( \xi , f_D,s) =
\sum_{  \substack{  d_1,d_2 \ge 0  \\ d_1+d_2=2d }}
 q^{  (d_1-d_2) s }  \cdot
 \mathrm{Trace} \big(\Frob_\xi  ; \,   \big( \beta_* L_{ (d_1,d_2) } \big)_{\bar \xi} \big),
\]
where $\bar{\xi}$ is a geometric point above $\xi:\Spec(\kk) \to A_D$.
\end{enumerate}       
\end{proposition}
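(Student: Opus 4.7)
The strategy is the standard relative trace formula unfolding: rewrite the regularized orbital integral as a weighted count on a moduli space, then invoke the Grothendieck sheaf-function dictionary to express the count as a trace of Frobenius.

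First I would unfold the regularized orbital integral. Fix any representative $\gamma \in J(F)$ of the double coset with $\inv(\gamma) = \xi$. Since the stabilizer in $T_{0F} \times T_{3F}$ is trivial by Lemma \ref{lem:S trivial}, the decomposition (\ref{J decomp}) combined with the argument of Lemma \ref{lem:finiteness 2} yields
\[
\J(\xi, f_D, s) = \int_{T_0(\A) \times T_3(\A)} f_D(t_0^{-1}\gamma t_3) \, |t_0|^{2s} \eta(t_3) \, dt_0 \, dt_3,
\]
which is a finite sum (a Laurent polynomial in $q^s$). Because $f_D = \mathbf{1}_{\Omega_D}$ is $U_0 \times U_3$-bi-invariant and each $T_i(\mathbb{O})$ has unit volume, this collapses to a sum
\[
\J(\xi, f_D, s) = \sum_{(t_0,t_3)} |t_0|^{2s} \eta(t_3)
\]
indexed by pairs $(t_0, t_3) \in \big(T_0(\A)/T_0(\mathbb{O})\big) \times \big(T_3(\A)/T_3(\mathbb{O})\big)$ satisfying $t_0^{-1} \gamma t_3 \in \Omega_D$.

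Next I would translate each contributing pair into a $\kk$-point of some $N_{(d_1, d_2)}$ lying over $\xi \in A_D$. Choosing lifts $\tilde t_0 = \mathrm{diag}(\alpha_1, \alpha_2) \in \tilde T_0(\A)$ and $\tilde t_3 \in \tilde T_3(\A) = \A_3^\times$, unique up to a common $\A^\times$-scalar, the integrality condition $\tilde t_0^{-1} \gamma \tilde t_3(\mathbb{O}_3) \subset \mathbb{O}_0$ together with the idele classes of $\alpha_1$, $\alpha_2$, $\tilde t_3$ produces line bundles $\mathcal{L}_1, \mathcal{L}_2 \in \Pic(X)$ and $\mathcal{L}_3 \in \Pic(Y_3)$ and a morphism $\phi \colon f_{3*}\mathcal{L}_3 \to \mathcal{L}_1 \oplus \mathcal{L}_2$ whose generic fiber is $\gamma$. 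Running the constructions of Proposition \ref{prop:fundamental 2 cartesian} in reverse shows that this quadruple is a $\kk$-point of $N_{(d_1,d_2)}$, and the determinant normalization built into $\Omega_D$ together with the identity $\newdet(\gamma) = \xi\cdot\det(\gamma)$ forces its image to be $\xi \in A_D(\kk)$. The $\Pic_X$-quotient in the definition of $N_{(d_1,d_2)}$ absorbs exactly the scaling ambiguity in the lifts $\tilde t_0, \tilde t_3$, so the resulting map is a bijection, with inverse produced by generic trivialization against $\gamma$. Part (1) is now immediate: if $\xi \notin A_D(\kk)$, no such quadruple exists, so no pair $(t_0, t_3)$ contributes and the integrand vanishes identically.

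For part (2) it remains to match the two weights. The integer pair $(d_1, d_2)$ is read off from $d_i = 2\deg \mathcal{L}_i - \deg \mathcal{L}_3$, and under the idele/line-bundle dictionary (with the absolute value normalization of \S\ref{ss:notation}) one computes $|t_0|^{2s} = q^{(d_1 - d_2)s}$. Likewise, $t_3$ represents the class of $\mathcal{L}_3$ in $\Pic(Y_3)/f_3^*\Pic(X)$, so $\eta(t_3) = \eta(\mathcal{L}_3)$ by Lemma \ref{lem:eta}; and by Proposition \ref{prop:etale fiber action}, this scalar is precisely the action of $\Frob_z$ on the rank one stalk $L_{(d_1,d_2),\bar z}$ at the corresponding point $z \in N_{(d_1,d_2)}(\kk)$. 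Regrouping the sum by $(d_1,d_2)$ and using that for the finite morphism $\beta$ the stalk $(\beta_* L_{(d_1,d_2)})_{\bar\xi}$ decomposes as a direct sum over $\beta^{-1}(\bar\xi)$ in which only $\kk$-rational points survive in the trace of Frobenius, each contributing $\eta(\mathcal{L}_{3,z})$, gives the claimed identity. The main obstacle is the bookkeeping: carefully verifying that the adelic integrality condition cut out by $\Omega_D$ matches the moduli conditions of Proposition \ref{prop:fundamental 2 cartesian} at every place, and that the $\G_m$-quotients, sign conventions for degrees, and the absolute value normalization conspire to produce the exponent $q^{(d_1-d_2)s}$ rather than its inverse.
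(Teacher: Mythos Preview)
Your proposal is correct and follows essentially the same approach as the paper: unfold the orbital integral via trivial stabilizer, set up an adelic-to-geometric dictionary identifying contributing $(t_0,t_3)$-pairs with $\kk$-points of the fiber $N_{(d_1,d_2),\xi}$, match the weights $|t_0|^{2s}$ and $\eta(t_3)$ with $q^{(d_1-d_2)s}$ and the Frobenius action on $L_{(d_1,d_2)}$, and conclude by Grothendieck--Lefschetz. The paper makes the bijection step explicit by parametrizing contributing pairs as divisor triples $(E_1,E_2,E_3)$ with $\mathcal{L}_i=\mathcal{O}(-E_i)$ and isolating the inverse construction as a separate lemma, but this is exactly the bookkeeping you flag at the end.
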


\begin{proof}
Recalling (\ref{orbit invariant}), let $\gamma \in T_0(F) \backslash J(F) / T_3(F)$ be the unique element with $\xi=\inv(\gamma)$, and fix a lift
 \[ 
 \gamma\in \tilde{J}(F)=\mathrm{Iso}(K_3,K_0).
 \]  
 We view  $\gamma$ as a rational map
\[ 
f_{3*}\co_{Y_3}  \map{\gamma}   f_{0*}\co_{Y_0} = \co_X\oplus \co_X
\]
of rank two vector bundles.
Each triple
\begin{equation}\label{triple divisors}
(E_1,E_2,E_3) \in \Div(X) \backslash \big( \Div(X) \times \Div(X) \times \Div(Y_3) \big)
\end{equation}
determines  a rational map $\phi_\gamma$ by the commutativity of 
\[
\xymatrix{
{  f_{3*}\co_{Y_3}(-E_3)}   \ar@{-->}[d] \ar@{-->}[rr]^{\phi_{\gamma}} & &  {  \co_X(-E_1) \oplus \co_X(-E_2) } \ar@{-->}[d] \\
{  f_{3*}\co_{Y_3} } \ar@{-->}[rr]_{ \gamma} &   &  { \co_X \oplus \co_X }. 
}
\]
Here the vertical arrows are the canonical rational maps.

Denote by $\mathfrak{N}_{D,\gamma}$ the set of all triples (\ref{triple divisors}) such that $\phi_\gamma$ is a morphism of vector bundles (as opposed to just  a rational map) with   $\mathrm{div} (  \det(\phi_\gamma) ) = D$.  
If the canonical bijection
\begin{eqnarray*}\lefteqn{
\A^\times \backslash (\tilde{T}_0 \times \tilde{T}_3)(\A) / (\tilde{T}_0 \times \tilde{T}_3)(\mathbb{O})      } \\
&  \iso &  \Div(X) \backslash \big(  \Div(Y_0) \times \Div(Y_3) \big) \\
& \iso & \Div(X) \backslash \big(  \Div(X) \times \Div(X) \times \Div(Y_3) \big)
\end{eqnarray*}
sends $(t_0,t_3) \mapsto (E_1,E_2,E_3)$,  then
\begin{equation}\label{characteristic hecke}
\tilde{f}_D(t_0^{-1} \gamma t_3) = 
\begin{cases}
1 &  \mbox{if } (E_1,E_2,E_3) \in \mathfrak{N}_{D,\gamma}    \\
0 & \mbox{otherwise.}
\end{cases}
\end{equation}
Here, as in  (\ref{D hecke}), we define $\tilde{f}_D$ as the characteristic function of the subset 
\[
\tilde{\Omega}_D = \prod_{x\in |X|} \tilde{\Omega}_{D,x} \subset \tilde{J}(\A).
\]

Any triple $(E_1,E_2,E_3) \in \mathfrak{N}_{D,\gamma}$ defines a quadruple
\begin{equation}\label{quadruple}
(\co_X( -E_1) , \co_X(-E_2) , \co_{Y_3}(-E_3) , \phi_\gamma ) \in N_{(d_1,d_2)}(\kk)
\end{equation}
where $d_i = \deg(E_3)-2\deg(E_i)$.  The image of this quadruple under
\[
N_{(d_1,d_2)}(\kk) \to A_d(\kk) \map{\Tr} \Sigma_d(X)(\kk)
\]  
is the pair $(\Delta,\zeta)$ defined by  the line bundle
\[
\Delta = \underline{\Hom} \big( \mathrm{det} ( f_{3*} \co_{Y_3}(-E_3) ) , \det (\co_X( -E_1) \oplus \co_X(-E_2) ) \big)
\]
on $X$ and its global section  $\zeta = \det(\phi_\gamma)$ with divisor $D$.
In other words,  the image of (\ref{quadruple}) under  $N_{(d_1,d_2)}(\kk) \to A_d(\kk)$ lies in the fiber over 
\[
(\Delta,\zeta) \iso (\co_X(D),1) \in \Sigma_d(X)(\kk),
\] 
and so defines an element of $A_D(\kk) \subset K_3$.  Tracing through the definitions, this element is precisely $\xi$, and hence $\xi \in A_D(\kk)$.

 In particular, if  $\xi \not\in A_D(\kk)$ then $\mathfrak{N}_{D,\gamma} = \emptyset$.
It then follows from (\ref{characteristic hecke}) that
\[
\tilde{f}_D(t_0^{-1} \gamma t_3)=0  \mbox{ for all } (t_0,t_3) \in \tilde{T}_0(\A) \times \tilde{T}_3(\A),
\] 
which in turn implies 
\[
f_D(t_0^{-1} \gamma t_3)=0  \mbox{ for all } (t_0,t_3) \in T_0(\A) \times T_3(\A).
\] 
The relation $\J(\gamma, f_D,s)=\J(\xi, f_D,s) = 0$ now follows directly from the definitions of \S \ref{ss:orbital notation}, giving the first claim.
   
 
From now on we assume that $\xi \in A_D(\kk)$, and form the fiber product
\begin{equation}\label{xi fiber}
\xymatrix{
{  N_{(d_1,d_2) ,\xi } } \ar[r]\ar[d] &  {  N_{(d_1,d_2)}  } \ar[d]^\beta \\
{ \Spec(\kk) }  \ar[r]_{\xi} & { A_d .}
}
\end{equation}
It follows from what was said above that
\[
(E_1,E_2,E_3)  \mapsto (\co_X( -E_1) , \co_X(-E_2) , \co_{Y_3}(-E_3) , \phi_\gamma )  
\]
defines a function
\begin{equation}\label{fiber param}
 \mathfrak{N}_{D,\gamma} \to \bigsqcup_{ \substack{  d_1,d_2 \ge 0 \\   d_1+d_2=2d} }  N_{(d_1,d_2) ,\xi } (\kk) .
\end{equation}

We interrupt the proof of Proposition \ref{prop:geometric orbital} for a lemma.

\begin{lemma}\label{lem:split fiber bijection}
The function (\ref{fiber param})  is a bijection.
\end{lemma}

\begin{proof}
To construct the inverse, start with a quadruple
\[
(\mathcal{L}_1,\mathcal{L}_2,\mathcal{L}_3,\phi) \in  \bigsqcup_{ \substack{  d_1,d_2 \ge 0 \\   d_1+d_2=2d} }  N_{(d_1,d_2) ,\xi } (\kk) .
\]

For $i\in \{1,2,3\}$, let  $V_i$ be the space of rational sections of $\mathcal{L}_i$, and set $V_0=V_1\oplus V_2$.  We view $V_0$ and $V_3$ as rank one modules over $K_0$ and $K_3$, respectively.  The morphism $\phi: f_{3*} \mathcal{L}_3 \to \mathcal{L}_1\oplus \mathcal{L}_2$ of vector bundles induces an $F$-linear isomorphism $\phi : V_3\to V_0$. 

Start by choosing any $K_0$-linear isomorphism $g_0 : V_0 \iso K_0$, and any $K_3$-linear isomorphism $g_3 : V_3 \iso K_0$.  These choices determine a bijection
\[
\mathrm{Iso}(V_3,V_0) \iso \mathrm{Iso}( K_3,K_0) ,
\]
and it is easy to see that the induced bijection
\begin{equation}\label{basis bijection}
K_0^\times \backslash \mathrm{Iso}(V_3,V_0) / K_3^\times \iso  K_0^\times \backslash \mathrm{Iso}( K_3,K_0)  / K_3^\times 
\end{equation}
does not depend on the initial choices of $g_0$ and $g_3$.  Let $\gamma'$ denote the image of $\phi$ under this bijection.

We claim that  $\gamma'=\gamma$.  This follows by unwinding the construction of the morphism $N_{(d_1,d_2)} \to A_d$, and comparing with the 
constructions of  \S \ref{ss:split coset invariant}.  As our initial triple $(\mathcal{L}_1,\mathcal{L}_2,\mathcal{L}_3,\phi)$ lies in the fiber (\ref{xi fiber}), the invariant
\[
K_0^\times \backslash \mathrm{Iso}( K_3,K_0)  / K_3^\times  \map{\inv}  K_3 
\]
of (\ref{new split invariant}) satisfies $\xi=\inv(\gamma')$.  On the other hand, we defined $\xi = \inv(\gamma)$.  
As we are assuming that $K_3$ is a field,  Proposition \ref{prop:split coset regular} implies that $\inv$ is a bijection, and so 
 $\gamma'=\gamma$.

The fact that (\ref{basis bijection}) identifies $\phi$ with $\gamma$ implies that we may choose the isomorphisms $g_0$ and $g_3$ so that the diagram 
\[
\xymatrix{
{ V_3  } \ar[r]^{\phi}  \ar[d]_{g_3}  &  {  V_0   }   \ar[d]^{g_0} \\
{  K_3  }  \ar[r]_{\gamma}   &  {  K_0   }
}
\]
commutes.  Moreover, this commutativity determines $(g_0,g_3)$ uniquely up to simultaneous rescaling by $F^\times$.  Indeed, any other pair making the diagram commute would have the form $(t_0 g_0 , t_3 g_3)$ for some $(t_0,t_3) \in K_0^\times \times K_3^\times$ satisfying 
$t_0^{-1} \gamma t_3=\gamma$.  The image of this pair under
\[
K_0^\times \times K_3^\times =\tilde{T}_0(F) \times \tilde{T}_3(F) \to T_0(F) \times T_3(F)
\]
lies in the subgroup $S_\gamma$ of (\ref{S def}), which is trivial by Lemma \ref{lem:S trivial}.  Thus $t_0,t_3\in F^\times$, and the condition 
$t_0^{-1} \gamma t_3=\gamma$ implies that $t_0=t_3$.

The $K_0$-linearity of $g_0$ allows us to rewrite the above diagram as
\[
\xymatrix{
{ V_3  } \ar[r]^{\phi}  \ar[d]_{g_3}  &  {  V_1 \oplus V_2   }   \ar[d]^{g_1 \oplus g_2} \\
{  K_3  }  \ar[r]_{\gamma}   &  {  F\oplus F   }
}
\]
for isomorphisms $g_1$ and $g_2$.  Let $(s_1,s_2,s_3)\in V_1\oplus V_2\oplus V_3$ be defined by $g_i(s_i)=1$.
These three vectors are (tautologically) rational sections of $\mathcal{L}_1$, $\mathcal{L}_2$, and $\mathcal{L}_3$, respectively, and we at last define
\[
(E_1,E_2,E_3) = (  - \mathrm{div}(s_1) ,  - \mathrm{div}(s_2),  - \mathrm{div}(s_3)) .
\]

We leave it as an exercise to the reader to verify that this construction is inverse to (\ref{fiber param}), completing the proof of the lemma.
\end{proof}

Now we complete the proof of Proposition \ref{prop:geometric orbital}.
First use  Lemma \ref{lem:S trivial} to rewrite
\begin{align*}
\J(\xi, f_D,s) 
& =  \int_{   ( T_0 \times T_3)(\A)} 
f_D(t_0^{-1} \gamma t_3 )\,  | t_0 |^{2s}  \eta(t_3) \, dt_0 \, dt_3  \\
& =  \int_{  \A^\times  \backslash (\tilde{T}_0 \times \tilde {T}_3)(\A)} 
\tilde{f}_D(t_0^{-1} \gamma t_3 )\,  | t_0 |^{2s} \eta(t_3) \, dt_0 \, dt_3 .
\end{align*}
Using (\ref{characteristic hecke}) this may be rewritten as
\[
\J(\xi, f_D,s) = \sum_{  (E_1,E_2,E_3) \in \mathfrak{N}_{D,\gamma}  } q^{-\deg(E_1-E_2) 2 s } \eta ( E_3 ),
\]
and Lemma \ref{lem:split fiber bijection} allows us to rewrite this as
\begin{align*}
\J(\xi, f_D,s) 
 &=   
\sum_{ \substack{  d_1,d_2 \ge 0 \\   d_1+d_2=2d \\  ( \mathcal{L}_1,\mathcal{L}_2, \mathcal{L}_3, \phi) \in N_{(d_1,d_2) ,\xi } (\kk)  } }   
q^{  ( 2 \deg(\mathcal{L}_1)  -  2 \deg(\mathcal{L}_2) )  s}\eta(\mathcal{L}_3 )\\
&=
 \sum_{ \substack{  d_1,d_2 \ge 0 \\   d_1+d_2=2d \\  ( \mathcal{L}_1,\mathcal{L}_2, \mathcal{L}_3, \phi) \in N_{(d_1,d_2) ,\xi } (\kk)  } }   
 q^{  (d_1-d_2) s }   \eta(\mathcal{L}_3 ).
\end{align*}
Combining the Grothendieck-Lefschetz trace formula with Proposition \ref{prop:etale fiber action} shows that
\[
 \sum_{( \mathcal{L}_1,\mathcal{L}_2, \mathcal{L}_3, \phi) \in N_{(d_1,d_2) ,\xi } (\kk)  } 
 \eta(\mathcal{L}_3 )
 =
 \mathrm{Trace} \Big(\Frob_\xi  ; \,   \big(\mathbf{R} \beta_* L_{ (d_1,d_2) } \big)_{\bar \xi}
 \Big).
\]
As the morphism $\beta$ is finite, the complex $\mathbf{R} \beta_* L_{ (d_1,d_2) }$ is supported in degree $0$,
completing the proof.
\end{proof}


\section{Intersection theory and moduli spaces of shtukas}
\label{s:intersection theory}


Fix an  integer $r\ge 0$, and an $r$-tuple $\mu=(\mu_1,\ldots, \mu_r) \in \{ \pm 1\}^r$ satisfying  the parity condition
$\sum_{i=1}^r \mu_i =0.$  In particular,  $r$ is even.


\subsection{Shtukas and Heegner-Drinfeld cycles}
\label{ss:cycles}


We rapidly recall some notation from \cite{YZ}.
Recall that $G_0\iso \PGL_{2/X}$.

Let  $\Bun_{G_0}$ be the Artin stack parametrizing $G_0$-torsors on $X$, and let 
 $\Hk_{G_0}^\mu$  be the Hecke stack parameterizing $G_0$-torsors on $X$ with $r$ modifications of type $\mu$.  
 It comes equipped with morphisms
 \[
 p_0,\ldots, p_r : \Hk_{G_0}^\mu \to \Bun_{G_0}
 \]
 and $p_X : \Hk_{G_0}^\mu \to  X^r$.  For the definitions, see \cite[\S 5.2]{YZ}.

Define the moduli stack of $G_0$-Shtukas of type $\mu$  by the cartesian diagram
\[
\xymatrix{\Sht^{\mu}_{G_0}\ar[d]\ar[rr] && \Hk^{\mu}_{G_0}\ar[d]^{(p_{0},p_{r})}\\
\Bun_{G_0}\ar[rr]^{(\mathrm{id},\Fr)} && \Bun_{G_0}\times \Bun_{G_0} .}
\]  
It is a Deligne-Mumford stack, locally of finite type over $\kk$, and the  morphism 
\[
\pi_{G_0} : \Sht_{G_0}^\mu \to X^r
\]
induced by $p_X$  is  separated and smooth of relative dimension $r$.

Recall that our two \'etale double covers $f_1:Y_1\to X$ and $f_2:Y_2\to X$ determine rank one tori $T_1$ and $T_2$ over $X$.
Fix $i \in \{1,2\}$ and let $\mathrm{Bun}_{T_i}$ be the moduli stack of $T_i$-torsors on $X$. 
Denote by   $\Hk_{T_i}^\mu$ the Hecke stack parameterizing $T_i$-torsors with $r$ modifications of type $\mu$.
It comes with morphisms
\[
p_1,\ldots , p_r:  \Hk_{T_i}^\mu \to \mathrm{Bun}_{T_i},
\]
and   $p_{Y_i} \colon \Hk_{T_i}^\mu \to Y_i^r$.
 See \cite[\S5.4]{YZ} for the definitions.

The stack of $T_i$-Shtukas of type $\mu$ is defined by the cartesian diagram
\begin{equation}\label{Sht_T definition}
\xymatrix{\Sht^{\mu}_{T_i}\ar[d]\ar[rr] && \Hk^{\mu}_{T_i}\ar[d]^{(p_{0},p_{r})}\\
\mathrm{Bun}_{T_i}\ar[rr]^{(\mathrm{id},\Fr)} && \mathrm{Bun}_{T_i}\times \mathrm{Bun}_{T_i} . }
\end{equation}
It is a smooth and proper Deligne-Mumford stack over $\kk$, and the morphism
\[
\pi_{T_i} : \Sht_{T_i}^\mu \to Y_i^r
\]
induced by $p_{Y_i}$ is  finite \'etale.  In particular, $\Sht_{T_i}^\mu$ is smooth and proper over $\kk$ of dimension $r$.

 The closed immersions of $T_1$ and $T_2$ into $G_0$ induce  finite  morphisms
\[
\xymatrix{
{   \Sht_{T_1}^\mu}  \ar[dr]_{\theta_1^\mu} &   &   {  \Sht_{T_2}^\mu  } \ar[dl]^{\theta_2^\mu}  \\
 & {  \Sht_{G_0}^\mu  } 
}
\]
which then induce push-forwards
\[
\xymatrix{
{  \Ch_r(  \Sht_{T_1}^\mu})  \ar[dr]_{\theta_{1*}^\mu} &   &   {  \Ch_r( \Sht_{T_2}^\mu ) } \ar[dl]^{\theta_{2*}^\mu}  \\
 & { \Ch_{c,r}( \Sht_{G_0}^\mu )  } 
}
\]
on Chow groups with $\Q$-coefficients \cite[Theorem 2.1.12]{Kresch-cycle}.  We obtain  cycle classes
\[
[\Sht_{T_1}^\mu] ,  [\Sht_{T_2}^\mu]  \in  \Ch_{c,r} (  \Sht_{G_0}^\mu )
\]
by pushing forward the fundamental classes.

As $\Sht_{G_0}^r$ has dimension $2r$, there is an intersection pairing
\[
\langle\cdot,\cdot\rangle :  \Ch_{c,r} (  \Sht_{G_0}^\mu ) \times  \Ch_{c,r} (  \Sht_{G_0}^\mu ) \to \Q
\]
as in  \cite[\S A.1]{YZ}.  Recall the Hecke algebra $\mathscr{H}$ of  \S \ref{ss:basic automorphic}.  For any $f\in\mathscr{H}$ define 
\begin{equation}\label{I distribution def}
\I_r(f) = \langle  [\Sht_{T_1}^\mu] ,  f * [\Sht_{T_2}^\mu]  \rangle \in \Q,
\end{equation}
where $*$ is the action of $\mathscr{H}$ on $\Ch_{c,r} (  \Sht_{G_0}^r )$ defined in   \cite[\S 5.3]{YZ}.

\begin{remark}
The isomorphism class of $\Hk_{G_0}^\mu$ is independent of $\mu$, and so we sometimes call this stack $\Hk_{G_0}^r$.  Similarly,  we sometimes write $\Sht_{G_0}^r$ instead of $\Sht_{G_0}^\mu$.   
\end{remark}


\subsection{Some moduli spaces}


Fix an integer $d$.   The purpose of this subsection is to construct a commutative diagram of $\kk$-schemes
\begin{equation}\label{fundamental 1}
\xymatrix{
 {  M_d  }   \ar[d]_\alpha \ar[r] &  { \Sigma_{2d}(Y) } \ar[d]^{\Nm}  \\
 {    A_d  }  \ar[d]_{\Tr}   \ar[r]_{f_3^\sharp} &    \Sigma_{2d}(Y_3)  \\
  {    \Sigma_d(X)  } 
}
\end{equation}
in such a way that the square is cartesian.    Let $S$ be any $\kk$-scheme.

Recall  from (\ref{fundamental 2}) that  $\Sigma_d(X)(S)$ is the set of  pairs $(\Delta, \zeta )$ consisting of  
\begin{itemize}
\item
a  line bundle $\Delta$ on $X_{S}$ of degree $d$,
\item
a nonzero section $\zeta \in H^0(X_{S} , \Delta)$.
\end{itemize}
(As in \S \ref{ss:analytic moduli}, these conditions are understood to hold fiber-by-fiber.)
The schemes   $\Sigma_{2d}(Y_3)$ and $\Sigma_{2d}(Y)$  are  defined similarly.
The morphism labeled $\Nm$ is  $(\Delta , \zeta) \mapsto (\Nm_{Y/Y_3} (\Delta) , \Nm_{Y/Y_3} ( \zeta ) ).$

Recall also that $A_d(S)$   is the set of all pairs  $(\Delta, \xi )$ consisting of  
\begin{itemize}
\item
a  line bundle $\Delta$ on $X_{S}$ of degree $d$,
\item
a $\xi \in H^0(Y_{3S} ,  f_3^*\Delta)$ with nonzero trace $\Tr_{Y_3/X}(\xi) \in H^0(X_S,\Delta)$.
\end{itemize}

Denote by  $\tilde{M}_d(S)$ the groupoid of triples $(\mathcal{L}_1,\mathcal{L}_2,\phi)$ consisting of
\begin{itemize}
\item
a line bundle  $\mathcal{L}_1 \in \Pic (Y_{1S})$,
\item
a line bundle $\mathcal{L}_2 \in\Pic( Y_{2S})$, 
\item
a morphism 
$
\phi :   f_{2*}\mathcal{L}_2 \to  f_{1*}\mathcal{L}_1 
$
 of rank two vector bundles on $X_S$.  
\end{itemize}
We require further that the line bundle
\[
\Delta = \underline{\Hom} ( \det(  f_{2*}\mathcal{L}_2 ) , \det(  f_{1*}\mathcal{L}_1 )  )
\]
on $X_S$ has degree $d$, and that $\det(\phi) \in H^0(X_S,\Delta)$ is nonzero.   The functor $\tilde{M}_d$ from $\kk$-schemes to groupoids is represented by an Artin stack over $\kk$.

The Picard group  $\Pic(X_S)$ acts on $\tilde{M}_d(S)$ by twisting
\[
(\mathcal{L}_1,\mathcal{L}_2,\phi) \otimes \mathcal{L} = (\mathcal{L}_1 \otimes f_1^*\mathcal{L} ,\mathcal{L}_2 \otimes f_2^*\mathcal{L} ,\phi \otimes \mathrm{id} ),
\]
 inducing an action of the Picard stack $\Pic_X$ on $\tilde{M}_d$.  The representability of the quotient stack
\[
M_d= \tilde{M}_d / \Pic_X
\]
 by a scheme is part of the following proposition, which also defines the two  arrows  in (\ref{fundamental 1}) emanating from $M_d$.

\begin{proposition}\label{prop:cartesian moduli}
There is a canonical isomorphism
\[
M_d \iso A_d \times_{ \Sigma_{2d}(Y_3) } \Sigma_{2d}(Y).
\]
\end{proposition}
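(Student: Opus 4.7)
The plan is to build mutually inverse morphisms between the two sides. In the forward direction, I will lift the construction to a morphism out of $\tilde{M}_d$ and then check $\Pic_X$-invariance; in the reverse direction, I will show fpqc-locally that fiber product data comes from a triple $(\mathcal{L}_1, \mathcal{L}_2, \phi)$, so that the forward morphism realizes $\tilde{M}_d$ as a $\Pic_X$-torsor over the fiber product.

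For the forward map, start with $(\mathcal{L}_1, \mathcal{L}_2, \phi) \in \tilde{M}_d(S)$, and write $g_1 : Y \to Y_1$, $g_2 : Y \to Y_2$ for the projections from $Y = Y_1 \times_X Y_2$, and $g_3 : Y \to Y_3$. Put $\Delta = \underline{\Hom}(\det f_{2*}\mathcal{L}_2, \det f_{1*}\mathcal{L}_1)$ on $X_S$, and on $Y_S$ set $\Delta_Y = \underline{\Hom}(g_2^*\mathcal{L}_2, g_1^*\mathcal{L}_1)$. Under the flat base-change isomorphism $f_1^* f_{2*}\mathcal{L}_2 \iso g_{1*} g_2^*\mathcal{L}_2$ (valid because $Y \iso Y_1 \times_X Y_2$) together with the adjunctions $(f_1^*, f_{1*})$ and $(g_1^*, g_{1*})$, the morphism $\phi$ corresponds to a morphism $\zeta : g_2^*\mathcal{L}_2 \to g_1^*\mathcal{L}_1$ on $Y_S$, i.e., a section of $\Delta_Y$; the nonvanishing of $\det\phi$ forces $\zeta \neq 0$. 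Set $\xi = \Nm_{Y/Y_3}(\zeta)$, a section of $\Nm_{Y/Y_3}(\Delta_Y)$ on $Y_{3S}$. A canonical isomorphism $\Nm_{Y/Y_3}(\Delta_Y) \iso f_3^*\Delta$ is obtained by combining the projection formula $\det f_{i*}\mathcal{L}_i \iso \Nm_{Y_i/X}(\mathcal{L}_i) \otimes L_i$ (with $L_i$ the rank-one piece of $f_{i*}\co_{Y_i}$), the relation $L_1 \otimes L_2 \iso L_3$ (since $L_1 \otimes L_2 \otimes L_3$ classifies the split cover $Y_0 = X \sqcup X$), and the fact that $f_3^*L_3 \iso \co_{Y_3}$.

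The key identity to verify is $\Tr_{Y_3/X}(\xi) = \det(\phi)$. I would check this by pulling both sides back to $Y_S$ and exhibiting $f^*\phi$ as a $2 \times 2$ matrix with entries $a, \tau_1^*a, \tau_2^*a, \tau_3^*a$ (where $a = \zeta$); the assertion that the symmetrized diagonal product recovers the determinant is formally identical to the calculation carried out in the proof of Proposition~\ref{prop:fundamental 2 cartesian}. In particular $\Tr_{Y_3/X}(\xi) \neq 0$, so $(\Delta, \xi) \in A_d(S)$. The assignment $(\mathcal{L}_1, \mathcal{L}_2, \phi) \mapsto (\Delta, \xi, \Delta_Y, \zeta)$ is manifestly invariant under twisting by $\Pic_X$, and descends to a morphism $M_d \to A_d \times_{\Sigma_{2d}(Y_3)} \Sigma_{2d}(Y)$.

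For the inverse, I will argue that the forward map makes $\tilde{M}_d$ into a $\Pic_X$-torsor over the fiber product. Given $(\Delta, \xi, \Delta_Y, \zeta)$ in the fiber product over $S$, I will fpqc-locally choose $\mathcal{L}_1 \in \Pic(Y_{1S})$, define $g_2^*\mathcal{L}_2 := g_1^*\mathcal{L}_1 \otimes \Delta_Y^{-1}$ on $Y_S$, and recover $\phi$ from $\zeta$ by running the double adjunction above in reverse. Uniqueness of a lift modulo $\Pic_X$ comes from the observation that if two pairs $(\mathcal{L}_1,\mathcal{L}_2)$ and $(\mathcal{L}_1', \mathcal{L}_2')$ yield isomorphic $\Delta_Y$, then $g_1^*(\mathcal{L}_1^{-1} \otimes \mathcal{L}_1') \iso g_2^*(\mathcal{L}_2^{-1} \otimes \mathcal{L}_2')$ is both $\tau_1$- and $\tau_2$-invariant, and a standard descent analysis along $Y \to X$ (absorbing the 2-torsion in $\ker g_i^*$) shows that the two sides differ by a simultaneous $f_i^*$-twist from $\Pic_X$. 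The main obstacle is therefore showing the existence of the presentation $\Delta_Y = g_1^*\mathcal{L}_1 \otimes g_2^*\mathcal{L}_2^{-1}$ itself: this is a descent problem along the Klein-four cover $Y \to X$, and its fpqc-local solvability is precisely what is forced by the compatibility $\Nm_{Y/Y_3}(\Delta_Y) \iso f_3^*\Delta$ supplied by the fiber product. Once this is in hand, the forward and inverse constructions are mutually inverse by direct computation, completing the proof.
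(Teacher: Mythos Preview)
Your forward construction coincides with the paper's: your $\zeta$ is the paper's $\bm{a}$, your $\Delta_Y$ is the paper's $\mathcal{K}=\underline{\Hom}(\widetilde{\mathcal{L}}_2,\widetilde{\mathcal{L}}_1)$, and the identity $\Tr_{Y_3/X}(\xi)=\det(\phi)$ is checked exactly by writing $f^*\phi$ as the $2\times2$ matrix with entries $\bm{a},\bm{a}^{\tau_1},\bm{a}^{\tau_2},\bm{a}^{\tau_3}$. Your adjunction/base-change packaging is equivalent to the paper's direct pullback, so there is no disagreement here.

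The gap is in the inverse direction. You have correctly isolated the crux as producing a factorization $\Delta_Y\cong g_1^*\mathcal{L}_1\otimes(g_2^*\mathcal{L}_2)^{-1}$, and correctly identified the norm compatibility $\Nm_{Y/Y_3}(\Delta_Y)\cong f_3^*\Delta$ as the governing constraint, but the sentence ``its fpqc-local solvability is precisely what is forced by the compatibility'' is the entire content of the inverse construction and you have not proved it. The paper carries this out explicitly: the norm isomorphism gives $\mathcal{K}\otimes\mathcal{K}^{\tau_3}\cong\mathcal{K}^{\tau_1}\otimes\mathcal{K}^{\tau_2}$, hence $\tau_2$-descent data on $\underline{\Hom}(\mathcal{K},\mathcal{K}^{\tau_1})$, producing a line bundle $\mathcal{M}$ on $Y_{2}$; one then checks that $\Nm_{Y_2/X}(\mathcal{M})$ is canonically trivial and invokes the Prym-type statement \cite[Proposition~6.1(1)]{YZ} to solve $\mathcal{L}_2\cong\mathcal{M}\otimes\mathcal{L}_2^{\sigma_2}$ in $\Pic_{Y_2}/\Pic_X$. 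This is exactly the ``absorbing the 2-torsion'' step you allude to, and it is not automatic. Once $\mathcal{L}_2$ is in hand, $\mathcal{L}_1$ is obtained by descending $\mathcal{K}\otimes\widetilde{\mathcal{L}}_2$ along $Y\to Y_1$, and $\phi$ is rebuilt from $\bm{a}$ and its Galois conjugates. Your torsor reformulation is fine in principle, but exhibiting a section fpqc-locally amounts to exactly this construction; without it, the proof is incomplete.
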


\begin{proof}
Start with a $\kk$-scheme $S$ and a triple
$
(\mathcal{L}_1,\mathcal{L}_2,\phi) \in \tilde{M}_d(S).
$
Define line bundles  
\begin{equation}\label{L bundles}
\widetilde{\mathcal{L}}_1 = \mathcal{L}_1|_{Y_S} ,\quad \widetilde{\mathcal{L}}_2=\mathcal{L}_2|_{Y_S}
\end{equation}
on $Y_S$.   The pullback of $\phi$ via $Y_{S} \to X_{S} $ is a morphism
\[
\widetilde{\mathcal{L}}_2 \oplus \widetilde{\mathcal{L}}_2^{\tau_3} \iso  ( f_{2_*}\mathcal{L}_2)|_{Y_S} 
 \map{\phi |_{Y_S}}
(  f_{1_*}\mathcal{L}_1 ) |_{Y_S} \iso \widetilde{\mathcal{L}}_1 \oplus \widetilde{\mathcal{L}}_1^{\tau_3}
\]
of rank two vector bundles on $Y_S$,  encoded by four morphisms 
\begin{eqnarray}\label{matrix entries}
\widetilde{\mathcal{L}}_2 & \map{ \quad \bm{a}\quad } & \widetilde{\mathcal{L}}_1 \\
\widetilde{\mathcal{L}}_2^{\tau_3} \iso \widetilde{\mathcal{L}}_2^{\tau_1} & \map{\bm{b}=\bm{a}^{\tau_1}}  &  \widetilde{\mathcal{L}}_1^{\tau_1}\iso\widetilde{\mathcal{L}}_1 \nonumber  \\
\widetilde{\mathcal{L}}_2 \iso \widetilde{\mathcal{L}}_2^{\tau_2} & \map{\bm{c}=\bm{a}^{\tau_2}}  &  \widetilde{\mathcal{L}}_1^{\tau_2}\iso \widetilde{\mathcal{L}}_1^{\tau_3} \nonumber \\
 \widetilde{\mathcal{L}}_2^{\tau_3} & \map{\bm{d}=\bm{a}^{\tau_3}}  &  \widetilde{\mathcal{L}}_1^{\tau_3}.\nonumber
\end{eqnarray}
%
The assumption $\det(\phi)\neq 0$ implies that $\bm{a}\neq 0$, and setting  $\mathcal{K} = \underline{\Hom}(\widetilde{\mathcal{L}}_2,\widetilde{\mathcal{L}}_1)$ defines a point  $(\mathcal{K},\bm{a}) \in \Sigma_{2d}(Y)(S)$.  We have now constructed a morphism 
\[
\tilde{M}_d \to \Sigma_{2d}(Y),
\]
which is easily seen to descend to the quotient $M_d$.

Consider the map 
\[
\det( f_{2_*}\mathcal{L}_2)|_{Y_S} = 
\det( \widetilde{\mathcal{L}}_2 \oplus \widetilde{\mathcal{L}}_2^{\tau_3} )       \map{ \newdet(\phi) }
  \det( \widetilde{\mathcal{L}}_1 \oplus \widetilde{\mathcal{L}}_1^{\tau_3} ) =  \det(  f_{1_*}\mathcal{L}_1 )|_{Y_S} , 
\]
where the arrow labeled $\newdet(\phi)$ sends  $s \wedge  t \mapsto \bm{a}(s)\wedge \bm{d}(t)$ for local sections $s$ and $t$ of $\widetilde{\mathcal{L}}_2$ and $\widetilde{\mathcal{L}}_2^{\tau_3}$, respectively.  When viewed as a section of $\Delta|_{Y_S}$, this map is $\tau_3$-equivariant.  Hence it admits a canonical descent to 
\[
\newdet(\phi) \in H^0(Y_{3S} ,  f_3^*\Delta)
\]
whose trace $\Tr_{Y_3/X}(\newdet(\phi)) = \det(\phi)$ is nonzero.
Thus  $(\Delta,\newdet(\phi)) \in A_d(S)$, and we have have constructed a morphism 
\[
\tilde{M}_d\to A_d.
\]
Again, this is easily seen to descend to the quotient $M_d$.

The canonical isomorphism
\[
\mathcal{K} \otimes \mathcal{K}^{\tau_3}  \iso  \underline{\Hom}(\widetilde{\mathcal{L}}_2\otimes\widetilde{\mathcal{L}}_2^{\tau_3} , \widetilde{\mathcal{L}}_1\otimes\widetilde{\mathcal{L}}_1^{\tau_3} )  \iso \Delta|_{Y_S}  
\]
on $Y_S$ descends to  an isomorphism $\Nm_{Y/Y_3} ( \mathcal{K} ) \iso f_3^* \Delta$, and this isomorphism sends
$\Nm_{Y/Y_3}(\bm{a}) \mapsto  \newdet(\phi)$.  In other words,   
\[
  ( f_3^*\Delta , \newdet(\phi) )\iso \Nm( \mathcal{K} , \bm{a}) 
\]
define the same element of $\Sigma_{2d}(Y_3) (S)$, and so the two morphisms constructed above  define a map
\begin{equation}\label{cartesian moduli map}
M_d \to  A_d \times_{ \Sigma_{2d}(Y_3) } \Sigma_{2d}(Y).
\end{equation}

We will show that (\ref{cartesian moduli map}) is an isomorphism by constructing the inverse.    An $S$-point of  $A_d \times_{ \Sigma_{2d}(Y_3) } \Sigma_{2d}(Y)$  consists of two pairs
\[
(\Delta , \zeta)\in A_d(S) ,\quad (\mathcal{K},\bm{a}) \in \Sigma_{2d}(Y)(S)
\]
along with an isomorphism  $\Nm_{Y/Y_3} ( \mathcal{K} ) \iso f_3^* \Delta$ satisfying
$\Nm_{Y/Y_3}(\bm{a}) \mapsto  \zeta$.

The isomorphism $\Nm_{Y/Y_3}(\mathcal{K}) \iso  f_3^* \Delta$
endows the line bundle $\mathcal{K} \otimes \mathcal{K}^{\tau_3}$  with descent data relative to $Y_S/X_S$.  
In other words, we are given isomorphisms between this line bundle and all of its $\Aut(Y/X)$-conjugates, and hence an isomorphism
$
\mathcal{K} \otimes \mathcal{K}^{\tau_3}  \iso \mathcal{K}^{\tau_1} \otimes \mathcal{K}^{\tau_2}.
$
This induces the first isomorphism in 
\[
\underline{\Hom}( \mathcal{K} , \mathcal{K}^{\tau_1} ) \iso \underline{\Hom}( \mathcal{K}^{\tau_2} , \mathcal{K}^{\tau_3} )
\iso \underline{\Hom}( \mathcal{K} , \mathcal{K}^{\tau_1} )^{\tau_2},
\]
and, by viewing the composition as descent data relative to $Y_S/Y_{2S}$, we obtain a degree $0$ line bundle $\mathcal{M}$ on $Y_{2S}$ endowed with an isomorphism 
\[
 \mathcal{M}|_{ Y_S} \iso  \underline{\Hom}(\mathcal{K},\mathcal{K}^{\tau_1}).
\]

The canonical trivialization 
\begin{align*}
( \mathcal{M} \otimes \mathcal{M}^{\sigma_2} )  |_{Y_S}    
&\iso 
 \underline{\Hom}(\mathcal{K},\mathcal{K}^{\tau_1}) 
\otimes  \underline{\Hom}(\mathcal{K},\mathcal{K}^{\tau_1})^{\tau_3} \\
& \iso 
 \underline{\Hom}(\mathcal{K} \otimes \mathcal{K}^{\tau_3},\mathcal{K}^{\tau_1} \otimes \mathcal{K}^{\tau_2}) \\
& \iso \co_Y
\end{align*}
is compatible with the natural descent data relative to $Y_S/X_S$ on the source and target, and so the line bundle $\Nm_{Y_2/X}( \mathcal{M})$
is trivial.  As  in the proof of \cite[Proposition 6.1(1)]{YZ}, this implies the existence of an $S$-point $\mathcal{L}_2$ of the quotient $\Pic_{Y_2}/\Pic_X$  satisfying
\[
 \mathcal{L}_2   \iso \mathcal{M} \otimes \mathcal{L}_2^{\sigma_2}.
\]
Viewing the isomorphism 
\begin{align*}
( \mathcal{K} \otimes \mathcal{L}_2|_{Y_S} )^{\tau_1}   
&  \iso  \mathcal{K}^{\tau_1} \otimes \mathcal{L}_2^{\sigma_2}|_{Y_S}  \\
&  \iso  \mathcal{K}^{\tau_1} \otimes    ( \mathcal{M}^{-1} \otimes \mathcal{L}_2)  |_{Y_S}  \\
& \iso \mathcal{K}^{\tau_1} \otimes  \underline{\Hom}(\mathcal{K} ^{\tau_1} ,\mathcal{K} ) \otimes \mathcal{L}_2|_{Y_S}\\
& \iso \mathcal{K}  \otimes \mathcal{L}_2|_{Y_S}
\end{align*}
as descent data relative to $Y_S/Y_{1S}$, we obtain a line bundle $\mathcal{L}_1$ on $Y_{1S}$ endowed with an isomorphism
$\mathcal{L}_1|_{Y_S} \iso \mathcal{K}  \otimes \mathcal{L}_2|_{Y_S}$.
If we set  $\widetilde{\mathcal{L}}_1=\mathcal{L}_1|_{Y_S}$ and $\widetilde{\mathcal{L}}_2=\mathcal{L}_2|_{Y_S}$  as in  (\ref{L bundles}),  this isomorphism can be rewritten as 
\begin{equation}\label{switcharoo bundle}
 \mathcal{K} \iso \underline{\Hom}( \widetilde{\mathcal{L}}_2 , \widetilde{\mathcal{L}}_1 )  .
\end{equation}

Now view $\bm{a}$ as a global section of (\ref{switcharoo bundle}), 
and  define global sections  $\bm{b}$, $\bm{c}$, and $\bm{d}$  using (\ref{matrix entries}).  These four global sections 
define a global section $\phi$ of 
\[
\underline{\Hom}( \widetilde{\mathcal{L}}_2 \oplus \widetilde{\mathcal{L}}_2^{\tau_3}  , \widetilde{\mathcal{L}}_1 \oplus \widetilde{\mathcal{L}}_1^{\tau_3} )
\iso  \underline{\Hom}(  f_{2*} \mathcal{L}_2 , f_{1*}\mathcal{L}_1)|_{Y_S},
\]
which,  by construction, is invariant under $\Aut(Y/X)$.  Thus $\phi$ descends to 
\[
\phi\in \Hom(  f_{2*} \mathcal{L}_2 , f_{1*}\mathcal{L}_1).
\]
The triple $(\mathcal{L}_1,\mathcal{L}_2,\phi)$ defines an object of $M_d(S)$, completing the construction of the inverse of (\ref{cartesian moduli map}).
\end{proof}

\begin{proposition}\label{prop:Msmooth}
Let $g$ and $g_3$ be the genera of $X$ and $Y_3$.
\begin{enumerate}
\item
The morphisms $\alpha$ and $\Nm$ in (\ref{fundamental 1}) are finite.
\item
If $d \geq 2g_3 - 1$ then $M_d$ is smooth over $\kk$ of dimension $2d - g + 1$.
\end{enumerate}
\end{proposition}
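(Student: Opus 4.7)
The plan is to prove part (1) directly and to prove part (2) by mimicking the proof of Proposition \ref{prop:Nsmooth}(2), with the Abel-Jacobi map for $Y_3$ replaced by an $H^1$-vanishing statement on the curve $Y$.

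For part (1), the morphism $\Nm\colon \Sigma_{2d}(Y)\to \Sigma_{2d}(Y_3)$ is obtained by descending the $2d$-fold cartesian power of the finite étale morphism $Y\to Y_3$ along the action of $S_{2d}$, and is therefore finite. Proposition \ref{prop:cartesian moduli} realizes $\alpha$ as a base change of $\Nm$, so $\alpha$ is finite as well.

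For part (2), I would introduce the forgetful morphism
\[
p\colon \tilde M_d \to \Pic_{Y_1}\times_\kk \Pic_{Y_2},\qquad (\mathcal{L}_1,\mathcal{L}_2,\phi)\mapsto (\mathcal{L}_1,\mathcal{L}_2),
\]
whose fiber over $(\mathcal{L}_1,\mathcal{L}_2)$ is the open subscheme $\{\det\phi\neq 0\}$ of the vector space $H^0(X,\underline{\Hom}(f_{2*}\mathcal{L}_2,f_{1*}\mathcal{L}_1))$. The key input is a repeated application of the projection formula along the Cartesian square $Y=Y_1\times_X Y_2$, combined with the relative duality $(f_{2*}\mathcal{L}_2)^\vee\iso f_{2*}\mathcal{L}_2^{-1}$ (using that $f_2$ is étale), yielding the canonical identification
\[
\underline{\Hom}(f_{2*}\mathcal{L}_2,f_{1*}\mathcal{L}_1)\iso (f_Y)_*\mathcal{K},\qquad \mathcal{K}:=\mathcal{L}_1|_Y\otimes \mathcal{L}_2^{-1}|_Y\in \Pic^{2d}(Y),
\]
where $f_Y\colon Y\to X$ is the structural map. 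Finiteness of $f_Y$ then gives $H^i(X,\underline{\Hom}(\cdots))\iso H^i(Y,\mathcal{K})$.

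Riemann-Hurwitz for the étale double cover $Y\to Y_3$ yields $g_Y=2g_3-1$, so the hypothesis $d\ge 2g_3-1=g_Y$ forces $\deg\mathcal{K}=2d>2g_Y-2$. Hence $H^1(Y,\mathcal{K})=0$ uniformly in $\mathcal{K}\in\Pic^{2d}(Y)$, and cohomology-and-base-change presents $R^0p_*\underline{\Hom}(f_{2*}\mathcal{L}_2,f_{1*}\mathcal{L}_1)$ as a vector bundle of constant rank $\chi(Y,\mathcal{K})=2d-g_Y+1=2d-4g+4$ over $\Pic_{Y_1}\times \Pic_{Y_2}$. The total space of this bundle is smooth over the smooth base (an Artin stack of dimension $2(2g-2)=4g-4$), so $\tilde M_d$ is smooth of dimension $2d$.

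To descend to $M_d=\tilde M_d/\Pic_X$, one checks that the $\Pic_X$-action on $\tilde M_d$ has trivial isotropy beyond the generic $\mathbb{G}_m$-gerbe; this reduces to verifying that $\ker(f_1^*)\cap \ker(f_2^*)$ is trivial in $\Pic_X$, which is immediate from the standing assumption $Y_1\not\iso Y_2$, since the two kernels are generated by the distinct $2$-torsion line bundles cutting out the corresponding covers. Subtracting $\dim\Pic_X=g-1$ from $\dim\tilde M_d=2d$ yields the claimed dimension $2d-g+1$. The main technical obstacle is setting up the projection-formula identification and the accompanying degree bookkeeping; once this is in hand, the numerology $g_Y=2g_3-1$ makes the cohomology-vanishing threshold on $Y$ exactly match the hypothesis $d\ge 2g_3-1$ on $X$.
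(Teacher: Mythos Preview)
Your argument is correct and follows a genuinely different route from the paper's.

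For part (1), your reasoning matches the paper's exactly.

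For part (2), the paper instead works through the cartesian description of Proposition~\ref{prop:cartesian moduli}, extracting a diagram
\[
\xymatrix{
M_d \ar[r] \ar[d]_{\rho} & \Sigma_{2d}(Y) \ar[d] \\
\Pic_X^d \ar[r]_{f_3^*} & \Pic_{Y_3}^{2d}
}
\]
in which the right vertical arrow factors as the Abel--Jacobi map $\Sigma_{2d}(Y)\to\Pic_Y^{2d}$ followed by the norm $\Nm\colon\Pic_Y\to\Pic_{Y_3}$. The Abel--Jacobi map is smooth by the same genus bound $2d\ge 2g_Y-1$ you use, while smoothness of $\Nm$ on Picard stacks requires a separate lemma proved via the infinitesimal lifting criterion and a trace-surjectivity argument on $H^1$. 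One then reads off that $\rho$ is smooth of the correct relative dimension over the smooth base $\Pic_X^d$.

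Your approach bypasses both the description of $M_d$ as a fiber product and the norm-smoothness lemma. By identifying $\underline{\Hom}(f_{2*}\mathcal{L}_2,f_{1*}\mathcal{L}_1)\iso (f_Y)_*\mathcal{K}$ via \'etale duality and the projection formula, you realize $\tilde M_d$ directly as an open substack of a vector bundle over (the appropriate degree components of) $\Pic_{Y_1}\times_\kk\Pic_{Y_2}$, from which smoothness and the dimension count are immediate. This is more self-contained and avoids the detour through $\Pic_{Y_3}$; the paper's approach, by contrast, stays aligned with the structure of diagram~(\ref{fundamental 1}) that organizes the rest of \S\ref{s:intersection theory}.

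One remark: your final isotropy check (that $\ker(f_1^*)\cap\ker(f_2^*)$ is trivial) is not needed. The quotient map $\tilde M_d\to\tilde M_d/\Pic_X$ is a torsor under the smooth group stack $\Pic_X$ regardless of stabilizers, so smoothness descends automatically and $\dim M_d=\dim\tilde M_d-\dim\Pic_X$ holds for the stack quotient unconditionally. Your description of the kernels as generated by $2$-torsion line bundles would also need adjustment in characteristic $2$, but since the step is superfluous this does not affect the argument.
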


\begin{proof}
The map $\Nm$ in (\ref{fundamental 1}) is clearly finite, and therefore  $\alpha$ is as well.  This proves the first claim.
For the proof of the second claim, recall the cartesian diagram
\[
\xymatrix{
{  M_d} \ar[r]^\pi \ar[d]_\rho  &  {  \Sigma_{2d}(Y)   }  \ar[d]^{\Nm} \\
{  \Pic^d_X } \ar[r]_{f_3^*}  & { \Pic^{2d}_{Y_3} }  
}
\]
of Proposition \ref{prop:cartesian moduli}.
In the notation used there, the map $\pi$ sends $(\mathcal{L}_1,\mathcal{L}_2,\phi)$ to $(\mathcal{K},\bm{a})$, and $\rho$ sends the same data to $\Delta$.

The vertical arrow on the right factors as
\[
\Sigma_{2d}(Y) \map{\mathrm{AJ}}  \Pic^{2d}_Y \map{\Nm}  \Pic^{2d}_{Y_3}.
\]
Letting $g_Y$ denote the genus of $Y$,  the Abel-Jacobi morphism $\mathrm{AJ}$ is smooth by our hypothesis 
\[
2d \geq 2g_Y - 1 = 2(2g_3 - 1) - 1.
\]
The norm map $\Pic^{2d}_Y \to \Pic^{2d}_{Y_3}$ is smooth by the following lemma.

\begin{lemma}
The norm $\Nm\colon \Pic_Y \to \Pic_{Y_3}$ is a smooth morphism.  
\end{lemma}

\begin{proof}
We use the infinitesimal lifting criterion for smoothness.  Suppose $A \to B$ is a surjection of local Artinian $k$-algebras with kernel $I$ satisfying $I^2 = 0$.  Suppose also that we have morphisms $\alpha$ and $\beta$ making the square
\[
\xymatrix{
{  \Spec (B)} \ar[r]^\alpha \ar[d]  &  {  \Pic_Y   }  \ar[d]^\Nm \\
{  \Spec (A) } \ar@{-->}[ur]^{\gamma} \ar[r]_{\beta}  & { \Pic_{Y_3} }
}
\]
commute.  We must prove the existence of a morphism $\gamma$ making the two triangles commute.

Let $g : Y\to Y_3$ be the \'etale double cover of (\ref{biquad curves}).  The  trace morphism $\Tr: g_*\co_Y \to \co_{Y_3}$ induces a short exact sequence 
\[
\xymatrix{
{  0   }  \ar[r]    &  {  \ker ( \mathrm{id} \otimes \Tr) } \ar[r]  &  {   I \otimes_B g_*\co_{Y_B}  } \ar[r]^{ \mathrm{id}\otimes   \Tr}  &    { I \otimes_B \co_{Y_{3B}}  }  \ar[r]   & { 0 }.
}
\]  
of coherent sheaves on $Y_{3B}$.  
As $Y_{3B}$ is a curve, the induced map
\[
 H^1(Y_B, I \otimes_B \co_{Y_B})  = H^1(Y_{3B}, I \otimes_B g_* \co_{Y_B})    \map{\mathrm{id}\otimes\Tr}    H^1(Y_{3B}, I \otimes_B \co_{Y_{3B}}) 
\]
on cohomology is surjective.

As the closed immersion $Y_B \hookrightarrow Y_A$ is an isomorphism on the underlying topological spaces, the category of sheaves on these two spaces are canonically identified.  Thus we have an exact sequence of sheaves 
\[
\xymatrix{
{  0 } \ar[r]  &  {  I \otimes_B \mathcal{O}_{Y_B} } \ar[r]^{\quad j}  &  {    \co_{Y_A}^\times   }   \ar[r]   &  {    \co_{Y_B}^\times  } \ar[r]  &  {  1 } 
}
\]
on $Y_B$,  where $j(r \otimes f) = 1 + rf$, and a similar exact sequence on $Y_{3A}$.  
Taking cohomology yields a commutative diagram
\[
\xymatrix{
{  H^1(Y_B, I \otimes_B \co_{Y_B})} \ar[r]  \ar[d]^{ \mathrm{id} \otimes \Tr} & {  \Pic(Y_A) } \ar[d]^{\Nm}  \ar[r]   &  {   \Pic(Y_B) } \ar[d]^{ \Nm } \ar[r] & { 0} \\
{  H^1(Y_{3B}, I \otimes_B \co_{Y_{3B}})}  \ar[r] \ar[d] & { \Pic(Y_{3A}) }   \ar[r] &  {   \Pic(Y_{3B}) } \ar[r] & {0} \\
{ 0} 
}
\] 
with exact rows and columns.
Note that the surjectivity of $\Pic(Y_A)\to \Pic(Y_B)$ follows from the smoothness of the Picard stack $\Pic_Y$ over $\kk$, and similarly with $Y$ replaced by $Y_3$.

The maps $\alpha$ and $\beta$ determine elements of $\Pic(Y_B)$ and $\Pic(Y_{3A})$, having the same image in $\Pic(Y_{3B})$.  
A  diagram chase shows that they  come from a common element of $\Pic(Y_A)$, which is the desired $\gamma$.
\end{proof}

We have now shown that  $\Nm: \Sigma_{2d}(Y) \to \Pic_{Y_3}^{2d}$ is a smooth morphism between stacks of dimension $2d$ and $g_3-1$, and hence $\rho:M_d \to \Pic_X^d$ is smooth of relative dimension 
\[
2d- g_3 + 1 = 2d-2g +2.
\]
As $\Pic_X^d$ is smooth over $\kk$ of dimension $g-1$, $M_d$ is smooth and of dimension $2d-g+1$ over $\kk$.
\end{proof}


\subsection{Interpretation of the intersection number}
\label{ss:geometric intersection}


We define a  correspondence 
\begin{equation}\label{hecke one paw}
\xymatrix{
& { \Hk_{M_d} }  \ar[dr]^{\gamma_1} \ar[dl]_{\gamma_0} \\
{  M_d  }  \ar[dr]_{ \alpha }  & &  {   M_d    }  \ar[dl]^{  \alpha } \\
& { A_d } 
}
\end{equation}
as follows.  We first define a stack $\widetilde{\Hk}_{M_d}$ whose $S$-points classify 
\begin{itemize}
\item 
two points $(\mathcal{L}^{(0)}_1,\mathcal{L}_2^{(0)},\phi^{(0)})$ and  $(\mathcal{L}_1^{(1)},\mathcal{L}_2^{(1)}, \phi^{(1)})$ of $\tilde M_{d}(S)$,
\item
one  $S$-point 
$y = (y_1,y_2)$ of  $Y_S=Y_{1S}\times_{X_S} Y_{2S}$,
\item
 injective $\mathcal{O}_X$-linear morphisms  $s_1 : \mathcal{L}^{(0)}_1  \to  \mathcal{L}^{(1)}_1$ and   $s_2 : \mathcal{L}^{(0)}_2  \to  \mathcal{L}^{(1)}_2$.
\end{itemize}
We require that the cokernels of $s_1$ and $s_2$ are invertible sheaves on the graph of $y_1$ and $y_2$, respectively, and that the diagram
\[
\xymatrix{
{   f_{2*} \mathcal{L}^{(0)}_2 } \ar[r]^{s_2}     \ar[d]_{\phi^{(0)}}  &  {   f_{2*} \mathcal{L}^{(1)}_2 }   \ar[d]^{\phi^{(1)}}     \\
  {   f_{1*} \mathcal{L}^{(0)}_1  }  \ar[r]_{ s_1}    &   {   f_{1*} \mathcal{L}^{(1)}_1  } 
}
\]
of $\co_{X_S}$-modules commutes.  Then $\Pic_X$ acts on $\widetilde\Hk_{M_d}$ by simultaneously twisting the $\mathcal{L}_i^{(j)}$, and we define $\Hk_{M_d} = \widetilde\Hk_{M_d}/\Pic_X$.

Using the top horizontal arrow of (\ref{fundamental 1}),  we may realize the above correspondence on $M_d$  as the pullback of a correspondence on $\Sigma_{2d}(Y)$.  More precisely, there is a correspondence 
\begin{equation}\label{simple hecke}
\xymatrix{
& { H_{2d} (Y) }  \ar[dr] \ar[dl] \\
{  \Sigma_{2d}(Y)  }  \ar[dr]_{ \Nm}  & &  {   \Sigma_{2d}(Y)  }  \ar[dl]^{  \Nm} \\
& { \Sigma_{2d}(Y_3) ,} 
}
\end{equation}
where, for any $\kk$-scheme $S$,  the groupoid $H_{2d}(Y)(S)$ classifies 
\begin{itemize}
\item
a pair of $S$-points $(\mathcal{K}^{(0)} , \bm{a}^{(0)} )$ and $(\mathcal{K}^{(1)} , \bm{a}^{(1)} )$ of $ \Sigma_{2d}(Y)$,
\item
 an $S$-point $y\in Y(S)$, 
 \item
  an isomorphism
\[
s : \mathcal{K}^{(0)} ( y^{\tau_1} - y^{\tau_2} ) \iso \mathcal{K}^{(1)} 
\]
of line bundles on $Y_S$ such that  $s(\bm{a}^{(0)}) = \bm{a}^{(1)}$, where we view the global section $\bm{a}^{(0)}$ of $\mathcal{K}^{(0)}$ as a rational section of  $\mathcal{K}^{(0)} ( y^{\tau_1} - y^{\tau_2} )$.  
\end{itemize}

Note that all of the data is determined by the pair $(\mathcal{K}^{(0)} ,\bm{a}^{(0)})$ and the point $y\in Y(S)$, for from this we may recover the line bundle $\mathcal{K}^{(1)} = \mathcal{K}^{(0)} ( y^{\tau_1} - y^{\tau_2} )$ and its rational section $\bm{a}^{(1)}=\bm{a}^{(0)}$.  The condition that  $\bm{a}^{(1)}$ be a section of $\mathcal{K}^{(1)}$, as opposed to merely a rational section, is equivalent to 
\[
\mathrm{div}(\bm{a}^{(0)} ) + y^{\tau_1} - y^{\tau_2} \ge 0.
\]
This is in turn equivalent to the condition that the effective Cartier divisor $y^{\tau_2}$ appears in the support of $\mathrm{div}(\bm{a}^{(0)})$.
In other words,  we may realize
\[
H_{2d}(Y) \hookrightarrow \Sigma_{2d}(Y) \times_k Y
\]
as the closed subscheme of triples $(\mathcal{K}^{(0)} , \bm{a}^{(0) } , y )$ for which $y^{\tau_2}$ appears in the support of $\mathrm{div}(\bm{a}^{(0)})$.

\begin{proposition}\label{prop:rep one paw}
The diagram (\ref{hecke one paw}) is canonically identified with the diagram
\[
\xymatrix{
& { A_d \times_{  \Sigma_{2d}(Y_3) }   H_{2d} (Y) }  \ar[dr] \ar[dl] \\
{  A_d \times_{  \Sigma_{2d}(Y_3) }  \Sigma_{2d}(Y)  }  \ar[dr]  & &  { A_d \times_{  \Sigma_{2d}(Y_3) }  \Sigma_{2d}(Y)    }  \ar[dl]\\
& {  A_d } 
}
\]
obtained from (\ref{simple hecke}) by base change along the  arrow $A_d \to  \Sigma_{2d}(Y_3)$  in  (\ref{fundamental 1}).
\end{proposition}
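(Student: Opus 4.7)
The strategy is to exploit the fiber-product description $M_d \iso A_d \times_{\Sigma_{2d}(Y_3)} \Sigma_{2d}(Y)$ supplied by Proposition \ref{prop:cartesian moduli}, and construct mutually inverse morphisms
\[
\Hk_{M_d} \rightleftarrows A_d \times_{\Sigma_{2d}(Y_3)} H_{2d}(Y)
\]
compatible with the two projections to $M_d \iso A_d \times_{\Sigma_{2d}(Y_3)} \Sigma_{2d}(Y)$ and with the projection to $A_d$. Throughout, I will use the notation of the proof of Proposition \ref{prop:cartesian moduli}:  each point $(\mathcal{L}_1^{(j)},\mathcal{L}_2^{(j)},\phi^{(j)})$ of $\tilde M_d$ restricts on $Y_S$ to line bundles $\widetilde{\mathcal{L}}_i^{(j)}=\mathcal{L}_i^{(j)}|_{Y_S}$, and its image  in $\Sigma_{2d}(Y)$ is the pair $(\mathcal{K}^{(j)},\bm{a}^{(j)})$ with $\mathcal{K}^{(j)} = \underline{\Hom}(\widetilde{\mathcal{L}}_2^{(j)},\widetilde{\mathcal{L}}_1^{(j)})$.

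For the forward direction I take an $S$-point of $\Hk_{M_d}$, i.e.~two $S$-points of $\tilde M_d$ together with $y=(y_1,y_2)\in Y(S)$ and compatible modifications $s_i\colon \mathcal{L}_i^{(0)}\to \mathcal{L}_i^{(1)}$ with cokernel on the graph $\Gamma(y_i)$. The key combinatorial observation is that the pullback of $\Gamma(y_i)\subset Y_{iS}$ to $Y_S$ along the projection $Y\to Y_i$ is the Cartier divisor $y+y^{\tau_i}$, because $Y\to Y_i$ is the \'etale double cover obtained by base change of $f_j$ for $\{i,j\}=\{1,2\}$. Consequently the restrictions $\widetilde s_i\colon \widetilde{\mathcal{L}}_i^{(0)}\to \widetilde{\mathcal{L}}_i^{(1)}$ have cokernel supported on $y+y^{\tau_i}$, which yields a canonical isomorphism $\mathcal{K}^{(0)}(y^{\tau_1}-y^{\tau_2})\iso \mathcal{K}^{(1)}$ matching $\bm{a}^{(0)}\mapsto \bm{a}^{(1)}$ by commutativity of the Hecke square.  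That the two resulting $A_d$-points agree follows from the equality of Cartier divisors $f_{1*}\Gamma(y_1) = f_{2*}\Gamma(y_2)$ in $X_S$, which is forced by the very definition $Y=Y_1\times_X Y_2$.

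For the reverse direction, start from a point of $A_d\times_{\Sigma_{2d}(Y_3)} H_{2d}(Y)$, and apply Proposition \ref{prop:cartesian moduli} to both pairs $(\Delta,\xi,\mathcal{K}^{(j)},\bm{a}^{(j)})$ to produce triples $(\mathcal{L}_1^{(j)},\mathcal{L}_2^{(j)},\phi^{(j)})\in \tilde M_d(S)$. Since Proposition \ref{prop:cartesian moduli} is functorial, the modification $\mathcal{K}^{(0)}(y^{\tau_1}-y^{\tau_2})\iso \mathcal{K}^{(1)}$ (which has trivial norm to $Y_3$ by the calculation $(y^{\tau_1})^{\tau_3}+y^{\tau_1}-(y^{\tau_2})^{\tau_3}-y^{\tau_2}=0$) translates into a distinguished identification $\widetilde{\mathcal{L}}_i^{(1)}\iso \widetilde{\mathcal{L}}_i^{(0)}(y+y^{\tau_i})$ of line bundles on $Y_S$ carrying $\Aut(Y/X)$-descent data; pushing the resulting inclusions down along $Y\to Y_i$ gives the morphisms $s_i\colon \mathcal{L}_i^{(0)}\to\mathcal{L}_i^{(1)}$, and the compatibility $s_1\circ \phi^{(0)} = \phi^{(1)}\circ s_2$ is a direct consequence of $s(\bm{a}^{(0)})=\bm{a}^{(1)}$ together with the $\tau$-equivariance of all four matrix entries in (\ref{matrix entries}).

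The routine verification that these two constructions are mutually inverse, once appropriately modded out by the $\Pic_X$-twisting, reduces to tracing the descent in the proof of Proposition \ref{prop:cartesian moduli}. The main obstacle I expect is bookkeeping the compatibility of the elementary modification $s$ on $\mathcal{K}^{(0)}$ with the choices made in the $\Pic_X$-descent for $\mathcal{L}_2$ (via the trivialization of $\Nm_{Y_2/X}\mathcal{M}$); but because $\mathcal{K}^{(0)}(y^{\tau_1}-y^{\tau_2})$ and $\mathcal{K}^{(0)}$ have the same norm to $Y_3$, this descent is preserved verbatim, so the verification is ultimately formal.
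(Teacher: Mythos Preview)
Your approach is the same as the paper's: construct mutually inverse maps between $\Hk_{M_d}$ and $A_d\times_{\Sigma_{2d}(Y_3)}H_{2d}(Y)$ by unwinding Proposition~\ref{prop:cartesian moduli}. Your forward direction is correct and matches the paper's argument essentially verbatim.

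The reverse direction has the right shape, but your justification for the key step---producing the isomorphisms $\mathcal{L}_i^{(1)}\iso\mathcal{L}_i^{(0)}(y_i)$---is imprecise. You appeal to ``functoriality'' of Proposition~\ref{prop:cartesian moduli} together with the observation that $\Nm_{Y/Y_3}$ kills the divisor $y^{\tau_1}-y^{\tau_2}$, concluding that ``the descent is preserved verbatim.'' But the construction in Proposition~\ref{prop:cartesian moduli} uses $\mathcal{K}$ itself, not merely its norm: the auxiliary bundle $\mathcal{M}$ on $Y_2$ is the descent of $\underline{\Hom}(\mathcal{K},\mathcal{K}^{\tau_1})$, and this \emph{does} change when $\mathcal{K}$ is twisted by $y^{\tau_1}-y^{\tau_2}$. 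The paper makes this explicit: from the isomorphism $s$ one computes directly that
\[
\mathcal{M}^{(1)}\iso\mathcal{M}^{(0)}(y_2-y_2^{\sigma_2}),
\]
and since $\mathcal{L}_2^{(i)}$ is characterized (as a point of $\Pic_{Y_2}/\Pic_X$) by $\mathcal{L}_2^{(i)}\iso\mathcal{M}^{(i)}\otimes(\mathcal{L}_2^{(i)})^{\sigma_2}$, one checks that $\mathcal{L}_2^{(0)}(y_2)$ satisfies the defining relation for $\mathcal{L}_2^{(1)}$. The identification $\mathcal{L}_1^{(1)}\iso\mathcal{L}_1^{(0)}(y_1)$ then follows. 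So your intuition that the verification is ``formal'' is right, but the specific reason you cite (norm to $Y_3$ unchanged) is not the operative mechanism; you need the explicit computation with $\mathcal{M}$.
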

\begin{proof}
The proof amounts to carefully tracing through the constructions in the proof of Proposition \ref{prop:cartesian moduli}. 

It is enough to show that $\Hk_{M_d} \cong A_d \times_{  \Sigma_{2d}(Y_3) }   H_{2d} (Y)$.  We define a map 
\[
\Hk_{M_d} \to A_d \times_{  \Sigma_{2d}(Y_3) }  H_{2d}(Y)
\]
  as follows.  Given a quintuple
\[((\mathcal{L}^{(0)}_1,\mathcal{L}_2^{(0)},\phi^{(0)}),( \mathcal{L}_1^{(1)},\mathcal{L}_2^{(1)}, \phi^{(1)}), (y_1,y_2), s_1, s_2) \in \widetilde\Hk_{M_d}(S),\] we obtain from the first two pieces of data, points $(\mathcal{K}^{(0)} , \bm{a}^{(0)} )$ and $(\mathcal{K}^{(1)} , \bm{a}^{(1)} )$ of $ \Sigma_{2d}(Y)(S)$.  
For $i = 1,2$, we have $\mathcal{K}^{(i)} \cong \Hom(\mathcal{L}_2^{(i)}|_{Y_S}, \mathcal{L}_1^{(i)}|_{Y_S})$, and so the isomorphisms $ \mathcal{L}_1^{(0)}(y_1) \cong \mathcal{L}_1^{(1)}$ and $ \mathcal{L}_2^{(0)}(y_2) \cong \mathcal{L}_2^{(1)}$ induce an isomorphism $ \mathcal{K}^{(0)}(y^{\tau_1} - y^{\tau_2})\cong \mathcal{K}^{(1)}$ sending $\bm{a}^{(0)}$ to $\bm{a}^{(1)}$.  
This gives a map 
\[
\widetilde\Hk_{M_d} \to A_d \times_{  \Sigma_{2d}(Y_3) }   H_{2d} (Y),
\]
 which factors through $\Hk_{M_d}$.  

To construct a map in the other direction,  suppose given an $S$-point 
\[(\Delta, \xi, \mathcal{K}^{(0)}, \bm{a}^{(0)},\mathcal{K}^{(1)}, \bm{a}^{(1)},y,s) \in A_d \times_{  \Sigma_{2d}(Y_3) } H_{2d} (Y).\]
The proof of Proposition \ref{prop:cartesian moduli} constructs a point $(\mathcal{L}^{(i)}_1, \mathcal{L}_2^{(i)}, \phi^{(i)}) \in M_d(S)$ corresponding to $(\Delta, \xi, \mathcal{K}^{(i)}, \bm{a}^{(i)})$, and we must show that that there are isomorphisms $\mathcal{L}_1^{(1)} \cong \mathcal{L}_1^{(0)}(y_1)$ and $\mathcal{L}_2^{(1)} \cong \mathcal{L}_2^{(0)}(y_2)$ inducing the given isomorphism 
\[
s : \mathcal{K}^{(0)} ( y^{\tau_1} - y^{\tau_2} ) \iso \mathcal{K}^{(1)}.
\]
The line bundle $\mathcal{M}^{(i)} \in \Pic(Y_{2S})$, used to construct $\mathcal{L}_2^{(i)}$, is a descent of $\underline\Hom(\mathcal{K}^{(i)}, \mathcal{K}^{(i)\tau_1})$ via the isomorphism $\Nm_{Y/Y_3}(\mathcal{K}^{(i)}) \cong f_3^*\Delta$.  The isomorphism $s$ therefore induces a canonical isomorphism 
\begin{equation}\label{Ms}
\mathcal{M}^{(1)} \cong \mathcal{M}^{(0)}(y_2 - y_2^{\sigma_2}).
\end{equation} 
By construction, $\mathcal{L}_2^{(i)}$ is the unique $S$-point of  $\Pic_{Y_2}/\Pic_X$ such that 
\[
\mathcal{L}_2^{(i)} \cong \mathcal{M}^{(i)} \otimes \mathcal{L}_2^{(i)\sigma_2}.
\]
  It then follows from (\ref{Ms}) that $\mathcal{L}_2^{(0)}(y_2) \cong \mathcal{L}_2^{(1)}$.  One constructs the isomorphism  $\mathcal{L}_1^{(1)} \cong \mathcal{L}_1^{(0)}(y_1)$ in a similar fashion.            
\end{proof}

\begin{corollary}\label{prop:hk_d facts}
The stack $\Hk_{M_d}$ is a scheme, and $\gamma_0, \gamma_1 \colon \Hk_{M_d} \to M_d$ are finite and surjective. 
In particular, by Proposition $\ref{prop:Msmooth}$,  if $d \geq 2g_3 - 1$ then $\dim \Hk_{M_d} = 2d - g + 1$.  
\end{corollary}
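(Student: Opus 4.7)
The plan is to deduce the statement from Proposition~\ref{prop:rep one paw}, which realizes $\Hk_{M_d}$ as the base change of the correspondence $H_{2d}(Y) \rightrightarrows \Sigma_{2d}(Y)$ along the map $A_d \to \Sigma_{2d}(Y_3)$ coming from (\ref{fundamental 1}). Because representability by a scheme, finiteness, surjectivity, and equality of dimensions all transfer through this base change, the problem reduces entirely to proving the corresponding statements for the two projections $H_{2d}(Y) \rightrightarrows \Sigma_{2d}(Y)$.

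First I would observe that the discussion preceding Proposition~\ref{prop:rep one paw} already identifies $H_{2d}(Y)$ with a closed subscheme of $\Sigma_{2d}(Y) \times_\kk Y$; in particular it is a scheme, and each of its projections to $\Sigma_{2d}(Y)$ is proper since $Y/\kk$ is proper. To upgrade properness to finiteness I would verify quasi-finiteness directly from the modular description: the scheme-theoretic fiber of the first projection over $(\mathcal{K}^{(0)}, \bm{a}^{(0)})$ is the closed subscheme of $Y$ cut out by $\tau_2^{-1}(\mathrm{div}(\bm{a}^{(0)}))$, an effective divisor of degree $2d$ on $Y$; by the same argument applied to $\tau_1$, the fiber of the second projection over $(\mathcal{K}^{(1)}, \bm{a}^{(1)})$ is $\tau_1^{-1}(\mathrm{div}(\bm{a}^{(1)}))$, again of degree $2d$. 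Both fibers are zero-dimensional, so each projection is proper and quasi-finite, hence finite. Surjectivity then follows because under the hypothesis $d \geq 2g_3 - 1 \geq 1$ the divisor $\mathrm{div}(\bm{a}^{(0)})$ is a nonzero effective divisor on $Y$, and any $y \in Y$ with $y^{\tau_2}$ in its support provides a preimage.

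Base-changing along $A_d \to \Sigma_{2d}(Y_3)$ then shows that $\gamma_0, \gamma_1 \colon \Hk_{M_d} \to M_d$ are finite and surjective, and $\Hk_{M_d}$ is a scheme either as a fiber product of schemes or because it carries a finite morphism to the scheme $M_d$. By Proposition~\ref{prop:Msmooth}, $M_d$ is smooth over $\kk$ of pure dimension $2d - g + 1$, and since finite surjections preserve dimension this gives $\dim \Hk_{M_d} = 2d - g + 1$.

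I do not anticipate any real obstacle here: the substantive content is already carried by Proposition~\ref{prop:rep one paw}, and the remaining verifications amount to reading off the fibers from the explicit incidence description of $H_{2d}(Y)$ inside $\Sigma_{2d}(Y) \times_\kk Y$, together with standard preservation properties under base change.
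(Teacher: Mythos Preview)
Your proposal is correct and follows exactly the approach the paper intends: the corollary is stated immediately after Proposition~\ref{prop:rep one paw} with no separate proof, and your argument simply unpacks why that proposition, together with the explicit incidence description of $H_{2d}(Y) \hookrightarrow \Sigma_{2d}(Y)\times_\kk Y$, gives the claim after base change. One small remark: the surjectivity statement in the corollary is phrased unconditionally, whereas you invoke $d\ge 2g_3-1\ge 1$; in fact surjectivity only needs $d\ge 1$ (so that $\mathrm{div}(\bm a^{(0)})$ is nonempty), and under the standing hypotheses of the paper $g\ge 1$ forces $g_3\ge 1$, so this is harmless for the applications.
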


The correspondence  $\Hk_{M_d}$ induces an endomorphism 
\[
[\Hk_{M_d}] :  \alpha_*\Q_\ell \to \alpha_*\Q_\ell
\] 
of sheaves on $A_d$, given by the composition
\[\alpha_*\Q_\ell \to \alpha_*\gamma_{0*}\gamma_0^*\Q_\ell \simeq \alpha_*\gamma_{0*}\Q_\ell \simeq \alpha_*\gamma_{1*}\Q_\ell \to \alpha_*\Q_\ell.\]
The first and last maps are induced by adjunction, using that $\gamma_0$ and $\gamma_1$ are finite.  Denote by $[\Hk_{M_d}]^r$ the $r$-fold composition of  this endomorphism with itself.  
 The remainder of  \S \ref{s:intersection theory} is devoted to the proof of the following proposition.


 \begin{proposition}\label{prop:geometric trace}
 Fix an effective divisor $D\in \Div(X)$ of degree $d \geq 2g_3 - 1$, and recall the closed subscheme $A_D \subset A_d$ and the inclusion $A_D(\kk) \subset K_3$ of Proposition $\ref{prop:invariant domain}$.  The intersection multiplicity (\ref{I distribution def}) satisfies
 \[
 \I_r(f_D) = \sum_{ \substack{  \xi \in K_3  \\  \Tr_{K_3/F}(\xi)=1   } }\I_r(\xi , f_D),
 \]
 where 
\[
\I_r(\xi , f_D)= 
\begin{cases}
\mathrm{Trace} \big(      [\Hk_{M_d}]_{\bar{\xi}} ^r  \circ \mathrm{Frob}_\xi   ;\,   ( \alpha_* \Q_\ell)_{\bar{\xi}}  \big)  
& \mbox{if } \xi\in A_D(\kk)\\
0 & \mbox{otherwise.}
\end{cases}
\]
Here $\bar{\xi}$ is any geometric point above $\xi:\Spec(\kk) \to A_D$.
 \end{proposition}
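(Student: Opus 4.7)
My plan is to interpret $\I_r(f_D)$ as a weighted point count on a shtuka-type moduli stack built from $M_d$ and the one-step correspondence $\Hk_{M_d}$, and then to decompose this count according to the invariant map $\alpha : M_d \to A_d$.

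First I would unwind the definitions of the Hecke action on $\Ch_{c,r}(\Sht_{G_0}^\mu)$ and of the intersection pairing, so as to rewrite $\I_r(f_D)$ as the proper intersection degree of the triple fiber product
\[
\mathcal{Z} = \Sht_{T_1}^\mu \times_{\Sht_{G_0}^\mu} \Sht_{G_0, f_D}^\mu \times_{\Sht_{G_0}^\mu} \Sht_{T_2}^\mu,
\]
where $\Sht_{G_0, f_D}^\mu \rightrightarrows \Sht_{G_0}^\mu$ is the Hecke correspondence attached to the basis element $f_D$. The relevant properness comes from $\Sht_{T_i}^\mu$ being proper over $Y_i^r$, combined with the support condition imposed by $f_D$. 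This step is entirely parallel to the self-intersection setup of Yun-Zhang.

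Next I would identify this fiber product with a shtuka-type moduli for the correspondence $\Hk_{M_d} \rightrightarrows M_d$. A $\kk$-point of $\mathcal{Z}$ consists of $T_1$- and $T_2$-shtukas whose induced $G_0$-shtukas are related by an $f_D$-Hecke modification of type $\mu$. By the Abel-Jacobi uniformization of $\Sht_{T_i}^\mu$ via $Y_i^r$, together with the recipe of Proposition \ref{prop:cartesian moduli} relating the coarse data $(\mathcal{L}_1, \mathcal{L}_2, \phi)$ to a point of $M_d$, each such datum produces an $r$-step loop in $\Hk_{M_d}$ closed up by Frobenius. Composing the resulting morphism $\mathcal{Z} \to M_d$ with $\alpha : M_d \to A_d$ and invoking Proposition \ref{prop:invariant domain} produces a decomposition
\[
\I_r(f_D) = \sum_{\xi \in A_D(\kk)} \I_r(\xi, f_D),
\]
with $\I_r(\xi, f_D) = 0$ for $\xi$ outside $A_D(\kk)$, as required.

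Finally I would apply Grothendieck-Lefschetz. For each $\xi \in A_D(\kk)$, the $\xi$-contribution to $\mathcal{Z}(\kk)$ is the set of $\Frob_\xi$-fixed points of the iterated correspondence $[\Hk_{M_d}]^r$ acting on the fiber $\alpha^{-1}(\xi)$. By the finiteness of $\gamma_0, \gamma_1$ (Corollary \ref{prop:hk_d facts}), the complex $\alpha_* \Q_\ell$ is concentrated in degree $0$, and the trace formula for this finite correspondence yields
\[
\I_r(\xi, f_D) = \mathrm{Trace}\bigl([\Hk_{M_d}]^r_{\bar\xi} \circ \Frob_\xi \, ; \, (\alpha_* \Q_\ell)_{\bar\xi}\bigr).
\]
The main obstacle will be making the moduli identification in the middle step fully precise: one must verify that the Abel-Jacobi uniformization of each $\Sht_{T_i}^\mu$ interacts correctly with the $G_0$-Hecke modification of type $f_D$, in such a way that the induced correspondence on $M_d$ is literally the $\Hk_{M_d}$ of (\ref{hecke one paw}). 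The cartesian description of $\Hk_{M_d}$ supplied by Proposition \ref{prop:rep one paw}, as a base change of the much simpler correspondence $H_{2d}(Y)$ on $\Sigma_{2d}(Y)$, will be the key tool for carrying this out.
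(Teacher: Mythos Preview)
Your overall strategy matches the paper's: realize $\I_r(f_D)$ via a shtuka-type fiber product $\Sht_{M_d}^\mu$ built from $\Hk_{M_d}^\mu$, decompose along $\alpha$, and finish with a Lefschetz-type trace formula. The moduli identification you flag as ``the main obstacle'' is in fact the easy part---it is exactly the content of the cartesian squares (\ref{Hk_d cartesian}) and (\ref{Shtuka_D cartesian}), and Proposition~\ref{prop:rep one paw} plus (\ref{Hecke mu}) give $[\Hk_{M_d}^\mu]=[\Hk_{M_d}]^r$ with no real difficulty.

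The genuine gap is in the first step, where you write ``proper intersection degree of the triple fiber product'' and later treat the $\xi$-contribution as a set of fixed points. The cycles $[\Sht_{T_i}^\mu]$ need not meet the Hecke correspondence transversally inside $\Sht_{G_0}^\mu$, and $\Hk_{G_0,d}^r$ is not smooth, so one cannot simply take a scheme-theoretic fiber product and count points. The paper instead works with refined Gysin maps in the sense of \cite[A.2.8]{YZ}: one must check these are defined for the relevant cartesian squares, and then invoke the octahedron lemma \cite[Thm.~A.10]{YZ} applied to the diagram (\ref{octahedron}) to prove that the two compositions
\[
(\theta_1^\mu\times\theta_2^\mu)^!\circ(\mathrm{id},\Fr_{H_d})^!\quad\text{and}\quad(\mathrm{id},\Fr_{M_d})^!\circ(\Pi_1^\mu\times\Pi_2^\mu)^!
\]
agree on $[\Hk_{G_0,d}^r]$. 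This requires the dimension estimate of Lemma~\ref{smooth part} (to pass to the smooth locus $\Hk_{G_0,d}^{r,\circ}$) and the factorization checks in Lemmas~\ref{fundamentalcycle} and \ref{lem:octahedron lemma 2}. Only after this does one obtain a well-defined class $(\mathrm{id},\Fr_{M_d})^![\Hk_{M_d}^\mu]\in\Ch_0(\Sht_{M_d}^\mu)$ whose degree is $\I_r(f_D)$; the trace identity then follows from the Lefschetz--Verdier formula \cite[A.12]{YZ}, not the ordinary Grothendieck--Lefschetz formula, applied to this refined $0$-cycle. Your sketch bypasses all of this machinery, and without it the equality of the intersection number with a Frobenius trace is unjustified.
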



\subsection{Correspondences with multiple paws}

As a first step toward proving Proposition \ref{prop:geometric trace}, we want to interpret the $r$-fold iterated endomorphism 
$[\Hk_{M_d}]^r$ as the endomorphism associated with a single correspondence.

 To this end, we define a stack $\Hk_{M_d}^\mu$, sitting in a commutative diagram
\begin{equation}\label{hecke}
\xymatrix{
& { \Hk^\mu_{M_d} }  \ar[dr]^{\gamma_r} \ar[dl]_{\gamma_0}  \\
{  M_d  }  \ar[dr]_{ \alpha }  & &  {   M_d    }  \ar[dl]^{  \alpha } \\
& { A_d } 
}
\end{equation}
First define a stack $\widetilde\Hk^\mu_{M_d}$ whose $S$-points are given by: 
\begin{itemize}
\item For each $0 \leq i \leq r$, points $(\mathcal{L}_1^{(i)}, \mathcal{L}_2^{(i)}, \phi_i) \in \widetilde M_d(S)$.
\item For each $1 \leq i \leq r$, points $y^{(i)} = (y^{(i)}_1, y^{(i)}_2) \in Y(S) = (Y_1 \times_X Y_2)(S)$.
\item For each $1 \leq i \leq r$, rational maps 
\[s_1^{(i)} \colon \mathcal{L}_1^{(i-1)} \dashrightarrow \mathcal{L}_1^{(i)} \hspace{2mm} \mbox{ and } \hspace{2mm} s_2^{(i)} \colon \mathcal{L}_2^{(i-1)} \dashrightarrow \mathcal{L}_2^{(i)},\]
such that $(\mathcal{L}_1^{(i)}, s_1^{(i)}, y^{(i)}_1)$ and $(\mathcal{L}_2^{(i)}, s_2^{(i)}, y^{(i)}_2)$ give points of $\Hk_{T_1}^\mu(S)$ and $\Hk_{T_2}^\mu(S)$, respectively, and such that the following diagram commutes  
\begin{equation}\label{Hk^mu_d}
\xymatrix{
f_{1*}\mathcal{L}_{1}^{(0)}\ar[d]^{\phi_0}\ar@{-->}[r]^{s_1^{(1)}} 
& f_{1*}\mathcal{L}_{1}^{(1)}\ar[d]^{\phi_{1}} \ar@{-->}[r]^{s_1^{(2)}} 
& \cdots\ar@{-->}[r]^{ s_1^{(r)}} & f_{1*}\mathcal{L}_{1}^{(r)}\ar[d]^{\phi_{r}}\\
f_{2*}\mathcal{L}_{2}^{(0)}\ar@{-->}[r]^{s_2^{(1)}} 
& f_{2*}\mathcal{L}_{2}^{(2)}\ar@{-->}[r]^{ s_2^{(2)}} 
& \cdots\ar@{-->}[r]^{ s_1^{(r)}} & f_{2*}\mathcal{L}_{2}^{(r)}
}
\end{equation}

\end{itemize}
We then set 
\[
\Hk_{M_d}^\mu = \widetilde\Hk_{M_d}^\mu/\Pic_X.
\]  
For $0 \leq i \leq r$, we have morphisms $\gamma_i \colon \Hk_{M_d}^\mu \to M_d$ which remember the $i$th column in (\ref{Hk^mu_d}), and this gives the diagram (\ref{hecke}).   

Exactly as in \cite[Lemma 6.2]{YZ},  there is an isomorphism of $(M_d \times_{A_d} M_d)$-schemes
\begin{equation}\label{Hecke mu}
\Hk_{M_d}^\mu \iso  \underbrace{\Hk_{M_d} \times_{\gamma_1, \gamma_0} \Hk_{M_d} \times_{\gamma_1, \gamma_0} \cdots \times_{\gamma_1,\gamma_0} \Hk_{M_d}}_{r \, \mathrm{times}},
\end{equation}
where the fiber products are  with respect to the morphisms of (\ref{hecke one paw}).  
\begin{corollary}
If $d \geq 2g_3 - 1$, then $\dim \Hk_{M_d}^\mu = \dim \Hk_{M_d} = 2d - g + 1.$
\end{corollary}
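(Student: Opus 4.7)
The plan is to deduce the dimension equality directly from the isomorphism (\ref{Hecke mu}) and the facts already established in Corollary \ref{prop:hk_d facts}, namely that $\gamma_0,\gamma_1 : \Hk_{M_d} \to M_d$ are finite and that $\dim \Hk_{M_d} = 2d - g + 1$.

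First I would show that the projection to the first factor,
\[
\mathrm{pr}_1 : \Hk_{M_d}^\mu \iso \Hk_{M_d} \times_{\gamma_1,\gamma_0} \Hk_{M_d} \times_{\gamma_1,\gamma_0}\cdots\times_{\gamma_1,\gamma_0} \Hk_{M_d} \to \Hk_{M_d},
\]
is a finite morphism. This is proved by induction on $r$: writing $Z_n$ for the $n$-fold fiber product, the natural map $Z_{n+1} \to Z_n$ is obtained from the finite morphism $\gamma_0 : \Hk_{M_d} \to M_d$ by base change along the composition $Z_n \to \Hk_{M_d} \xrightarrow{\gamma_1} M_d$ (taking $\gamma_1$ on the last factor of $Z_n$). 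Finite morphisms are stable under base change, so each step $Z_{n+1} \to Z_n$ is finite, and therefore the composition $\mathrm{pr}_1 : Z_r \to Z_1 = \Hk_{M_d}$ is finite.

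Since finite morphisms preserve dimension, this yields
\[
\dim \Hk_{M_d}^\mu = \dim \Hk_{M_d},
\]
and the second equality $\dim \Hk_{M_d} = 2d - g + 1$ is precisely the content of Corollary \ref{prop:hk_d facts} under the hypothesis $d \geq 2g_3 - 1$ (which in turn relied on the smoothness of $M_d$ established in Proposition \ref{prop:Msmooth} together with the finiteness and surjectivity of $\gamma_0,\gamma_1$).

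There is no real obstacle here; the statement is essentially a bookkeeping consequence of Corollary \ref{prop:hk_d facts}. The only thing worth being careful about is that one uses $\gamma_0$ and $\gamma_1$ symmetrically (both being finite) so that the iterated fiber product against either map preserves finiteness, and hence dimension, at each step.
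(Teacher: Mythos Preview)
Your argument is correct and is exactly the reasoning the paper leaves implicit (the corollary is stated without proof, relying on the isomorphism (\ref{Hecke mu}) together with Corollary \ref{prop:hk_d facts}). One small tightening: the assertion ``finite morphisms preserve dimension'' is not literally true (a closed point in a curve is a counterexample); what you need is that \emph{finite surjective} morphisms preserve dimension, and since surjectivity is stable under base change just as finiteness is, your inductive step shows $\mathrm{pr}_1$ is both finite and surjective, which is what gives the equality.
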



Define a $\kk$-scheme $\Sht_{M_d}^\mu$ as the fiber product  
\begin{equation}\label{sht_d}
\xymatrix{
{  \Sht_{M_d}^\mu} \ar[rr]\ar[d]  & &  {  \Hk_{M_d}^\mu   }  \ar[d]^{(\gamma_0, \gamma_r)} \\
{  M_d } \ar[rr]_{(\mathrm{id}, \mathrm{Fr}_{M_d})}  &  & { M_d \times M_d }  .
}
\end{equation}

\begin{proposition}
The scheme $\Sht_{M_d}^\mu$ has dimension $0$, and the image of the composition
\[
\Sht_{M_d}^\mu(\kk^{\mathrm{alg}}) \to M_d(\kk^{\mathrm{alg}}) \map{\alpha} A_d(\kk^{\mathrm{alg}})
\]
is a subset of $A_d(\kk)$.
\end{proposition}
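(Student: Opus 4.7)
The plan is to reduce both assertions to a single observation: the maps $\gamma_0$ and $\gamma_r$ out of $\Hk_{M_d}^\mu$ factor the same morphism to $A_d$.

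First, I would verify that $\alpha\circ\gamma_0 = \alpha\circ\gamma_r$ as morphisms $\Hk_{M_d}^\mu \to A_d$. In the one-paw case $r=1$ this is exactly Proposition \ref{prop:rep one paw}, which exhibits $\Hk_{M_d}$ as $A_d \times_{\Sigma_{2d}(Y_3)} H_{2d}(Y)$: both maps $\gamma_0,\gamma_1 \colon \Hk_{M_d} \to M_d$ are $A_d$-morphisms, so composing with $\alpha$ yields the same projection to the first factor. Iterating through the fiber-product description (\ref{Hecke mu}) then gives the statement for arbitrary $r$.

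Granting this, the image statement is essentially formal. Given $x \in \Sht_{M_d}^\mu(\kk^{\mathrm{alg}})$ with underlying Hecke datum $\tilde x \in \Hk_{M_d}^\mu(\kk^{\mathrm{alg}})$, the cartesian square (\ref{sht_d}) forces $\gamma_r(\tilde x) = \mathrm{Fr}_{M_d}(\gamma_0(\tilde x))$. Applying $\alpha$ to both sides and using the first step along with the Frobenius-equivariance of any $\kk$-morphism,
\[
\alpha(\gamma_0(\tilde x)) = \alpha(\gamma_r(\tilde x)) = \alpha(\mathrm{Fr}_{M_d}(\gamma_0(\tilde x))) = \mathrm{Fr}_{A_d}(\alpha(\gamma_0(\tilde x))),
\]
so $\alpha(\gamma_0(\tilde x))$ is Frobenius-fixed and therefore lies in $A_d(\kk)$.

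Finally, for the dimension, I would combine standard finiteness properties: $\alpha \colon M_d \to A_d$ is finite (Proposition \ref{prop:Msmooth}), and $\gamma_0, \gamma_1 \colon \Hk_{M_d} \to M_d$ are finite (Corollary \ref{prop:hk_d facts}); through (\ref{Hecke mu}) this passes to finiteness of $\gamma_0, \gamma_r \colon \Hk_{M_d}^\mu \to M_d$, so $\alpha\circ\gamma_0 \colon \Hk_{M_d}^\mu \to A_d$ is finite with finite fibers. Since $\Sht_{M_d}^\mu \hookrightarrow \Hk_{M_d}^\mu$ is a closed subscheme (pullback of the Frobenius graph) and the previous paragraph places $\Sht_{M_d}^\mu(\kk^{\mathrm{alg}})$ over the finite set $A_d(\kk)$, finiteness of the fibers of $\alpha\circ\gamma_0$ forces $\Sht_{M_d}^\mu(\kk^{\mathrm{alg}})$ to be finite, hence $\dim \Sht_{M_d}^\mu = 0$. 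The only real content is the first step, which is geometric rather than computational and already available from Proposition \ref{prop:rep one paw}; the rest is bookkeeping.
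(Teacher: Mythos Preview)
Your proposal is correct and follows essentially the same route as the paper: the key input is the commutativity $\alpha\circ\gamma_0=\alpha\circ\gamma_r$ (this is exactly the content of diagram (\ref{hecke})), from which the Frobenius-fixed argument and the finiteness of $\alpha$ and of $\gamma_0,\gamma_r$ give both claims. The paper compresses the Frobenius step into the single sentence ``from the cartesian diagrams (\ref{hecke}) and (\ref{sht_d}) we see that any $\kk^{\mathrm{alg}}$-point of $\Sht_{M_d}^\mu$ lies over a $\kk$-point of $A_d$,'' and then deduces $\dim=0$ by noting that the finite map $\Sht_{M_d}^\mu\to M_d$ factors through $\alpha^{-1}(A_d(\kk))$; your version unpacks the same logic.
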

\begin{proof}
 The map $(\gamma_0, \gamma_r)$ is finite, by Proposition \ref{prop:hk_d facts} and (\ref{Hecke mu}),   and hence 
so is $\Sht_{M_d}^\mu \to M_d$.
From the cartesian diagrams (\ref{hecke}) and (\ref{sht_d}) we see that any $\kk^{\mathrm{alg}}$-point of $\Sht_{M_d}^\mu$ lies over a $\kk$-point of $A_d$.  As $\alpha : M_d \to A_d$ is finite, it follows that  $\Sht_{M_d}^\mu \to M_d$  factors through a 0-dimensional closed subscheme of $M_d$.  
All parts of the proposition follow from this.
\end{proof}

For any $\xi \in A_d(\kk)$ we form the fiber product
\[
\xymatrix{
{  \Sht_{M_d}^\mu(\xi) }  \ar[rr] \ar[d]  &  &  {   \Spec(\kk)  }    \ar[d]^{\xi}     \\
 {  \Sht_{M_d}^\mu  }   \ar[r]    & {  M_d  }   \ar[r]_{\alpha}      &   {  A_d ,}  
}
\]
and so obtain  a  decomposition
\[
\Sht_{M_d}^\mu = \bigsqcup_{\xi \in A_d(\kk)} \Sht_{M_d}^\mu(\xi)
\] 
into finitely many open and closed $0$-dimensional subschemes.  On the level of point sets there is a decomposition
$A_d(\kk) = \bigsqcup_D A_D(\kk)$,  where the disjoint union runs over all   effective degree $d$ divisors  
\[
D \in  \mathrm{Sym}^d(X) \iso \Sigma_d(X)(\kk) ,
\]
and $A_D$ is as in \S \ref{ss:geometric orbital}. Setting 
\begin{equation}\label{ShtD}
\Sht_{M_D}^\mu = \bigsqcup_{\xi \in A_D(\kk)} \Sht_{M_d}^\mu(\xi),
\end{equation}
we obtain a decomposition of the Chow group
\begin{equation}\label{Ddecomp}
\Ch_0(\Sht_{M_d}^\mu) = \bigoplus_{D \in \Sigma_d(X)(\kk)} \Ch_0(\Sht_{M_d}^\mu).
\end{equation}


\subsection{The refined Gysin map}
 

As $M_d$ is smooth (by Proposition \ref{prop:Msmooth}), the morphism  $(\mathrm{id}, \mathrm{Fr}_{M_d})$  of (\ref{sht_d}) is a regular local immersion.  We therefore have a refined Gysin map
\begin{equation}\label{basic gysin} 
(\mathrm{id}, \mathrm{Fr}_{M_d})^! \colon \Ch_{2d-g+1}(\Hk_{M_d}^\mu) \to \Ch_0(\Sht_{M_d}^\mu)
\end{equation}
defined as in \cite[\S3.1]{Kresch-cycle}.

\begin{proposition}\label{prop:fundamental}
Suppose $D$ is an effective divisor on $X$ of degree $d \geq 2g_3 - 1$.
The composition
 \[
 \Ch_{2d-g+1}(\Hk_{M_d}^\mu) \map{ (\ref{basic gysin}) }  \Ch_0(\Sht_{M_d}^\mu)
 \map{(\ref{Ddecomp})}   \Ch_0(\Sht_{M_d}^\mu) \map{\deg}\Q
\]
sends the fundamental class $ [\Hk_{M_d}^\mu] \in \Ch_{2d-g+1}(\Hk_{M_d}^\mu)$ to 
the intersection multiplicity  $\I_f(f_D)$ defined by (\ref{I distribution def}).
  \end{proposition}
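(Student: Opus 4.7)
The strategy is to realize $\Sht_{M_D}^\mu$ as the scheme-theoretic incarnation of the intersection pairing $\langle [\Sht_{T_1}^\mu], f_D * [\Sht_{T_2}^\mu]\rangle$, so that the definition of the intersection pairing on $\Ch_{c,r}(\Sht_{G_0}^\mu)$ from \cite[\S A.1]{YZ} reduces to the Gysin computation (\ref{basic gysin}) applied to the fundamental class $[\Hk_{M_d}^\mu]$.

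The first and main step is to construct a cartesian diagram realizing $\Hk_{M_d}^\mu$ as a fiber product of $\Hk_{T_1}^\mu$ and $\Hk_{T_2}^\mu$ over a substack $\Hk_{G_0,D}^\mu \subset \Hk_{G_0}^\mu$ of Hecke modifications of $G_0$-torsors whose discriminant divisor equals $D$; this substack is the geometric avatar of the basis function $f_D\in \mathscr{H}$. The data in (\ref{Hk^mu_d}) is exactly a pair of compatible Hecke chains on $Y_1$ and $Y_2$ intertwined by the rank two maps $\phi_i$; pushing the $\mathcal{L}_i^{(\bullet)}$ forward to $X$ recovers the induced Hecke modification of $G_0$-torsors. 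The condition $\alpha(\phi) = (\co_X(D),1)$ singles out the $f_D$-piece, and the $\Pic_X$-quotient on both sides matches the passage from $\GL_2$ to $\PGL_2$.

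Taking Frobenius graphs and using that fiber products commute with Frobenius yields a similar cartesian square for shtukas, realizing the $D$-component $\Sht_{M_D}^\mu$ of (\ref{ShtD}) as
\[
\Sht_{T_1}^\mu \times_{\Sht_{G_0}^\mu} \mathcal{H}_{f_D} \times_{\Sht_{G_0}^\mu} \Sht_{T_2}^\mu,
\]
where $\mathcal{H}_{f_D}$ is the Hecke shtuka realizing $f_D$ as a self-correspondence of $\Sht_{G_0}^\mu$. The definition of the intersection pairing computes $\I_r(f_D)$ as the degree of a refined Gysin pullback of $[\Sht_{T_1}^\mu]\boxtimes [\Sht_{T_2}^\mu]$ along the diagonal of $\Sht_{G_0}^\mu$ twisted by $f_D$. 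By compatibility of refined Gysin maps with cartesian base change, this transfers through the square above to $\deg\,(\mathrm{id},\mathrm{Fr}_{M_d})^![\Hk_{M_d}^\mu]$, restricted to the $D$-component via (\ref{Ddecomp}).

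The main obstacle I expect is establishing the first cartesian square rigorously: verifying that the ``determinant divisor equals $D$'' condition on Hecke modifications of $G_0$-torsors translates cleanly to the condition $\alpha(\phi)=(\co_X(D),1)$, and tracking the $\Pic_X$-quotient compatibly with the $\PGL_2$ structure on both sides. Once this is in hand, the remainder is a formal refined-Gysin calculation along the lines of \cite[\S 6]{YZ}: the smoothness and dimension count from Proposition \ref{prop:Msmooth} ensure that $(\mathrm{id},\mathrm{Fr}_{M_d})$ is a regular local immersion of codimension $2d-g+1$, so the refined Gysin map produces a class in $\Ch_0(\Sht_{M_d}^\mu)$ with no excess contribution.
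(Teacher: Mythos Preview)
Your proposal follows essentially the same strategy as the paper's proof: both realize $\Hk_{M_d}^\mu$ as a fiber product over the Hecke stack $\Hk_{G_0,d}^r$ (the paper's version of your $\Hk_{G_0,D}^\mu$, before restricting to a particular $D$), pass to shtukas by intersecting with Frobenius, and then compare two orderings of refined Gysin pullbacks via the octahedron lemma of \cite[Thm.\ A.10]{YZ}. The cartesian square you anticipate is exactly the paper's diagram (\ref{Hk_d cartesian}), and the shtuka-level cartesian square is (\ref{Shtuka_D cartesian}).

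However, there is a genuine technical gap in your last paragraph. You describe the remainder as ``a formal refined-Gysin calculation,'' but this overlooks a real obstacle: the stack $\Hk_{G_0,d}^r$ is \emph{not} smooth, so there is no automatic reason why the refined Gysin pullback $(\Pi_1^\mu\times\Pi_2^\mu)^![\Hk_{G_0,d}^r]$ should equal the fundamental class $[\Hk_{M_d}^\mu]$. The paper handles this by passing to the open smooth locus $\Hk_{G_0,d}^{r,\circ}$ (where the divisor of $\det(\phi)$ avoids the modification points) and proving a dimension bound (Lemma \ref{smooth part}) showing that the complement in $\Hk_{M_d}^\mu$ has strictly smaller dimension. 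This bound uses the explicit description of $\Hk_{M_d}^\mu$ from Proposition \ref{prop:rep one paw} and is not formal; without it, the equality (\ref{eq:fundamental class}) does not follow.

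A second, smaller point: what you call ``compatibility of refined Gysin maps with cartesian base change'' is really the octahedron lemma, which requires checking that various morphisms factor as regular local immersions followed by smooth relative Deligne-Mumford morphisms (conditions (1)--(4) of \cite[Thm.\ A.10]{YZ}). The paper verifies these explicitly in Lemma \ref{lem:octahedron lemma 2}. This is more bookkeeping than a conceptual obstacle, but it is not automatic.
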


\begin{proof}
 As in \cite[6.3]{YZ} we consider an octahedral diagram:
\begin{equation}\label{octahedron}
\xymatrix{\Hk^{\mu}_{T_1}\times\Hk^{\mu}_{T_2}\ar@<-3ex>[d]_{(\gamma_{0},\gamma_{r})}\ar@<3ex>[d]^{(\gamma_{0},\gamma_{r})}\ar[rr]^{\Pi_1^{\mu}\times\Pi_2^{\mu}} && \Hk^{r}_{G_0}\times\Hk^{r}_{G_0} \ar@<-3ex>[d]_{(\gamma_{0},\gamma_{r})}\ar@<3ex>[d]^{(\gamma_{0},\gamma_{r})} && \Hk^{r}_{G_0,d}\ar[d]^{(\gamma_{0},\gamma_{r})}\ar[ll]_{(\orr{s},\orr{t})}\\
(\mathrm{Bun}_{T_1})^{2}\times (\mathrm{Bun}_{T_2})^{2}\ar[rr]^{\Pi\times\Pi\times\Pi\times\Pi} && (\mathrm{Bun}_{G_0})^{2}\times (\mathrm{Bun}_{G_0})^{2} && H_{d}\times H_{d}\ar[ll]_{s^2 \times t^2}\\
\mathrm{Bun}_{T_1}\times \mathrm{Bun}_{T_2}\ar@<-3ex>[u]_{(\mathrm{id},\Fr)}\ar@<3ex>[u]^{(\mathrm{id},\Fr)}\ar[rr]^{\Pi_1\times\Pi_2} && \mathrm{Bun}_{G_0}\times\mathrm{Bun}_{G_0}\ar@<-3ex>[u]_{(\mathrm{id},\Fr)}\ar@<3ex>[u]^{(\mathrm{id},\Fr)} && H_{d}\ar[ll]_{(s, t)} \ar[u]_{(\mathrm{id},\Fr)}}
\end{equation}

The stack $H_d$ is defined exactly as in \cite{YZ}.  In particular, $H_d = \widetilde H_d/\Pic_X$, where $\widetilde H_d$ parameterizes colength $d$ injections $\phi \colon \mathcal{E} \hookrightarrow \mathcal{E}'$ of rank two vector bundles on $X$.  The map $(s, t) \colon H_d \to \Bun_{G_0}^2$ appearing in the bottom row of (\ref{octahedron}) takes the map $\phi$ to $(\mathcal{E}, \mathcal{E}')$.

The stack $\Hk^r_{G_0,d}$ is defined, as in \cite[6.3.3]{YZ}, to be $\widetilde \Hk_{G_0,d}^r/\Pic_X$.  Here, $\widetilde \Hk_{G_0,d}^r$ parameterizes colength $d$ injections $\phi \colon \mathcal{E} \hookrightarrow \mathcal{E}'$ of rank two vector bundles on $X$, together with $r$ modifications $f_i \colon \mathcal{E}_i \to \mathcal{E}_{i+1}$ and $f_i' \colon \mathcal{E}_i' \to \mathcal{E}_{i+1}'$ of $\mathcal{E}$ and $\mathcal{E}'$, compatible with $\phi$.  The modifications $f_i$ and $f_i'$ are required to be of type $\mu$ and above the same points $(x_1, \ldots, x_r) \in X^r$.  The isomorphism class of $\Hk^r_{G_0,d}$ is independent of $\mu$.  The map 
\[(\orr{s},\orr{t}) \colon \Hk_{G_0,d}^r \to \Hk_{G_0}^r \times \Hk_{G_0}^r\] in (\ref{octahedron}) is $\{\phi, f_i, f_i'\} \mapsto (\{\mathcal{E}, f_i\}, \{\mathcal{E}', f_i'\})$.

The following two lemmas follow immediately from the definitions.   

\begin{lemma}
The fiber product of the bottom row in (\ref{octahedron}) is $M_d$.  
\end{lemma}

\begin{lemma}
The fiber product of the top row in (\ref{octahedron}) is $\Hk^\mu_{M_d}$, i.e. 
\begin{equation}\label{Hk_d cartesian}
\xymatrix{
{  \Hk_{M_d}^\mu} \ar[r]\ar[d]  &  {  \Hk_{G_0,d}^r   }  \ar[d] \\
{  \Hk_{T_1}^\mu \times \Hk_{T_2}^\mu } \ar[r]^{\Pi_1^\mu \times \Pi_2^\mu}  & { \Hk_{G_0}^r \times \Hk_{G_0}^r}  .
}
\end{equation}
is cartesian.  
\end{lemma}

We will have to work around the singularities of $\Hk_{G_0,d}^r$ in order to compute various intersection pairings.  Let $\Hk^{r,\circ}_{G_0,d} \subset \Hk^r_{G_0,d}$ be the open substack consisting of those $\{\phi, f_i, f_i', x_i\}$ such that the support of the divisor of $\det(\phi)$ is disjoint from the $x_i$.  Then $\Hk^{r,\circ}_{G_0,d}$ is smooth of dimension $2d +2r + 3g -3$ \cite[6.10(1)]{YZ}.  Let $\Hk_{M_d}^{\mu,\circ}$ be the preimage of $\Hk^{r,\circ}_{G_0,d}$ in $\Hk_{M_d}^\mu$.  

\begin{lemma}\label{smooth part}
If $d \geq 2g_3 - 1$, then 
\[\dim(\Hk_{M_d}^\mu - \Hk_{M_d}^{\mu, \circ}) < 2d - g + 1 = \dim\, \Hk^\mu_{M_d}.\]  
\end{lemma}

\begin{proof}
From Proposition \ref{prop:rep one paw} and (\ref{Hecke mu}) we have 
\[\Hk_{M_d}^\mu \simeq A_d \times_{\Sigma_{2d}(Y_3)} H^r_{2d}(Y),\] where $H^r_{2d}(Y)$ parameterizes tuples $(E_0,E_1, \cdots, E_r)$ of effective divisors on $Y$ of degree $2d$, such that for each $1 \leq i \leq r$, $E_i$ is obtained from $E_{i-1}$ by changing a point $y_i \in E_{i-1}$ to $\tau_3(y_i)$.  If we write $g_3 \colon Y \to Y_3$ for the double cover, then the divisor $E = g_{3*}(E_i) \in \Div(Y_3)$ is independent of $i$.  

The locus $\Hk_{M_d}^\mu - \Hk_{M_d}^{\mu, \circ}$ consists of those triples 
\[(\Delta, \xi, (E_i)) \in A_d \times_{\Sigma_{2d}(Y_3)} H_{2d}^r(Y)\] such that the divisors $\mathrm{div}(\Tr(\xi)) = \mathrm{div}(\xi + \xi^{\sigma_3})$ and $\mathrm{div}(\xi \xi^{\sigma_3}) = f_{3*}(E)$ have a point in common.  For such triples, there exists $x \in |Y_3|$ such that $x$ and $x^{\sigma_3}$ are in $E$.  Thus, the image of $(\Delta, \xi, (E_i))$ in $A_d$ lies in the subscheme $C_d \subset A_d$ consisting of pairs $(\Delta, \xi)$ such that $\mathrm{div}(\xi)$ and $\mathrm{div}(\xi)^{\sigma_3}$ have a point in common.  Since there is a surjective map $X \times A_{d-1} \to C_d$, we have 
\[\dim \, C_d \leq \dim(X\times A_{d-1}) = 2d - g.\]  As the composition $\Hk_{M_d}^\mu \to M_d \to A_d$ is a finite morphism, we deduce the desired inequality: $\dim(\Hk_{M_d}^\mu - \Hk^{\mu, \circ}_d) \leq 2d - g$.    
\end{proof}

\begin{lemma}\label{fundamentalcycle}
The refined Gysin map 
\[
\left(\Pi_1^\mu \times \Pi_2^\mu\right)^! \colon \Ch_{2d+2r + 3g -3}(\Hk_{G_0,d}^r) \to \Ch_{2d-g+1}(\Hk_{M_d}^\mu)
\]
associated to the diagram (\ref{Hk_d cartesian}) is defined.  Moreover, 
\begin{equation}\label{eq:fundamental class}
\left(\Pi_1^\mu \times \Pi_2^\mu\right)^![\Hk_{G_0,d}^r] = [\Hk_{M_d}^\mu].
\end{equation}
\end{lemma}

\begin{proof}
For the first statement, it is enough to verify the two conditions in \cite[A.2.8]{YZ}.  The first condition is satisfied since $\Hk_{M_d}^\mu$ is a scheme (Corollary \ref{prop:hk_d facts}).  For the second condition, it is enough to show that for $i = 1,2$, the map $\Pi_i^\mu \colon \Hk_{T_i}^\mu \to \Hk_{G_0}^r$ can be factored as a regular local immersion followed by a smooth relative Deligne-Mumford type morphism.  Since $X^{'r} \to X^r$ is \'etale, it is enough to prove this for the base change $\Hk_{T_i}^\mu \to \Hk_{G_0}^r \times_{X^r} X^{'r}$, and this is proved in \cite[Lem.\ 6.11(1)]{YZ}.   

For the second statement, note that 
\[\Ch_{2d+2r + 3g - 3}\left(\Hk_{G_0,d}^r\right) \simeq \Ch_{2d+2r + 3g - 3}\left(\Hk_{G_0,d}^{r, \circ}\right).\]  By Lemma \ref{smooth part}, we also have
\[\Ch_{2d -g + 1}\left(\Hk_{M_d}^\mu\right) \simeq \Ch_{2d-g+1}\left(\Hk_{M_d}^{\mu, \circ}\right).\] Both of these isomorphisms preserve fundamental classes.  On the other hand, $\left(\Pi_1^\mu \times \Pi_2^\mu\right)^![\Hk_{G_0,d}^{r, \circ}] = [\Hk_{M_d}^{\mu, \circ}]$, since $\Hk_{G_0,d}^{r, \circ}$ is smooth.  The equality (\ref{eq:fundamental class}) follows.     
\end{proof}

The stack $\Sht_{G_0,d}^r$ is defined to be the fiber product of the third column in (\ref{octahedron}):
\begin{equation}\label{ShtGd definition}
\xymatrix{
{  \Sht_{G_0,d}^r} \ar[r]\ar[d]  &  {  \Hk_{G_0,d}^r   }  \ar[d]^{(\gamma_0, \gamma_r)} \\
{  H_d} \ar[r]^{(\mathrm{id},\Fr_{H_d})}  & { H_d \times H_d}  .
}
\end{equation}
By \cite[Lem.\ 6.12]{YZ}, there is a canonical isomorphism
\begin{equation}\label{ShtukaDdecomp}
\Sht_{G_0,d}^r \cong \bigsqcup_{D \in \Sigma_d(X)(\kk)} \Sht_{G_0}^r(f_D),
\end{equation}
where $\Sht_{G_0}^r(f_D)$ is the stack of Hecke correspondences between Shtukas with $r$ paws, defined in \cite[5.3.1]{YZ}.  Recall that it is these Hecke correspondences which give the action of $\mathscr{H}$ on $\Ch_{c,r}(\Sht_{G_0}^r)$, and which allowed us to define $\I_r(f_D)$.    

\begin{lemma}\label{lem:octahedron lemma 1}
Let $D$ be an effective divisor of degree $d$, and recall the definition of $\Sht_{M_D}^\mu$ in (\ref{ShtD}).  Then the following diagram is cartesian 
\begin{equation}\label{Shtuka_D cartesian}
\xymatrix{
{  \Sht_{M_D}^\mu} \ar[r]\ar[d]  &  {  \Sht_{G_0}^r(f_D)   }  \ar[d] \\
{  \Sht_{T_1}^\mu \times \Sht_{T_2}^\mu} \ar[r]^{\theta_1^\mu \times \theta_2^\mu}  & { \Sht_{G_0}^r \times \Sht_{G_0}^r}  .
}
\end{equation}
\end{lemma}

\begin{proof}
The fiber products of the three rows in the octahedron are
\begin{equation}\label{rows}
\Hk_{M_d}^\mu \xrightarrow{(\gamma_0, \gamma_r)} M_d \times M_d \xleftarrow{(\mathrm{id}, \Fr)}M_d,
\end{equation}
and the fiber product of this resulting diagram is by definition 
\begin{equation}\label{Ddecomposition}
\Sht_{M_d}^\mu \cong \bigsqcup_{D \in \Sigma_d(X)(\kk)} \Sht_{M_D}^\mu.
\end{equation}
By (\ref{Sht_T definition}), the fiber products of the three columns in the octahedron are
\begin{equation}\label{columns}
\Sht_{T_1}^\mu \times \Sht_{T_2}^\mu \xrightarrow{\theta_1^\mu \times \theta_2^\mu}\Sht_{G_0}^r \times \Sht_{G_0}^r \xleftarrow{(s,t)} \Sht_{G_0,d}^r.
\end{equation}
By \cite[Lem.\ A.9]{YZ}, the fiber product of (\ref{rows}) is canonically isomorphic, as an $A_d$-stack, to  
the fiber product of (\ref{columns}).  We therefore have the cartesian square
\begin{equation}\label{Shtuka_d cartesian}
\xymatrix{
{  \Sht_{M_d}^\mu} \ar[r]\ar[d]  &  {  \Sht_{G_0,d}^r   }  \ar[d] \\
{  \Sht_{T_1}^\mu \times \Sht_{T_2}^\mu} \ar[r]^{\theta_1^\mu \times \theta_2^\mu}  & { \Sht_{G_0}^r \times \Sht_{G_0}^r}  .
}
\end{equation}
Taking the fiber over $D \in \Sigma_{d}(X)(\kk)$ in (\ref{Shtuka_d cartesian}) and using the decompositions (\ref{ShtukaDdecomp}) and (\ref{Ddecomposition}), we obtain the desired cartesian square (\ref{Shtuka_D cartesian}).  
\end{proof}

Lemma \ref{lem:octahedron lemma 1} allows us to define two different maps 
\[\Ch_{2d+2r+3g-3}(\Hk_{G_0,d}^r) \to \Ch_0(\Sht_{M_d}^\mu),\] each obtained by composing two refined Gysin morphisms.  Specifically, the cartesian squares (\ref{ShtGd definition}) and (\ref{Shtuka_d cartesian}) induce the composition 
\[ \Ch_{2d+2r+3g-3}(\Hk_{G_0,d}^r) \map{(\mathrm{id}, \Fr_{H_d})^!} \Ch_{2r}(\Sht_{G_0}^d) \map{\left(\theta_1^\mu \times \theta_2^\mu\right)^!} \Ch_0(\Sht_{M_d}^\mu),\]
whereas the cartesian squares (\ref{Hk_d cartesian}) and (\ref{sht_d}) give 
\[ \Ch_{2d+2r+3g-3}(\Hk_{G_0,d}^r) \map{\left(\Pi_1^\mu \times \Pi_2^\mu\right)^!} \Ch_{2d-g+1}(\Hk_{M_d}^\mu)\map{(\mathrm{id}, \Fr_{M_d})^!}  \Ch_0(\Sht_{M_d}^\mu).\] 
This is assuming that $\left(\theta_1^\mu \times \theta_2^\mu\right)^!$ and $(\mathrm{id}, \Fr_{H_d})^!$ are well-defined, which we justify next. The other technical result that we need is that these two compositions agree on the fundamental cycle:      

\begin{lemma}\label{lem:octahedron lemma 2}
The refined Gysin maps $\left(\theta_1^\mu \times \theta_2^\mu\right)^!$ and $(\mathrm{id}, \Fr_{H_d})^!$ are well-defined. Moreover, we have
\[\left(\theta_1^\mu \times \theta_2^\mu\right)^!(\mathrm{id}, \Fr_{H_d})^![\Hk_{G_0,d}^r] =  (\mathrm{id}, \Fr_{M_d})^!\left(\Pi_1^\mu \times \Pi_2^\mu\right)^![\Hk_{G_0,d}^r].\]
\end{lemma}
\begin{proof}
This is the octahedron lemma \cite[Thm.\ A.10]{YZ} applied to the diagram (\ref{octahedron}).  We must show that hypotheses $(1)$-$(4)$ of that lemma are satisfied.  The smoothness of all stacks in (\ref{octahedron}) aside from $\Hk_{G_0,d}^r$ was proven in \cite{YZ}, so (1) is verified.  That the middle and left rows and middle and bottom columns have expected fiber product dimensions is an easy computation using Proposition \ref{prop:Msmooth} and that $\dim H_d = 2d + 3g - 3$, $\dim \Bun_{G_0} = 3g - 3$, and $\dim \Bun_{T_i} = g - 1$.  This proves (2).  Hypothesis $(3)$ was verified in the proof of Lemma \ref{fundamentalcycle} and \cite[Lem.\ 6.14]{YZ}.     

To verify (4), we must check that the two conditions in \cite[A.2.8]{YZ} are satisfied for the cartesian diagrams (\ref{sht_d}) and (\ref{Shtuka_d cartesian}).  First note that the fiber product in both of these diagrams is $\Sht_{M_d}^\mu$, which is a scheme by Proposition \ref{Hecke mu} and (\ref{sht_d}).  Thus, it remains to check that the bottom row in each of the cartesian squares (\ref{sht_d}) and (\ref{Shtuka_d cartesian}) can be factored as regular local immersion followed by a smooth relative Deligne-Mumford type morphism.  For (\ref{sht_d}), note that $(\mathrm{id}, \Fr_{M_d})$ is a regular immersion since $M_d$ is smooth (by Proposition \ref{prop:Msmooth}).  For (\ref{Shtuka_d cartesian}), this follows since $\Sht_{T_1}^\mu$, $\Sht_{T_2}^\mu$, and $\Sht_{G_0}^r$ are all smooth Deligne-Mumford stacks.         
\end{proof}

We finally put everything together to prove Proposition \ref{prop:fundamental}.  Recalling that $[\Sht_{T_i}^\mu]$ is the pushforward of the fundamental class along the map $\theta_i^\mu \colon \Sht_{T_i}^\mu \to \Sht_{G_0}^\mu$, we have 
\begin{align*}
\I_r(f_D) &= \left\langle [\Sht_{T_1}^\mu], f_D *  [\Sht_{T_2}^\mu]\right\rangle_{\Sht_{G_0}^r}\\
&= \deg\left(\left(\theta_1^\mu \times \theta_2^\mu\right)^![\Sht_{G_0}^r(f_D)]\right).
\end{align*}
By Lemma  \ref{lem:octahedron lemma 1}, the class $\left(\theta_1^\mu \times \theta_2^\mu\right)^![\Sht_{G_0}^r(f_D)]$ is the $D$th component of 
\[\left(\theta_1^\mu \times \theta_2^\mu\right)^![\Sht_{G_0,d}^r] \in \bigoplus_{D \in \Sigma_d(X)(\kk)} \Ch_0(\Sht_{M_D}^\mu).\]
So to prove Proposition \ref{prop:fundamental} for all $D$ of degree $d$, it is enough to show that 
\[\left(\theta_1^\mu \times \theta_2^\mu\right)^![\Sht_{G_0,d}^r] = (\mathrm{id}, \Fr_{M_d})^![\Hk_{M_d}^\mu].\]
For this we compute:
\begin{align*}
\left(\theta_1^\mu \times \theta_2^\mu\right)^![\Sht_{G_0,d}^r] &= \left(\theta_1^\mu \times \theta_2^\mu\right)^!(\mathrm{id}, \Fr_{H_d})^![\Hk_{G_0,d}^r] \hspace{6mm} (\mbox{\cite[Lem.\ 6.14(2)]{YZ}}) \\
&= (\mathrm{id}, \Fr_{M_d})^!\left(\Pi_1^\mu \times \Pi_2^\mu\right)^![\Hk_{G_0,d}^r] \hspace{3mm} (\mbox{Lemma \ref{lem:octahedron lemma 2}})\\
&= (\mathrm{id},\Fr_{M_d})^![\Hk_{M_d}^\mu] \hspace{25mm} (\mbox{Lemma \ref{fundamentalcycle}}).
\end{align*}
This completes the proof of Proposition \ref{prop:fundamental}.
\end{proof}


\subsection{Completion of the proof of Proposition \ref{prop:geometric trace}}


The proof of  Proposition \ref{prop:geometric trace} is obtained by combining  Proposition \ref{prop:fundamental} with the  Lefschetz-Verdier trace formula, as we now explain.

  \begin{proof}[Proof of   Proposition $\ref{prop:geometric trace}$]
  Consider the composition
 \begin{align*}
 \Ch_{2d-g+1}(\Hk_{M_d}^\mu)   & \map{ (\ref{basic gysin}) }  \Ch_0(\Sht_{M_d}^\mu)  \\
&  \map{(\ref{Ddecomp})}   \Ch_0(\Sht_{M_d}^\mu)  = \bigoplus_{\xi \in A_D(\kk)} \Ch_0(\Sht_{M_d}^\mu(\xi)),
  \end{align*}
  where the final decomposition is induced by (\ref{ShtD}).  If the image of the fundamental class $[\Hk_{M_d}^\mu] \in \Ch_{2d-g+1}(\Hk_{M_d}^\mu)$
 in the component indexed by $\xi$ is denoted $C_\xi$,  the trace formula of  \cite[A.12]{YZ} implies 
\[
\deg(C_\xi ) = \mathrm{Trace}\big   ( [\Hk_{M_d}^\mu]_{\bar{\xi}} \circ \mathrm{Frob}_\xi ; (\alpha_*\Q_\ell)_{\bar\xi}  \big),
\]
The isomorphism  (\ref{Hecke mu}) implies the equality $ [\Hk_{M_d}^\mu]  = [\Hk_{M_d}]^r$ of  endomorphisms of $\alpha_* \Q_\ell$, and so
 \[
\deg(C_\xi ) = \mathrm{Trace}\big(  [\Hk_{M_d}]^r_{\bar{\xi}} \circ \mathrm{Frob}_\xi ; (\alpha_*\Q_\ell)_{\bar\xi}  \big).
\]
On the other hand,  Proposition \ref{prop:fundamental}  gives the first equality in
 \[
 \I_r(f_D)  = \sum_{\xi \in A_D(\kk)} \deg(C_\xi)
 = \sum_{\xi \in A_D(\kk)}  \mathrm{Trace}\big(  [\Hk_{M_d}]^r_{\bar{\xi}} \circ \mathrm{Frob}_\xi  ;  ( \alpha_*\Q_\ell)_{\bar\xi}  \big),
 \]
    completing the proof.
\end{proof}


\section{Completion of the proofs}


Fix an  integer $r\ge 0$, and an $r$-tuple $\mu=(\mu_1,\ldots, \mu_r) \in \{ \pm 1\}^r$ satisfying  the parity condition
$\sum_{i=1}^r \mu_i =0.$  In particular,  $r$ is even.

 
 \subsection{Representations of symmetric groups}
 \label{ss:symmetric representations}
 

Fix a positive integer $d$. 
We will prove some elementary facts about the representation theory of the finite group  
\[
\Gamma_{2d} = \{ \pm 1\}^{2d} \rtimes S_{2d}.
\]
These facts will be used in the proof of Proposition \ref{prop:fundamental lemma} below.

Denote by $\bm{1}$ the trivial representation of $S_{2d}$ on $\Q_\ell$, so that 
\begin{equation}\label{symmetric induced}
\mathrm{Ind}_{ S_{2d}  }^{\Gamma_{2d} } \bm{1}  = \{ \Phi  : S_{2d} \backslash \Gamma_{2d}  \to \Q_\ell \}.
\end{equation}
For each $ x  \in \{ \pm 1\}^{2d}$ denote by $\Phi_x$ the characteristic function of the coset
$S_{2d}  \cdot x \subset \Gamma_{2d}$.   As $x$ varies these form a basis for (\ref{symmetric induced}).
Setting
 \[
e_i = ( 1,\ldots, 1, \underbrace{-1}_{i^\mathrm{th} \mathrm{\, place} }, 1,\ldots,1) \in \{\pm 1\}^{2d},
\]  
define  a    $\Gamma_{2d}$-linear endomorphism   of (\ref{symmetric induced})  by 
$
H \cdot  \Phi_x = \sum_{i=1}^{2d} \Phi_{ e_i  x}.
$

\begin{proposition}\label{prop:rep decomp}
There is a $\Gamma_{2d}$-stable decomposition 
\begin{equation}\label{rep decomp}
\mathrm{Ind}_{ S_{2d}  }^{\Gamma_{2d} } \bm{1}  = \bigoplus_{ \substack{  d_1,d_2 \ge 0 \\ d_1+d_2=2d   } } V_{(d_1,d_2)}
\end{equation}
where $V_{(d_1,d_2)}$ is the subspace of (\ref{symmetric induced}) on which $H$ acts as the scalar $d_1-d_2$.  
Moreover,  each $V_{(d_1,d_2)}$ is an irreducible representation of $\Gamma_{2d}$.
\end{proposition}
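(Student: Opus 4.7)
The plan is to diagonalize the abelian subgroup $\{\pm 1\}^{2d} \subset \Gamma_{2d}$ first, and then to use the action of $S_{2d}$ to read off both the decomposition and the irreducibility.

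First I would identify the basis $\{\Phi_x\}_{x\in\{\pm 1\}^{2d}}$ with $\Q_\ell[\{\pm 1\}^{2d}]$, under which $\{\pm 1\}^{2d}$ acts by translation $y \cdot \Phi_x = \Phi_{yx}$, and $S_{2d}$ acts by permutation of coordinates $\sigma \cdot \Phi_x = \Phi_{\sigma(x)}$ (after checking the convention on left vs.\ right cosets). For each subset $S \subseteq \{1,\ldots,2d\}$, let $\chi_S: \{\pm 1\}^{2d} \to \{\pm 1\}$ be the character $\chi_S(z) = \prod_{i\in S} z_i$, and define the Fourier dual basis vector
\[
v_S = \sum_{x \in \{\pm 1\}^{2d}} \chi_S(x)\, \Phi_x.
\]
A direct substitution shows $y \cdot v_S = \chi_S(y) v_S$, so the $v_S$ diagonalize the action of $\{\pm 1\}^{2d}$ with pairwise distinct characters.

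Next I would compute the action of $H = \sum_{i=1}^{2d} e_i$ on $v_S$. Since $\chi_S(e_i x) = \chi_S(e_i)\chi_S(x)$ and $\chi_S(e_i)$ equals $-1$ for $i\in S$ and $+1$ otherwise, one obtains
\[
H \cdot v_S \;=\; \Bigl(\sum_{i=1}^{2d} \chi_S(e_i)\Bigr)\, v_S \;=\; (2d - 2|S|)\, v_S.
\]
Setting $d_2 = |S|$ and $d_1 = 2d - |S|$, this shows that the eigenspace of $H$ with eigenvalue $d_1-d_2$ is exactly $V_{(d_1,d_2)} := \mathrm{span}\{v_S : |S|=d_2\}$, giving the decomposition (\ref{rep decomp}) and in particular that $H$ is diagonalizable with the claimed eigenvalues.

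Finally I would check $\Gamma_{2d}$-stability and irreducibility of each $V_{(d_1,d_2)}$. A short computation gives $\sigma \cdot v_S = v_{\sigma(S)}$ for $\sigma \in S_{2d}$, so $V_{(d_1,d_2)}$ is $S_{2d}$-stable, hence $\Gamma_{2d}$-stable. For irreducibility, any $\Gamma_{2d}$-subrepresentation $W \subset V_{(d_1,d_2)}$ is in particular $\{\pm 1\}^{2d}$-stable, and since the characters $\chi_S$ (for $|S| = d_2$) are pairwise distinct, $W$ must be spanned by a subset $\{v_S : S \in \mathcal T\}$ for some $\mathcal T \subseteq \binom{[2d]}{d_2}$. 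The $S_{2d}$-stability of $W$ forces $\mathcal T$ to be $S_{2d}$-stable, but $S_{2d}$ acts transitively on $\binom{[2d]}{d_2}$, so $\mathcal T$ is empty or everything. The only step with any real content is verifying the exact $S_{2d}$-action on $\Phi_x$ from the coset conventions of (\ref{symmetric induced}); everything else is a straightforward Fourier/Mackey-style computation.
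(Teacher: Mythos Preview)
Your proof is correct and follows essentially the same approach as the paper: diagonalize via the Fourier basis $\Psi_\chi = \sum_x \chi(x)\Phi_x$ (your $v_S$, with $\chi = \chi_S$), compute the $H$-eigenvalue as $\mathrm{pos}(\chi)-\mathrm{neg}(\chi) = 2d-2|S|$, and observe that $S_{2d}$ permutes these basis vectors transitively within each eigenspace. Your irreducibility argument via distinctness of the characters and transitivity of $S_{2d}$ on $\binom{[2d]}{d_2}$ is exactly the content the paper leaves implicit when it says ``it is easy to see''.
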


\begin{proof}
For each character $\chi : \{ \pm 1\}^{2d} \to \{\pm 1\}$ define 
\[
\Psi_\chi = \sum_{ x \in \{ \pm 1\}^{2d} } \chi (x) \Phi_x .
\]
As $\chi$ varies these  form a basis for (\ref{symmetric induced}), and   an easy calculation shows that  
\[
H \cdot \Psi_\chi = \big( \mathrm{pos}(\chi) - \mathrm{neg}(\chi) \big)  \cdot   \Psi_\chi,
\]
where  $\mathrm{pos}(\chi) = \# \{ e_i : \chi(e_i)= 1 \}$ and $\mathrm{neg}(\chi) = \# \{ e_i : \chi(e_i)= - 1 \}$. 
It is now easy to see that 
\[
V_{(d_1,d_2)}  = \mathrm{Span} \left\{ \Psi_\chi :  \begin{array}{c} \mathrm{pos}(\chi) = d_1 \\ \mathrm{neg}(\chi)=d_2 \end{array} \right\}
\]
is $\Gamma_{2d}$-stable and irreducible, that  $H$ acts by $d_1-d_2$, and that    (\ref{rep decomp}) holds.
\end{proof}

Now fix a pair $(d_1,d_2)$ of non-negative integers  such that $d_1+d_2 = 2d$.  For $i\in \{1,2\}$, set
$
\Gamma_{d_i} = \{ \pm 1\}^{d_i} \rtimes S_{d_i},
$    
and define a character 
\[
\eta_{d_i}: \{ \pm 1\}^{d_i} \to \{ \pm 1\}
\] 
by $(x_1,\ldots, x_{d_i}) \mapsto x_1\cdots x_{d_i}$.  It extends uniquely to  a character of $\Gamma_{d_i}$, 
trivial on the subgroup $S_{d_i}$.

\begin{proposition}\label{prop:induced constituent}
There are isomorphisms of $\Gamma_{2d}$-representations
\begin{align*}
 \mathrm{Ind}^{\Gamma_{2d}}_{  \Gamma_{d_1} \times \Gamma_{d_2}  } ( \eta_{d_1}   \boxtimes \bm{1} )  \iso V_{(d_1,d_2)} 
\end{align*}
\end{proposition}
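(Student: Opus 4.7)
The plan is to apply Frobenius reciprocity together with the irreducibility of $V_{(d_1,d_2)}$ established in Proposition \ref{prop:rep decomp}, plus a dimension count. Both sides of the claimed isomorphism have dimension $\binom{2d}{d_1}$: the induced representation because $[\Gamma_{2d}:\Gamma_{d_1}\times\Gamma_{d_2}]=\binom{2d}{d_1}$ and the inducing character is one-dimensional, and $V_{(d_1,d_2)}$ because, from the proof of Proposition \ref{prop:rep decomp}, it has a basis $\{\Psi_\chi\}$ indexed by characters $\chi$ of $\{\pm 1\}^{2d}$ with $\mathrm{pos}(\chi)=d_1$, and there are exactly $\binom{2d}{d_1}$ such characters.

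Granted this dimension equality and the irreducibility of $V_{(d_1,d_2)}$, it suffices to produce a single nonzero $\Gamma_{2d}$-equivariant map between the two representations, for such a map will automatically be surjective (by irreducibility of the target) and therefore bijective (by the dimension count). By Frobenius reciprocity, producing this map is equivalent to producing a nonzero vector $v\in V_{(d_1,d_2)}$ on which $\Gamma_{d_1}\times\Gamma_{d_2}\subset\Gamma_{2d}$ acts through the character $\eta_{d_1}\boxtimes\bm{1}$.

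The candidate for $v$ is $\Psi_{\chi_0}$, where $\chi_0$ is the character of $\{\pm 1\}^{2d}$ corresponding to a suitable subset $S_0\subseteq\{1,\ldots,2d\}$ of size $d_2$ singled out by the embedding $\Gamma_{d_1}\times\Gamma_{d_2}\hookrightarrow\Gamma_{2d}$. The verification is a direct computation with the semidirect-product structure. Writing out the right-regular action of $\Gamma_{2d}$ on the coset space $S_{2d}\backslash\Gamma_{2d}$ and transporting it to the basis $\{\Psi_\chi\}$ yields a formula of the form $\Psi_\chi\cdot(\delta,\tau)=(\tau^{-1}\cdot\chi)(\delta)\cdot\Psi_{\tau^{-1}\cdot\chi}$, where $\tau\cdot\chi$ denotes the character $\epsilon\mapsto\chi(\tau^{-1}\cdot\epsilon)$; equivalently, if $\chi$ corresponds to a subset $S$, then $\tau\cdot\chi$ corresponds to $\tau(S)$. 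Applied to $(\delta,\tau)\in\Gamma_{d_1}\times\Gamma_{d_2}$, the permutation $\tau$ preserves $S_0$ so $\tau^{-1}\cdot\chi_0=\chi_0$, and the resulting scalar is $\chi_0(\delta)=\prod_{i\in S_0}\delta_i$; for the correct choice of $S_0$ relative to the embedding this agrees with $\eta_{d_1}\boxtimes\bm{1}$.

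The only real obstacle is bookkeeping: one must pick $S_0$ so that simultaneously $\mathrm{pos}(\chi_0)=d_1$ (ensuring $\Psi_{\chi_0}\in V_{(d_1,d_2)}$) and the restriction of $\chi_0$ to $\{\pm 1\}^{d_1}\times\{\pm 1\}^{d_2}$ is $\eta_{d_1}\boxtimes\bm{1}$. These are compatible once the conventions of the $\mathrm{pos}/\mathrm{neg}$ definition and the embedding of $\Gamma_{d_1}\times\Gamma_{d_2}$ are fixed, and amount to choosing $S_0$ as the appropriate one of the two natural blocks. Once $v$ is in hand, Frobenius reciprocity delivers the desired nonzero map $\mathrm{Ind}_{\Gamma_{d_1}\times\Gamma_{d_2}}^{\Gamma_{2d}}(\eta_{d_1}\boxtimes\bm{1})\to V_{(d_1,d_2)}$, completing the proof.
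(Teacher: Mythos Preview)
Your proposal is correct and follows essentially the same approach as the paper: identify a vector $\Psi_{\chi_0}\in V_{(d_1,d_2)}$ on which $\Gamma_{d_1}\times\Gamma_{d_2}$ acts via $\eta_{d_1}\boxtimes\bm{1}$, invoke Frobenius reciprocity to get a nonzero (hence surjective, by irreducibility) map from the induced representation, and finish by the dimension count $\binom{2d}{d_1}$. The paper simply writes down the explicit $\chi$ with $\chi(x_1,\ldots,x_{2d})=\eta_{d_1}(x_1,\ldots,x_{d_1})$ rather than discussing the bookkeeping abstractly, but the argument is the same.
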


\begin{proof}
Define a character $\chi : \{ \pm 1 \}^{2d} \to \{ \pm 1\}$ by 
\[
\chi(x_1,\ldots, x_{2d}) = \eta_{d_1}( x_1 , \ldots, x_{d_1} ).
\]
Using the notation of the  proof of Proposition \ref{prop:rep decomp}, it is easy to see that the subgroup $\Gamma_{d_1}\times \Gamma_{d_2} \subset \Gamma_{2d}$ acts on the vector $\Psi_\chi$ via the character $\eta_{d_1}   \boxtimes \bm{1}$, and that $\Psi_\chi$ generates $V_{(d_1,d_2)}$ as a $\Gamma_{2d}$-representation.
Thus Frobenius reciprocity provides a surjection
\[
 \mathrm{Ind}^{\Gamma_{2d}}_{  \Gamma_{d_1} \times \Gamma_{d_2}  } ( \eta_{d_1}   \boxtimes \bm{1})  \to V_{(d_1,d_2)} ,
\]
which is an isomorphism by dimension counting.  
\end{proof}


\subsection{Comparison of \'etale sheaves}


The constructions of \S \ref{ss:geometric intersection} provide us with a constructible  $\ell$-adic sheaf $\alpha_* \Q_\ell$ on $A_d$, endowed with an endomorphism $[\Hk_{M_d}]$.
On the other hand, for every pair of non-negative integers $(d_1,d_2)$ with $d_1+d_2=2d$ the constructions of \S \ref{ss:geometric orbital} provide us with a constructible  $\ell$-adic sheaf $\beta_* L_{(d_1,d_2)}$ on $A_d$.
The goal of this section is to prove the following result, relating these \'etale sheaves.

\begin{proposition}\label{prop:fundamental lemma}
Assume that $d\ge 2g_3-1$.  There is an isomorphism
\begin{equation}\label{perverse continuation}
\alpha_* \Q_\ell \iso 
\bigoplus_{  \substack{  d_1,d_2 \ge 0  \\ d_1+d_2=2d }}  \beta_* L_{ (d_1,d_2) }
\end{equation}
of $\ell$-adic sheaves on $A_d$.  Each summand  is stable under the Hecke correspondence $[\Hk_{M_d}]$, which 
acts on $ \beta_* L_{ (d_1,d_2) }$  via the scalar $d_1-d_2$.
\end{proposition}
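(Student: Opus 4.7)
The strategy is to use the cartesian squares of Propositions \ref{prop:cartesian moduli} and \ref{prop:fundamental 2 cartesian} to reduce the statement from $A_d$ to a decomposition of sheaves on $\Sigma_{2d}(Y_3)$, then translate this into the representation theory of $\Gamma_{2d} = \{\pm 1\}^{2d} \rtimes S_{2d}$ worked out in Section \ref{ss:symmetric representations}. Both morphisms $\alpha$ and $\beta$ are base changes, along $A_d \to \Sigma_{2d}(Y_3)$, of the finite morphisms $\Nm : \Sigma_{2d}(Y) \to \Sigma_{2d}(Y_3)$ and $\otimes : \Sigma_{d_1}(Y_3) \times \Sigma_{d_2}(Y_3) \to \Sigma_{2d}(Y_3)$, and by Proposition \ref{prop:rep one paw} the correspondence $\Hk_{M_d}$ is the base change of $H_{2d}(Y)$. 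Proper base change thus reduces the proposition to producing a decomposition
\[
\Nm_* \Q_\ell \iso \bigoplus_{d_1+d_2=2d} \otimes_*(L_{d_1} \boxtimes \Q_\ell)
\]
on $\Sigma_{2d}(Y_3)$ upon which $[H_{2d}(Y)]$ acts by $d_1 - d_2$ on the $(d_1,d_2)$-summand.

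I would next analyze this decomposition via the $S_{2d}$-Galois cover $Y_3^{2d} \to \Sigma_{2d}(Y_3)$. The étale $\{\pm 1\}$-torsor $Y \to Y_3$ yields an étale $\{\pm 1\}^{2d}$-torsor $Y^{2d} \to Y_3^{2d}$, so the pullback of $\Nm_*\Q_\ell$ to $Y_3^{2d}$ is the regular representation $\Q_\ell[\{\pm 1\}^{2d}]$, and equipped with its natural $S_{2d}$-permutation action this is the sheaf on $\Sigma_{2d}(Y_3)$ corresponding to the $\Gamma_{2d}$-representation $\mathrm{Ind}_{S_{2d}}^{\Gamma_{2d}} \bm{1}$. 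By construction, $L_{d_1}$ is the sign-isotypic component of the analogous smaller pushforward, so corresponds to the character $\eta_{d_1}$ of $\Gamma_{d_1}$; hence $L_{d_1} \boxtimes \Q_\ell$ corresponds to $\eta_{d_1} \boxtimes \bm{1}$ as a $(\Gamma_{d_1} \times \Gamma_{d_2})$-representation, and the morphism $\otimes_*$ corresponds to induction to $\Gamma_{2d}$, yielding $V_{(d_1,d_2)}$ by Proposition \ref{prop:induced constituent}. Proposition \ref{prop:rep decomp} then gives the desired decomposition of sheaves.

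For the Hecke action, the description of $H_{2d}(Y) \hookrightarrow \Sigma_{2d}(Y) \times_\kk Y$ following (\ref{simple hecke}) shows that the correspondence sends a divisor $y_1 + \cdots + y_{2d}$ on $Y$ to the sum over $i$ of the divisors obtained by replacing $y_i$ with its Galois conjugate $y_i^{\tau_3}$ over $Y_3$ (using $y^{\tau_1} = (y^{\tau_2})^{\tau_3}$). Under the identification above, this is precisely the operator $H$ of Section \ref{ss:symmetric representations}, and Proposition \ref{prop:rep decomp} gives the eigenvalue $d_1 - d_2$ on $V_{(d_1,d_2)}$.

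The main technical obstacle is the descent from $\Gamma_{2d}$-equivariant sheaves on $Y^{2d}$ to honest constructible sheaves on $\Sigma_{2d}(Y_3)$, since the quotient morphism $Y_3^{2d} \to \Sigma_{2d}(Y_3)$ ramifies along the diagonals. I expect to handle this either by working throughout on the stack quotient $[Y^{2d}/\Gamma_{2d}]$ with its natural map to the coarse space, or by first establishing the isomorphism and the Hecke-equivariance on the complement of the big diagonal (where all relevant covers are étale and the analysis above applies directly) and then extending by purity and normality.
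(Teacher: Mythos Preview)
Your proposal is correct and follows essentially the same route as the paper: reduce via proper base change through the cartesian squares of Propositions \ref{prop:cartesian moduli}, \ref{prop:fundamental 2 cartesian}, and \ref{prop:rep one paw}, identify the relevant sheaves over the open locus of distinct points with the $\Gamma_{2d}$-representations of \S\ref{ss:symmetric representations} (so that $[H_{2d}(Y)]$ becomes the operator $H$), and then extend across the diagonals using normality. The only difference is organizational: the paper performs the extension step on $A_d$ itself, invoking the smoothness of $M_d$ and $N_{(d_1,d_2)}$ supplied by the hypothesis $d\ge 2g_3-1$ (Propositions \ref{prop:Msmooth} and \ref{prop:Nsmooth}), whereas your reduction to $\Sigma_{2d}(Y_3)$ would let you appeal instead to the unconditional smoothness of the symmetric products $\Sigma_{2d}(Y)$ and $\Sigma_{d_1}(Y_3)\times\Sigma_{d_2}(Y_3)$.
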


\begin{proof}
 Let  $U_{2d}(Y_3)  \subset Y_3^{2d}$  be the open subscheme parametrizing  $2d$-tuples of \emph{distinct} points on $Y_3$,
and let $U'_{2d}(Y) \subset Y^{2d}$ be its preimage under the morphism $Y^{2d} \to Y_3^{2d}$. 
Thus we have a cartesian diagram
\[
\xymatrix{
{  U'_{2d}(Y) } \ar[r] \ar[d] &  { Y^{2d}  }  \ar[d] \\
{  U_{2d}(Y_3)  }  \ar[r]  & {  Y_3^{2d} },
}
\]
in which the horizontal arrows are open immersions with dense image, and the vertical arrows are finite \'etale.  Taking the GIT quotients by the action of $S_{2d}$ throughout, and using the isomorphisms of (\ref{symmetric id}), we obtain a cartesian diagram
\begin{equation}\label{unramified stratum}
\xymatrix{
{ S_{2d} \backslash U'_{2d}(Y) } \ar[r] \ar[d]_a &  {   \Sigma_{2d}(Y) }  \ar[d]^{\Nm} \\
{   S_{2d} \backslash U_{2d}(Y_3)  }  \ar[r]_u  & {   \Sigma_{2d}(Y_3) ,}
}
\end{equation}
in which the horizontal arrows are open immersions, the vertical arrows are finite, and $a$ is \'etale.

By identifying   $\{\pm 1\}^{2d} \iso \Aut(Y/Y_3)^{2d}$,  the semi-direct product $\Gamma_{2d}$ of  \S \ref{ss:symmetric representations}
acts  on $U'_{2d}(Y)$.  In fact $\Gamma_{2d}$ is the automorphism group of the Galois cover
\[
U'_{2d}(Y) \to S_{2d} \backslash U_{2d}(Y_3),
\]
and the local system $a_*\Q_\ell$ on $S_{2d}\backslash U_{2d}(Y_3)$  corresponds to the induced representation (\ref{symmetric induced}).  
Each summand on the right hand side of (\ref{rep decomp})  also determines a local system, denoted the same way, and so we have a decomposition 
\begin{equation}\label{unr stratum decomp}
a_*\Q_\ell = \bigoplus_{ \substack{  d_1,d_2 \ge 0 \\ d_1+d_2=2d } } V_{(d_1,d_2)}
\end{equation}
of local systems on $S_{2d}\backslash U_{2d}(Y_3)$.

Define  $A_d^\circ$ as the cartesian product
\[
\xymatrix{
{  A_d^\circ } \ar[r]^v \ar[d]_\pi &   {  A_d }   \ar[d]^{f_3^\sharp}  \\
{      S_{2d} \backslash U_{2d}(Y_3)    }  \ar[r]_u&  {  \Sigma_{2d}(Y_3) . } 
}
\]
We interrupt the proof of Proposition \ref{prop:fundamental lemma} for two  lemmas.

\begin{lemma}
On $A_d^\circ$, there are isomorphisms of constructible sheaves
\begin{align}
 v^*\alpha_* \Q_\ell     & \iso    \pi^*a_*\Q_\ell   \label{first unr restriction} \\
v^* \beta_*L_{(d_1,d_2)}  &  \iso  \pi^*V_{(d_1,d_2)} .  \label{second unr restriction}
 \end{align}
\end{lemma}

\begin{proof}
Applying the proper base change theorem   to (\ref{fundamental 1}) and  (\ref{unramified stratum}) provides the 
first and third isomorphisms, respectively,  in 
\[
v^*  \alpha_* \Q_\ell    \iso   ( f_3^\sharp \circ v )^* \mathrm{Nm}_*\Q_\ell    
 \iso (  u \circ  \pi )^* \mathrm{Nm}_*\Q_\ell   \iso \pi^* a_*\Q_\ell .
\]
This proves (\ref{first unr restriction}).

The proof of (\ref{second unr restriction}) is very similar. 
Consider the canonical morphism
\[
\Sigma_{d_1}(Y_3) \times_\kk \Sigma_{d_2}(Y_3) \map{g} \Sigma_{2d}(Y_3).
\]
(This is the morphism labeled $\otimes$ in (\ref{fundamental 2}), but we temporarily change the name to avoid the awkward  notation $\otimes_*$ for the pushforward.)
By Proposition \ref{prop:induced constituent}, the local system $L_{d_1}\boxtimes \Q_\ell$ on $\Sigma_{d_1}(Y_3) \times_\kk \Sigma_{d_2}(Y_3)$ defined in  \S \ref{ss:orbital local} 
satisfies $ u^* g_*( L_{d_1}\boxtimes \Q_\ell ) \iso V_{(d_1,d_2)}.$
On the other hand, applying proper base change to the diagram (\ref{fundamental 2}) and recalling (\ref{local system}), there is an canonical isomorphism
$
\beta_*L_{(d_1,d_2)} \iso (f_3^\sharp)^* g_* ( L_{d_1}\boxtimes \Q_\ell ).
$
The desired isomorphism (\ref{second unr restriction}) is the composition
\[
v^*\beta_*L_{(d_1,d_2)}     \iso (f_3^\sharp \circ v)^* g_*( L_{d_1}\boxtimes \Q_\ell ) 
  \iso  (u\circ \pi )^* g_*( L_{d_1}\boxtimes \Q_\ell )   
 \iso \pi^*V_{(d_1,d_2)}.
\]
\end{proof}

\begin{lemma}
On $A_d$, there are isomorphisms of constructible sheaves
\begin{align}
 v_*v^*\alpha_* \Q_\ell     & \iso   \alpha_*  \Q_\ell   \label{first no perversity} \\
v_*v^* \beta_*L_{(d_1,d_2)}  &  \iso   \beta_* L_{(d_1,d_2)} .  \label{second no perversity}
 \end{align}
\end{lemma}

\begin{proof}
If $\mathscr{F}$ is a locally constant \'etale sheaf on a normal scheme $S$,  the natural map
$\mathscr{F} \to j_* j^* \mathscr{F}$ is an isomorphism for any open immersion  $j:S^\circ\to S$  with dense image.  
Indeed, normality guarantees that the natural map $\pi_0( U\times_S S^\circ)  \to \pi_0(U)$
is bijective for any \'etale morphism $U\to S$, which implies the claim whenever $\mathscr{F}$ is constant.
The locally constant case then follows using a descent argument.

Now define $M_d^\circ$ as the cartesian product
\[
\xymatrix{
{M_d^\circ} \ar[d]_{a} \ar[r]^{j}  &  { M_d } \ar[d]^{\alpha} \\
{ A_d^\circ } \ar[r]_{v}  & { A_d.} 
}
\]
Proposition \ref{prop:Msmooth} implies that $M_d$ is normal and $\alpha$ is finite.
Hence the discussion above and the proper base change theorem provide us with  isomorphisms 
\[
\alpha_* \Q_\ell \iso \alpha_*j_*j^*\Q_\ell \iso v_*a_* j^*\Q_\ell \iso v_* v^* \alpha_* \Q_\ell.
\]
   This proves  (\ref{first no perversity}). 
   The proof of (\ref{second no perversity}) is entirely similar, using Proposition \ref{prop:Nsmooth} in place of Proposition \ref{prop:Msmooth}.
\end{proof}

Combining (\ref{unr stratum decomp}) with the isomorphisms (\ref{first unr restriction}) and (\ref{second unr restriction}) yields an isomorphism of local systems
\begin{equation}\label{precontinuation}
v^* \alpha_*\Q_\ell \iso \bigoplus_{  \substack{  d_1,d_2 \ge 0  \\ d_1+d_2=2d }}  v^*\beta_* L_{ (d_1,d_2) }
\end{equation}
 over the open subscheme $A_d^\circ \subset A_d$.   The isomorphism (\ref{perverse continuation})  follows by applying $v_*$ to both sides and using the isomorphisms (\ref{first no perversity}) and  (\ref{second no perversity}).

We now turn to the endomorphism $[\Hk_{M_d}] : \alpha_*\Q_\ell \to \alpha_*\Q_\ell$.  
The endomorphism $H$ of (\ref{symmetric induced})  induces an endomorphism $H : a_*\Q_\ell \to a_*\Q_\ell$ of the local system (\ref{unr stratum decomp}).
After restricting to the open subscheme $A_d^\circ \subset A_d$  there is a commutative diagram
\[
\xymatrix{
{ v^*\alpha_* \Q_\ell    }  \ar[r]^{ [\Hk_{M_d}] }  \ar[d]_{ (\ref{first unr restriction}) }  &   {  v^*\alpha_* \Q_\ell    }  \ar[d]^{ (\ref{first unr restriction}) }   \\
{   \pi^*a_*\Q_\ell    }   \ar[r]_{H} &  {    \pi^*a_*\Q_\ell   } .
}
\]
Commutativity of the diagram follows by direct comparison of the definition of $H$ with Proposition \ref{prop:rep one paw}, which characterizes  $\Hk_{M_d}$ in terms of the correspondence (\ref{simple hecke}).

By Proposition \ref{prop:rep decomp}   the endomorphism $H$  acts via the scalar $d_1-d_2$ on  the summand $V_{(d_1,d_2)}$  in (\ref{unr stratum decomp}).  This implies that  the endomorphism $[\Hk_{M_d}]$ in the top row  of the above diagram acts  via the scalar $d_1-d_2$ on the summand  $v^*\beta_* L_{ (d_1,d_2) }$ in  (\ref{precontinuation}).
Using (\ref{first no perversity}) and (\ref{second no perversity}), it  also  on the summand $\beta_*L_{(d_1,d_2)}$ in (\ref{perverse continuation}).
\end{proof}


\subsection{The intersection pairing in cohomology}


Fix an auxiliary prime $\ell \neq \mathrm{char} (\kk)$.  
Before stating and proving our main results, we must summarize some results from  \cite{YZ} on various quotients of the $\ell$-adic analogue $\mathscr{H}_\ell=\mathscr{H}\otimes\Q_\ell$ of the $\Q$-algebra $\mathscr{H}$ of  \S \ref{ss:basic automorphic}.

The Hecke algebra $\mathscr{H}_\ell$ acts on the $\ell$-adic cohomology group
\[
V = H_c^{2r}(\Sht^\mu_{G_0} \otimes_k \bar k, \Q_\ell)(r),
\]
as in \cite[\S 7.1]{YZ}.   The cycle class map $\mathrm{cl} \colon \Ch_{c,r}(\Sht_{G_0}^r) \to V$
 is $\mathscr{H}$-equivariant, and  the cup product   
 \begin{equation}\label{cup}
 \langle\cdot,\cdot\rangle:  V\times V \to \Q_\ell
 \end{equation}
 pulls back to  the intersection pairing on the Chow group.

Recalling the map $\mathscr{H} \to \Q[\Pic_X(\kk)]^{\iota_\Pic}$ appearing in (\ref{eisenstein def}), define
\begin{align*}
\widetilde{\mathscr{H}}_\ell 
& = \mathrm{Image}  \big(  \mathscr{H}_\ell \to \End_{\Q_\ell} (V) \times \End_{\Q_\ell}(\mathscr{A}_\ell) \times \Q_\ell[\Pic_X(\kk)]^{\iota_\Pic}  \big)   \\
\overline{\mathscr{H}_\ell} 
& = \mathrm{Image}  \big(  \mathscr{H}_\ell \to \End_{\Q_\ell} (V) \times  \Q_\ell[\Pic_X(\kk)]^{\iota_\Pic}  \big) \\
\mathscr{H}_{\mathrm{aut},\ell} 
& = \mathrm{Image}  \big(  \mathscr{H}_\ell \to \End_{\Q_\ell}(\mathscr{A}_\ell) \times \Q_\ell[\Pic_X(\kk)]^{\iota_\Pic}  \big).
\end{align*}
These are  finite type $\Q_\ell$-algebras, related by surjections
\[
\xymatrix{
& { \widetilde{\mathscr{H}}_\ell } \ar[dl] \ar[dr] \\
{  \overline{\mathscr{H}_\ell}  }  \ar[dr] &  &  {  \mathscr{H}_{\mathrm{aut},\ell}  }  \ar[dl] \\
& {   \Q_\ell[\Pic_X(\kk)]^{\iota_\Pic}  .  }   
}
\]
Recalling the $\Q$-algebra $\mathscr{H}_\mathrm{aut}$ of \S \ref{ss:basic automorphic}, there is a canonical isomorphism 
\[
 \mathscr{H}_\mathrm{aut} \otimes\Q_\ell \iso \mathscr{H}_{\mathrm{aut},\ell} .
\]

For any $f\in \mathscr{H}$ the function $\J(f,s)$ of (\ref{J distribution def}) is a Laurent polynomial in $q^s$ with rational coefficients.
Setting 
\[
\J_r(f) = ( \log q)^{-r}  \frac{d^r}{ds^r}  \J(f ,s)  \big|_{s=0},
\]
we obtain a linear functional $\J_r : \mathscr{H} \to \Q$.  The following result shows that this agrees with the linear functional 
$\I_r : \mathscr{H} \to \Q$ defined by (\ref{I distribution def}).

\begin{proposition}\label{prop:key identity}
The equality \[\I_r(f) = \J_r(f)\] holds for every $f\in \mathscr{H}$.  
Moreover, the  $\Q_\ell$-linear extensions  of $\I_r$ and $\J_r$ to $\mathscr{H}_\ell \to \Q_\ell$   factor through  $\widetilde{\mathscr{H}}_\ell$.
\end{proposition}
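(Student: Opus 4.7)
The plan is to combine the geometric trace formula of Section \ref{s:intersection theory} with the analytic orbital computation of Section \ref{s:analytic} through the \'etale sheaf matching in Proposition \ref{prop:fundamental lemma}, and then to propagate the resulting identity from Hecke basis elements of high degree to all of $\mathscr{H}$ using the factorization through the finite-dimensional quotient $\widetilde{\mathscr{H}}_\ell$.

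I would first establish the factorization claims. For $\J_r$, this is essentially a restatement of the final assertion of Proposition \ref{J pi decomp}: since $\J(f,s)$ depends only on the image of $f$ under $\mathscr{H} \to \mathscr{H}_\mathrm{aut}$, so does $\J_r(f)$, and $\mathscr{H}_\mathrm{aut} \otimes_\Q \Q_\ell = \mathscr{H}_{\mathrm{aut},\ell}$ is a quotient of $\widetilde{\mathscr{H}}_\ell$. For $\I_r$, the intersection pairing on $\Ch_{c,r}(\Sht_{G_0}^r)$ pulls back the cup product on $V$ via the $\mathscr{H}$-equivariant cycle class map $\mathrm{cl}$, so $\I_r(f) = \langle \mathrm{cl}[\Sht_{T_1}^\mu],\, f \cdot \mathrm{cl}[\Sht_{T_2}^\mu] \rangle_V$ vanishes whenever $f$ acts trivially on $V$; hence $\I_r$ factors through the image of $\mathscr{H}_\ell$ in $\End_{\Q_\ell}(V)$, and therefore through $\widetilde{\mathscr{H}}_\ell$.

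Next I would prove $\I_r(f_D) = \J_r(f_D)$ when $d = \deg D \geq 2g_3 - 1$. Proposition \ref{prop:geometric trace} expresses $\I_r(f_D)$ as a sum over $\xi \in A_D(\kk)$ of traces of $[\Hk_{M_d}]^r_{\bar\xi} \circ \Frob_\xi$ on $(\alpha_*\Q_\ell)_{\bar\xi}$. Applying the splitting of Proposition \ref{prop:fundamental lemma}, on whose $(d_1, d_2)$-summand $[\Hk_{M_d}]^r$ acts via the scalar $(d_1-d_2)^r$, gives
\[
\I_r(f_D) = \sum_{\xi \in A_D(\kk)} \sum_{d_1 + d_2 = 2d} (d_1-d_2)^r \, \Tr\bigl(\Frob_\xi;\, (\beta_*L_{(d_1,d_2)})_{\bar\xi}\bigr).
\]
The same expression emerges on the analytic side: applying $(\log q)^{-r}\, d^r/ds^r|_{s=0}$ to the formula of Proposition \ref{prop:geometric orbital}(2) replaces $q^{(d_1-d_2)s}$ by $(d_1-d_2)^r$, and summing over $\xi$ using (\ref{J decomp}) and part (1) of that proposition yields $\J_r(f_D)$.

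Finally, to extend from high-degree $f_D$ to all of $\mathscr{H}_\ell$, both functionals descend by the factorization established above to the finite-dimensional algebra $\widetilde{\mathscr{H}}_\ell$, so it suffices to verify that the images of $\{f_D : \deg D \geq 2g_3 - 1\}$ span $\widetilde{\mathscr{H}}_\ell$ as a $\Q_\ell$-vector space. I would exhibit this by picking an unramified place $x$ of $F$ at which the image of the local Hecke operator acts invertibly on both $V$ and $\mathscr{A}_\ell$ (such $x$ exists outside a finite set, since $V$ and $\mathscr{A}_\ell$ are finite-dimensional and their Hecke eigenvalues are generically nonzero), and using the convolution structure at $x$ to rewrite any $\bar f_D$ as a combination of $\bar f_{D+n[x]}$ for arbitrarily large $n$. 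This spanning argument is the main technical point; everything else is a direct assembly of the three propositions above with the analytic formula.
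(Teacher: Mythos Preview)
Your proposal is correct and follows the paper's approach closely. The factorization arguments and the comparison of $\I_r(f_D)$ with $\J_r(f_D)$ for $\deg D \ge 2g_3-1$ via Propositions~\ref{prop:geometric trace}, \ref{prop:geometric orbital}, and \ref{prop:fundamental lemma} are exactly what the paper does.

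The only divergence is in the spanning step. The paper does not argue this directly but simply invokes the proof of \cite[Theorem~9.2]{YZ} to conclude that the images of $\{f_D : \deg D \ge 2g_3-1\}$ span $\widetilde{\mathscr{H}}_\ell$ over $\Q_\ell$. Your sketch via an invertible local Hecke operator is in the right spirit, but two points deserve care. First, you refer to $\widetilde{\mathscr{H}}_\ell$ as finite-dimensional; the paper only asserts it is of finite type, and finite-dimensionality of the image of $\mathscr{H}_\ell$ in $\End_{\Q_\ell}(V)$ is itself a nontrivial input (this is part of the content of the cited result in \cite{YZ}). Second, invertibility of $h_x$ must be checked on all three factors defining $\widetilde{\mathscr{H}}_\ell$, including the Eisenstein component $\Q_\ell[\Pic_X(\kk)]^{\iota_{\Pic}}$, not just on $V$ and $\mathscr{A}_\ell$. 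Neither point is fatal, but both are genuine technicalities that the citation to \cite{YZ} is doing real work to absorb.
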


\begin{proof}
The compatibility of the cup product pairing (\ref{cup}) with the intersection pairing on the Chow group implies that 
the $\Q_\ell$-linear extension $\I_r : \mathscr{H}_\ell \to \Q_\ell$ factors through  $\overline{\mathscr{H}_\ell}$. 
The final claim of Proposition \ref{J pi decomp} implies that the $\Q_\ell$-linear extension $\J_r : \mathscr{H}_\ell \to \Q_\ell$ factors through  $\mathscr{H}_{\mathrm{aut}, \ell}$.  It follows that both   $\I_r$ and $\J_r$  factor through the quotient 
$ \widetilde{\mathscr{H}}_\ell$.

It  remains to prove that $\I_r(f) = \J_r(f)$ for all $f\in \mathscr{H}$.
Assume first that $f=f_D$ for some effective divisor $D \in \mathrm{Div}(X)$ of degree $d\ge 2 g_3-1$.  
Combining the decomposition (\ref{J decomp}) with Proposition \ref{prop:geometric orbital}, we find
\[
\J_r(f_D)=   \sum_{ \xi\in A_D(\kk)} \sum_{  \substack{  d_1,d_2 \ge 0  \\ d_1+d_2=2d } } 
  (d_1-d_2)^r    \cdot   \mathrm{Trace} \big(\Frob_\xi  ; \,   \big( \beta_* L_{ (d_1,d_2) } \big)_{\bar \xi} \big)  .
\]
On the other hand,  Proposition   \ref{prop:geometric trace} tells us that 
\[
 \I_r(f_D) =   \sum_{\xi\in A_D(\kk) }
 \mathrm{Trace} \big(      [\Hk_{M_d}]_{\bar{\xi}} ^r  \circ \mathrm{Frob}_\xi   ;\,   ( \alpha_* \Q_\ell)_{\bar{\xi}}  \big)  .
\]
These two expressions are equal, by Proposition \ref{prop:fundamental lemma}.  
 
 The proof of \cite[Theorem 9.2]{YZ} shows that the image of $\mathscr{H}_\ell \to \widetilde{\mathscr{H}}_\ell$  is generated as $\Q_\ell$-vector space by the images of $f_D\in \mathscr{H}$ as $D$ ranges over all effective divisors on $X$ of degree $d \ge 2 g_3-1$.  Therefore $\I_r=\J_r$.
\end{proof}

According to  \cite[(9.5)]{YZ}, there is a canonical $\Q_\ell$-algebra decomposition
\begin{equation}\label{ell hecke decomp}
\widetilde{\mathscr{H}}_\ell = 
\widetilde{\mathscr{H}}_{ \ell, \mathrm{Eis}}  \oplus
 \big( \bigoplus_\mathfrak{m}   \widetilde{\mathscr{H}}_{ \ell,  \mathfrak{m} }   \big),
\end{equation}
where $\mathfrak{m}$ runs over the finitely many maximal ideals $\mathfrak{m} \subset \widetilde{\mathscr{H}}_\ell$ that do not contain the 
kernel of the projection  
\begin{equation}\label{ell satake}
\widetilde{\mathscr{H}}_\ell  \to \Q_\ell[\Pic_X(\kk)]^{\iota_{\Pic}} .
\end{equation}
For each such $\m$ the localization  $ \widetilde{\mathscr{H}}_{\ell,\mathfrak{m}}$ is a finite (hence Artinian) $\Q_\ell$-algebra.
If we denote by $E_\m$ its residue field, then Hensel's lemma implies that the quotient map $\widetilde{\mathscr{H}}_{\ell,\mathfrak{m}} \to E_\mathfrak{m}$ admits a unique section, which makes $\widetilde{\mathscr{H}}_{\ell, \mathfrak{m}}$ into an Artinian local $E_\mathfrak{m}$-algebra.

The decomposition (\ref{ell hecke decomp}) induces a decomposition  of $\widetilde{\mathscr{H}}_\ell$-modules
\begin{equation}\label{cohomological decomposition}
V = V_\Eis \oplus \big(\bigoplus_\m V_\m\big),
\end{equation}
in which each localization $V_\m$ is a finite-dimensional $E_\m$-vector space.
It follows from \cite[Corollary 7.15]{YZ} that this decomposition is orthogonal with respect to the cup product pairing.   
Moreover, the self adjointness of the action of $\mathscr{H}_\ell$ with respect to the cup product pairing (\ref{cup})
implies that there is a unique symmetric $E_\m$-bilinear pairing 
\[
\langle \cdot , \cdot \rangle_{E_\m} : V_\m \times V_\m \to E_\m
\]
such that $\mathrm{Trace}_{E_\m/\Q_\ell} \langle \cdot , \cdot \rangle_{E_\m}= \langle\cdot,\cdot\rangle$.

For $i\in \{1,2\}$,   we define
$
[\Sht_{T_i}^\mu]_\m \in V_\m
$ 
to be the projection  of the cycle class $\mathrm{cl}( [\Sht_{T_i}^\mu]) \in V$,  and form the intersection pairing
\[
\langle [\Sht_{T_1}^\mu]_\m, [\Sht_{T_2}^\mu]_\m \rangle_{E_\m} \in E_\m.
\]

Some of the maximal ideals $\mathfrak{m} \subset \widetilde{\mathscr{H}}_\ell$ appearing in (\ref{ell hecke decomp}) are attached to cuspidal automorphic forms, as we now explain.  Fix an  unramified cuspidal automorphic representation $\pi \subset \mathcal{A}_\cusp(G_0)$. As 
in \S \ref{ss:basic automorphic}, such a representation determines a homomorphism 
\[
\mathscr{H}_\mathrm{aut} \to \mathscr{H}_\cusp \map{\lambda_\pi} \C
\] 
whose image is a number field $ E_\pi$.  The induced map
\[
\widetilde{ \mathscr{H} }_\ell \to  \mathscr{H}_{\mathrm{aut},\ell} \map{\lambda_\pi} E_\pi \otimes \Q_\ell \iso \prod_{ \mathfrak{l} \mid \ell} E_{\pi ,\mathfrak{l}},
\]
determines,  for every prime  $\mathfrak{l}\mid \ell$ of $E_\pi$,  a surjection
$\lambda_{\pi,\mathfrak{l}}   :    \widetilde{\mathscr{H}}_\ell \to  E_{\pi,\mathfrak{l}}$ whose kernel is one of those maximal ideals 
\begin{equation}\label{cuspidal points}
\mathfrak{m}=\ker( \lambda_{\pi,\mathfrak{l}} )
\end{equation}
appearing in the decomposition (\ref{cohomological decomposition}).  This is a consequence of the isomorphism (\ref{automorphic decomp}).

Recalling the period integrals $\mathscr{P}_0$ and $\mathscr{P}_3$ of \S \ref{ss:spectral}, for every cuspidal automorphic representation $\pi \subset\mathcal{A}_\cusp(G_0)$ define 
\[
C(\pi,s) =   \frac{   \mathscr{P}_0 (  \phi ,s) \mathscr{P}_3(\overline{\phi} ,\eta)}{   \langle \phi, \phi \rangle_\mathrm{Pet} } .
\]
Here $\phi\in \pi^{U_0}$ is any nonzero vector.   
Recall from Remark \ref{rem:auto galois} that $\Aut(\C/\Q)$ acts on the set of all unramified cuspidal automorphic representations, in such a way that stabilizer of $\pi$ is the subgroup $\Aut(\C/E_\pi)$.

\begin{proposition}\label{prop:period reciprocity}
The complex number 
\[
C_r(\pi)  =  ( \log q)^{-r} \cdot  \frac{d^r}{ds^r}  C (  \pi ,s) \big|_{s=0}
\]  
satisfies $C_r(\pi)^\sigma= C_r(\pi^\sigma)$ for all $\sigma \in \Aut(\C/\Q)$.  In particular, it lies in  $E_\pi$.
\end{proposition}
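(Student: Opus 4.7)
The plan is to extract the statement by differentiating the spectral identity of Proposition \ref{J pi decomp}, using the rationality of the geometric intersection numbers from Proposition \ref{prop:key identity}, and then obtaining Galois equivariance through linear independence of the characters $\lambda_\pi$.

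First I would differentiate the identity
\[
\J(f,s) = \sum_\pi \lambda_\pi(f) \, C(\pi,s), \qquad f \in \mathcal{I}^{\mathrm{Eis}},
\]
of Proposition \ref{J pi decomp} $r$ times at $s=0$ and normalize, yielding
\[
\J_r(f) = \sum_\pi \lambda_\pi(f) \, C_r(\pi),
\]
where the sum ranges over the finite set of unramified cuspidal representations $\pi$. Proposition \ref{prop:key identity} then tells us $\J_r(f) = \I_r(f) \in \Q$, so this sum is a rational number.

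Next I would fix $\sigma \in \Aut(\C/\Q)$ and apply $\sigma$ to both sides. The left side is $\sigma$-fixed because it is rational, and on the right $\sigma(\lambda_\pi(f)) = \lambda_{\pi^\sigma}(f)$ by Remark \ref{rem:auto galois}. Reindexing $\pi \mapsto \pi^{\sigma^{-1}}$ and subtracting gives
\[
\sum_\pi \lambda_\pi(f) \bigl(C_r(\pi) - \sigma(C_r(\pi^{\sigma^{-1}}))\bigr) = 0 \qquad \text{for all } f \in \mathcal{I}^{\mathrm{Eis}}.
\]
The key input is that the image of $\mathcal{I}^{\mathrm{Eis}}$ under the projection $\mathscr{H} \to \mathscr{H}_\mathrm{aut}$ equals the cuspidal factor $\mathscr{H}_\cusp$ under the decomposition (\ref{automorphic decomp}), since $\mathcal{I}^{\mathrm{Eis}}$ is by definition the kernel of the projection to the complementary Eisenstein factor. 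Consequently the distinct characters $\lambda_\pi$ remain linearly independent when restricted to $\mathcal{I}^{\mathrm{Eis}}$, forcing $\sigma(C_r(\pi)) = C_r(\pi^\sigma)$ for all $\pi$ and $\sigma$.

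Finally, since the set of unramified cuspidal $\pi$ is finite, the $\Aut(\C/\Q)$-orbit of $C_r(\pi)$ is finite, so $C_r(\pi) \in \overline{\Q}$. Since $E_\pi = \lambda_\pi(\mathscr{H})$ is the fixed field of the stabilizer of $\pi$ in $\Aut(\C/\Q)$ (indeed $\sigma$ fixes $\pi$ precisely when $\sigma \circ \lambda_\pi = \lambda_\pi$, i.e.\ when $\sigma$ fixes $E_\pi$ pointwise), the relation $\sigma(C_r(\pi)) = C_r(\pi^\sigma)$ shows $C_r(\pi) \in E_\pi$. The only nontrivial step is the linear-independence deduction, and my plan relies on the clean product structure of $\mathscr{H}_\mathrm{aut}$ recorded in (\ref{automorphic decomp}); everything else is formal Galois descent.
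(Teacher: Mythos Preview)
Your argument is correct and follows essentially the same route as the paper: differentiate the spectral expansion of Proposition \ref{J pi decomp}, use that $\J_r(f)\in\Q$, and then separate the characters $\lambda_\pi$ via the product decomposition $\mathscr{H}_{\mathrm{aut}}\cong\mathscr{H}_\cusp\times\Q[\Pic_X(\kk)]^{\iota_\Pic}$. The only difference is cosmetic: you invoke Proposition \ref{prop:key identity} to get $\J_r(f)\in\Q$, whereas the paper simply observes that $\J(f,s)$ is a Laurent polynomial in $q^s$ with rational coefficients (this is immediate from the definition, since $f$ is $\Q$-valued and the Haar measures give rational volumes), so the appeal to $\I_r$ is unnecessary.
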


\begin{proof}
 Proposition \ref{J pi decomp} implies that
\begin{equation}\label{period spectrum}
\J_r(f) = \sum_{\mathrm{unr.\, cusp.\,}\pi }  C_r(\pi)\cdot  \lambda_\pi(f)  
\end{equation}
for all $f\in \mathcal{I}^\Eis$, and both sides factor through the quotient 
\[
\mathscr{H}\to \mathscr{H}_\mathrm{aut}\iso \mathscr{H}_\cusp \times \Q[\Pic_X(\kk)]^{\iota_\Pic}
\]
appearing in (\ref{automorphic decomp}).  In other words, we may view (\ref{period spectrum}) as an equality of linear functionals on the cuspidal subalgebra
$\mathscr{H}_\cusp \subset \mathscr{H}_\mathrm{aut}$.

It follows from what was said in \S \ref{ss:basic automorphic} that $\mathscr{H}_\cusp$ is a finite product of number fields, where the factors are indexed by the $\Aut(\C/\Q)$-orbits of unramified cuspidal automorphic representations.  Restricting (\ref{period spectrum}) to the factor $\mathscr{H}_\cusp(\pi)\iso E_\pi$ indexed by the Galois orbit of $\pi$ yields 
the equality 
\[
\J_r(f) = \sum_{ \sigma : E_\pi \to \C  }  C_r(\pi^\sigma)\cdot \sigma(f)
\]
for all  $f\in E_\pi$.  The sum is over all  $\Q$-algebra embeddings $\sigma : E_\pi \to \C$, and for each such embedding we  fix an extension to $\Aut(\C/\Q)$.

On the other hand, we know that $\J_r(f)$ is $\Q$-valued, so the right hand side must be fixed by the action of $\Aut(\C/\Q)$.  The claim follows easily from this and the linear independence of $\{ \sigma : E_\pi \to \C\}$.
\end{proof}

\begin{theorem}\label{main}
Let $\mathfrak{m} \subset \widetilde{\mathscr{H}}_\ell$ be a maximal ideal that does not contain the kernel of (\ref{ell satake}).
\begin{enumerate}
\item
If $\mathfrak{m}$ is of the form (\ref{cuspidal points}) for an unramified cuspidal automorphic representation $\pi$ and a place $\mathfrak{l}\mid \ell$ of $E_\pi$, the equality
\[
\langle [\Sht_{T_1}^\mu]_\m , \,   [\Sht_{T_2}^\mu]_\m \rangle_{E_\m} = C_r(\pi)
\]
holds  in $E_\m=E_{\pi ,\mathfrak{l}}$.
\item
If $\mathfrak{m}$ is not of the form (\ref{cuspidal points}) then 
\[
\langle [\Sht_{T_1}^\mu]_\m , \,   [\Sht_{T_2}^\mu]_\m \rangle_{E_\m} =0.
\]
\end{enumerate}
\end{theorem}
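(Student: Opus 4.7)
The plan is to combine Proposition \ref{prop:key identity} (which gives $\I_r=\J_r$ as linear functionals on $\widetilde{\mathscr{H}}_\ell$) with the spectral decomposition of $\J_r$, then extract information at each maximal ideal via the idempotents of (\ref{ell hecke decomp}).  First I would note that the proof of Proposition \ref{J pi decomp} actually shows the Eisenstein contributions $\J_n^\chi(f,s)$ vanish without the hypothesis $f \in \mathcal{I}^\Eis$, yielding the spectral identity
\[
\J_r(f) = \sum_{\pi} \lambda_\pi(f)\,C_r(\pi)
\]
for all $f \in \mathscr{H}$, with $\pi$ running over unramified cuspidal automorphic representations.  After fixing an embedding $\bar\Q\hookrightarrow\bar\Q_\ell$ and grouping terms by $\Gal(\bar\Q_\ell/\Q_\ell)$-orbits of the $\C$-valued characters, Proposition \ref{prop:period reciprocity} ($C_r(\pi^\sigma)=\sigma(C_r(\pi))$) lets me recollect this sum as a sum of $\Q_\ell$-valued field traces indexed by the cuspidal maximal ideals (\ref{cuspidal points}):
\[
\J_r(f) = \sum_{\mathfrak{m}'\text{ cuspidal}} \mathrm{Trace}_{E_{\mathfrak{m}'}/\Q_\ell}\!\big(\lambda_{\pi,\mathfrak{l}}(f) \cdot C_r(\pi)\big),
\]
valid for every $f\in\widetilde{\mathscr{H}}_\ell$.

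Next I would fix the given maximal ideal $\mathfrak{m}$, let $e_\mathfrak{m}\in\widetilde{\mathscr{H}}_\ell$ be its idempotent from (\ref{ell hecke decomp}), and take an arbitrary $f\in\widetilde{\mathscr{H}}_{\ell,\mathfrak{m}}$.  The orthogonality of (\ref{cohomological decomposition}) with respect to the cup product gives $\I_r(f) = \langle [\Sht_{T_1}^\mu]_\mathfrak{m},\, f\cdot [\Sht_{T_2}^\mu]_\mathfrak{m}\rangle$, and the Hensel section $\widetilde{\mathscr{H}}_{\ell,\mathfrak{m}}\iso E_\mathfrak{m}\oplus\mathfrak{n}_\mathfrak{m}$ (with nilpotent $\mathfrak{n}_\mathfrak{m}$) lets me decompose $f=\bar f+n$.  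Combining $\langle\cdot,\cdot\rangle = \mathrm{Trace}_{E_\mathfrak{m}/\Q_\ell}\langle\cdot,\cdot\rangle_{E_\mathfrak{m}}$ with the $E_\mathfrak{m}$-bilinearity of the refined pairing yields
\[
\I_r(f) = \mathrm{Trace}_{E_\mathfrak{m}/\Q_\ell}\!\big(\bar f \cdot \langle [\Sht_{T_1}^\mu]_\mathfrak{m},[\Sht_{T_2}^\mu]_\mathfrak{m}\rangle_{E_\mathfrak{m}}\big) + \mathrm{Trace}_{E_\mathfrak{m}/\Q_\ell}\langle [\Sht_{T_1}^\mu]_\mathfrak{m},\, n\cdot[\Sht_{T_2}^\mu]_\mathfrak{m}\rangle_{E_\mathfrak{m}}.
\]

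To finish, Proposition \ref{prop:key identity} gives $\I_r(f) = \J_r(f)$, and by the first paragraph $\J_r(f)$ equals $\mathrm{Trace}_{E_\mathfrak{m}/\Q_\ell}(\bar f \cdot C_r(\pi))$ in case (1) and $0$ in case (2), so in either case it depends only on $\bar f$.  Varying $n\in\mathfrak{n}_\mathfrak{m}$ with $\bar f$ fixed therefore forces the nilpotent contribution to $\I_r(f)$ to vanish identically, and varying $\bar f$ over $E_\mathfrak{m}$ together with non-degeneracy of the trace pairing $E_\mathfrak{m}\otimes E_\mathfrak{m}\to\Q_\ell$ extracts the claimed equality in $E_\mathfrak{m}$.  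The main technical subtlety lies in the first paragraph: the careful conversion of the $\C$-valued spectral sum into the $\Q_\ell$-valued trace sum requires identifying each $\Gal(\bar\Q_\ell/\Q_\ell)$-orbit of embeddings $E_\pi\hookrightarrow\bar\Q_\ell$ with a single prime $\mathfrak{l}\mid\ell$ of $E_\pi$ and using the Galois-equivariance of $C_r$ from Proposition \ref{prop:period reciprocity}.
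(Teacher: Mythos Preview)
Your overall strategy matches the paper's: use $\I_r=\J_r$ from Proposition \ref{prop:key identity}, identify both functionals on the summand $\widetilde{\mathscr{H}}_{\ell,\mathfrak m}$ as trace pairings against the two quantities in question, and conclude by nondegeneracy of the trace form on $E_{\mathfrak m}$.  The paper carries this out slightly more directly by restricting from the start to the subfield $E_{\mathfrak m}\subset\widetilde{\mathscr{H}}_{\ell,\mathfrak m}$ (via the Hensel section), so that no nilpotent bookkeeping is needed; your nilpotent argument is correct but unnecessary.

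There is, however, a genuine gap in your first paragraph.  You claim that the vanishing of the terms $\J_n^\chi(f,s)$ in the proof of Proposition \ref{J pi decomp} gives the identity $\J_r(f)=\sum_\pi \lambda_\pi(f)\,C_r(\pi)$ for \emph{all} $f\in\mathscr{H}$.  But the decomposition $\K_f=\K_{f,\cusp}+\K_{f,\mathrm{sp}}$ invoked from \cite[Theorem 4.3]{YZ} is only available for $f\in\mathcal{I}^{\Eis}$; for general $f$ the kernel carries a further continuous Eisenstein contribution which your argument does not address, so the asserted identity on all of $\mathscr{H}$ (hence on all of $\widetilde{\mathscr{H}}_\ell$) is unjustified.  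The fix is to use what the paper actually establishes: $\J_r$ factors through $\mathscr{H}_{\mathrm{aut},\ell}\cong(\mathscr{H}_\cusp\otimes\Q_\ell)\times(\Q[\Pic_X(\kk)]^{\iota_\Pic}\otimes\Q_\ell)$, and for any non-Eisenstein $\mathfrak m$ the idempotent $e_{\mathfrak m}$ maps to zero in the second factor.  Thus the image of $\widetilde{\mathscr{H}}_{\ell,\mathfrak m}$ in $\mathscr{H}_{\mathrm{aut},\ell}$ lies inside $\mathscr{H}_\cusp\otimes\Q_\ell$, which is exactly the image of $\mathcal I^{\Eis}\otimes\Q_\ell$, and on that image Proposition \ref{J pi decomp} does give the spectral formula.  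With this correction your proof goes through and is essentially the same as the paper's.
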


\begin{proof}
Given Proposition \ref{prop:key identity}, the proof is essentially the same as that of  \cite[Theorem 1.6]{YZ}.
Briefly, restrict both
\[
\I_r,\J_r : \widetilde{\mathscr{H}}_\ell \to \Q_\ell
\]
to the  $E_\m$-algebra  $\widetilde{\mathscr{H}}_{\ell,\m}$ in (\ref{ell hecke decomp}), and then further restrict to $E_\m$ itself.  
Directly from its definition (\ref{I distribution def}), the resulting $\I_r : E_\m \to \Q_\ell$ satisfies
\begin{align*}
\I_r(f) & = \langle [\Sht_{T_1}^\mu]_\m , \,  f *   [\Sht_{T_2}^\mu]_\m \rangle \\
& = \mathrm{Trace}_{E_\m/\Q_\ell} \big( f\cdot  \langle [\Sht_{T_1}^\mu]_\m , \,   [\Sht_{T_2}^\mu]_\m \rangle_{E_\m}  \big),
\end{align*}
where the first pairing is (\ref{cup}). 
 As for $\J_r : E_\m \to \Q_\ell$, an argument similar to that used in Proposition \ref{prop:period reciprocity} shows that
\[
\J_r(f) = \mathrm{Trace}_{E_\m/\Q_\ell} \big( f\cdot C_r(\pi) \big)
\]
if $\m$ has the form (\ref{cuspidal points}), and otherwise $\J_r(f) =0$.   
The claim now follows from  $\I_r=\J_r$ and the nondegeneracy of the trace pairing.
\end{proof}


\subsection{The proofs of Theorems \ref{thm:main intro} and \ref{thm:nonvanishing}}
\label{ss:main proofs}


As in the introduction,  for $i\in \{1,2\}$ we let $[\Sht_{T_i}^r]$ be the pushforward of the fundamental class under    
\[
\theta_i^\mu \colon \Sht_{T_i}^r \to \Sht_G^r,
\] 
and let 
$
\tilde W_i \subset \Ch_{c,r}(\Sht_G^r)
$ 
be the  $\mathscr{H}$-submodule generated by it.   Define quotients  
\begin{align*}
W_1 & = \tilde{W}_1/  \{ c \in \tilde{W}_1 : \langle c, \tilde{W}_2 \rangle =0 \}  \\
W_2 & = \tilde{W}_2/  \{ c \in \tilde{W}_2 : \langle c, \tilde{W}_1\rangle =0 \},
\end{align*}
so that the intersection pairing  descends to  $\langle \cdot\, , \cdot\rangle :  W_1 \times W_2 \to \Q$.

\begin{proposition}\label{prop:automorphic chow}
The actions of $\mathscr{H}$ on $W_1$ and $W_2$ factor through the quotient 
\[
\mathscr{H} \to \mathscr{H}_\mathrm{aut} \iso \mathscr{H}_\mathrm{cusp} \times \Q[\Pic_X(\kk)]^{\iota_\Pic}
\]
defined in \S \ref{ss:basic automorphic}.
\end{proposition}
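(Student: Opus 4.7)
The plan is to show that $\mathcal{I} := \ker(\mathscr{H}\to\mathscr{H}_\mathrm{aut})$ annihilates both $W_1$ and $W_2$. The two main ingredients are already available: Proposition~\ref{prop:key identity} identifies the geometric distribution $\I_r$ with the analytic distribution $\J_r$, and the final clause of Proposition~\ref{J pi decomp} asserts that $\J(f,s)$ depends only on the image of $f$ in $\mathscr{H}_\mathrm{aut}$, hence so does $\J_r$. Combining these, $\I_r$ itself vanishes on $\mathcal{I}$.

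To upgrade this scalar vanishing to the statement that $\mathcal{I}$ annihilates all Hecke translates in $W_i$, I would invoke the self-adjointness of the $\mathscr{H}$-action on $\Ch_{c,r}(\Sht_{G_0}^r)$ with respect to the intersection pairing: for any $f\in\mathscr{H}$ and classes $c_1,c_2$, one has
\[
\langle f * c_1,\, c_2\rangle \;=\; \langle c_1,\, f*c_2\rangle.
\]
This is standard for the Hecke correspondences of \cite[\S 5.3]{YZ}; the key points are that $\mathscr{H}$ is commutative and that the basis elements $f_D\in\mathscr{H}$ are invariant under the anti-involution $f^*(g)=f(g^{-1})$, so that the source and target maps of each Hecke correspondence may be interchanged without changing the class.

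With self-adjointness in hand, for any $h\in\mathcal{I}$ and $f_1,f_2\in\mathscr{H}$ I would compute
\[
\langle (hf_1)*[\Sht_{T_1}^r],\; f_2*[\Sht_{T_2}^r]\rangle
= \langle [\Sht_{T_1}^r],\; (h f_1 f_2)*[\Sht_{T_2}^r]\rangle
= \I_r(hf_1f_2) = \J_r(hf_1f_2) = 0,
\]
the last equality holding because $hf_1f_2\in\mathcal{I}$ (as $\mathcal{I}$ is an ideal) and because $\J_r$ factors through $\mathscr{H}_\mathrm{aut}$. Letting $f_1$ vary over $\mathscr{H}$ shows that $h*\tilde W_1$ lies in the kernel $\{c\in\tilde W_1 : \langle c,\tilde W_2\rangle = 0\}$ of the pairing, so $h$ acts as zero on the quotient $W_1$. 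The symmetric argument, obtained by swapping the roles of the two cycles, handles $W_2$.

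The only nonformal step is the self-adjointness input, which I expect to follow immediately from the symmetric nature of the Hecke stacks $\Sht_{G_0,d}^r$ used in \S \ref{s:intersection theory} and the commutativity of $\mathscr{H}$; everything else is a purely formal consequence of the equality $\I_r=\J_r$ already proved in Proposition~\ref{prop:key identity}.
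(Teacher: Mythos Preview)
Your proposal is correct and follows essentially the same approach as the paper: the paper's proof notes that $\J_r$ factors through $\mathscr{H}_\mathrm{aut}$ (Proposition~\ref{J pi decomp}), that $\I_r=\J_r$ (Proposition~\ref{prop:key identity}), and then cites \cite[Corollary~9.4]{YZ} for the remaining formal argument, which is exactly the self-adjointness computation you spell out. The self-adjointness of the $\mathscr{H}$-action on the Chow group is indeed used elsewhere in the paper (see the proof of Theorem~\ref{thm:main chow}), so your only ``nonformal'' step is legitimate.
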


\begin{proof}
By Proposition \ref{J pi decomp} the distribution $\J_r(f)$ only depends on the image of $f$ under $\mathscr{H} \to \mathscr{H}_\mathrm{aut}$.
By Proposition \ref{prop:key identity} the same is true of the distribution $\I_r(f)$ defined by (\ref{I distribution def}), and the claim follows exactly as in \cite[Corollary 9.4]{YZ}. 
\end{proof}

It follows from the discussion of \S \ref{ss:basic automorphic} that $\mathscr{H}_{\cusp,\R}= \mathscr{H}_\cusp\otimes_\Q\R$ is isomorphic to a product of copies of $\R$,  indexed by the unramified cuspidal automorphic representations $\pi$.  
For each such $\pi$, let $e_\pi \in \mathscr{H}_{\cusp,\R}$ be the corresponding idempotent.  
Using Proposition \ref{prop:automorphic chow}, these idempotents induce a decomposition
\[
W_i (\R) =   W_{i,\mathrm{cusp}} \oplus W_{i,\mathrm{Eis}}  = \left(\bigoplus _\pi W_{i, \pi}\right) \oplus W_{i,\mathrm{Eis}} 
\] 
where the sum is over all  unramified cuspidal  $\pi$, and $W_{i, \pi} \subset W_i(\R)$ is the $\lambda_\pi$-eignespace of $\mathscr{H}$.

The following is Theorem \ref{thm:main intro} of the introduction.

\begin{theorem}\label{thm:main chow}
If $[\Sht_{T_i}^r]_\pi$ denotes the projection of the image of $[\Sht_{T_i}^r]$ to the summand $W_{i,\pi}$, then 
\[
\langle   [\Sht_{T_1}^r]_\pi, [\Sht_{T_2}^r]_\pi \rangle  = C_r(\pi). 
\]  
\end{theorem}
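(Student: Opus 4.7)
The plan is to evaluate the linear functional $\I_r \colon \mathscr{H} \to \Q$ of (\ref{I distribution def}) on the cuspidal idempotent $e_\pi$, and to compute the result in two ways. By Proposition \ref{prop:automorphic chow}, $\I_r$ factors through the quotient $\mathscr{H} \to \mathscr{H}_\mathrm{aut}$, and so extends by linearity to an $\R$-valued functional on $\mathscr{H}_{\mathrm{aut},\R}$; in particular, $\I_r(e_\pi)$ is defined.

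On the geometric side, I will show that
\[
\I_r(e_\pi) \;=\; \langle [\Sht_{T_1}^r]_\pi, \, [\Sht_{T_2}^r]_\pi \rangle.
\]
Two facts are needed. First, $e_\pi * [\Sht_{T_2}^r] = [\Sht_{T_2}^r]_\pi$, which is immediate from the definition of $e_\pi$ as the idempotent cutting out the $\pi$-component in the Hecke-stable decomposition $W_2(\R) = W_{2,\Eis} \oplus \bigoplus_{\pi'} W_{2,\pi'}$. Second, the pairing $W_1(\R) \times W_2(\R) \to \R$ is orthogonal for the isotypic decompositions on both sides. This follows from self-adjointness of the Hecke action, $\langle f * c_1, c_2 \rangle = \langle c_1, f * c_2 \rangle$, which is a consequence of the fact that Hecke correspondences on $\Sht_{G_0}^r$ are symmetric under interchange of their two legs, together with the standard formalism of intersection pairings with correspondences; the usual eigenvalue argument (if $\pi_1 \neq \pi_2$, choose $f$ with $\lambda_{\pi_1}(f) \neq \lambda_{\pi_2}(f)$, so that $\lambda_{\pi_1}(f)\langle c_1,c_2\rangle = \lambda_{\pi_2}(f)\langle c_1,c_2\rangle$ forces $\langle c_1,c_2\rangle=0$) then separates the summands. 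Combining these,
\[
\I_r(e_\pi) \;=\; \langle [\Sht_{T_1}^r], \, e_\pi * [\Sht_{T_2}^r] \rangle \;=\; \langle [\Sht_{T_1}^r], \, [\Sht_{T_2}^r]_\pi \rangle \;=\; \langle [\Sht_{T_1}^r]_\pi, \, [\Sht_{T_2}^r]_\pi \rangle .
\]

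On the spectral side, Proposition \ref{prop:key identity} gives $\I_r = \J_r$ on $\mathscr{H}$, and the identity persists after $\R$-linear extension to $\mathscr{H}_{\mathrm{aut},\R}$. The spectral formula extracted in the proof of Proposition \ref{prop:period reciprocity} (a direct consequence of Proposition \ref{J pi decomp}) expresses $\J_r$ on the cuspidal subalgebra as $\J_r(f) = \sum_{\pi'} C_r(\pi')\,\lambda_{\pi'}(f)$, whence $\J_r(e_\pi) = C_r(\pi)$. Combining the two computations yields the theorem.

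The main obstacle I anticipate is verifying the self-adjointness of the Hecke action at the Chow-group level: while it is plausible from the symmetry of the Hecke correspondences on $\Sht_{G_0}^r$, a fully direct Chow-theoretic argument requires some care with excess intersection. If this becomes awkward, a safer route is to tensor with $\Q_\ell$ and transport everything through the $\mathscr{H}$-equivariant cycle class map $\mathrm{cl} \colon \Ch_{c,r}(\Sht_{G_0}^r) \to V$, which intertwines the intersection pairing with the cup product (\ref{cup}); self-adjointness and orthogonality are then already encoded in the block decomposition (\ref{cohomological decomposition}), and part (1) of Theorem \ref{main} supplies the desired equality $C_r(\pi)$ directly, which one then descends to the real Chow-group statement by matching the cuspidal idempotent $e_\pi$ with the sum of block idempotents $e_\m$ corresponding to maximal ideals $\m = \ker(\lambda_{\pi,\mathfrak{l}})$ for $\mathfrak{l} \mid \ell$.
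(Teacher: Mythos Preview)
Your proposal is correct, and your primary route is in fact more direct than the paper's. The paper works with the $\Q$-rational idempotent $e_\Pi$ attached to the full Galois orbit $\Pi$ of $\pi$, defines an $\mathscr{H}_\Pi$-valued refinement $\langle\cdot,\cdot\rangle_\Pi$ of the intersection pairing, invokes Theorem~\ref{main} (the $\ell$-adic statement) together with compatibility of the cycle class map to obtain $\langle e_\Pi[\Sht_{T_1}^r],\,e_\Pi[\Sht_{T_2}^r]\rangle_\Pi = C_r(\Pi)$ in $\mathscr{H}_\Pi$, and only then applies $\lambda_\pi$ to isolate $\pi$. Your argument instead works directly over $\R$: you evaluate $\I_r(e_\pi)=\J_r(e_\pi)$ using Proposition~\ref{prop:key identity} and the spectral expansion (\ref{period spectrum}), bypassing Theorem~\ref{main} and the passage through $\ell$-adic cohomology entirely. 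Both routes rely on self-adjointness of the Hecke action on the Chow group, which the paper simply asserts (``The action of $\mathscr{H}$ on the Chow group is self-adjoint relative to the $\Q$-bilinear intersection pairing''), so your concern about justifying this is equally present in the paper's argument. Your backup plan via Theorem~\ref{main} is essentially the paper's own proof.
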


\begin{proof}
It follows from the discussion of \S \ref{ss:basic automorphic} that $\mathscr{H}_\cusp$ decomposes as a product of totally real fields, 
indexed by the $\Aut(\C/\Q)$-orbits of unramified cuspidal automorphic representations.  Let $\Pi$ denote the $\Aut(\C/\Q)$-orbit of $\pi$, and let $\mathscr{H}_\Pi \subset \mathscr{H}_\cusp$ be the corresponding summand.

For each $\pi^\sigma \in \Pi$, the  corresponding $\lambda_{\pi^\sigma} : \mathscr{H}_\cusp \to \C$ restricts to an isomorphism $\lambda_{\pi^\sigma} : \mathscr{H}_\Pi \to E_{\pi^\sigma}$.   As $\sigma\in \Aut(\C/\Q)$ varies,  it follows from Proposition \ref{prop:period reciprocity} that we may collect together the constants $C_r(\pi^\sigma)$ into a single 
\[
C_r(\Pi)\in \mathscr{H}_\Pi
\]
such that $\lambda_{\pi^\sigma}( C_r(\Pi) ) = C_r(\pi^\sigma)$.

The idempotent $e_\Pi\in \mathscr{H}_\Pi \subset \mathscr{H}_\mathrm{aut}$ cuts out an $\mathscr{H}_\Pi$-vector space
\[
e_\Pi  W_i  \subset  W_i.
\]
The action of $\mathscr{H}$ on the Chow group is self-adjoint relative to the $\Q$-bilinear intersection pairing, and it follows that
there is a unique $\mathscr{H}_\Pi$-bilinear pairing  
\[
\langle \cdot , \cdot\rangle_\Pi \colon e_\Pi  W_1  \times e_\Pi  W_2 \to \mathscr{H}_\Pi
\]
whose trace  is the $\Q$-valued intersection form.

Using Theorem \ref{main} and  the compatibility of the cycle class map  with intersection pairings, we obtain the equality
\[
\langle  e_\Pi   [\Sht_{T_1}^r] , e_\Pi   [\Sht_{T_2}^r]   \rangle_\Pi  = C_r(\Pi)
\]
in $\mathscr{H}_\Pi$, and applying the isomorphism $\lambda_\pi : \mathscr{H}_\Pi \to E_\pi$ to both sides  proves the claim.
\end{proof}

Let $\mathscr{L}(\pi,s)$ be the normalized $L$-function defined in the introduction and recall that $\chi_1$ and $\chi_2$ are quadratic characters corresponding to $K_1$ and $K_2$, respectively.      
The following is Theorem \ref{thm:nonvanishing} of the introduction.

\begin{theorem}
In the notation of Theorem \ref{thm:main chow},
\[
\langle   [\Sht_{T_1}^r]_\pi, [\Sht_{T_2}^r]_\pi \rangle_{\pi}  = 0
\] 
if and only if 
\[
\mathscr{L}^{(r)}\left(\pi, 1/2 \right)\mathscr{L}\left(\pi \otimes \chi_1, 1/2\right)\mathscr{L}\left(\pi \otimes \chi_2, 1/2\right) = 0.
\]
\end{theorem}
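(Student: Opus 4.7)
The plan is to feed the identity of Theorem \ref{thm:main chow} through an explicit unramified spectral computation. By that theorem, $\langle [\Sht_{T_1}^r]_\pi, [\Sht_{T_2}^r]_\pi \rangle_\pi = C_r(\pi)$, so the task reduces to showing that $C_r(\pi) = 0$ if and only if the product of $L$-values in the statement vanishes.

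The first step is to evaluate $C_r(\pi)$ using the spherical vector $\phi \in \pi^{U_0}$ normalized so that $\mathscr{P}_0(\phi, s) = \mathscr{L}(\pi, 2s + 1/2)$, as indicated just before Theorem \ref{thm:nonvanishing}.  Since $\pi$ has trivial central character, multiplicity one (Remark \ref{rem:real valued}) gives $\pi = \overline{\pi}$, and because $\pi^{U_0}$ is one-dimensional, $\overline{\phi}$ differs from $\phi$ by a nonzero scalar, so $\mathscr{P}_3(\overline{\phi}, \eta)$ and $\mathscr{P}_3(\phi, \eta)$ have the same vanishing behavior.  The quantity $\mathscr{P}_3(\overline{\phi}, \eta)$ does not depend on $s$, and differentiating $r$ times at $s=0$ yields
\[
C_r(\pi) = 2^r (\log q)^{-r} \cdot \frac{\mathscr{P}_3(\overline{\phi}, \eta)}{\langle \phi, \phi \rangle_{\mathrm{Pet}}} \cdot \mathscr{L}^{(r)}(\pi, 1/2).
\]
Since $\langle \phi, \phi \rangle_{\mathrm{Pet}} > 0$, we conclude $C_r(\pi) = 0 \iff \mathscr{P}_3(\overline{\phi}, \eta) \cdot \mathscr{L}^{(r)}(\pi, 1/2) = 0$.

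The second step is to identify $\mathscr{P}_3(\phi, \eta)$ via Waldspurger's formula.  Viewing $\phi$ as an automorphic form on $G_3$ through Lemma \ref{lem:naive transfer} (so $\mathscr{P}_3(\phi, \eta)$ is the standard toric period on $\PGL_2$ for $T_3 \subset G_3$ twisted by $\eta$), and using that in the everywhere-unramified function field situation all local factors in Waldspurger's formula are explicit nonzero constants when tested against spherical vectors, one obtains
\[
\frac{|\mathscr{P}_3(\phi, \eta)|^2}{\langle \phi, \phi \rangle_{\mathrm{Pet}}} = C \cdot \frac{L(\pi_{K_3} \otimes \eta, 1/2)}{L(\pi, \mathrm{Ad}, 1)}
\]
for an explicit nonzero $C$.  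A brief character computation on the Klein four group $\Gal(K/F) = \{1, \sigma_1, \sigma_2, \sigma_3\}$ shows that $\mathrm{Ind}_{K_3}^F \eta \cong \chi_1 \oplus \chi_2$: both $\chi_1$ and $\chi_2$ restrict to the nontrivial character of $\Gal(K/K_3) = \langle \sigma_3 \rangle$, while $1$ and $\chi_3 = \chi_1 \chi_2$ restrict trivially.  The inductive property of automorphic $L$-functions then gives
\[
L(\pi_{K_3} \otimes \eta, s) = L(\pi \otimes \chi_1, s) \cdot L(\pi \otimes \chi_2, s).
\]
Combined with the nonvanishing of $L(\pi, \mathrm{Ad}, 1)$ (Jacquet--Shalika, valid over function fields) and the fact that the exponential factor $q^{2(g-1)(s-1/2)}$ relating $\mathscr{L}$ to $L$ is everywhere nonvanishing, this yields $\mathscr{P}_3(\overline{\phi}, \eta) = 0 \iff \mathscr{L}(\pi \otimes \chi_1, 1/2) \, \mathscr{L}(\pi \otimes \chi_2, 1/2) = 0$.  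Assembling the two steps gives the stated equivalence.

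The only nontrivial point in carrying this out is citing Waldspurger's formula in precisely the form needed; in the unramified function field setting this reduces to the routine verification that each local toric period integral against a spherical vector equals the local $L$-factor ratio up to an explicit nonzero constant.  With that in hand, the theorem follows cleanly from Theorem \ref{thm:main chow} and the character identity $\mathrm{Ind}_{K_3}^F \eta \cong \chi_1 \oplus \chi_2$.
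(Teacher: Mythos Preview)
Your proof is correct and follows essentially the same route as the paper: both reduce via Theorem~\ref{thm:main chow} to the two equivalences $\mathscr{P}_0^{(r)}(\phi,0)=0 \iff \mathscr{L}^{(r)}(\pi,1/2)=0$ and $\mathscr{P}_3(\overline{\phi},\eta)=0 \iff \mathscr{L}(\pi\otimes\chi_1,1/2)\mathscr{L}(\pi\otimes\chi_2,1/2)=0$, handling the first via the normalization $\mathscr{P}_0(\phi,s)=\mathscr{L}(\pi,2s+1/2)$ and the second via Waldspurger. The only cosmetic difference is that the paper first rewrites the $\eta$-twisted $T_3$-period of $\phi$ as the \emph{untwisted} $T_3$-period of $\phi\cdot(\chi_1\circ\det)\in\pi\otimes\chi_1$ (using $\chi_1\circ\mathrm{Nm}_{K_3/F}=\eta$ from Lemma~\ref{lem:eta}) and then applies Yun--Zhang/Waldspurger in its trivial-character form to $\pi\otimes\chi_1$ and $K_3/F$, whereas you apply the character-twisted Waldspurger formula directly to $\pi$ and then factor $L(\pi_{K_3}\otimes\eta,s)$ via your induced-character identity $\mathrm{Ind}_{K_3}^F\eta\cong\chi_1\oplus\chi_2$; these are the same computation organized in two equivalent ways.
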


\begin{proof}
By Theorem \ref{thm:main chow}, it suffices to show that
\begin{equation}\label{P0}
\mathscr{P}_0^{(r)}(\phi,0) = 0 \hspace{2mm} \mbox{ if and only if } \hspace{2mm} \mathscr{L}^{(r)}(\pi,1/2) = 0  
\end{equation}
and
\begin{equation}\label{P3}
\mathscr{P}_3(\bar\phi,\eta) = 0 \hspace{2mm} \mbox{ if and only if }  \hspace{2mm} \mathscr{L}(\pi \otimes \chi_1,1/2)\mathscr{L}(\pi \otimes \chi_2,1/2) = 0.  
\end{equation}
The equivalence (\ref{P0}) follows since we may choose a spherical vector $\phi$ so that 
\[\mathscr{P}_0(\phi,s) = q^{1-g} \mathscr{L}(\pi,2s + 1/2),\] 
as follows from the calculations in \cite[pg.\ 805-6]{YZ}.  Note that our definition of $\mathscr{P}_0$ differs slightly from the one appearing in \cite{YZ},  since we insert $2s$ instead of $s$ in the exponent.    
The claim (\ref{P3}) follows from applying the Yun-Zhang formula \cite{YZ} (or even the original work of Walspurger \cite{Waldspurger}, as extended to function fields in \cite{CW}) to the twist $\pi \otimes \chi_1$ and the cover $Y_3/X$.   
\end{proof}


\subsection{The proof of Theorem \ref{thm:bare intersection}}


The goal of this section is to prove that 
\[
\langle [\Sht_{T_1}^r ]  ,   [\Sht_{T_2}^r ] \rangle = 0
\]
when $r>0$.  This will be deduced  from the following result.

\begin{proposition}\label{f_0 computation}
Let $f  \in \mathscr{H}$ be the characteristic function of $U_0\subset G_0(\A)$, and recall the function  $\J(f,s)$ of (\ref{J distribution def}).  
\begin{enumerate}
\item
If $\mathrm{char}(\kk)=2$ then $\J(f ,s)=0$ for all $s\in \C$.
\item
If $\mathrm{char}(\kk)\neq 2$ then $\J(f ,s)=1$ for all $s\in \C$.
\end{enumerate}
\end{proposition}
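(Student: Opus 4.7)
The plan is to apply Proposition \ref{prop:geometric orbital} with $D$ the zero divisor. Indeed, the characteristic function of $U_0 = \PGL_2(\mathbb{O})$ is precisely $f_D$ for this $D$, since $U_0$ is the image of $\{X \in M_2(\mathbb{O}) : \det X \in \mathbb{O}^\times\}$ in $\PGL_2(\A)$. First I would determine $A_0(\kk)$: by Proposition \ref{prop:invariant domain} its elements are $\xi \in K_3$ with $\Tr_{K_3/F}(\xi) = 1$ and $\mathrm{div}(\xi) \geq 0$. Since $Y_3$ is smooth, projective, and geometrically connected, the effectivity condition forces $\xi \in H^0(Y_3, \co_{Y_3}) = \kk$, and for such a $\xi$ the involution $\sigma_3$ fixes constants, so $\Tr_{K_3/F}(\xi) = 2\xi$. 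The equation $2\xi = 1$ has no solution when $\mathrm{char}(\kk) = 2$ and the unique solution $\xi = 1/2$ otherwise.

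In the characteristic $2$ case, the first part of Proposition \ref{prop:geometric orbital} gives $\J(\xi, f, s) = 0$ for every orbit in \eqref{J decomp}, hence $\J(f, s) = 0$. In the remaining case, only the orbit $\xi = 1/2$ contributes, and the constraint $d_1 + d_2 = 2d = 0$ forces $(d_1, d_2) = (0, 0)$, so the second part of the proposition collapses to
\[
\J(f, s) = \mathrm{Trace}\bigl(\Frob_{1/2};\, (\beta_* L_{(0,0)})_{\overline{1/2}}\bigr),
\]
a quantity independent of $s$. Since $\Sigma_0(Y_3) = \Spec \kk$, Proposition \ref{prop:fundamental 2 cartesian} identifies $\beta \colon N_{(0,0)} \to A_0$ with the identity, and the local system $L_{(0,0)}$ is the pullback of the constant sheaf on $\Spec \kk$ (the \'etale cover $\Sigma_0(Y/Y_3) \to \Sigma_0(Y_3)$ is just the identity of $\Spec \kk$), hence trivial. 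The trace of Frobenius on the resulting one-dimensional stalk is $1$, giving $\J(f, s) = 1$.

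The only subtlety I anticipate is administrative: Proposition \ref{prop:Nsmooth} requires $d \geq 2g_3 - 1$, but neither Proposition \ref{prop:geometric orbital} nor its proof via Lemma \ref{lem:split fiber bijection} invokes smoothness, so the argument goes through without change in the degenerate case $d = 0$. No new ingredient is needed beyond what is already in place.
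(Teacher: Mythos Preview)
Your argument is correct and takes a genuinely different route from the paper's own proof. The paper does \emph{not} specialize Proposition~\ref{prop:geometric orbital} to $D=0$. Instead it argues directly on the adelic side: a self-contained integrality lemma (Lemma~\ref{lem:optimal}) shows that any orbit $\gamma$ contributing to $\J(f,s)$ must have $\inv(\gamma)\in\kk$ with $2\inv(\gamma)=1$, which already gives the characteristic~$2$ vanishing. For $\mathrm{char}(\kk)>2$, the paper then fixes the unique contributing $\gamma$ explicitly (via $\phi(x+y\epsilon)=(x,y)$ for a trace-zero $\epsilon\in K_3^\times$) and evaluates the local orbital integrals place by place, obtaining $|\epsilon|_x^{-2s}$ at each $x$ and hence $|\epsilon|^{-2s}=1$ globally.

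Your approach is cleaner and more in keeping with the machinery already built: once Proposition~\ref{prop:geometric orbital} is in hand, the case $d=0$ collapses because $\Sigma_0(Y_3)=\Spec(\kk)$ forces $\beta$ to be an isomorphism and $L_{(0,0)}$ to be constant. The paper's approach, by contrast, is self-contained and avoids appealing back to the moduli spaces $N_{(d_1,d_2)}$; it also makes the integrality mechanism (the reason $\xi$ is forced into~$\kk$) completely transparent via the self-duality of $\mathbb{O}_0\otimes_{\mathbb{O}}\mathbb{O}_3$, which later resurfaces in the discussion of optimal embeddings (Remark~\ref{rem:no optimal}). Your observation that Proposition~\ref{prop:geometric orbital} and Lemma~\ref{lem:split fiber bijection} make no use of the smoothness hypothesis $d\ge 2g_3-1$ is accurate, so there is no gap.
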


\begin{proof}
Using Lemma \ref{lem:coset transfer}, we view $f$ as a compactly supported function 
\[
f: U_0 \backslash J(\A) / U_3 \to \Q.
\]
In other words, $f$ is the characteristic function of the image of
\[
\mathrm{Iso}(\mathbb{O}_3,\mathbb{O}_0) \subset \mathrm{Iso}(\A_3,\A_0) = \tilde{J}(\A)
\]
under $\tilde{J}(\A) \to J(\A)$.

\begin{lemma}\label{lem:optimal}
Fix $\gamma \in J(F)$, and let $\xi \in K_3$ be its image under (\ref{orbit invariant}).
If there exist   $t_0\in T_0(\A)$  and $t_3\in T_3(\A)$ such that $f( t_0^{-1} \gamma t_3) \neq 0$, then $\xi\in \kk$ and 
$2\xi =1$.
\end{lemma}

\begin{proof}
 By hypothesis there is some $\phi \in \tilde{J}(\A)$  lifting $t_0^{-1} \gamma t_3 \in J(\A)$,  and satisfying the integrality condition $\phi(\mathbb{O}_3) = \mathbb{O}_0$.
   As in the proof of Proposition \ref{prop:split coset regular}, there is a canonical bijection 
\[
\A_0^\times   \backslash     \mathrm{Iso}(\A_3,\A_0)   / \A_3^\times
\iso 
 \GL_2(\A) \backslash    \left\{  \begin{array}{c}  \mbox{pairs of embeddings }  \\     \alpha_0: \A_0 \to M_2(\A)  \\   \alpha_3 :\A_3 \to M_2(\A)  \end{array} \right\} ,
\]
and the image of $\phi$ under this bijection is represented by a pair of $\mathbb{O}$-algebra embeddings
$ \alpha_0: \mathbb{O}_0 \to M_2(\mathbb{O})$ and   $\alpha_3 :\mathbb{O}_3 \to M_2(\mathbb{O})$.

As both $K_0$ and $K_3$ are unramified over $F$, the  quartic $\mathbb{O}$-algebra 
\[
R= \mathbb{O}_0\otimes_\mathbb{O} \mathbb{O}_3
\]
 is self-dual with respect to the  bilinear form $(x,y)\mapsto \mathrm{Tr}_{R/\mathbb{O}}(xy)$.
 If we define an $\mathbb{O}$-linear map 
$\alpha : R \to M_2(\mathbb{O})$ by $\alpha(x_0\otimes x_3) = \alpha_0(x_0)\alpha_3(x_3)$, then 
tracing the construction of the invariant (\ref{orbit invariant}) all the way back to Proposition \ref{prop:xi construct} shows that $\xi\in K_3$ satisfies 
\[
\mathrm{Trd}_{M_2(\mathbb{O})}( \alpha(x) \alpha(y)^\iota )= \mathrm{Tr}_{ R / \mathbb{O}}( \xi x \overline{y} )
\]
for all $x,y\in R$.   Here $\iota$ is the main involution on the quaternion order $M_2(\mathbb{O})$, and $y\mapsto \overline{y}$ is the involution on $R$ defined by $x_0\times x_3\mapsto x_0^{\sigma_0}\otimes x_3^{\sigma_3}$.  
  The left hand side clearly lies in $\mathbb{O}$ for all choices of $x$ and $y$, and hence $\xi\in R$, by   the self-duality of $R$ noted above.

Recalling that $K_3=\kk(Y_3)$ is the field of rational functions on a projective and geometrically connected curve,  
\[
 \xi \in K_3 \cap R = K_3\cap \mathbb{O}_3=\kk,
\]
and the condition $2\xi =1$ then follows from $\mathrm{Tr}_{K_3/F}(\xi)=1$.
\end{proof}

Returning to the main proof, fix a $\gamma \in J(F)$ and recall from \S \ref{ss:orbital notation} the notation
\begin{equation}\label{partial J}
\J (\gamma,f,s) = \int_{ T_0(\A)  \times T_3(\A) } f(t_0^{-1} \gamma t_3) \, |t_0|^{2s} \eta(t_3)\, dt_0\, dt_3.
\end{equation}
If (\ref{partial J}) nonzero, the lemma implies that the invariant $\xi = \inv(\gamma) \in K_3$  lies in the field of constants $\kk$ and satisfies  $2\xi =1.$  If $\mathrm{char}(\kk)=2$ there is no such $\xi$,   and so  (\ref{partial J}) vanishes for all  $\gamma\in J(F)$.   
The first claim of Theorem \ref{f_0 computation} follows from this and the decomposition   (\ref{J decomp}).

From now on we assume that $\mathrm{char}(\kk)>2$, and  let 
\[
\gamma \in T_0(F) \backslash J(F) / T_3(F)
\] 
be the unique element with $\inv(\gamma)=1/2$.  Thus, by the discussion above,
\begin{equation}\label{lonely sum}
\J( f,s) =  \int_{ T_0(\A)  \times T_3(\A) } f(t_0^{-1} \gamma t_3) \, |t_0|^{2s} \eta(t_3)\, dt_0\, dt_3.
\end{equation}

Fix an   $\epsilon \in K_3^\times$ satisfying $\mathrm{Tr}_{K_3/F}(\epsilon)=0$, and define an $F$-linear isomorphism $\phi :  K_3 \to  K_0$ by $\phi  ( x+y\epsilon) = (x,y).$
By carefully unwinding the definition of the invariant (\ref{orbit invariant}), one can see that $\phi \mapsto \gamma$ under the canonical bijection
\[
 K_0^\times \backslash  \mathrm{Iso}( K_3 ,  K_0 )  / K_3^\times = T_0(F) \backslash J(F) /T_3(F).
\]

\begin{lemma}
If we factor $f=\prod_{x\in |X|} f_x$ and $\eta=\prod_{x\in |X|} \eta_x$,  then 
\begin{equation}\label{local orbital}
 \int_{ T_0(F_x)  \times T_3(F_x) } f_x(t_0^{-1} \gamma t_3) \, |t_0|^{2s} \eta_x(t_3)\, dt_0\, dt_3 = |  \epsilon |_x^{-2s}
\end{equation}
for every place $x$ of $F$.
\end{lemma}

\begin{proof}
As $K_3/F$ is unramified, we may  choose  $c \in F_x^\times$ so that $c \epsilon \in \co_{K_{3, x}}^\times$.  For any such choice we have 
$
\co_{K_3,x} = \co_{F,x} \oplus c\epsilon \co_{F,x},
$
and hence  \[ \phi(\co_{K_{3,x}}) = (1,c) \cdot \co_{K_{0,x}}.\]

Suppose first that $x$ is inert in $K_3$.   The integral over $T_3(F_x)$ can be replaced by a sum over the singleton set
$F_x^\times \backslash K^\times_{3,x} / \co^\times_{K_{3,x}} =\{1\}$, while the integral over $T_0(F_x)$ can be replaced by a sum over
\[
F_x^\times \backslash (F_x^\times \times F_x^\times) / ( \co_{F_x}^\times \times \co_{F_x}^\times ) =
\{ (1,\varpi^k) : k\in \Z\}
\]
for any uniformizer $\varpi \in F_x$.   Moreover,  $f_x( ( 1, \varpi^{-k}) \cdot  \gamma )$ is equal to $1$ if the $\co_{F_x}$-lattices $\phi(\co_{K_{3,x} } )$ and $( 1, \varpi^{k}) \cdot  \co_{ K_{0,x}}$  agree up to scaling by $F_x^\times$, and is $0$ otherwise. 
 In other words
\[
f_x( ( 1, \varpi^{-k}) \cdot  \gamma ) = \begin{cases}
1 & \mbox{if } | \varpi |^k = |c|  \\
0 & \mbox{otherwise,}
\end{cases}
\]
and  the integral (\ref{local orbital})  reduces to 
\[
\sum_{k\in \Z}  | \varpi | ^{2ks}      f_x \big( ( 1, \varpi^{-k}) \cdot  \gamma \big)  = |c|^{2s} = |\epsilon|_x^{-2s}  .
\]

Now suppose that $x$ is split in $K_3$. In this case we can choose $c$ in such a way that $(c\epsilon)^2=1$, and define orthogonal idempotents
\[
e = \frac{1+c\epsilon}{2} ,\quad f = \frac{1-c\epsilon}{2}
\]
in $\co_{K_{3,x}}$.  The integral over $T_0(F_x)$ can then be replaced by a sum over
\[
F_x^\times \backslash K_{3,x}^\times / \co_{K_{3,x}}^\times = \{  e+ \varpi^\ell  f   : \ell \in \Z   \} .
\]
Moreover, $f_x\big( ( 1, \varpi^{-k}) \cdot  \gamma \cdot (e+\varpi^\ell f) \big)$ is equal to $1$ if the $\co_{F_x}$-lattices
\[
\phi \big(  (e+\varpi^\ell f)  \cdot  \co_{K_{3,x}} \big) = \left\{ (x + \varpi^\ell y , cx -  \varpi^\ell c y) \in K_{0,x} : x,y  \in   \co_{F_x} \right\} 
\]
and $( 1, \varpi^{k}) \cdot  \co_{ K_{0,x}}$  agree up to scaling by $F_x^\times$, and is $0$ otherwise.   
After some elementary linear algebra,  this simplifies to
\[
f_x\big( ( 1, \varpi^{-k}) \cdot  \gamma \cdot (e+\varpi^\ell f) \big)     = \begin{cases}
1 & \mbox{if } | \varpi |^k = |c|  \mbox{ and } \ell=0 \\
0 & \mbox{otherwise,}
\end{cases}
\]
and the integral (\ref{local orbital}) again  reduces to 
\[
\sum_{k,\ell \in \Z}  | \varpi | ^{2ks}   f_x\big( ( 1, \varpi^{-k}) \cdot  \gamma \cdot (e+\varpi^\ell f) \big) 
\, \eta_x(  e+\varpi^\ell f )     = |c|^{2s} = |\epsilon|_x^{-2s}  .
\]
This proves the lemma.
\end{proof}

Combining (\ref{lonely sum}) with the preceding lemma yields
$
\J(f,s) = |\epsilon|^{-2s}=1,
$
completing the proof of the second claim of Theorem \ref{f_0 computation}.
\end{proof}

The following is Theorem \ref{thm:bare intersection} of the introduction.

\begin{theorem}\label{thm:unprojected}
Let  $\langle\cdot,\cdot\rangle :\Ch_{c,r}(\Sht_{G_0}^r) \times \Ch_{c,r}(\Sht_{G_0}^r) \to \Q$
be the intersection pairing.
\begin{enumerate}[$(a)$]
\item If $r>0$ then $\langle  [\Sht_{T_1}^r ]  ,   \, [\Sht_{T_2}^r ] \rangle  = 0$.
\item If $r=0$ then  
\[
\langle  [\Sht_{T_1}^0 ]  , \,   [\Sht_{T_2}^0 ] \rangle = 
\begin{cases} 1 &  \mbox{ if }  \mathrm{char}(\kk)>2 \\
0 & \mbox{ if } \mathrm{char}(\kk)=2 .
\end{cases}
\]
\end{enumerate}
%
%
\end{theorem}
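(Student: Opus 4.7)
The plan is to deduce both parts of the theorem as essentially a formal corollary of Proposition \ref{prop:key identity} combined with Proposition \ref{f_0 computation}.  Take $f \in \mathscr{H}$ to be the characteristic function of $U_0 \subset G_0(\A)$, which is the same as $f_D$ for the empty divisor $D=0$ (indeed, $\det X \in \mathbb{O}^\times$ cuts out exactly $U_0$ inside $\PGL_2(\A)$).  Under the measure normalization of \S\ref{ss:notation}, $U_0$ has volume $1$, so this $f$ is the unit of the convolution algebra $\mathscr{H}$, and the corresponding Hecke correspondence on $\Sht_{G_0}^r$ acts as the identity on Chow groups.  Consequently
\[
\I_r(f) = \langle [\Sht_{T_1}^r],\, f * [\Sht_{T_2}^r] \rangle = \langle [\Sht_{T_1}^r],\, [\Sht_{T_2}^r] \rangle,
\]
so the two intersection numbers appearing in the theorem both equal $\I_r(f)$ for this particular $f$.

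Next, I would invoke Proposition \ref{prop:key identity} to rewrite
\[
\I_r(f) = \J_r(f) = (\log q)^{-r} \cdot \frac{d^r}{ds^r} \J(f,s)\Big|_{s=0},
\]
and then apply Proposition \ref{f_0 computation}, which asserts that $\J(f,s)$ is \emph{constant} in $s$: identically $0$ when $\mathrm{char}(\kk)=2$, and identically $1$ when $\mathrm{char}(\kk)>2$.

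Both assertions of the theorem fall out immediately.  For part $(a)$, where $r > 0$, the $r$-th derivative of a constant is zero, so $\I_r(f) = 0$ regardless of characteristic.  For part $(b)$, where $r = 0$, no derivative is taken and $\J_0(f) = \J(f,0)$ yields precisely the claimed value $1$ or $0$ according to whether $\mathrm{char}(\kk) > 2$ or $\mathrm{char}(\kk) = 2$.  The only conceivable point of friction is confirming that the convolution unit of $\mathscr{H}$ really does act as the identity in the pairing $\I_r$ as defined in (\ref{I distribution def}); but this is built into the Hecke action of \cite[\S 5.3]{YZ}, given the chosen measure normalization, so there is no substantive obstacle — all the geometric content has already been absorbed into Proposition \ref{f_0 computation}.
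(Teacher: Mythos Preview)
Your proposal is correct and follows exactly the same route as the paper's proof: take $f$ to be the characteristic function of $U_0$, observe that $\I_r(f) = \langle [\Sht_{T_1}^r], [\Sht_{T_2}^r]\rangle$, invoke Proposition \ref{prop:key identity} to identify $\I_r(f) = \J_r(f)$, and then read off the answer from the constancy of $\J(f,s)$ established in Proposition \ref{f_0 computation}. The only difference is that you spell out the derivative step and the identification of $f$ as the Hecke unit a bit more explicitly than the paper does.
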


\begin{proof}
If  $f \in \mathscr{H}$ denotes the characteristic function of $U_0$, then  Proposition \ref{prop:key identity} implies
\[
\langle [\Sht_{T_1}^r ]  ,   [\Sht_{T_2}^r ] \rangle = \I_r(f) = 
(\log q)^{-r} \frac{d^r}{ds^r} \J( f,s) \big|_{s=0} .
\]
The claim follows from this and Proposition \ref{f_0 computation}.
\end{proof}


\subsection{The proof of Theorem \ref{thm:periods}}
\label{ss:r=0}


Throughout \S \ref{ss:r=0} we assume that $r=0$, and omit the $r$ superscripts in our moduli spaces  $\Sht_{G_0}$, $\Sht_{T_1}$, and $\Sht_{T_2}$.  All three of these are  disjoint unions of stack quotients of $\Spec(\kk)$, and the latter two have only finitely many connected components.

On $\kk^\alg$-points (or, equivalently, $\kk$-points), the morphisms 
\[
\theta_i:\Sht_{T_i} \to \Sht_{G_0}
\]
introduced in \S \ref{ss:cycles} are identified with
\[
\xymatrix{
{  \Sht_{T_1} (\kk^\alg) }  \ar@{=}[d] \ar[dr]^{\theta_1} &   &    { \Sht_{T_2} (\kk^\alg) }  \ar@{=}[d] \ar[dl]_{\theta_2}   \\
  { T_1(F) \backslash T_1(\A) / T_1(\mathbb{O}) } \ar[d]   &  { \Sht_{G_0} (\kk^\alg)} \ar@{=}[d]  &   { T_2(F) \backslash T_2(\A) / T_2(\mathbb{O}) }   \ar[d]   \\
   { G_1(F) \backslash G_1(\A) / U_1 }  \ar@{=}[r] & { G_0(F) \backslash G_0(\A) / U_0 } \ar@{=}[r] & { G_2(F) \backslash G_2(\A) / U_2 } .
}
\]
where each ``='' is a canonical isomorphism.
The isomorphisms of the bottom row come from Lemma \ref{lem:naive transfer}, while the others follow by examination of the definition of a Shtuka with $0$ modifications.

The Chow group of 0-cycles with proper support $\Ch_{c,0}(\Sht_{G_0})_\R$ is identified with the space 
\[
\mathscr{A}_\R = C_c^{\infty}(G_0(F)\backslash G_0(\A)/U_0, \R)
\] 
of compactly supported, $\R$-valued, unramified automorphic forms.  Under this identification, the intersection pairing becomes 
\begin{align*}
\langle \phi_1, \phi_2 \rangle 
&= \int_{ G_0(F) \backslash G_0(\A)} \phi_1(g)\phi_2(g)\,  dg 
\end{align*}
where the Haar measure on $G_0(\A)$ is normalized as in \S \ref{ss:notation}.

Fix $i \in \{1,2\}$.  As in Lemma \ref{lem:naive transfer}, fix an $F$-linear isomorphism $\rho_i : K_0 \to K_i$.
Denote in the same way the induced isomorphism 
\[
\rho_i : G_{0/F} \to G_{i/F},
\]
 and by 
 \begin{equation}\label{torus fixed}
 \alpha_i : T_{i/F} \to G_{0/F}
 \end{equation}
  the composition of the inclusion  $T_{i/F} \subset G_{i/F}$ with $\rho_i^{-1}$.
If we  choose  a $b_i \in G_0(\A)$ such that 
$
\rho_i(  b_i U_0 b_i^{-1}  ) =   U_i   , 
$
the injection
\begin{equation}\label{simple class}
 T_i(\A) / T_i(\mathbb{O})  \map{  t \mapsto \alpha_i(t) b_i  } G_0(\A) /U_0
\end{equation}
 induces the function denoted $\theta_i$ in the diagram above.
It follows that the class $[\Sht_{T_i}]  \in \Ch_{c,0}(\Sht_{G_0})_\R$ is identified with the 
$\psi_i \in \mathscr{A}_\R$ defined by
\[
\psi_i (g ) =  \sum_{ \gamma \in  G_0(F) / \alpha_i(T_i(F))  } \bm{1}_{   \theta_i    } ( \gamma^{-1}  g ).
\]
Here $ \bm{1}_{   \theta_i  }$ is the characteristic function of the image of (\ref{simple class}).

The following is Theorem \ref{thm:periods} of the introduction.

\begin{theorem}
Let $\pi$ be an unramified cuspidal automorphic representation of $G_0(\A)$.
For  any $\phi \in \pi^{U_0}$ we have
\begin{multline*}
   \left( \int_{T_1(F)\backslash T_1(\A)} \phi(t_1) dt_1 \right)  \left( \int_{T_2(F) \backslash T_2(\A)} \overline{\phi}(t_2) dt_2\right) \\
    = 
    \left( \int_{T_0(F)\backslash T_0(\A)} \phi(t_0) dt_0 \right)  \left( \int_{T_3(F) \backslash T_3(\A)} \overline{\phi}(t_3)\eta(t_3)dt_3\right).
 \end{multline*}
\end{theorem}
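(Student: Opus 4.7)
The strategy is to deduce Theorem \ref{thm:periods} from the $r=0$ case of Theorem \ref{thm:main chow} by unpacking both sides as concrete integrals on $G_0(\A)$. Since $\Sht_{G_0}^0$ is a disjoint union of stack quotients of $\Spec(\kk)$, its Chow group $\Ch_{c,0}(\Sht_{G_0}^0)_\R$ is canonically identified with the space $\mathscr{A}_\R$ of real-valued compactly supported unramified automorphic forms on $G_0(\A)$, with intersection pairing given by the bilinear form $\langle\phi_1,\phi_2\rangle = \int_{[G_0]}\phi_1\phi_2\,dg$. Under this identification the class $[\Sht_{T_i}^0]$ corresponds, as recalled at the start of \S\ref{ss:r=0}, to the function $\psi_i(g) = \sum_{\gamma\in G_0(F)/\alpha_i(T_i(F))} \mathbf{1}_{\theta_i}(\gamma^{-1}g)$.

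The central step is the unfolding identity $\langle\psi_i,\phi\rangle = \mathscr{P}_i(\phi)$ for every $\phi \in \mathcal{A}_\cusp(G_0)^{U_0}$ (viewed in $\mathcal{A}_\cusp(G_i)^{U_i}$ via Lemma \ref{lem:naive transfer}). Substituting the definition of $\psi_i$ and collapsing the sum against the quotient by $\alpha_i(T_i(F))$ gives
\[
\langle\psi_i,\phi\rangle \;=\; \int_{\alpha_i(T_i(F))\backslash G_0(\A)}\mathbf{1}_{\theta_i}(g)\phi(g)\,dg \;=\; \int_{[T_i]}\phi(\alpha_i(t)b_i)\,dt,
\]
where the second equality uses $\mathrm{vol}(U_0)=1$ together with the $U_0$-invariance of $\phi$ and the fact that $\mathbf{1}_{\theta_i}$ is supported on $\alpha_i(T_i(\A))b_i U_0$. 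By Lemma \ref{lem:naive transfer}, the map $g\mapsto \rho_i(g)h_i$ sends $\alpha_i(t)b_i$ to $t\cdot (\rho_i(b_i)h_i)$, and the choices of $b_i$ and $h_i$ force $\rho_i(b_i)h_i$ to normalize $U_i$, so modulo $U_i$ the image lies in $T_i(\A)$; with the Haar measure conventions of \S\ref{ss:notation} the right-hand integral becomes exactly $\mathscr{P}_i(\phi)$.

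By Remark \ref{rem:real valued} we may choose a real-valued $\phi \in \pi^{U_0}$, and for such a $\phi$ the bilinear intersection pairing and the Petersson Hermitian pairing coincide on the one-dimensional real subspace $W_{i,\pi} \subset W_i(\R)$. The orthogonal projection of $\psi_i$ onto this line is therefore
\[
[\Sht_{T_i}^0]_\pi \;=\; \frac{\langle\psi_i,\phi\rangle}{\langle\phi,\phi\rangle_{\mathrm{Pet}}}\,\phi \;=\; \frac{\mathscr{P}_i(\phi)}{\langle\phi,\phi\rangle_{\mathrm{Pet}}}\,\phi,
\]
which gives
\[
\langle[\Sht_{T_1}^0]_\pi,[\Sht_{T_2}^0]_\pi\rangle \;=\; \frac{\mathscr{P}_1(\phi)\mathscr{P}_2(\phi)}{\langle\phi,\phi\rangle_{\mathrm{Pet}}}.
\]
On the other hand, Theorem \ref{thm:main chow} at $r=0$ evaluates the same intersection as $C_0(\pi) = \mathscr{P}_0(\phi)\mathscr{P}_3(\phi,\eta)/\langle\phi,\phi\rangle_{\mathrm{Pet}}$. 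Cancelling the common denominator yields the desired identity for real $\phi$; since both sides of the stated equality are of shape $F_1(\phi)F_2(\bar\phi)$ for linear functionals $F_j$ on $\pi^{U_0}$, they transform the same way under $\phi\mapsto c\phi$ and the complex case follows at once.

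The main obstacle is the careful bookkeeping in the unfolding step: ensuring that the identification between $\alpha_i(T_i(\A))b_iU_0 \subset G_0(\A)$ and $T_i(\A)U_i \subset G_i(\A)$ supplied by Lemma \ref{lem:naive transfer} is a volume-preserving bijection of measure spaces. Once the normalizers of $U_i$ are tracked through $\rho_i$ at each finite place, the agreement is forced by the uniform conventions of \S\ref{ss:notation}; the rest of the proof is a direct application of Theorem \ref{thm:main chow}.
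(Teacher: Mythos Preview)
Your proposal is correct and follows essentially the same approach as the paper: identify $\Ch_{c,0}(\Sht_{G_0}^0)_\R$ with $\mathscr{A}_\R$, recognize $[\Sht_{T_i}^0]$ as the function $\psi_i$, unfold $\langle\psi_i,\phi\rangle$ to the toric period $\int_{[T_i]}\phi$, project to the $\pi$-line, and then invoke the $r=0$ case of Theorem~\ref{thm:main chow}. The paper's proof is terser---it states the unfolding identity $\langle\psi_i,\phi\rangle=\int_{[T_i]}\phi(t_i)\,dt_i$ without the intermediate manipulation you spell out, and it reduces to real-valued $\phi$ at the outset rather than arguing the complex case separately at the end---but the logical structure is identical.
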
 

\begin{proof}
By Remark \ref{rem:real valued}, we may assume that  $\phi \in \pi^{U_0}$  is $\R$-valued, so that $\phi \in \mathscr{A}_\R$.
The projection of $[\Sht_{T_i}] \in \mathscr{A}_\R$ onto the $\pi$-isotypic component is 
\[
[\Sht_{T_i}]_\pi = \dfrac{\langle \psi_i, \phi \rangle}{\langle \phi, \phi\rangle } \phi
= \dfrac{1}{\langle \phi, \phi\rangle_{\mathrm{Pet}}}   \left( \int_{[T_i]} \phi(t_i) dt_i \right) \cdot    \phi,
\] 
from which we deduce
\[
\left\langle [\Sht_{T_1}]_\pi, [\Sht_{T_2}]_\pi \right\rangle =    \dfrac{1}{\langle \phi, \phi \rangle_{\mathrm{Pet}}}\left( \int_{[T_1]} \phi(t_1) dt_1 \right)  \left( \int_{[T_2]} \overline{\phi}(t_2) dt_2\right).
\]
On the other hand,  Theorem \ref{thm:main intro} implies 
\[
\left\langle [\Sht_{T_1}]_\pi, [\Sht_{T_2}]_\pi \right\rangle =   \dfrac{1}{\langle \phi, \phi \rangle_{\mathrm{Pet}}} \left( \int_{[T_0]} \phi(t_0) dt_0 \right)  \left( \int_{[T_3]} \overline{\phi}(t_3)\eta(t_3)dt_3\right),
\]
and we are done.
\end{proof}

Finally, we give a reinterpretation of the $r=0$ case of Theorem \ref{thm:unprojected} (a.k.a.~Theorem \ref{thm:bare intersection} of the introduction).

\begin{definition}\label{def:optimal}
A pair of $F$-algebra embeddings 
\[
\alpha_1 : K_1 \to M_2(F) ,\qquad \alpha_2: K_2 \to M_2(F)
\]
is \emph{optimal} if there is a maximal $\mathbb{O}$-order in $M_2(\A)$  containing $\alpha_1(\mathbb{O}_1) \cup \alpha(\mathbb{O}_2)$.
\end{definition}

Note that the set of all optimal pairs $(\alpha_1,\alpha_2)$ is stable under the conjugation action of $\GL_2(F)$.

\begin{proposition}\label{prop:optimal}
If $(\alpha_1,\alpha_2)$ is an optimal pair, then the maximal order of $M_2(\A)$  containing $\alpha_1(\mathbb{O}_1) \cup \alpha_2(\mathbb{O}_2)$ is unique.  Moreover,  the number of $\GL_2(F)$-conjugacy classes of optimal pairs $(\alpha_1,\alpha_2)$ is equal to
\[
\langle  [\Sht_{T_1} ]  , \,   [\Sht_{T_2} ] \rangle = 
\begin{cases} 1 &  \mbox{ if }  \mathrm{char}(\kk)>2 \\
0 & \mbox{ if } \mathrm{char}(\kk)=2 .
\end{cases}
\]
\end{proposition}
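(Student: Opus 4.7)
The strategy is to match $\GL_2(F)$-conjugacy classes of optimal pairs with exactly those orbits in $T_0(F)\backslash J(F)/T_3(F)$ that contribute nonzero terms to $\mathscr{J}(f,0)$, where $f\in\mathscr{H}$ is the characteristic function of $U_0$. Both parts of the proposition then drop out of Lemma \ref{lem:optimal} (which controls which orbits contribute) and Theorem \ref{thm:unprojected} (which identifies $\mathscr{J}(f,0)$ with the intersection number). Throughout I use freely that $K_3$ is a field, which follows from the standing assumption $Y_1\not\iso Y_2$.

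First I would set up a canonical bijection
\[
\GL_2(F)\backslash\bigl\{(\alpha_1,\alpha_2)\bigr\}\iso T_0(F)\backslash J(F)/T_3(F)
\]
via the common invariant. By (\ref{coset to quaternion}) and Theorem \ref{thm:the invariant}, the left-hand set biject onto $\{\xi\in K_3^\times:\mathrm{Tr}_{K_3/F}(\xi)=1\}$. On the right, the identification $T_0(F)\backslash J(F)/T_3(F)=K_0^\times\backslash\mathrm{Iso}(K_3,K_0)/K_3^\times$ from \S \ref{ss:orbital notation}, combined with Proposition \ref{prop:split coset regular}, gives a bijection onto the same set. The composite provides the dictionary between pairs and $J$-orbits.

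Next I would translate optimality into a local integrality condition on the corresponding orbit representative. Given a pair $(\alpha_1,\alpha_2)$ corresponding to $\gamma$ with lift $\phi\in\tilde J(F)=\mathrm{Iso}(K_3,K_0)$, optimality says there is a maximal $\mathbb{O}$-order of $M_2(\A)$ containing both $\alpha_i(\mathbb{O}_i)$. Working place by place and using Skolem--Noether together with the conjugacy of all maximal orders of $M_2(F_x)$, this is equivalent to the existence of $(t_0,t_3)\in T_0(F_x)\times T_3(F_x)$ such that $(t_0^{-1}\phi t_3)(\mathbb{O}_{3,x})=\mathbb{O}_{0,x}$; but this is precisely the nonvanishing condition $f_x(t_0^{-1}\gamma t_3)\neq 0$ at the start of the proof of Proposition \ref{f_0 computation}. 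By Lemma \ref{lem:optimal}, the corresponding invariant must satisfy $\xi\in\kk$ and $2\xi=1$. Conversely, the explicit local orbital computation in the second half of the proof of Proposition \ref{f_0 computation} shows that in odd characteristic the $\xi=1/2$ orbit is indeed optimal. This gives exactly one optimal orbit when $\mathrm{char}(\kk)>2$ and none when $\mathrm{char}(\kk)=2$, matching Theorem \ref{thm:unprojected} and proving the count in part (2).

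It remains to prove the uniqueness assertion in part (1). Locally at each $x$, any maximal order $\Lambda_x\subset M_2(F_x)$ containing $\alpha_1(\mathbb{O}_{1,x})\cup\alpha_2(\mathbb{O}_{2,x})$ contains the $\mathbb{O}_x$-lattice $\alpha(R_x)$, where $R_x=\mathbb{O}_{1,x}\otimes_{\mathbb{O}_x}\mathbb{O}_{2,x}$ and $\alpha$ is the map from Proposition \ref{prop:xi construct}. The trace-form identity (\ref{trdquad}) combined with $\xi=1/2\in \mathbb{O}_{3,x}^\times$ and the unramifiedness of $K_1/F$ and $K_2/F$ at $x$ forces $\alpha(R_x)$ to have unit discriminant in $M_2(F_x)$ for the reduced-trace pairing, hence to be self-dual, hence to coincide with a (unique) maximal order of $M_2(F_x)$. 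The main technical obstacle is this last step: verifying cleanly that $\alpha(R_x)$ of unit discriminant forces $\Lambda_x$ to be uniquely determined, which I would handle either via a direct discriminant argument inside the maximal $\mathbb{O}_x$-order $M_2(\mathbb{O}_x)$ or via the Bruhat--Tits building, showing that the set of $\alpha_i(\mathbb{O}_{i,x})$-stable lattices in $F_x^2$ is a single vertex.
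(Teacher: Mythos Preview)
Your approach differs substantially from the paper's, and the key transfer step does not hold up as written.

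The paper never passes through the $(K_0,K_3)$/$J$-side at all.  Using the $r=0$ identifications at the start of \S\ref{ss:r=0}, it simply unfolds
\[
\langle [\Sht_{T_1}],[\Sht_{T_2}]\rangle=\int_{[G_0]}\psi_1(g)\psi_2(g)\,dg
=\sum_{(\alpha_1,\alpha_2)}\ \sum_{R\subset M_2(\A)}\mathrm{opt}(\alpha_1,\alpha_2,R),
\]
the outer sum over $\GL_2(F)$-conjugacy classes of pairs and the inner over maximal orders, with $\mathrm{opt}=1$ exactly when $\alpha_1(\mathbb{O}_1)\cup\alpha_2(\mathbb{O}_2)\subset R$.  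The crucial identifications are $G_0(\A)/U_0\leftrightarrow\{\text{maximal orders in }M_2(\A)\}$ and $(\gamma_1,\gamma_2)\leftrightarrow$ pairs of embeddings.  Since Theorem~\ref{thm:unprojected} gives the value $1$ or $0$, and every term is $0$ or $1$, both the count of conjugacy classes and the uniqueness of $R$ drop out simultaneously, with no separate local discriminant argument needed.

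Your route instead matches $(\alpha_1,\alpha_2)$ to $\gamma\in J(F)$ via equality of invariants and then asserts that optimality of $(\alpha_1,\alpha_2)$ is equivalent, place by place, to the support condition $f_x(t_0^{-1}\gamma t_3)\neq 0$.  This is a genuine matching statement between two different quaternion-embedding problems (one for $(K_1,K_2)$, one for $(K_0,K_3)$) linked only through the invariant $\xi$; it is not a consequence of Skolem--Noether or the conjugacy of maximal orders, which act within a single pair.  Without it you cannot invoke Lemma~\ref{lem:optimal}.  The argument can be repaired by arguing directly on the $(K_1,K_2)$ side as in Remark~\ref{rem:no optimal}: if $(\alpha_1,\alpha_2)$ is optimal then $\xi$ is everywhere locally integral by (\ref{trdquad}), hence $\xi\in K_3\cap\mathbb{O}_3=\kk$ and $2\xi=1$.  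Your discriminant argument for uniqueness then also supplies existence in odd characteristic (it shows $\alpha(R_x)$ is itself the unique maximal order), though note $\alpha(R_x)$ is not a priori a subring: what you actually use is that any maximal order containing $\alpha_1(\mathbb{O}_{1,x})\cup\alpha_2(\mathbb{O}_{2,x})$ must contain the self-dual lattice $\alpha(R_x)$ and hence equal it.
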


\begin{proof}
Using the notation and discussion of \S \ref{ss:r=0}, one can easily check that
\begin{align}
\langle  [\Sht_{T_1} ]  , \,   [\Sht_{T_2} ] \rangle &=  
\int_{ G_0(F) \backslash G_0(\A)}  \psi_1(g)  \psi_2  (g)\,  dg  \nonumber \\
&= \sum_{     (\gamma_1,\gamma_2) }  \sum_{  g\in   G_0(\A)/U_0    }  \mathbf{1}_{\theta_1}(\gamma_1^{-1} g)  \mathbf{1}_{\theta_2}(  \gamma_2^{-1} g) , \label{optimal count}
\end{align}
where the outer summation is over all pairs
\[
(\gamma_1,\gamma_2)  \in    G_0(F) \backslash   \big(  G_0(F)/  \alpha_1(T_1(F)) \times G_0(F) / \alpha_2( T_2(F) )   \big).
\]

There is a bijection from $G_0(\A) /U_0=\mathrm{PGL}_2(\A) / \mathrm{PGL}_2(\mathbb{O})$ to the set of maximal orders in $M_2(\A)$, sending 
 $g \mapsto gM_2(\mathbb{O})g^{-1}$.
Recalling the fixed embeddings (\ref{torus fixed}), every  $(\gamma_1,\gamma_2)$ in the outer summation gives rise to a pair of embeddings 
\[
\gamma_1 \alpha_1\gamma_1^{-1} : T_{1/F} \to G_{0/F} ,\quad \gamma_2 \alpha_2 \gamma_2^{-1} : T_{2/F} \to G_{0/F},
\]
which arise from $F$-algebra maps 
\[
K_1\to \End_F(K_0) \iso M_2(F) ,\qquad  K_2\to \End_F(K_0) \iso M_2(F).
\]
It is an exercise in linear algebra to check that 
   $\mathbf{1}_{\theta_1}(\gamma_1^{-1} g)  \mathbf{1}_{\theta_2}(  \gamma_2^{-1} g)=1$  if and only if these embeddings take both $\mathbb{O}_1$ and $\mathbb{O}_2$ into   $gM_2(\mathbb{O})g^{-1}$.

In other words, we can rewrite (\ref{optimal count}) as
\[
\langle  [\Sht_{T_1} ]  , \,   [\Sht_{T_2} ] \rangle 
= \sum_{ (\alpha_1,\alpha_2)} \sum_{ R \subset M_2(\A) } \mathrm{opt}(\alpha_1,\alpha_2,R)
\]
where the outer sum is over all $\GL_2(F)$-conjugacy classes of  pairs $(\alpha_1,\alpha_2)$ as in Definition \ref{def:optimal},
the inner sum is over all maximal orders $R\subset M_2(\A)$, and 
\[
\mathrm{opt}(\alpha_1,\alpha_2,R) = \begin{cases}
1 &\mbox{if } \alpha_1(\mathbb{O}_1) \cup \alpha_2(\mathbb{O}_2) \subset R \\
0 & \mbox{otherwise.}
\end{cases}
\]
Both claims of the proposition follow from this and Theorem \ref{thm:unprojected}.
\end{proof}

\begin{remark}\label{rem:no optimal}
Suppose $(\alpha_1,\alpha_2)$ is an  optimal pair as in Definition \ref{def:optimal}.
In the notation of \S \ref{ss:invariants}, this data determines a quaternion embedding
\[
(M_2(F) , \alpha_1,\alpha_2) \in \mathcal{Q}(K_1,K_2).
\]
As in the proof of Lemma \ref{lem:optimal},   optimality forces  
$\xi = \inv(M_2(F) , \alpha_1,\alpha_2)$ to be everywhere locally integral, so that  $\xi\in K_3\cap \mathbb{O}_3$ is a rational function on $Y_3$ having no poles.
As $Y_3$ is  geometrically connected, this forces  $\xi\in \kk$, and hence
\[
1 = \mathrm{Tr}_{K_3/F}(\xi)  = 2\xi.
\]
This gives a more direct proof that no optimal pair can exist when $\mathrm{char}(\kk)=2$.
It also provides a strengthening of  Proposition \ref{prop:optimal}:
if $\mathrm{char}(\kk)>2$ the unique $\GL_2(F)$-conjugacy class of optimal pairs is characterized by 
\[
\inv(M_2(F) , \alpha_1,\alpha_2) = \frac{1}{2} \in K_3.
\]
\end{remark}



\bibliographystyle{amsalpha}


\providecommand{\bysame}{\leavevmode\hbox to3em{\hrulefill}\thinspace}
\providecommand{\MR}{\relax\ifhmode\unskip\space\fi MR }
\providecommand{\MRhref}[2]{%
  \href{http://www.ams.org/mathscinet-getitem?mr=#1}{#2}}

\end{document}